\documentclass{amsart}  

\usepackage{amssymb} 
\usepackage{graphicx}   
\usepackage{enumerate}
\usepackage[dvipsnames]{xcolor} 
\usepackage[pagebackref]{hyperref}
\hypersetup{
    colorlinks=true,
    linkcolor=blue, 
    filecolor=blue,      
    urlcolor=blue, 
    citecolor=blue,
}
\usepackage{amsmath,amssymb,amsrefs}
\usepackage{tikz}
\usetikzlibrary{positioning, cd, arrows, backgrounds, arrows.meta,decorations,decorations.pathreplacing}
\usepackage{enumerate}
\usepackage{bm}
\usepackage{float}
\usepackage{wrapfig}
\usepackage{caption}
\usepackage{subcaption}
\usepackage{mathrsfs}
\usepackage{bbold}
\usepackage{appendix}
\usepackage{afterpage} 
\usepackage{wasysym}
\usepackage{longtable}
\usepackage{stmaryrd}

\newcommand{\tin}{:}

\usetikzlibrary{nfold}

\tikzset{Rightarrow/.style={double distance=2pt,scaling nfold=2,>={Implies},->}}

\makeatletter
\newcommand{\oset}[3][0ex]{\mathrel{\mathop{#3}\limits^{
    \vbox to#1{\kern-2\ex@
    \hbox{$\scriptstyle#2$}\vss}}}}
\makeatother
\newcommand{\Cat}[1]{\mathsf{#1}}
\newcommand{\cat}[1]{\Cat{#1}}
\newcommand{\acat}[1]{\mathsf{#1}}

\newcommand{\Caps}[1]{\mathsf{#1}}

\newcommand{\Core}[1]{\oset{\leftrightarrow}{\acat{#1}}}
\newcommand{\core}[1]{\Core{#1}}
\newcommand{\LongCore}[1]{\oset{\longleftrightarrow}{\acat{#1}}}
\newcommand{\lcore}[1]{\LongCore{#1}}
\DeclareRobustCommand\longtwoheadrightarrow
     {\relbar\joinrel\twoheadrightarrow}
\newcommand{\Epi}[1]{\oset{\twoheadrightarrow}{\acat{#1}}}
\newcommand{\epi}[1]{\Epi{#1}}
\newcommand{\LongEpi}[1]{\oset{\longtwoheadrightarrow}{\acat{#1}}}
\newcommand{\lepi}[1]{\LongEpi{#1}}  
  
\newcommand{\Mono}[1]{\oset{\hookrightarrow}{\acat{#1}}}
\newcommand{\mono}[1]{\Mono{#1}}

\captionsetup[subfigure]{labelfont=rm}

\usepackage[many]{tcolorbox}
\tcbuselibrary{listings}

\newlength\inwd
\setlength\inwd{1.3cm} 
\newcounter{notebookcounter}

\newfloat{lstfloat}{htbp}{lop}
\floatname{lstfloat}{Listing}
\numberwithin{lstfloat}{section}

\colorlet{inputcolor}{blue!50!black}
\colorlet{outputcolor}{red!50!black}

\lstdefinelanguage{genericnotebookno}{
    morekeywords={print, def, return},
    morecomment=[l]{//},
    basicstyle=\footnotesize\ttfamily,
    keywordstyle=\color{green!40!black}\bfseries,
    commentstyle=\color{purple!40!black}\itshape,
}

\lstdefinelanguage{genericnotebookin}{
    morekeywords={print, def, return},
    morecomment=[l]{//},
    basicstyle=\footnotesize\ttfamily,
    keywordstyle=\color{green!40!black}\bfseries,
    commentstyle=\color{purple!40!black}\itshape,
}

\lstdefinelanguage{genericnotebookout}{
    morekeywords={:,Type},
    morecomment=[l]{//},
    basicstyle=\footnotesize\ttfamily,
    keywordstyle=\bfseries\color{green!40!black},
    commentstyle=\itshape\color{purple!40!black},
}

\newtcblisting{notebookno}[1][\thenotebookcounter]{
  enlarge left by=\inwd-1.2cm,
  width=\linewidth-\inwd,
  enhanced,
  boxrule=0.4pt,
  colback=ForestGreen!15,
  listing only,
  top=0pt,
  bottom=0pt,
  overlay={
    \node[
      anchor=north east,
      text width=\inwd,
      font=\footnotesize\ttfamily\color{inputcolor},
      inner ysep=2mm,
      inner xsep=0pt,
      outer sep=0pt
      ] 
      at (frame.north west)
      {\stepcounter{notebookcounter}};
  }
  listing engine=listing,
  listing options={
    aboveskip=1pt,
    belowskip=1pt,
    language=genericnotebookno,
    showstringspaces=false,
    breaklines=true,
  },
}

\newtcblisting{notebookin}[1][\thenotebookcounter]{
  enlarge left by=\inwd,
  width=\linewidth-\inwd,
  enhanced,
  boxrule=0.4pt,
  colback=ForestGreen!15,
  listing only,
  top=0pt,
  bottom=0pt,
  overlay={
    \node[
      anchor=north east,
      text width=\inwd,
      font=\footnotesize\ttfamily\color{inputcolor},
      inner ysep=2mm,
      inner xsep=0pt,
      outer sep=0pt
      ] 
      at (frame.north west)
      {\stepcounter{notebookcounter}In [#1]:};
  }
  listing engine=listing,
  listing options={
    aboveskip=1pt,
    belowskip=1pt,
    language=genericnotebookin,
    showstringspaces=false,
    breaklines=true,
  },
}
\newtcblisting{notebookout}[1][\thenotebookcounter]{
  enlarge left by=\inwd,
  width=\linewidth-\inwd,
  enhanced,
  boxrule=0.1pt,colback=White,
  listing only,
  top=0pt,
  bottom=0pt,
  overlay={
    \node[
      anchor=north east,
      text width=\inwd,
      font=\footnotesize\ttfamily\color{outputcolor},
      inner ysep=2mm,
      inner xsep=0pt,
      outer sep=0pt
      ] 
      at (frame.north west)
      {Out [#1]:};
  }
  listing engine=listing,
  listing options={
    aboveskip=1pt,
    belowskip=1pt,
    language=genericnotebookout,
    showstringspaces=false,
  },
}

\floatstyle{ruled}
\newfloat{typefloat}{htbp}{lop}
\floatname{typefloat}{Type}

\DeclareMathOperator{\Dom}{Dom}
\DeclareMathOperator{\Codom}{Codom}
\DeclareMathOperator{\im}{Im}

\newcommand{\Free}[2]{#1\langle #2\rangle}

\DeclareMathOperator{\End}{End}
\DeclareMathOperator{\Aut}{Aut}

\DeclareMathOperator{\GL}{GL}

\DeclareMathOperator{\id}{id}

\newcommand{\srcfunc}{\mathbin{\blacktriangleleft}}
\newcommand{\tgtfunc}{\mathbin{\blacktriangleleft}}
\newcommand{\src}[1]{#1\srcfunc}
\newcommand{\tgt}[1]{\tgtfunc #1}

\usepackage{amsfonts}
\makeatletter
\def\amsbb{\use@mathgroup \M@U \symAMSb}
\makeatother
\newcommand{\one}{\mathbb{1}}

\renewcommand{\ker}{{\rm ker\,}}
\newcommand{\coker}{{\rm coker\,}}

\newcommand{\type}[1]{#1}
\tikzcdset{
  cells={font=\everymath\expandafter{\the\everymath\displaystyle}},
}

\usepackage{scalerel} 
\newcommand{\defeq}{\mathrel{\hstretch{.13}{=}\hspace{.2ex}{=}}}
\renewcommand{\leq}{\leqslant}

\newcommand{\inputref}[2]{\hyperref[#1]{\texttt{{\color{inputcolor}In\,[#2]}}}}
\newcommand{\outputref}[2]{\hyperref[#1]{\texttt{{\color{outputcolor}Out\,[#2]}}}}

\newcommand{\eval}{{\rm eval}}

\makeatletter
\newcommand{\xRrightarrow}[2][]{\ext@arrow 0359\Rrightarrowfill@{#1}{#2}}
\newcommand{\Rrightarrowfill@}{\arrowfill@\equiv\equiv\Rrightarrow}
\newcommand{\xLleftarrow}[2][]{\ext@arrow 3095\Lleftarrowfill@{#1}{#2}}
\newcommand{\Lleftarrowfill@}{\arrowfill@\Lleftarrow\equiv\equiv}
\makeatother

\newcommand{\counital}{\iota}
\newcommand{\unital}{\pi}

\newcommand{\adjoint}{\dashv}

\newcommand{\Autcat}{\cat{Aut}}

\newcommand{\func}[1]{\mathcal{#1}}

\newcommand{\fC}{\func{C}}
\newcommand{\fD}{\func{D}}
\newcommand{\fE}{\func{E}}
\newcommand{\fF}{\func{F}}
\newcommand{\fG}{\func{G}}
\newcommand{\fH}{\func{H}}
\newcommand{\fI}{\func{I}}
\newcommand{\fJ}{\func{J}}
\newcommand{\fK}{\func{K}}
\newcommand{\fL}{\func{L}}
\newcommand{\fM}{\func{M}}
\newcommand{\fN}{\func{N}}

\newcommand{\fR}{\func{R}}
\newcommand{\fS}{\func{S}}

\newcommand{\fU}{\func{U}}

\newcommand{\cA}{\cat{A}}
\newcommand{\cB}{\cat{B}}
\newcommand{\cC}{\cat{C}}
\newcommand{\cD}{\cat{D}}
\newcommand{\cE}{\cat{E}}

\newcommand{\cG}{\cat{G}}

\newcommand{\cS}{\cat{S}}

\newcommand{\cX}{\cat{X}}

\newcommand{\aA}{\Caps{A}}

\newcommand{\aX}{\Caps{X}}
\newcommand{\aY}{\Caps{Y}}
\newcommand{\aZ}{\Caps{Z}}

\newcommand{\venturi}{%
  \mathrel{
    \begin{tikzpicture}[yscale=0.6,xscale=0.5]
        \draw (0ex,2.5ex) to (1ex,2ex);
        \draw (1ex,2ex) to (3ex,2ex);
        \draw (0ex,0.75ex) to (1ex,1.25ex);
        \draw (1ex,1.25ex) to (3ex,1.25ex);
    \end{tikzpicture}
  }
}
\newcommand{\exqed}{\hfill $\square$}

\usepackage[draft]{pdfcomment}
\usepackage{xparse}
    \DeclareDocumentCommand \EDITmargin { o m } {
        \IfNoValueTF {#1} {
            \pdfmargincomment[icon=Note]{#2}
        }{
            \pdfmargincomment[icon=NOte,author=#1]{#2}
        }
    }
    \DeclareDocumentCommand \EDITcomment { o m } {
        \IfNoValueTF {#1} {
            \pdfcomment[color=Blue!20]{#2}
        }{
            \pdfcomment[color=Blue!20,author=#1]{#2}
        }
    }
    \DeclareDocumentCommand \EDITalt { o m m } {
        \IfNoValueTF {#1} {
            \pdfmarkupcomment[markup=StrikeOut]{#2}{#3}
        }{
            \pdfmarkupcomment[markup=StrikeOut,icon=key, author=#1]{#2}{#3}
        }
    }
    \DeclareDocumentCommand \EDITtypo { o m o } {
        \IfNoValueTF {#3} {
            \pdfmarkupcomment[markup=Squiggly,author=#1]{#2}{}
        }{
            \pdfmarkupcomment[markup=Squiggly,author=#1]{#2}{#3}
        }
    }
    \DeclareDocumentCommand \EDIThighlight { o m m } {
\pdfmarkupcomment[markup=Highlight,author=#1,color=blue!20]{#2}{#3}
}

\usepackage{enumitem}  

\newenvironment{ithm}{\begin{enumerate}[label={\rm(\alph*)}, ref=(\alph*),
      labelwidth=18pt, leftmargin=18pt, topsep=3pt, itemsep=1pt, parsep=2pt]}
      {\end{enumerate}}
      
\newenvironment{iprf}{\begin{enumerate}[label=(\alph*), ref=(\alph*),
      labelwidth=-18pt, leftmargin=0pt, topsep=3pt, itemsep=2pt, parsep=2pt]}
      {\end{enumerate}} 
            
\renewcommand{\leq}{\leqslant} 

\newcommand{\pf}{p}

\newtheorem{thm}{Theorem}[section]
\newtheorem{mainthm}{Theorem}
\newtheorem{prop}[thm]{Proposition}
\newtheorem{fact}[thm]{Fact}

\newtheorem{lem}[thm]{Lemma}

\theoremstyle{definition}
\newtheorem{defn}[thm]{Definition}
\newtheorem{ex}[thm]{Example}
\newtheorem{rem}[thm]{Remark}

\theoremstyle{remark}
\numberwithin{equation}{section}

\title{Categorification of characteristic structures}
\date{\today}
\author[P.\ A.\ Brooksbank]{Peter A.\ Brooksbank}
\author[H.\ Dietrich]{Heiko Dietrich}
\author[J.\ Maglione]{Joshua Maglione}
\author[E.\ A.\ O'Brien]{E.A.\ O'Brien}
\author[J.\ B.\ Wilson]{James B.\ Wilson}
\address[Brooksbank]{{\tt pbrooksb@bucknell.edu}, Bucknell University, USA}
\address[Dietrich]{{\tt heiko.dietrich@monash.edu}, Monash University, Australia}
\address[Maglione]{{\tt joshua.maglione@universityofgalway.ie}, University of Galway, Ireland}
\address[O'Brien]{{\tt e.obrien@auckland.ac.nz}, University of Auckland, New Zealand}
\address[Wilson]{{\tt James.Wilson@ColoState.Edu}, Colorado State University, USA}

\begin{document}

\begin{abstract}
   We develop a representation theory of categories as a means to 
   explore characteristic structures in algebra.
  Characteristic structures play a critical role in isomorphism testing of
  groups and algebras, and their construction and description often rely on
  specific knowledge of the parent object and its automorphisms. In many cases,
  questions of reproducibility and comparison arise. Here we present a 
  categorical framework that addresses these questions. We prove that every
  characteristic structure is the image of a functor equipped with a 
  natural transformation. 
  This  shifts the local description in the parent object to a
  global one in the ambient category.
  Through  constructions in representation theory, such as tensor products, 
  we can combine characteristic structure across multiple categories.
  Our results are constructive and stated in the language of a constructive type theory which facilitates their implementation in proof checkers.
\end{abstract} 

\maketitle

\newcommand{\customlabel}[2]{%
    \begingroup
    \renewcommand{\thetheorem}{#1}
    \refstepcounter{theorem}
    \label{#2}
    \endgroup
}

\section{Introduction}
\label{sec:intro}

The problem of deciding when two algebraic structures are isomorphic  
is fundamental to algebra and computer science. It encompasses issues of
decidability and complexity, and it tests the limits of our theories and
algorithms. An initial tactic in deciding isomorphism is to
identify substructures that are invariant under isomorphisms because doing so
reduces the search space. We first discuss groups, where the literature is 
most developed (see, for example, 
\citelist{\cite{ELGO2002}\cite{BOW}\cite{Maglione2021}\cite{Wilson:filters}}), 
but our results apply to monoids, loops, rings, and non-associative algebras.
 
A subgroup $H$ of a group $G$ is \emph{characteristic}
if  $\varphi(H)=H$ for every automorphism $\varphi:G\to G$; it is \emph{fully
invariant} if $\psi(H)\leq H$ for every homomorphism $\psi:G\to G$. We use
the language of categories, following  \cite{Riehl}, and a type of
natural transformation to describe our main results (details
are given in Section~\ref{sec:nat-trans-express}).
\begin{defn}\label{def:firstcounital}
  Let $\cat{A}$ be a category, and let $\cat{B}$ be a subcategory with inclusion
  functor $\fI:\cat{B}\to \cat{A}$. A \emph{counital} is a natural
  transformation $\iota:\fC\Rightarrow \fI$ for some functor $\fC:\cat{B}\to
  \cat{A}$. The class of all such counitals is denoted
  $\text{Counital}(\cat{B},\cat{A})$. For an object $X$ of $\acat{B}$, 
  the $X$-component of $\iota$ is the morphism $\iota_X: \fC(X) \to \fI(X)$ in $\acat{A}$.
\end{defn}

A special case of our results, for the category of groups, can be stated as follows.

\begin{mainthm}\label{thm:char-counital}
  For the category $\cat{Grp}$ of groups and subcategory $\lcore{Grp}$ 
  of groups and their isomorphisms, the following equalities of sets hold:
  \begin{center}
    \begin{tabular}{ccc} 
      $\{H \leq G ~|~ H \text{ characteristic in } G \} $
      & 
      $=$ 
      & 
      $\left\{
        \mathrm{Im}(\iota_G) ~\middle|~ \iota\in \text{\rm Counital}\left(\lcore{Grp},\cat{Grp}\right)
      \right\};$\\[8pt]
      $\{H \leq G ~|~ H\text{ fully invariant in }G \} $
      & 
      $=$ 
      & 
      $\left\{
        \mathrm{Im}(\iota_G) ~\middle|~ \iota\in \text{\rm Counital}(\cat{Grp},\cat{Grp})
      \right\}$. 
    \end{tabular}
  \end{center}
\end{mainthm}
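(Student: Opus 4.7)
My plan is to prove each of the two set equalities by double containment. The forward inclusion ``every image of a counital component is invariant'' will follow mechanically from naturality; the reverse inclusion ``every invariant subgroup arises this way'' will require an explicit functorial construction.

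For the forward direction, I fix a counital $\iota\colon \fC \Rightarrow \fI$ with $\fI\colon \cat{B} \to \cat{Grp}$ and set $H = \mathrm{Im}(\iota_G)$. For any morphism $\varphi\colon G \to G$ in $\cat{B}$, naturality gives $\iota_G \circ \fC(\varphi) = \varphi \circ \iota_G$, so taking images yields
\[
\varphi(H) \;=\; \mathrm{Im}(\varphi \circ \iota_G) \;=\; \mathrm{Im}(\iota_G \circ \fC(\varphi)) \;\leq\; \mathrm{Im}(\iota_G) \;=\; H.
\]
In the fully-invariant case $\cat{B} = \cat{Grp}$ this is the desired conclusion. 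In the characteristic case $\cat{B} = \lcore{Grp}$, applying the same argument to $\varphi^{-1}\in\Aut(G)$ upgrades the inclusion to the equality $\varphi(H)=H$.

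For the reverse direction, given an invariant $H \leq G$, I will define $\fC\colon \cat{B} \to \cat{Grp}$ uniformly in both cases by
\[
\fC(K) \;=\; \bigl\langle \varphi(H) \;\big|\; \varphi\colon G \to K \text{ a morphism of } \cat{B} \bigr\rangle \;\leq\; K,
\]
with $\fC(f) = f|_{\fC(K)}$ on morphisms. That this restriction lands in $\fC(K')$, and that $\fC$ is functorial, will follow from closure of $\cat{B}$ under composition together with the identity $f(\varphi(H)) = (f\varphi)(H)$. I then take $\iota_K\colon \fC(K) \hookrightarrow K$ to be the inclusion; naturality $\iota_{K'}\circ\fC(f)=f\circ\iota_K$ becomes a one-line diagram chase. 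To close the argument, I must verify $\fC(G) = H$: in the characteristic case every generating $\varphi \in \Aut(G)$ satisfies $\varphi(H) = H$; in the fully-invariant case every $\varphi \in \End(G)$ satisfies $\varphi(H) \leq H$, while $\id_G\in\End(G)$ contributes $\id_G(H)=H$ to the generating set, so equality holds.

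The main obstacle is choosing the right construction of $\fC$ in the reverse direction. A naive attempt would, for each $K$, pick an isomorphism $\alpha_K\colon G \to K$ and set $\fC(K) = \alpha_K(H)$; this requires an ambient choice function and, more seriously, fails to be functorial since the transported copy chosen for $K'$ need not agree with the image of the chosen copy for $K$ under a morphism $f\colon K\to K'$. The ``join of all transported copies'' definition above is canonical, sidesteps choice, and adapts identically to both the characteristic and fully-invariant settings by merely changing the class of morphisms $\varphi$ admitted into the join.
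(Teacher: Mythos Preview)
Your proof is correct. The forward direction matches the paper's argument (naturality forces invariance of the image). For the reverse direction, however, you take a more elementary route than the paper.

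The paper first observes that a characteristic $H\leq G$ yields a counital on the one-object category $\Autcat(G)$, then invokes its Extension Theorem (Theorem~\ref{thm:extension}) to promote this to a counital on all of $\lcore{Grp}$; the extension is phrased in terms of bicapsules and, at an object $K$, is defined as the image of the coproduct $\coprod_{\varphi:G\to K}\mathrm{im}(\varphi\circ\iota)$ taken in the variety. Unpacking that coproduct-and-image in $\cat{Grp}$ yields precisely your subgroup $\langle \varphi(H)\mid \varphi:G\to K\text{ in }\cat{B}\rangle$, so the two constructions agree on objects. What you gain is directness: you bypass the bicapsule formalism and the Extension Theorem entirely, checking functoriality and naturality by hand. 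What the paper's route buys is uniformity: the Extension Theorem is proved once for arbitrary varieties and arbitrary full subcategory inclusions, so that Theorem~\ref{thm:char-counital} (and its generalization Theorem~\ref{thm:char-counital-eastern}, together with the dual statements for characteristic quotients) all fall out as instances, and the resulting counital is automatically packaged with the bimorphism data needed for Theorem~\ref{thm:char-repn}.
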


Theorem~\ref{thm:char-counital} contrasts a ``recognizable'' description
of characteristic (fully invariant) subgroups with a ``constructive''
one. For a fixed group $G$, the sets on the left are of the form $\{H\mid P(G,H)\}$, where $P$
is the appropriate logical predicate that allows us to recognize when a subgroup $H$ 
belongs to the set; those on the right are of the form
$\{f(\iota) \mid \iota \in \text{Counital} (\ldots, \cat{Grp})\}$, where
$f(\iota)=\mathrm{Im}(\iota_G)$ allows us to  construct members of the
subset by applying a function. Also, the descriptions on the left are
``local" since they reference just a
single parent group, whereas those on the right are ``global" since they
apply to the
ambient categories.

The characterization of characteristic subgroups by natural
transformations allows one to recast the lattice theory of characteristic
subgroups into the globular compositions of natural transformations as
explored in \cites{Baez, Power}. 
We now explore 
other implications 
of Theorem~\ref{thm:char-counital}.

\subsection{Constraining isomorphism by characteristic subgroups}
\label{sec:intro-iso}
Characteristic subgroups 
constrain isomorphisms in the following sense:
\begin{fact}\label{fact:aut-iso}
  If $H$ is a characteristic subgroup of $G$, 
and $\alpha,\beta:G\to \tilde{G}$ are isomorphisms, 
  then $\alpha(H)=\beta(H)$.
\end{fact}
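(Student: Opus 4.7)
The plan is to reduce the statement about two distinct isomorphisms $\alpha,\beta : G \to \tilde{G}$ to a statement about a single automorphism of $G$, which is where the hypothesis that $H$ is characteristic can be applied. The key observation is that although $\alpha$ and $\beta$ go out of $G$ into $\tilde{G}$, the composite $\beta^{-1} \circ \alpha$ stays inside $G$ as an automorphism.

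Concretely, I would proceed as follows. First, since $\alpha$ and $\beta$ are isomorphisms $G \to \tilde{G}$, the map $\varphi \defeq \beta^{-1} \circ \alpha : G \to G$ is well-defined and is an automorphism of $G$ (being a composition of isomorphisms). Second, because $H$ is characteristic in $G$, applying the defining property to this particular $\varphi$ gives $\varphi(H) = H$, i.e.\ $(\beta^{-1} \circ \alpha)(H) = H$. Third, applying $\beta$ to both sides yields $\alpha(H) = \beta(H)$, which is the desired equality of subsets of $\tilde{G}$.

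There is essentially no obstacle here: the statement is a one-line manipulation, and the only conceptual content is to realize that an \emph{external} comparison of two isomorphisms out of $G$ can be pulled back to an \emph{internal} automorphism of $G$ via the composition $\beta^{-1} \circ \alpha$. This is precisely the perspective that will be promoted by Theorem~\ref{thm:char-counital}, since the right-hand side of the first equality there (counitals over $\lcore{Grp}$, the subcategory of isomorphisms) is naturally suited to expressing exactly this kind of invariance: the component $\iota_G$ of a counital is natural with respect to isomorphisms of $G$, so its image is automatically fixed by them.
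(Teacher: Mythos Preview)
Your proof is correct and is the standard argument. The paper states this as a \texttt{Fact} without proof, so there is nothing to compare against; your one-line reduction via the automorphism $\beta^{-1}\circ\alpha$ is exactly what any reader would supply.
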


It is therefore useful for an isomorphism test to locate characteristic
subgroups of a group $G$: every hypothetical isomorphism from $G$ to $\tilde{G}$
must then assign such a subgroup $H$ to a unique corresponding subgroup
$\tilde{H}$ of $\tilde{G}$.  This raises at least two issues. First, if the task is
to construct isomorphisms, then we should assume that $\Aut(G)$ is not yet
known. How then do we verify that $H$ is characteristic? Is there an alternative
definition of the characteristic property that does not directly reference
$\Aut(G)$? A second issue is how to determine the possible $\tilde{H}\leq
\tilde{G}$ when we know only that $H$ is characteristic in $G$. For familiar
characteristic subgroups such as the center $\zeta(G)$ this is possible because
the definition is already global to all groups. Hence,  
a hypothetical isomorphism $\alpha:G\to \tilde{G}$ must satisfy
$\alpha(\zeta(G))=\zeta(\tilde{G})$, and typically $\zeta(G)$ and
$\zeta(\tilde{G})$ can be constructed without explicit knowledge of $\Aut(G)$ or
$\Aut(\tilde{G})$. However, the following family of examples, first explored by
Rottlaender \cite{Rottlander28}, exhibits groups whose characteristic subgroups
have no known global definition, so it is difficult to utilize
Fact~\ref{fact:aut-iso}.

\begin{ex}
  \label{ex:Rottlaender}
  Let $p$ be a prime and $m<p$ a positive integer. Let $q\equiv 1\bmod{p}$ be a
  prime and denote by $\mathbb{F}_q$ the field with $q$ elements. Let
  $\theta\in\GL_m(\mathbb{F}_q)$, with $\theta^p=1$, be diagonalizable with $m$
  eigenvalues $a_1,\ldots,a_m$, each different from $1$, 
satisfying the following
  property: if there exists $u\in \{1,\dots, p-1\}$ with $a_i^u=a_j$ for all $i\neq
  j$, then $p\nmid(u^k-1)$ for $k\in \{1,\dots, m\}$. 
For $m=2$, this requires $a_1\ne a_2^{\pm 1}$.
  
  The cyclic group $C_p$ of order $p$ acts on the vector space
  $V=\mathbb{F}_q^m$ via $\theta$. The condition on $\theta$ means that each eigenspace
  in $V$ is a characteristic subgroup of the semidirect product
$G_\theta=C_p\ltimes_\theta V$ determined by $\theta$, and
exactly $m$ of the $1+q+q^2+\cdots+q^{m-1}$ order $q$ subgroups of $G_\theta$ 
are characteristic. Two such groups $G_\theta$ and $G_\tau$ may be
  isomorphic even if the eigenvalues of $\theta$ and $\tau$ are different. For
  example, this occurs when $\tau=\theta^j$ for some $j$ coprime to $p$. 
Thus, the correspondence between characteristic subgroups of 
$G_\theta$ and $G_\tau$ is not {\em a priori} clear. 
\exqed
\end{ex}

One of the goals of this work is to reinterpret the definition of a
characteristic subgroup in a way that is independent of automorphisms and which
is unambiguously defined for all groups.  We do this by formulating the
characteristic condition on the entire category of groups, thereby providing a
categorification of the property of being characteristic. Moreover, our
formulation pairs well with---and indeed is motivated by---the necessities of
computation (see Section~\ref{sec:apps_type}). To address this, we employ 
methods from theorem-checking, specifically type-theoretic
techniques \cite{Hindley-Seldin,Pierce:types,HoTT}; these have recently become
accessible through systems such as Agda \cite{Agda}, Coq \cite{Coq}, and Lean
\cite{lean}.

\subsection{A local-to-global problem}
\label{sec:local-to-global}
Our approach is to transform the local characteristic property of subgroups into
an equivalent global property of the category of all groups and their
isomorphisms. Calculations now take place within the category instead of within
individual groups, which opens up new ways to search for characteristic
subgroups. Our approach also facilitates an {\it a priori} verification of the
global characteristic property, rather than the usual {\it a posteriori} check
that requires knowledge of automorphisms. The process is analogous to proving
that $\zeta(G)$ is characteristic without employing specific properties of $G$.
Our methods extend to \textit{every characteristic subgroup},
even those discovered via bespoke calculations.

The traditional model of a category $\cA$ involves both objects and morphisms. By
sometimes focusing only on morphisms, we work with categories as an algebraic 
structure with a partial binary associative product on $\cA$---given by composition 
of its morphisms---and with identities
$\one_{\cA}=\{\id_X\mid X$ an object in $\cA\}$. It
is partial because not every pair of morphisms is composable, in which case the
product is undefined. 
This perspective yields an algebraic framework for our computations.

The morphisms of a category can act on the morphisms of another category 
either on the left or the right. Although several interpretations of ``category
action" appear in the literature~\citelist{\cite{Bergner-Hackney}*{\S 2}
\cite{nlab:action} \cite{FS}*{1.271--274}}, there is no single established meaning.
Our formulation uses \textit{partial functions} that are purposefully undefined for some
inputs; see Section~\ref{sec:partial-functions} for a precise definition.
Let $\acat{A}$, $\acat{B}$, and $\acat{X}$ be categories. 
A left \emph{$\acat{A}$-action} on $\acat{X}$ is a partial function,
where $a\cdot x$ is defined for some morphisms $a$ of $\acat{A}$ and $x$ of $\acat{X}$,
that satisfies two conditions inspired by group actions. The first is that  
$(a\acute{a})\cdot x=a\cdot (\acute{a}\cdot x)$, whenever defined, for all morphisms 
$a,\acute{a}$ of $\acat{A}$ and $x$ of $\acat{X}$.
The second is that 
$\one_{\acat{A}}\cdot x=\{x\}$; 
to simplify notation we write $\one_{\acat{A}}\cdot x=x$.
As in the theory of bimodules of rings, an
\emph{$(\acat{A},\acat{B})$-biaction} on $\acat{X}$ is a left
$\acat{A}$-action and a right $\acat{B}$-action on $\acat{X}$ such 
that for every morphism $a$ in $\acat{A}$, $b$ in $\acat{B}$, and 
$x$ in $\acat{X}$,
\[ 
  a\cdot (x \cdot b) = (a \cdot x) \cdot b
\] 
whenever both sides of the equation are defined. 
For $(\acat{A},\acat{B})$-biactions on categories 
$\acat{X}$ and $\acat{Y}$, 
an \emph{$(\cat{A},\cat{B})$-morphism} 
is a partial function $\mathcal{M}:{\acat{Y}}\to \acat{X}$
such that 
\[ 
  \mathcal{M}(a\cdot y\cdot b)=a\cdot \mathcal{M}(y)\cdot b
\] 
whenever $a\cdot y\cdot b$ is defined for morphisms $a$ in $\cat{A}$, 
$b$ in $\cat{B}$, and $y$ in $\acat{Y}$.

We write $\acat{A}\leq \acat{B}$
to indicate that $\acat{A}$ is a subcategory of $\acat{B}$, and denote 
the identity functor of $\acat{A}$
by $\id_{\acat{A}} : \acat{A} \to \acat{A}$.
A \emph{counit} is a counital of the form $\eta: \mathcal{C}\Rightarrow \id_\acat{A}$. 
The following specialization of one of our principal results to groups 
describes how characteristic subgroups relate to counits and morphisms of category 
biactions.\enlargethispage{0.6cm}

\begin{mainthm} 
  \label{thm:char-repn}
   Let $G$ be a group and $H\leq G$ with inclusion $\iota_G:H\hookrightarrow G$.
   There exist categories $\cat{A}$ and $\cat{B}$, where
   $\lcore{Grp}\;\leq\acat{A}\leq\acat{Grp}$, such that the following are
   equivalent.
  \begin{ithm}
  \item[\rm (1)] $H$ is characteristic in $G$.
   \item[\rm(2)] There is a functor $\func{C} : \cat{A} \to \cat{A}$ and
     a counit $\eta:\func{C}\Rightarrow \id_{\cat{A}}$ such that 
     $H = \im(\eta_G)$.

     \item[\rm (3)] There is an $(\acat{A},\acat{B})$-morphism $\mathcal{M}:\acat{B}\to
    \acat{A}$ such that $\iota_G=\mathcal{M}(\id_G \cdot \one_{\acat{B}})$.
  \end{ithm} 
\end{mainthm}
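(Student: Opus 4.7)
The plan is to establish the equivalences $(1) \Leftrightarrow (2) \Leftrightarrow (3)$ by leveraging Theorem~\ref{thm:char-counital} and promoting the counital it produces first to a counit on a single enlarged category $\cA$, and then to a biaction morphism between $\cB$ and $\cA$.

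For $(1) \Rightarrow (2)$, I would invoke Theorem~\ref{thm:char-counital} to obtain a functor $\fC_0\colon \lcore{Grp} \to \cat{Grp}$ and a counital $\iota\colon \fC_0 \Rightarrow \fI$, with $\fI$ the inclusion, such that $\im(\iota_G) = H$. Define $\cA$ as the subcategory of $\cat{Grp}$ generated by $\lcore{Grp}$ together with all counital components $\iota_K\colon \fC_0(K) \to K$; this gives $\lcore{Grp} \leq \cA \leq \cat{Grp}$ by construction. Extend $\fC_0$ to $\fC\colon \cA \to \cA$ by setting $\fC(K) = \fC_0(K)$ on objects, $\fC(\alpha) = \fC_0(\alpha)$ on isomorphisms, $\fC(\iota_K) := \iota_{\fC_0(K)}$ on the new generators, and extending by composition. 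Functoriality on arbitrary composites reduces to naturality of $\iota$, and the same naturality promotes $\{\iota_K\}$ to a counit $\eta\colon \fC \Rightarrow \id_\cA$ with $\im(\eta_G) = H$.

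For $(2) \Rightarrow (1)$, every $\alpha \in \Aut(G)$ is a morphism of $\lcore{Grp} \leq \cA$, so naturality of $\eta$ gives $\alpha \circ \eta_G = \eta_G \circ \fC(\alpha)$. Taking images yields $\alpha(H) = \im(\alpha \eta_G) = \im(\eta_G \fC(\alpha)) \leq \im(\eta_G) = H$, and applying the same argument to $\alpha^{-1}$ gives the reverse containment, so $\alpha(H) = H$ and $H$ is characteristic.

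For $(2) \Leftrightarrow (3)$, I would exhibit a canonical bijection between counits on $\cA$ and $(\cA,\cB)$-morphisms $\cB \to \cA$. Given $\eta$, choose $\cB$ to be a small subcategory containing the objects on which $\fC$ is non-trivial (equipped with the partial biactions inherited from composition in $\cat{Grp}$), set $\mathcal{M}(\id_X) := \eta_X$, and extend via $\mathcal{M}(a \cdot y \cdot b) := a \cdot \mathcal{M}(y) \cdot b$; well-definedness of this extension is exactly naturality of $\eta$. Conversely, an equivariant $\mathcal{M}$ recovers $\eta_X := \mathcal{M}(\id_X)$ and $\fC(X) := \Dom(\eta_X)$, with equivariance translating back to naturality. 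The main obstacles I anticipate are $(i)$ verifying functoriality of the extended $\fC$ on arbitrary composites in $\cA$, which rests entirely on naturality of $\iota$, and $(ii)$ configuring $\cB$ so that the partial $(\cA,\cB)$-biactions make the equivariance rule unambiguous on every morphism of $\cA$ and $\cB$ that actually arises.
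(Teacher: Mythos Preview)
Your approach follows the same path as the paper, though you arrange the implications as $(1)\Leftrightarrow(2)$ and $(2)\Leftrightarrow(3)$ while the paper cycles $(1)\Rightarrow(3)\Rightarrow(2)\Rightarrow(1)$.

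Your $(1)\Rightarrow(2)$ construction---generate $\cA$ from $\lcore{Grp}$ together with the counital components $\iota_K$, then extend $\fC_0$ to $\fC$ on the larger category---is precisely the content of Proposition~\ref{prop:isosceles-to-internal}. One point you glossed over: well-definedness of the extended $\fC$ requires the $\iota_K$ to be \emph{monic}. If some $\iota_K$ happens already to lie in $\lcore{Grp}$ (for instance, when it is an isomorphism), your two rules $\fC(\iota_K):=\iota_{\fC_0(K)}$ and $\fC(\iota_K):=\fC_0(\iota_K)$ must agree, and the paper's argument uses monicity of $\iota_K$ to force this. The statement of Theorem~\ref{thm:char-counital} does not promise monic components, but its proof (via Theorem~\ref{thm:extension}) does construct them; you should either invoke that or first replace $\iota$ by $\mathrm{im}(\iota)$.

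Your $(2)\Leftrightarrow(3)$ sketch is the weak spot. The phrase ``a small subcategory containing the objects on which $\fC$ is non-trivial'' specifies neither the morphisms of $\cB$ nor the biactions, and since $\eta$ has a component at \emph{every} object of $\cA$ you in fact need all of them. The paper's choice is simply $\cB=\cA$: the left $\cA$-action on $\cA$ is regular, the right $\cB$-action is $x\cdot b:=x\,\fC(b)$, and then $\mathcal{M}(a):=a\,\eta_{\src{a}}$ is the required $(\cA,\cB)$-morphism (Proposition~\ref{prop:nat-trans-biact}\ref{proppart:get-biactions}); the converse $(3)\Rightarrow(2)$ is Theorem~\ref{thm:counit-capsules}\ref{thmpart:bicap-to-counit}. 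With this choice your obstacle~$(ii)$ dissolves.
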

\noindent 
We emphasize that the category $\cat{B}$ in Theorem~\ref{thm:char-repn} need not be a subcategory of $\cat{Grp}$; see   
Section \ref{sec:compose} for an example. Moreover, our results apply to characteristic substructures of
varieties of algebraic structures,
which include  monoids, loops, rings, and non-associative
algebras. This generalization (Theorem~\ref{thm:char-repn-eastern})
and its dual  version (Theorem~\ref{thm:general-dual}) are proved in  Section~\ref{sec:inv-cat}. We 
now illustrate how natural transformations arise from characteristic substructures.

\begin{ex}
\label{ex:three-chars_first}
 The derived subgroup $\gamma_2(G)$ of a group $G$ determines the inclusion
  homomorphism $\lambda_G:\gamma_2(G)\hookrightarrow G$ and a functor $\func{D}
  : \cat{Grp} \to \cat{Grp}$ mapping groups to their derived subgroup and
  mapping homomorphisms to their restriction onto the derived subgroups.
  For every group homomorphism $\varphi : G \to H$, observe that 
  $\lambda_H\func{D}(\varphi) = \id_{\cat{Grp}}(\varphi)\lambda_G$, so $\lambda
  : \func{D} \Rightarrow \id_{\cat{Grp}}$ is a natural transformation.

  The center $\zeta(G)$ of $G$ yields the
  inclusion homomorphism $\rho_G:\zeta(G)\hookrightarrow G$. 
To define a functor with
  object map $G\mapsto \zeta(G)$, we must restrict the type of homomorphisms
  between groups since homomorphisms need not map centers to centers. (Consider,
  for example, an embedding $\mathbb{Z}/2\hookrightarrow \text{Sym}(3)$.) Since
  every isomorphism  maps center to center, we restrict to $\lcore{Grp}$, defining a functor $\func{Z} : \;\lcore{Grp}\; \to
  \;\lcore{Grp}$ mapping $G\mapsto \zeta(G)$ and mapping each homomorphism to its
  restriction. If $\func{I} : \; \lcore{Grp}\; \to \cat{Grp}$ is the inclusion functor, then $\rho :
  \func{I}\func{Z}\Rightarrow \func{I}$ is a natural transformation.~\exqed
\end{ex}

\subsection{Applications to computation}
\label{sec:apps_type}
Part of the motivation for our work comes from computational challenges that
arise in contemporary isomorphism tests in algebra. 
One of these is to develop new ways to discover characteristic subgroups. 
Standard constructions---such as the commutator subgroup, 
the center, and the Fitting subgroup---can be applied to any group.  
However, these subgroups often contribute little to resolving isomorphism.
Many ideas have been introduced to search for   
new structures; see, for example, 
~\citelist{\cite{BOW}\cite{ELGO2002}\cite{Maglione2021}}. 
Often these involve detailed computations with 
individual groups, and 
their application 
is {\it ad hoc}. Indeed, a primary motivation for 
this study is to systematize the disparate techniques 
currently used to search for characteristic subgroups.

Theorem~\ref{thm:char-repn} provides the framework for a systematic search for
characteristic subgroups. An $(\acat{A},\acat{B})$-morphism generalizes
the familiar and much studied category theory notion of adjoint functor pairs.
We show in Section~\ref{sec:comp-nat-trans} that category actions offer a
flexible way to implement the behavior of natural transformations in a computer
algebra system. To exploit the full power of the categorical interpretation of
characteristic subgroups, we work in a suitably general algebraic framework that
allows a seamless transfer of information from one category to another. The
familiar examples from Sections~\ref{sec:examples} and \ref{sec:compose} demonstrate how to
identify characteristic structure in a category and transfer it back to groups.

A second challenge concerns reproducibility and comparison of characteristic
subgroups. 
  Algorithms to decide isomorphism between groups $G$ and $H$ often, as a first step, 
  generate lists of characteristic subgroups for $G$ and $H$, respectively. To exploit 
  these lists, any hypothetical isomorphism $G\to H$ must map the first list to the 
  second (Fact~\ref{fact:aut-iso}). We observed in Example~\ref{ex:Rottlaender} that 
  it is not always possible to determine a `canonical ordering'  of characteristic 
  subgroups such that this is guaranteed. In practice, some constructions employ randomization
or make labelling choices that vary from one run to the next. 
These variations can limit the utility of characteristic subgroups in
deciding isomorphism.

Our proposed solution is to develop algorithms that return the natural
transformation (or a morphism of biactions) from Theorem~\ref{thm:char-repn} 
instead of the characteristic subgroup
itself. This will allow us, in principle, to extend the reach of a specific
characteristic subgroup of a given group to an entire category, in much the same
way that the commutator subgroup and center behave. 
The natural transformation can then be applied to a group $\tilde{G}$ to
produce a characteristic subgroup $\tilde{H}$ that corresponds to $H$ in the
sense of Fact~\ref{fact:aut-iso}: every isomorphism $G\to\tilde{G}$ necessarily maps
$H$ to $\tilde{H}$, so allowing a meaningful comparison of characteristic
subgroups.

A third challenge is verifiability: in a computer algebra system, 
subgroups are often given by monomorphisms which are defined on a given 
generating set.
The construction of such a monomorphism usually invokes computations that 
\emph{prove} 
the claimed properties (such as homomorphism or characteristic image). 
We present our work in a framework that combines these computations, data, 
and proofs, by employing 
an intuitionistic Martin-L\"of type theory; such a model also allows machine verification of proofs.  
In this setting, if a computer algebra system returns a counital $\iota$, then this counital comes 
with a ``type" that \emph{certifies} that each morphism $\iota_G$ of $\iota$ yields a characteristic substructure. 

\subsection{Structure of this paper}
In Section~\ref{sec:type}, we discuss the required background for our
foundations (type theory). In Section~\ref{sec:Eastern}, we first review 
varieties of algebraic structures 
and then show how to model
categories as terms of the variety of \emph{abstract~categories}.

Section~\ref{sec:actions} studies category actions. In particular, we define
\emph{capsules} (category modules) and describe a computational model for
natural transformations  as category bimorphisms
(Proposition~\ref{prop:nat-trans-biact}). This also allows us to describe  
counitals (Theorem~\ref{thm:counit-capsules}) and adjoint functor pairs
(Theorem~\ref{thm:iso-biacts-adjoints}) in the language of bicapsules and bimorphisms.

In Section~\ref{sec:induced}, we explain how characteristic structures can be
described by counitals.  
The functors involved in this construction are defined on categories with one
object, but Theorem~\ref{thm:extension}---which we call the \emph{Extension
Theorem}---allows us to extend these functors to larger categories. This theorem
is the essential ingredient for proving  our main results. We  also generalize
Theorem~\ref{thm:char-counital} to varieties of algebras
(Theorem~\ref{thm:char-counital-eastern}).

In Section~\ref{sec:inv-cat}, we generalize  Theorem~\ref{thm:char-repn} to 
varieties of algebras (Theorem~\ref{thm:char-repn-eastern}). We show that characteristic
substructures can be described as certain counits, and as bimorphism actions on
capsules. We also prove the dual version of this result for characteristic
quotients (Theorem~\ref{thm:general-dual}).

In Section~\ref{sec:examples}, we use our framework to provide categorical
descriptions of common characteristic subgroups, including verbal and marginal
subgroups. 

In Section~\ref{sec:compose}, we describe a cross-category translation of
counitals and explain, in categorical terms, how a counital for a category of
groups can be constructed from a counital for a  category of algebras.

In Section~\ref{sec:imp}, we 
report on an implementation of some of the concepts introduced in this work.

Table~\ref{tab:notation-table} summarizes notation used throughout the
paper.

\begin{table}[ht]
\begin{center}
{
\begin{tabular}{l|l}
  Symbol & Description  \\ \hline \hline 
  $\cat{E}$ & variety of algebraic structures \\
  $\cA, \cB,\cC,\cD$ & Abstract categories or categories that act  \\
  $\aX, \aY, \aZ$ & Capsules  \\
  $\Delta,\Sigma$ & Bicapsules\\
  $\id_X$ & Identity morphism of type $X$  \\
  $\one_{\acat{A}}$ & Identity morphisms of $\acat{A}$  \\
  $\func{F},\func{G}$ & Morphisms between categories  \\
  $\func{I}, \func{J},\func{K},\func{L}$ & Inclusion functors \\ 
  $\func{M},\func{N},\fR,\fS$ & Capsule morphisms\\
  $A^X$ &  
    Functions $X\to A$  \\
  $A^n$ & 
    Functions $\{1,\dots, n\} \to A$  \\
  $\Omega$ & Signature  \\
  $\mathrm{Alge}_\Omega$  & Type of $\Omega$-algebras  \\
  $\mathrm{Alge}_{\Omega,\mathcal{L}}$ & Type of $\Omega$-algebras in the variety for the laws $\mathcal{L}$\\  
  $\bot$ & The void type  \\
  $B^?$ & The type $B\sqcup \{\bot\}$  \\
  $f(a)\venturi b$ & If $f(a)$ is defined, then $f(a)=b$  \\
    $\src{f},\tgt{f}$ & Source and target of a morphism\\
    $f\lhd , \lhd f$ & Guards for a category action\\
  $\core{A}$, $\epi{A}$, $\mono{A}$ & The iso-, epi-, and mono-morphisms of $\acat{A}$ (resp.)  \\  
\end{tabular}
}
\end{center}
\caption{A guide to notation}\label{tab:notation-table}
\end{table}

\section{Type theory and certifying characteristic structure}
\label{sec:type}
The emergence of randomized methods in computational algebra has 
elevated the significance of certification. Certificates are used to upgrade 
Monte Carlo algorithms
to Las Vegas algorithms, where 
every output (other than failure) is correct~\cite[\S 3.2.1]{handbook}.
Usually this certification occurs \textit{a posteriori}, 
which can present an intractable obstacle. 
Suppose an algorithm 
constructs a characteristic subgroup $H$ of a group $G$ with inclusion $\iota\colon H\hookrightarrow G$.
To certify that $H$ is characteristic, we must verify that 
\begin{equation}\label{eq:char-logic}
\begin{array}{llll}
   (\forall \varphi\in \Aut(G)) &(\forall h\in H) & (\exists k\in H)& \varphi(\iota(h))=\iota(k).
 \end{array}
\end{equation}
An obstacle to certification 
is that the algorithm may not know 
$\Aut(G)$ explicitly. 
The very construction 
of $\Aut(G)$ is often one of the key reasons to find characteristic 
subgroups in the first place. What is needed is \textit{a priori} certification.

Of course, certain constructions yield subgroups of $G$ which 
are guaranteed to be characteristic;
these include $\zeta (G)$ and $\gamma_2(G)$.
Their constructions, and the 
reasons they produce characteristic subgroups, apply to all groups. A 
careful examination of these reasons on the categorical level leads to the 
key insight of this paper:
there is a uniform 
categorical description of the characteristic property. 
As we shall see, this insight ultimately leads to the possibility 
of \textit{a priori} certification.

 To put this into practice, we develop a
constructive version of our main results using type theory language.
Specifically, we use an intuitionistic Martin-L\"of type theory (MLTT), a model
of computation capable of expressing aspects of proofs that can be machine
verified. 
An advantage of this approach is that
certificate data can be verified by practical  
type-checkers. 
An MLTT employs the ``propositions as types" paradigm 
(Curry--Howard Correspondence),
where types correspond to propositions and terms are programs that 
correspond to proofs. The remainder of this section is a concise treatment 
of type theory from
\citelist{
\cite{Hindley-Seldin}*{Chapters~10--13}
\cite{HoTT}*{Chapter~3}}.

\subsection{Types}
\label{sec:types}
Informally, \emph{types} annotate data by signalling which syntax rules apply
to the data. We write $a:A$ and 
say ``$a$ is a \emph{term} of type $A$'' or ``$a$
inhabits $A$''.  
For example, $a: \mathbb{N}$
signals that $a$ can only be used as a natural number. 
A type $A$ is \emph{inhabited} if there exists at
least one term $a:A$ and \emph{uninhabited} if no term of type $A$ exists.  
The \emph{void} type $\bot$ has no inhabitants by definition.  
Deciding whether a type is inhabited or not is computationally 
undecidable \cite{Hindley-Seldin}*{pp.~66--67}. Therefore, 
in computational settings, 
types are permitted to be neither inhabited nor uninhabited.
Type annotations enable us to use symbols according to their logical
purpose; for example, $a:A$ is analogous to $a\in A$,
but type theories do not have the axioms of set theory.

Types are introduced from two sources.  
Some are predefined by the \emph{context}: they are 
given \emph{a priori}, such as the type of natural numbers $\mathbb{N}$. 
Others are created using \emph{type-builders}: these construct new types 
from existing ones.
We use both $\type{A}\to \type{B}$ and $\type{B}^{\type{A}}$ to
denote the type of functions, and set $\Dom (A\to B) = A$ and 
$\Codom (A\to B) = B$. If $n$ is a natural number, then an inhabitant of
type $\type{A}^n$ can be interpreted as an
$n$-tuple $(a_1,\ldots,a_n)$ with each $a_i: \type{A}$, or alternatively 
as a function $\{1,\ldots,n\}\to \type{A}$.  There is a unique function 
$\bot\to A$ (akin to the uniqueness of a function $\varnothing \to A$), 
so $A^0$ is a type with a single inhabitant---it is \emph{not} void.

The notation $\prod_{i\tin I}\type{A}_i$ together with projection maps $\pi_i:
\left(\prod_{i\tin I}\type{A}_i\right) \to A_i$ is used for Cartesian products,
and $\bigsqcup_{i\tin I} \type{A}_i$ together with inclusion maps $\iota_i : A_i
\to \bigsqcup_{i\tin I} \type{A}_i$ is used for disjoint unions. (The tradition
in type theory is to use $\sum_{i\tin I}\type{A}_i$ instead of $\bigsqcup_{i\tin
I} \type{A}_i$, but this conflicts with algebraic uses of $\Sigma$.)

\subsection{Propositions as types}\label{sec-propositions-as-types}
In set theory, propositions are part of the existing foundations. 
In type theory, propositions co-evolve with the theory as special types. A
proposition $P$ in logic is associated to a type $\hat{P}:\mathrm{Type}$. 
(Only in this section do we distinguish propositions $P$ in logic from 
propositions as types with the notation~$\hat{P}$.) 
If the type $\hat{P}$ is
inhabited by data $\pf:\hat{P}$, then the term $\pf$ is regarded as a proof that
$P$ is true. For example, an implication $P \Rightarrow Q$ (here
$\Rightarrow$ means ``implies" with weakening and contraction laws) can be proved 
by means of a function $f:\hat{P}\to \hat{Q}$, where $\hat{P}$ and $\hat{Q}$ are 
the respective types associated with $P$ and $Q$, because it suffices to assume $P$ 
and derive a proof of $Q$.  Likewise, if we assume that there is a term $\pf:\hat{P}$ 
and apply the function $f$, then it produces a term $f(\pf):\hat{Q}$. 

In classical logic, it is only the existence of a proof for a proposition that
is relevant. Analogously, in type theory,
$\hat{P}:\mathrm{Type}$ is a \emph{mere proposition}, 
written $\hat{P}:\mathrm{Prop}$, if it has at most one inhabitant.  

Consider the function $\hat{P}:A\to \mathrm{Prop}$. 
Now $(\forall a\in A)(P(a))$ and $(\exists a\in A)(P(a))$ are expressed by terms of type 
$\prod_{a:A}\hat{P}_a:\mathrm{Prop}$ and
$\|\bigsqcup_{a:A} \hat{P}_a\|:\mathrm{Prop}$, respectively, where $\|A\|$ {\it truncates} a type to a 
single term if it has any terms~\cite{HoTT}*{\S 3.7}. The negation of a proposition $P$ is
$P\Rightarrow \textsc{False}$, which accords with functions of type $\hat{P}\to
\bot$. For additional details, see
\citelist{\cite{Hindley-Seldin}*{Chapters~12--13}\cite{HoTT}*{Chapter~3}}.

\subsection{Equality}
\label{sec:equality}
In Zermelo set theories, all data are sets and there is a single notion of
equality afforded by the \textit{Axiom of extensionality}: two sets are equal
if, and only if, they have the same elements. In type theory, terms and types 
are separate entities, and this single axiom
is replaced by several distinct notions of equality more representative of
computational behavior.
Each type theory is built on a 
rewriting system (such as a $\lambda$-calculus or combinatory logic). 
Employing the language of \cite{Hindley-Seldin}*{\S 1D and \S 2D}, 
we judge data as equal if their normal forms in this rewriting system
coincide; and 
$s\defeq t$ followed by some sentences $M$ means that
``within the given scope $M$, the variable $s$ should be substituted by the data $t$".

Type theories include axioms that  allow equality after taking normal forms to
count as \emph{$($definitional$)$ equality}--the type theory sees no difference
between the data \cite{Hindley-Seldin}*{p.\ 193}. For example, if we build
the type $\mathbb{Z}/n$ which depends on a term $n:\mathbb{N}$, 
then some type systems judge that 
$\mathbb{Z}/(m+m)$ is equal to
$\mathbb{Z}/2m$ because $m+m$ and $2m$ have the same normal form.
But the function $\gcd(m,2m)$ is more complicated and its normal 
form may differ from $m$.  Hence, the type
system does not judge $\mathbb{Z}/\gcd(m,2m)$ as equal to $\mathbb{Z}/m$; 
neither does it assert they are not equal; instead it withholds judgment.

To construct an equality that mimics set theory,
Per Martin-L\"of developed a notion of 
propositional equality that imitates the \emph{Leibniz Law}~\cite{Feldman}:  
\begin{equation*}
    (s = t) \iff \left[(\forall P(x))\; P(s)\Longleftrightarrow P(t)\right],
\end{equation*}
where $P(x)$ runs over all predicates of a single variable $x$. 
For every type $A$ and terms $s,t:A$, we define 
an auxiliary type $s=_A t$, where terms are proofs that $s$ equals $t$, 
with the rule that, given a function $f:A\to B$, there is a function 
\begin{align}  \label{eqn:path-proof}
  \text{path}(f): (s =_A t)\to (f(s) =_B f(t)). 
\end{align}
For example, a proof $p: (\gcd(m,2m) =_{\mathbb{N}} m)$ can be transported along a path 
to $q: (\mathbb{Z}/\gcd(m,2m)$ $=_{\mathbb{Z}/m} \mathbb{Z}/m)$ allowing programs
to treat these types as equal. Thus, computational 
evidence enhances the reach of equality,  
see \citelist{\cite{Hindley-Seldin}*{\S 13D}\cite{HoTT}}.

For readability we often omit the subscript $A$ in $s=_A t$. 
By slight abuse of notation, writing ``$s=t$'' as a 
logical statement in text should be interpreted as  
``the type $s=_At$ is inhabited''. 

\subsection{Subtypes and inclusion functions}
\label{sec:subtypes}
Sets are a special case of types: 
we write $S:\mathrm{Set}$ for a type $S$ 
if the type $s=_S t$ is a mere
proposition for all $s,t:S$.
Let $A$ be a type. 
If $P : A \to \mathrm{Prop}$, then 
\[ 
  \{ a : \type{A} \,|\, P(a) \} \defeq \bigsqcup_{a: \type{A}} P(a),
\] 
is the \emph{subtype} of $\type{A}$ defined by $P$. We also write this as $B \defeq \{ a
: \type{A} \,|\, P(a) \}\subset A$. Terms of type $B$ have the form $\langle a,
p\rangle$ for $a:A$ and $p:\mathrm{Prop}$, where $p$ is a proof that $P(a)$ is
inhabited. We sometimes use set theory notation to improve readability when
describing a subtype. For more details, see \cite{HoTT}*{\S 3.5}.
For a typed function $f:A\to B$, the image $\{f(a)\mid a:A\}$ is shorthand 
for $\{b:B\mid (\exists a:A)(f(a)=b)\}$.

Subtypes have an associated inclusion function 
$\alpha:B\to A$ where $\alpha(\langle
a,p\rangle)\defeq a$. 
A subtlety is that if $C\subset B$ with inclusion map
$\beta:C\to B$, then the composition $\alpha\beta:C\to A$ is injective 
but does not show directly that $C\subset A$.  A term of type
$C\defeq\bigsqcup_{b:B}Q(b)$, with $Q : B\to \mathrm{Prop}$, has the 
form $\langle \langle a,p\rangle,q\rangle$, which differs from terms 
of type $B$.   A small
modification addresses the fact that the relation $\subset$ is not strictly
transitive. Define a subtype $C' \defeq \bigsqcup_{a:A} R(a)$, where
$R(a)\defeq\bigsqcup_{p:P(a)}Q(a)$, and inclusion $\gamma : C' \to A$. 
Now construct a map $\sigma: C\to C'$ given by 
\begin{align*} 
  \langle \langle a,p\rangle,q\rangle \mapsto \langle a,\langle p,q\rangle\rangle,
\end{align*}
where $a:A$ and $\langle p,q\rangle \tin R(a)$. Thus,
$\alpha\beta=\gamma\sigma$, and the composition $\alpha\beta$ is
equivalent to $\gamma$.  Hence, $\subset$ is transitive up to this equivalence. 

\subsection{Partial functions}
\label{sec:partial-functions}
In type theory, functions are ultimately programs so they may fail to halt.
Since our concern lies with algebraic
obstacles rather than decidability, we confine our model to algebras that 
have decidable operations, such as polynomial and integer operations,
 look-up tables, and strongly normalizing rewriting systems.
Thus, all functions are total: given an input, they produce an 
output. However, it is helpful to identify inputs we regard 
as ``undefined", or ``leading to errors". For example, 
a division operator may allow $0$ as an input and return an error token as output.  
We call such functions \emph{partial functions} and regard them as 
``undefined'' at such inputs.

To accommodate such partial operations, we extend types by adjoining the symbol
$\bot$ (the void type) to represent ``undefined''. For a type $A$, we define \begin{align} 
  \label{eq:A-lift}
  A^? \defeq A\sqcup \{\bot\}
\end{align}
with inclusion $\iota_A:A\hookrightarrow A^?$. For $a:A^?$, we write $a:A$ in
this setting as shorthand for ``there exists $a':A$ such that $a=\iota_A(a')$".
This allows us to define an endofunctor $Q$ on the category of types that maps a
morphism $f:A\to B$ to $f^?:A^?\to B^?$, such that $f^?(a)=f(a)$ for $a:A$ and
$f^?(\bot)=\bot$. The canonical projection $\mu : QQ \Rightarrow Q$, with
components $\mu_A: (A^?)^? \to A?$, is a natural isomorphism. Hence, it suffices
to work with single applications of $Q$, and $\bot$ will serve as the designated
symbol for undefined elements throughout.
 
This discussion also motivates a notion of ``directional equality'' similar to
that in~\cite{FS}*{1.12}. For $a,b: A^?$, define  
\begin{align}
  \label{eq:venturi}
  (a\venturi b) &\defeq [(a:A) \to (a=b)] \defeq [((\exists a':A)(a=\iota_A(a'))) \to (\iota_A(a')=b)]. 
\end{align}

By slight abuse of notation, for function terms  $f,g : A^?\to B^?$ we denote
function extensionality also by $f=g$, that is, we define 
\[(f=g) \defeq \big[(\forall a:A^?)\; (f(a)=g(a))\big].\]

\subsection{Certifying that the trivial group is characteristic}
As an illustration, we present a type verifying the characteristic 
property of the trivial subgroup. Let $G : \mathrm{Group}$ be a group 
with identity $1:G$. Let $H\defeq\{x: G \mid x=1\}$ be the 
subtype of $G$ representing the trivial subgroup. Recall that terms of 
$H$ have the form $\langle x, p\rangle$, where $x:G$ and 
$p$ is a term of type $x=1$, 
and there is a map $\iota:H\to G$,  $\langle x,p\rangle\mapsto x$. 
If $h,k:H$, then  $\iota(h)=\iota(k)=1$, and, by \eqref{eqn:path-proof}, 
for every $\varphi:\Aut(G)$ there is an invertible function of type
\begin{align} \label{eq:my-invert}
  \varphi(1)=_G 1\; \longleftrightarrow\; \varphi(\iota(h))=_G \iota(k).
\end{align}
The latter function depends on $h$ and $k$, but  we suppress this dependency  
to simplify the exposition. Let $\mathrm{idLaw}(\varphi) : \varphi(1)=_{G} 1$
be a proof that $\varphi:\Aut(G)$ fixes $1:G$. Using \eqref{eq:my-invert}, we 
define the term 
\[
    \mathrm{idMap}(\varphi) : \prod_{h:H}\left\|\bigsqcup_{k:H} \varphi(\iota(h))
    =_G \iota(k)\right\|
\]
that takes as input $h:H$ and produces $\langle
1,\mathrm{idLaw}(\varphi)\rangle: \left\|\bigsqcup_{k:H} \varphi(\iota(h))=_G \iota(k)\right\|$.
Therefore we obtain the term 
\begin{align*}
  \mathrm{idMap} & : \prod_{\varphi:\Aut(G)}\prod_{h:H}\left\|\bigsqcup_{k:H}
\varphi(\iota(h))=_G \iota(k)\right\|, 
\end{align*}
which certifies that $H$ is
characteristic in $G$; compare to \eqref{eq:char-logic}. Recall that in MLTT,
types correspond to propositions, and terms are programs that correspond to
proofs. Thus, the term $\mathrm{idMap}$  is not an  exhaustive tuple listing
$\Aut(G)$, but a program (function) that takes as input $\varphi:\Aut(G)$ and
$h:H$, and produces $k:H$ and $p: \varphi(\iota(h)) =_G \iota(k)$.

\section{Algebraic structures and varieties}
\label{sec:Eastern} 
To interpret characteristic structure as computable categorical information, we
treat categories as algebraic structures.  (Computational
categories should not be confused with categorical semantics of computation.) 
For our purpose, it suffices 
to use operations that may only be partially defined, so categories are important examples, as are
monoids, groups, groupoids, rings, and non-associative algebras. We
give an abridged account and refer to  
\citelist{\cite{Cohn}*{\S II.2} \cite{AR1994:categories}*{Chapter~3}}
for details.

\subsection{Intentional and extensional formulations of algebra}
\label{sec:int-ext}
It is natural to ask if algebraic structures such as groups and rings, 
that are introduced in standard texts such as~\cite{Hungerford} using 
\textit{extensional}
set theory, have logically consistent \textit{intentional}
formulations in foundations such as MLTT.
While it is not within our purview 
to consider alternative 
foundations of algebra, we briefly compare type-theoretic 
formulations of 
groups with their long-standing 
and rigorous 
treatment in \textit{computational} algebra.

Although systems such as {\sf GAP}~\cite{GAP4}, Macauley~\cite{Macauley},
{\sc Magma}~\cite{magma}, and SageMath~\cite{sagemath} are not designed 
to use types robustly, 
they nevertheless facilitate a treatment of 
groups that is more intentional than extensional. In these systems, 
groups can be represented in many different ways, but for practical reasons 
\textit{they are generally not treated as sets of elements}. 
For instance, a group $G$ may be specified by a generating set $Y$ of 
permutations. 
Algorithms such as the \textit{product replacement}~\cite[\S 3.2.2]{handbook} 
can then be used to select ``random" elements of $G$ as words in $Y$.
However, basic questions such as 
membership---\textit{does a permutation belong to $G$?}---often require 
clever algorithms to answer~\cite[Chapter 4]{handbook}.

Even the question of whether two elements in a group are equal is often not immediate.
(There are models, such as finitely-presented groups, where this question is 
not decidable.) In standard models for computation with groups,
effective equality testing is usually available, but it is not always
done by simply asking if two pieces of data are identical. For example, 
do elements $a$ and $b$ of $G=\langle Y\rangle$ coincide in $G / \zeta(G)$? 
Equivalently, is $ab^{-1} \in \zeta(G)$?  Even if we do not know generators for $\zeta(G)$, 
we can answer the latter question efficiently by deciding whether 
$ab^{-1}$ commutes with every element of $Y$. 
In this sense, asking whether $a=b$ in computational algebra (where a program 
settles the question) is closer to writing $a=b$ in type theory (where evidence 
is provided by a proof) than it is to the (trivial) 
question in set theory (cf.~Section~\ref{sec:equality}).

\subsection{Operators, grammars, and signatures}
\label{sec:ops-grammar}
Informally, a grammar is a description of rules for formulas. 

\begin{defn}
  An \emph{operator} is a symbol with a grammar, which we describe 
  using the Backus--Naur Form (BNF)~\cite{Pierce:types}*{p.~24}.  
  The \textit{valence} of an operator $\omega$,
  written $|\omega|$, is the number of parameters in its grammar.
  A set $\Omega$ of operators is a \emph{signature}.
\end{defn}

\begin{ex}\label{ex:additive-grammar}
  A signature for additive formulas specifies three operators:
  \begin{equation*}
    \label{eq:abelian-grammar}
    \texttt{<Add> ::= (<Add> + <Add>)~|~0~|~(-<Add>)}
  \end{equation*}
  The bivalent addition $(+)$ depends on terms to the left and right;
  zero ($0$) depends on nothing; and univalent negation ($-$) is followed by a term. \exqed
\end{ex}
It is easy to reject $+-+\,2\,3\,7$ since it is not  meaningful. However, we might write
$2+3-7$ intending $(2+3)+(-7)$; the BNF grammar  
\texttt{<Add>} accepts only the latter.

The purpose of the signature is to formulate 
important algebraic concepts such as  
homomorphisms.  
To declare that a 
function $f:A\to B$ is a homomorphism between additive groups, 
we use the signature of Example~\ref{ex:additive-grammar} as follows:
\begin{align*} 
  f((x+y)) & = (f(x)+f(y)), & f(0) & =0, & f((-x)) & = (- f(x)).
\end{align*}

\subsection{Algebraic structures}
An algebra is a single type with a signature \cite{Cohn}*{\S II.2}.

\begin{defn}
  \label{def:alg-structure}
  An \emph{algebraic structure} with signature $\Omega$ 
is a type $A$ and a function $\omega\mapsto \omega_A$, 
where $\omega:\Omega$ and $\omega_A:A^{|\omega|}\to A$.
  A \emph{homomorphism} between algebraic structures $A$ and $B$, 
each having signature $\Omega$, 
  is a function $f:A\to B$ such that, for every $\omega:\Omega$ and 
  $a_1,\ldots,a_{|\omega|}:A$,
  \begin{align*}
    f(\omega_A(a_1,\ldots, a_{|\omega|})) & = \omega_B(f(a_1),\ldots, f(a_{|\omega|})).
  \end{align*}
As in Section~\ref{sec-propositions-as-types}, we extend these propositions to 
  types as follows:
  \begin{align*}
    \mathrm{Alge}_{\Omega} & \defeq \bigsqcup_{A:\mathrm{Type}}~\prod_{\omega:\Omega} (A^{|\omega|}\to A)
    \\
    \mathrm{Hom}_{\Omega}(A,B) & \defeq \!\bigsqcup_{f:A\to B}~\prod_{\omega:\Omega}~\prod_{a:A^{|\omega|}}\!
  f(\omega_A(a_1,\ldots,a_{|\omega|})) =_B \omega_B(f(a_1),\ldots,f(a_{|\omega|})).
  \end{align*}
  Terms of type $\mathrm{Alge}_{\Omega}$ are \emph{$\Omega$-algebras}.
\end{defn}

For example, consider the additive group signature from
Example~\ref{ex:additive-grammar}. The underlying structure of an additive group
can be described by a type (set) $A$ together with assignments of the operators
in \texttt{Add} such as \texttt{(<Add> + <Add>)} to $+_A : A\times A\to A$. The
nullary operator \texttt{0} is then identified with a term $0:A$.

\subsection{Free algebras and formulas}\label{sec:free}

We now extend signatures to include variables that allow us to work with formulas.

\begin{defn}
  Let $\Omega$ be a signature and 
let $X$ be a type whose terms are \emph{variables}. The
  \emph{free} $\Omega$-algebra in variables $X$, denoted by $\Free{\Omega}{X}$,
  is the type of every formula in $X$ constructed using the operators in
  $\Omega$. 
\end{defn}

\begin{ex}
  To describe formulas in variables $x,y$ and $z$, 
we extend the additive signature
  $\Omega =\texttt{Add}$ of Example~\ref{ex:additive-grammar} as follows: 
   \begin{equation*}
     \label{eq:free-grammar}
     \begin{split}
     \texttt{<Add<X>>} \texttt{ ::= (<Add<X>> + <Add<X>>)  | 0 | (-<Add<X>>) |  x | y | z}.
     \end{split}
   \end{equation*}
   Here, $x+y$ and $(-x)+(0+z)$ have type $\texttt{Add}\langle X\rangle$,
   but $x-$ and $x+7$ do not. 
   The operations on the formulas $\Phi_1(X),\Phi_2(X):\Free{\texttt{Add}}{X}$ are:
   \begin{align*}
      \Phi_1(X) +_{\Free{\texttt{Add}}{X}} \Phi_2(X) 
      & \defeq ( \Phi_1(X) + \Phi_2(X) )\\
      0_{\Free{\texttt{Add}}{X}} 
      & \defeq 0\\
      -_{\Free{\texttt{Add}}{X}} \Phi_1(X) 
      & \defeq (-\Phi_1(X)).
   \end{align*}
   Thus, $\texttt{Add}\langle X\rangle$ is the free additive algebra,  but it
   lacks laws such as $x+y = y+x$ and $x + (-x) = 0$. We explain how to impose
   these laws in  Section~\ref{sec_laws}. 
   \exqed
 \end{ex}

 \begin{fact}
  \label{fact:ump}
    Let $A$ be an $\Omega$-algebra and $a:A^X$, where 
    $X$ is a type whose terms are variables. 
    There is a unique homomorphism $\mathrm{eval}_a:\Omega\langle
    X\rangle \to A$ that satisfies 
    $\mathrm{eval}_a(x)  = a_x $.
\end{fact}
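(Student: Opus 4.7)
The plan is to prove the universal mapping property of the free $\Omega$-algebra by structural recursion on the grammar of $\Free{\Omega}{X}$. Since $\Free{\Omega}{X}$ is built from a BNF grammar with base terms coming from $X$ and constructors coming from $\Omega$, every inhabitant of $\Free{\Omega}{X}$ has a unique parse tree whose leaves are variables in $X$ and whose internal nodes are operators $\omega:\Omega$ applied to $|\omega|$ subformulas. This is exactly the setup where dependent elimination (induction) applies in MLTT.

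First I would define $\mathrm{eval}_a$ recursively on formulas. On the variable clause of the grammar, set $\mathrm{eval}_a(x)\defeq a_x$ for $x:X$. On the operator clause, given $\omega:\Omega$ and already-defined values $\mathrm{eval}_a(\Phi_1),\dots,\mathrm{eval}_a(\Phi_{|\omega|}):A$ for subformulas $\Phi_i:\Free{\Omega}{X}$, set
\[
  \mathrm{eval}_a(\omega_{\Free{\Omega}{X}}(\Phi_1,\dots,\Phi_{|\omega|}))\;\defeq\;\omega_A(\mathrm{eval}_a(\Phi_1),\dots,\mathrm{eval}_a(\Phi_{|\omega|})).
\]
This clause is exactly the homomorphism condition from Definition~\ref{def:alg-structure}, so $\mathrm{eval}_a$ is by construction a term of $\mathrm{Hom}_\Omega(\Free{\Omega}{X},A)$, and it satisfies $\mathrm{eval}_a(x)=a_x$ by the variable clause. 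Existence follows because each $\Phi:\Free{\Omega}{X}$ has a unique parse, so the recursive clauses cover every case without conflict.

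For uniqueness, suppose $f:\Free{\Omega}{X}\to A$ is any $\Omega$-homomorphism with $f(x)=a_x$ for all $x:X$. I would prove $f=\mathrm{eval}_a$ by induction on $\Phi:\Free{\Omega}{X}$: on variables, $f(x)=a_x=\mathrm{eval}_a(x)$ by hypothesis; on the operator clause, assuming inductively that $f(\Phi_i)=\mathrm{eval}_a(\Phi_i)$ for $i=1,\dots,|\omega|$, the homomorphism property gives
\[
  f(\omega_{\Free{\Omega}{X}}(\Phi_1,\dots,\Phi_{|\omega|}))=\omega_A(f(\Phi_1),\dots,f(\Phi_{|\omega|}))=\omega_A(\mathrm{eval}_a(\Phi_1),\dots,\mathrm{eval}_a(\Phi_{|\omega|})),
\]
which equals $\mathrm{eval}_a(\omega_{\Free{\Omega}{X}}(\Phi_1,\dots,\Phi_{|\omega|}))$ by the recursive definition. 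Transitivity of propositional equality then yields the required path $f=\mathrm{eval}_a$, and by function extensionality this is a proof of equality as homomorphisms.

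The only subtlety worth flagging is the type-theoretic one: because the paper is working in MLTT, the argument relies on $\Free{\Omega}{X}$ being an inductive type whose eliminator packages precisely the structural recursion used above, and on identifying propositional equalities along that recursion (as in~\eqref{eqn:path-proof}). The algebraic content is routine, so the main work is organizing the induction so that the homomorphism data bundled into a term of $\mathrm{Hom}_\Omega$ is produced simultaneously with the underlying function.
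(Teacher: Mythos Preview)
Your proof is correct and is the standard structural-induction argument. Note, however, that the paper does not actually prove this statement: it is labeled a \emph{Fact} and is simply asserted without proof, with the subsequent text immediately introducing the notation $\Phi(a)\defeq\eval_a(\Phi)$ and a remark on its categorical interpretation. So there is no paper proof to compare against; your argument supplies exactly the routine justification the paper omits.
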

\noindent
Consequently, we write $\Phi(a)\defeq \eval_a(\Phi)$ for formulas
$\Phi:\Free{\Omega}{X}$ and $a:A^X$.

\begin{rem}
  The construction in~Fact~\ref{fact:ump} is categorical in nature, and we use
  it in Section~\ref{sec:examples} to construct characteristic subgroups. The
  category of $\Omega$-algebras has objects of type $\mathrm{Alge}_{\Omega}$
  together with homomorphisms. The pair of functors (given only by their object
  maps)
  \[
    X\mapsto \Free{\Omega}{X}\quad\text{ and }\quad
    \langle A:\mathrm{Type},\ (\omega:\Omega)\mapsto (\omega_A:A^{|\omega|}\to A)\rangle\mapsto A
  \]
  forms an \emph{adjoint functor pair} between the categories of types and
  $\Omega$-algebras; see Section~\ref{sec:biacts-adjoints} for related discussion. 
\end{rem}

\subsection{Laws and varieties}\label{sec_laws}
Let $\Omega$ be a signature. We
now describe the variety of $\Omega$-algebras whose operators satisfy a list
of (equational) laws such as the axioms of a group.  Let $X$ be a type for variables. A \emph{law} is a term of
type $\Free{\Omega}{X}^2$. We index laws by a type $L$, so they
are terms $\mathcal{L}: L \to \Omega\langle X\rangle^2$ and are written $\ell \mapsto
(\Lambda_{1,\ell}, \Lambda_{2,\ell})$. 

An $\Omega$-algebra ${\type{A}}$ is
in the \emph{variety} for the laws $\mathcal{L}: L \to \Omega\langle
X\rangle^2$ if 
\begin{align*}
\begin{array}{lll}
  (\forall a\tin {\type{A}}^X)& (\forall \ell\tin {L})& \Lambda_{1,\ell}(a) = \Lambda_{2,\ell}(a).
\end{array}
\end{align*}
We write $\mathrm{Alge}_{\Omega,\mathcal{L}}$ for the type of all $\Omega$-algebras in the variety for the laws $\mathcal{L}$; this is a subtype of  $\mathrm{Alge}_{\Omega}$. The category of  $\Omega$-algebras in the variety for $\mathcal{L}$ has object type  $\mathrm{Alge}_{\Omega,\mathcal{L}}$ and morphism type
  \[ 
  \mathrm{Hom}_{\Omega,\mathcal{L}} \defeq \bigsqcup_{A : \mathrm{Alge}_{\Omega,\mathcal{L}}}~\bigsqcup_{B : \mathrm{Alge}_{\Omega,\mathcal{L}}} \mathrm{Hom}_{\Omega}(A,B),
  \]
with $\mathrm{Hom}_{\Omega}(A,B)$ as in Definition \ref{def:alg-structure}.

\begin{ex}\label{ex:group-grammar}
  The signature $\Omega$ for groups is the following:
  \begin{align*}
    \texttt{<G> ::= (<G><G>) | 1 | (<G>)$^{-1}$}  .
  \end{align*}
  The variety of groups uses three laws, indexed by 
  $L\defeq\{{\tt asc}, {\tt id}, {\tt inv}\}$ with variables 
  $X\defeq\{x,y,z\}$, where for example
  \[ 
    \Lambda_{1,\texttt{asc}} \defeq x(yz) \quad \text{and} \quad \Lambda_{2, \texttt{asc}} \defeq (xy)z.
  \] 
  Thus, 
  $\Lambda_{1,\texttt{asc}}(g,h,k)=g(hk)$ and $\Lambda_{2,\texttt{asc}}(g,h,k)=(gh)k$, 
  and associativity is imposed on the $\Omega$-algebra $G$ by requiring a term (``proof'') 
  of type
  \[ 
    \prod_{g:G}\prod_{h:G}\prod_{k:G} 
    g(hk)=_G (gh)k.
  \] 
  Encoding $1x=x$ and $x^{-1} x=1$ as additional laws 
  gives a complete description of the variety of groups.  
  Laws need not be algebraically independent:  for example, 
  $x1=x$ and $xx^{-1}=1$ are often also encoded. 
  \exqed 
\end{ex}
For clarity, henceforth we write laws as propositions.  For example, we write
$g(hk)=(gh)k$ rather than  terms of a mere proposition type.

\subsection{Categories as algebraic structures}\label{sec:cats-algestruct} 
We cannot always
compose a pair of morphisms in a category: composition may be a
partial function. Hence, the morphisms need not form an algebraic structure
under composition. We address this limitation by identifying precisely when the
operators yield partial functions. 

\begin{ex}
The type of each function is given as 
\[
  \mathrm{Fun}\defeq\bigsqcup_{\type{A}:\mathrm{Type}}~\bigsqcup_{\type{B}:\mathrm{Type}}(\type{A}\to \type{B}).
\]  
Technically, to quantify over all types, we shift  to a larger 
universe $\text{Type}_1$; see Remark~\ref{rem:[paradox]}.
For $f:\type{A}\to\type{B}$ and $g:\mathrm{Fun}$, define
\begin{align}
  \label{eq:def-comp}
  \src{f} & \defeq \id_{A},
  &
  \tgt{f} & \defeq \id_{B},
  &
  fg &\defeq \begin{cases} 
      f\circ g & \src{f}=\tgt{g},\\
      \bot & \text{otherwise}.
  \end{cases}
\end{align}
where $f\circ g$ is the usual composition of functions.    
The condition $\src{f}=\tgt{g}$ guards against composing non-composable
functions (one can think of $\src{f}=\tgt{g}$ as saying ``what enters $f$ must
match what exits $g$''). Note that
$\tgt{(\src{f})}=\tgt{\id_{A}}=\id_{A}=\src{f}$, and similarly
$\src{(\tgt{f})}=\tgt{f}$. 
\exqed
\end{ex}

The definitions in~\eqref{eq:def-comp} motivate an algebraic structure on $\mathrm{Fun}^?$.
We define the \emph{composition signature}:
\begin{equation}\label{eqn:comp-sig}
  \texttt{<Comp> ::= (<Comp><Comp>) | }(\tgt{\texttt{<Comp>}})\texttt{ | }(\src{\texttt{<Comp>}})\texttt{ | } \bot
\end{equation}

\begin{defn} 
  \label{def:abs-cat}
    Let $\Omega$ be the composition signature of~\eqref{eqn:comp-sig}. An
    \emph{abstract category} 
    $\cat{A}$ is an $\Omega$-algebra on a type $C$ satisfying the 
    law
    \[f(gh)   = (fg)h\]
    in variables $f,g,h$,
    together with the following \emph{source--target} laws and \emph{$\bot$-sink} laws:
    \begin{align*}
      \tgt{(\src{f})} & = \src{f} & (\tgt{f}) f & = f & \tgt{(fg)} & = \tgt{(f (\tgt{g}))}\\
      \src{(\tgt{f})} & = \tgt{f}      
      & 
      f (\src{f})  & = f
      & 
      \src{(fg)} & = \src{((\src{f})g)}
    \end{align*}
    \begin{align*}
      \src{\bot} &= \bot & \tgt{\bot} &= \bot & f\bot &= \bot & \bot f &= \bot.
    \end{align*}
\end{defn}

We  refer to the operators
    $\src{(-)}$ and $\tgt{(-)}$ in Definition~\ref{def:abs-cat} as \emph{guards}.  Note that $\tgt{f}=\bot$ or $\src{f}=\bot$ if, and only if, 
$f=\bot$; this follows from the laws $ (\tgt{f}) f  = f$ and 
$f(\src{f})=f$.

Conventional categories can be treated as abstract categories.
First, the morphisms of the category can be packaged as a disjoint union 
into a common type $A$, which possibly requires an enlarged universe. 
Then we use $A^?$ as the carrier type for the abstract category $\acat{A}$, 
where the nullary operator 
$\bot:\Omega$ is identified with the term $\bot$ in $A^?$; see \eqref{eq:A-lift}. 
 We write $a:\acat{A}$ to indicate that $a$ is a 
term of the carrier type $A^?$. 
Henceforth, 
we assume that all abstract categories have carrier types of the form $A^?$.

A useful subtype of an abstract category $\acat{A}$ is the 
type of \emph{identities}:
\[
    \one_{\acat{A}}\defeq\{\src{a}\mid a\tin\acat{A},\,a\neq \bot\}.
\] 
Since $\tgt{(\src{a})}=\src{a}$ and $\src{(\tgt{a})}=\tgt{a}$, we also have
$\one_{\acat{A}}=\{\tgt{a}\mid a\tin \acat{A},\,a\neq \bot\}$.

\begin{lem}\label{lem:idempotent-guards} 
  The following hold in every abstract category. 
  \begin{ithm}
    \item\label{lempart:idem} The guards are idempotents, namely
    \begin{align*}
      \src{(\src{(-)})} &= \src{(-)}, & \tgt{(\tgt{(-)})} &= \tgt{(-)}.
    \end{align*}
    \item\label{lempart:guard-reduc} Terms $f$ and $g$ satisfy
    \begin{align*}
      \tgt{(fg)} &\venturi \tgt{f}, & \src{(fg)} \venturi \src{g}.
    \end{align*}
  \end{ithm}
\end{lem}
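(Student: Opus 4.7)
The plan for part (a) is a short rewriting in the source--target laws. For $\src{(\src{f})} = \src{f}$, I specialize the law $\src{(\tgt{x})} = \tgt{x}$ at $x \defeq \src{f}$ to obtain $\src{(\tgt{(\src{f})})} = \tgt{(\src{f})}$. Applying the companion law $\tgt{(\src{f})} = \src{f}$ to both sides collapses this to $\src{(\src{f})} = \src{f}$. The dual identity $\tgt{(\tgt{f})} = \tgt{f}$ follows by specializing $\tgt{(\src{x})} = \src{x}$ at $x \defeq \tgt{f}$ and simplifying via $\src{(\tgt{f})} = \tgt{f}$. No associativity or $\bot$-sink axioms are needed here.

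For part (b), the plan is to prove $\tgt{(fg)} \venturi \tgt{f}$ and deduce $\src{(fg)} \venturi \src{g}$ by a left--right dual argument. Unfolding the definition of $\venturi$, assume $\tgt{(fg)} \neq \bot$; the $\bot$-sink law $\tgt{\bot} = \bot$ then forces $fg \neq \bot$. Using $(\tgt{g})g = g$ and associativity, $fg = f((\tgt{g})g) = (f(\tgt{g}))g$, so the $\bot$-sink laws give $f(\tgt{g}) \neq \bot$. Combined with the law $\tgt{(fg)} = \tgt{(f(\tgt{g}))}$, the problem reduces to showing $\tgt{(f(\tgt{g}))} = \tgt{f}$.

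The crux is to prove $f(\tgt{g}) = f$, from which $\tgt{(f(\tgt{g}))} = \tgt{f}$ is immediate. Using $f = f(\src{f})$ and associativity rewrites $f(\tgt{g}) = (f(\src{f}))(\tgt{g}) = f((\src{f})(\tgt{g}))$, reducing the task further to $(\src{f})(\tgt{g}) = \src{f}$. Both $\src{f}$ and $\tgt{g}$ are full identities---each equals both its source and its target via part (a) and the source--target laws---so this last equality is precisely the statement that a defined product of two identities coincides with each factor. I expect this identity-uniqueness step to be the main obstacle; the argument must combine associativity with the guard-composition laws $\tgt{(xy)} = \tgt{(x(\tgt{y}))}$ and $\src{(xy)} = \src{((\src{x})y)}$ to force the equality. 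Once established, chaining the reductions yields $\tgt{(fg)} = \tgt{(f(\tgt{g}))} = \tgt{(f(\src{f}))} = \tgt{f}$, as required.
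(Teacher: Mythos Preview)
Part~(a) is correct and is exactly the paper's argument.

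For part~(b) there is a genuine gap at the step you yourself flag: showing that the defined product of two identities $(\src f)(\tgt g)$ equals $\src f$. Your hope that ``associativity with the guard-composition laws'' will force this cannot succeed. The three-element algebra on $\{e_1,e_2,\bot\}$ in which each $e_i$ is its own source and target and multiplication on $\{e_1,e_2\}$ is left projection satisfies every equational law in the definition of abstract category, yet $e_1e_2=e_1\neq\bot$ with $\src{e_1}\ne\tgt{e_2}$; in the mirror image (right projection) the very statement $\tgt{(fg)}\venturi\tgt f$ fails. So no purely equational derivation from the listed laws can close your gap.

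The paper's proof bypasses all of your intermediate work by invoking one extra fact: if $fg\neq\bot$ then $\src f=\tgt g$. This is not among the displayed equational axioms, but it is how composition is set up in the passage introducing abstract categories (where $fg$ is declared to be $\bot$ unless $\src f=\tgt g$), and the paper treats it as part of what an abstract category is. Once you grant it, the proof is the one-line chain you already wrote at the end: $\tgt{(fg)}=\tgt{(f(\tgt g))}=\tgt{(f(\src f))}=\tgt f$. Invoke that implication directly rather than trying to manufacture the identity-uniqueness step from the equational laws alone.
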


\begin{proof}
  For a term $f$ in an abstract category,
  \[ 
    \tgt{(\tgt{f})}=\tgt{(\src{(\tgt{f})})}=\src{(\tgt{f})}=\tgt{f}.
  \] 
  A similar argument shows $\src{(\src{f})} =\src{f}$, so (a) holds. For
  (b), it remains to consider terms $f,g$ such that $\tgt{(fg)}$ is not $\bot$.
  This means that $fg$ is not $\bot$, and hence $\src{f}=\tgt{g}$ with both
  $\src{f}$ and $\tgt{g}$ not $\bot$. Now  \[\tgt{(fg)} =
  \tgt{(f(\tgt{g}))}=\tgt{(f(\src{f}))}=\tgt{f},\]as claimed. The other formula
  follows similarly.
\end{proof}

Let $\cat{C}$ be a category with object type $\cat{C}_0$.
Form the type of all morphisms of $\cat{C}$:
\begin{align}\label{eqn:all-morphisms}
    \cat{C}_1 &\defeq \bigsqcup_{U:\cat{C}_0}\bigsqcup_{V:\cat{C}_0} \cat{C}_1(U,V).
\end{align} 
For objects $U,V:\cat{C}_0$, there is an
inclusion map (see Section~\ref{sec:types})
\begin{align*} 
    \iota_{UV} & : \cat{C}_1(U,V)\hookrightarrow \acat{C}_1.
\end{align*} 
Thus, for each $\varphi:\acat{C}_1$, there exist unique $U,V:\cat{C}_0$ 
and $f:\cat{C}_1(U,V)$ such that  $\varphi=\iota_{UV}(f)$.

\begin{prop}
  \label{prop:morph_laws}
Let $\cat{C}$ be a category. 
The type $\acat{C}_1^?$ from~\eqref{eqn:all-morphisms} of all
morphisms of $\cat{C}$ with the composition signature 
from~$\eqref{eqn:comp-sig}$ forms an abstract category.
\end{prop}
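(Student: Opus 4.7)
The plan is to unpack the definitions on $\acat{C}_1^?$ and verify each axiom in Definition~\ref{def:abs-cat} in turn, using nothing beyond the standard category axioms of $\cat{C}$ (associativity and two-sided unit laws for identities). First I would define the operations: for a morphism $\varphi:\acat{C}_1$ with unique decomposition $\varphi = \iota_{UV}(f)$, set $\src{\varphi}\defeq\iota_{UU}(\id_U)$ and $\tgt{\varphi}\defeq\iota_{VV}(\id_V)$, and for $\varphi:\acat{C}_1$, $\psi:\acat{C}_1$ declare $\varphi\psi$ to equal the categorical composite $\iota_{UW}(f\circ g)$ when $\varphi=\iota_{UV}(f)$, $\psi=\iota_{VW}(g)$ with matching $V$, and $\bot$ otherwise. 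Then extend to $\acat{C}_1^?$ by sending every expression involving $\bot$ to $\bot$.

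The $\bot$-sink laws $\src{\bot}=\bot$, $\tgt{\bot}=\bot$, $f\bot=\bot$, $\bot f=\bot$ are then immediate from the extension convention. For the source--target laws, note that identity morphisms $\iota_{UU}(\id_U)$ have themselves as source and target (since $\id_U\circ \id_U=\id_U$), which gives $\tgt{(\src{f})}=\src{f}$ and $\src{(\tgt{f})}=\tgt{f}$. The unit laws $(\tgt{f})f=f$ and $f(\src{f})=f$ translate, under $f=\iota_{UV}(g)$, to $\id_V\circ g=g$ and $g\circ \id_U=g$, which hold in $\cat{C}$.

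For the guard-reduction laws $\tgt{(fg)}=\tgt{(f(\tgt{g}))}$ and $\src{(fg)}=\src{((\src{f})g)}$, I split on whether the composite exists. When $f=\iota_{UV}(\varphi)$, $g=\iota_{V'W}(\psi)$: if $V=V'$, then $fg=\iota_{UW}(\varphi\circ\psi)$, while $f(\tgt{g})=\iota_{UV}(\varphi\circ \id_V)=\iota_{UV}(\varphi)$, and both sides of the first equation reduce to $\iota_{UU}(\id_U)$; similarly for the second. If $V\neq V'$, both sides equal $\tgt{\bot}=\bot$ by the sink laws (after applying Lemma~\ref{lem:idempotent-guards}\ref{lempart:guard-reduc}-style reasoning to $f(\tgt{g})$ whose source $\tgt{g}=\iota_{V'V'}(\id_{V'})$ does not match $\src{f}=\iota_{VV}(\id_V)$). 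Associativity $f(gh)=(fg)h$ is then a case analysis: either every intermediate composition is defined, in which case the law reduces directly to associativity of $\circ$ in $\cat{C}$, or some intermediate is $\bot$, in which case both sides propagate to $\bot$ via the sink laws.

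The main obstacle is purely bookkeeping: ensuring that the case where composites fail to exist is handled consistently on both sides of each equation, so that a mismatch of source and target at one place does not accidentally hide behind a successful composition elsewhere. This is handled uniformly by observing that each axiom compares two expressions built from the same sub-morphisms by the same combinators, so if any sub-expression is $\bot$ the sink laws force both sides to $\bot$; otherwise all guards are genuine identities and the category axioms of $\cat{C}$ apply.
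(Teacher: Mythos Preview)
Your proposal is correct and follows essentially the same route as the paper's proof: dispose of any occurrence of $\bot$ via the sink laws, then verify each remaining axiom for genuine morphisms by unwinding to the unit and associativity laws of $\cat{C}$, with a case split on whether the relevant composites exist. The only cosmetic issue is that your indexing convention for $\iota_{UV}$ drifts between paragraphs (sometimes consistent with $f:U\to V$, sometimes with $f:V\to U$), and the appeal to Lemma~\ref{lem:idempotent-guards}\ref{lempart:guard-reduc} is unnecessary since the needed fact is a direct one-line check from your definitions.
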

\begin{proof}
Let $f,g:\acat{C}_1^?$. If $f=\bot$ or $g=\bot$, then all the equations in
Definition~\ref{def:abs-cat} become $\bot=\bot$. It remains to consider
the case that $f,g,h:\acat{C}_1$.  If $f:U \to V$ in $\acat{C}$, then 
  \(
    \tgt{(\src{f})}
      = \id_{\text{Codom} \id_{U}}
      = \id_{U}
      = \src{f}.
  \)
  Similarly, $\src{(\tgt{f})}=\tgt{f}$, and $\tgt{(\src{f})} =
  \src{f}$ and $\src{(\tgt{f})}=\tgt{f}$.
 
  Observe that $(\tgt{f})f$ is defined and equals $\id_{V}f=f$; also
  $f(\src{f})$ is defined and equals $f\id_{U}=f$. For $g:\acat{C}_1(U',V')$,
  the expression $\tgt{(fg)}$ is defined whenever $\src{f}=\tgt{g}$, and
  $f(\tgt{g})$ is defined whenever $\src{f} = \tgt{(\tgt{g})}$. Since
  $\tgt{(-)}$ is idempotent by
  Lemma~\ref{lem:idempotent-guards}\ref{lempart:idem}, both $\tgt{(fg)}$ and
  $f(\tgt{g})$ are defined when $\src{f}=\tgt{g}$. Thus, $\src{f}=\tgt{g}$
  implies  
  \[ 
    \tgt{(fg)} = \id_{V} 
     = \tgt{(f(\src{f}))} = \tgt{(f(\tgt{g}))} ,
  \] 
  so $\tgt{(fg)} = \tgt{(f (\tgt{g}))}$. Similar arguments hold for
  $\src{(fg)} = \src{((\src{f})g)}$ and for  $f(gh) = (fg)h$. 
\end{proof}

\begin{ex}\label{ex:abscat}
  Let $\acat{A}$ be an abstract category with
  $\one_{\acat{A}}\defeq\{e_1,\ldots,e_6\}$ and additional morphisms
  $a_{12},a_{23},a_{13},\acute{a}_{13},b_{45},b_{54}$, where $\src{x_{ij}} =
  e_j$ and $\tgt{x_{ij}}=e_i$.  Using the composition signature from~\eqref{eqn:comp-sig},
  $\acat{A}$ is an algebraic structure with multiplication defined in
  Table~\ref{tab:comp-grammer-table}, where each instance of $\bot$ is omitted. 
It is not easy
  to discern structure from this table, so two additional visualizations of
  $\acat{A}$ are given in Figure~\ref{fig:abscat-prod}, again with $\bot$ omitted. 
  The first is the Cayley graph
  of the multiplication with undefined products omitted. The second is the
  Peirce decomposition, which we now discuss.\exqed

\begin{table}[h]
  \begin{align*}
    \begin{array}{|c||ccccc|c|cccc|cc|}\hline
     x& e_1 & e_2 & e_3 & e_4 & e_5 & e_6 & a_{12} & a_{23} & a_{13} & \acute{a}_{13} & b_{45} & b_{54}\\\hline\hline
       \src{x}   & e_1 & e_2 & e_3 & e_4 & e_5 & e_6 & e_2 & e_3 & e_3 & e_3 & e_5& e_4 \\\hline
       \tgt{x}  & e_1&e_2&e_3&e_4&e_5& e_6 & e_1 & e_2 & e_1 & e_1 & e_4 & e_5 \\\hline
       \multicolumn{13}{c}{}\\\hline
        \cdot & e_1 & e_2 & e_3 & e_4 & e_5 & e_6 & a_{12} & a_{23} & a_{13} & \acute{a}_{13} & b_{45} & b_{54}\\
      \hline
      \hline 
         e_1 & e_1 &     &     &     &     &     & a_{12}&   & a_{13}& \acute{a}_{13} &   & \\
         e_2 &     & e_2 &     &     &     &     &     & a_{23}&   &   &   & \\
         e_3 &     &     & e_3 &     &     &     &     &     &     &   &   &\\
         e_4 &     &     &     & e_4 &     &     &     &     &     &   & b_{45} &    \\
         e_5 &     &     &     &     & e_5 &     &     &     &     &   &   & b_{54} \\
        \hline
         e_6 &     &     &     &     &     & e_6 &     &     &     &   &   & \\
        \hline
         a_{12}&   & a_{12}&   &     &     &     &     & a_{13}&   &   &   & \\
         a_{23}&   &     & a_{23}&   &     &     &     &     &     &   &   & \\
         a_{13}&   &     & a_{13}&   &     &     &     &     &     &   &   & \\
         \acute{a}_{13}&&&\acute{a}_{13}&& &     &     &     &     &   &   & \\
        \hline        
         b_{45}&   &     &     &     & b_{45}&   &     &     &     &   &   & e_4 \\
         b_{54}&   &     &     & b_{54}&   &     &     &     &     &   & e_5& \\
        \hline
    \end{array}
  \end{align*} 
\caption{The multiplication table for $\acat{A}$}\label{tab:comp-grammer-table}
\end{table}
\end{ex}

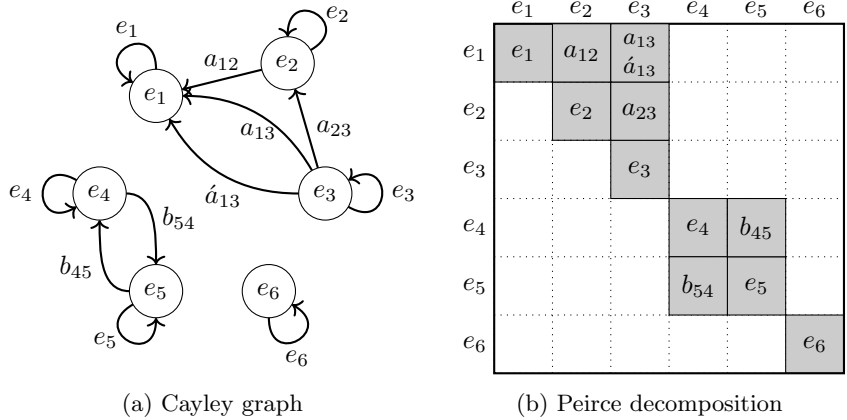
\begin{figure}[!htbp]
  \centering 
  \begin{subfigure}[b]{0.45\textwidth}
    \centering 
    \begin{tikzpicture}
      \node[draw,  circle, outer sep=0pt] (a) at (0:1.5) {$e_3$};
      \node[draw,  circle, outer sep=0pt] (b) at (60:2) {$e_2$};
      \node[draw,  circle, outer sep=0pt] (c) at (120:1.5) {$e_1$};
      \node[draw,  circle, outer sep=0pt] (d) at (180:1.5) {$e_4$};
      \node[draw,  circle, outer sep=0pt] (e) at (240:1.5) {$e_5$};
      \node[draw,  circle, outer sep=0pt] (f) at (300:1.5) {$e_6$};
  
      \draw (a) edge[thick,->,out=-30,in=30, looseness=5, "$e_3$"{right}] (a);
      \draw (b) edge[thick,->,out= 30,in=90, looseness=5, "$e_2$"{right}] (b);    
      \draw (c) edge[thick,->,out= 90,in=150, looseness=5, "$e_1$"{above}] (c);    
      \draw (d) edge[thick,->,out=150,in=210, looseness=5, "$e_4$"{left}] (d);    
      \draw (e) edge[thick,->,out=210,in=270, looseness=5, "$e_5$"{left}] (e);    
      \draw (f) edge[thick,->,out=270,in=330, looseness=5, "$e_6$"{below}] (f); 
  
      \draw (a) edge[thick,->, "$a_{23}$"{right}] (b);
      \draw (b) edge[thick,->, "$a_{12}$"{above}] (c);
      \draw (a) edge[thick,->, bend right, "$a_{13}$"{below}] (c);
      \draw (a) edge[thick,->, bend left, "$\acute{a}_{13}$"{below}] (c);

      \draw (d) edge[thick,->,out=0,in=90, looseness=1, "$b_{54}$"{right}] (e);    
      \draw (e) edge[thick,->,out=180,in=-90, looseness=1, "$b_{45}$"{left}] (d);    

    \end{tikzpicture}
    \caption{Cayley graph}
    \label{fig:dots-arrows}
  \end{subfigure}
  \begin{subfigure}[b]{0.45\textwidth}
    \centering
    \begin{tikzpicture}[scale=1.25]
      \pgfmathsetmacro{\s}{0.62}
      \foreach \t in {0,...,5} {
        \draw[dotted] (0,\s*\t) -- ++(6*\s,0);
        \draw[dotted] (\s*\t,0) -- ++(0,6*\s);
        \draw[fill=black!20] (\s*\t,5*\s-\s*\t) rectangle node{$e_{\the\numexpr \t + 1\relax}$} ++(\s,\s);
      };
      \draw[thick] (0,0)  rectangle (6*\s,6*\s);      
      \draw[fill=black!20] (2*\s,4*\s) rectangle node{$a_{23}$} ++(\s,\s);
      \draw[fill=black!20] (1*\s,5*\s) rectangle node{$a_{12}$} ++(\s,\s);
      \draw[fill=black!20] (2*\s,5*\s) rectangle node{{\small $\begin{array}{c}a_{13} \\ \acute{a}_{13}\end{array}$}} ++(\s,\s);
      \draw[fill=black!20] (4*\s,2*\s) rectangle node{$b_{45}$} ++(\s,\s);
      \draw[fill=black!20] (3*\s,1*\s) rectangle node{$b_{54}$} ++(\s,\s);
      \node at (0.5*\s, 6.25*\s) {$e_1$};
      \node at (1.5*\s, 6.25*\s) {$e_2$};
      \node at (2.5*\s, 6.25*\s) {$e_3$};
      \node at (3.5*\s, 6.25*\s) {$e_4$};
      \node at (4.5*\s, 6.25*\s) {$e_5$};
      \node at (5.5*\s, 6.25*\s) {$e_6$};
      \node at (-0.33*\s, 5.5*\s) {$e_1$};
      \node at (-0.33*\s, 4.5*\s) {$e_2$};
      \node at (-0.33*\s, 3.5*\s) {$e_3$};
      \node at (-0.33*\s, 2.5*\s) {$e_4$};
      \node at (-0.33*\s, 1.5*\s) {$e_5$};
      \node at (-0.33*\s, 0.5*\s) {$e_6$};
    \end{tikzpicture}
    \caption{Peirce decomposition}
    \label{fig:Peirce}
  \end{subfigure}
  \caption{Visualizing the abstract category $\acat{A}$ in Example~\ref{ex:abscat}}
  \label{fig:abscat-prod}
\end{figure}

\subsection{Peirce decomposition of abstract categories}
\label{sec:Peirce}

Treating categories as algebraic structures allows us to frame aspects of
category theory in algebraic terms.
Our goal is an elementary representation theory of categories. In
particular, we seek matrix-like structures---known as Peirce
decompositions in ring theory---for abstract categories.

One can recover from an abstract category $\acat{A}$ notions of objects and
morphisms by considering the identities $\one_{\cat{A}}$. Using the laws in
Definition~\ref{def:abs-cat}, if $e\tin \one_{\cat{A}}$, then $e=\src{e}$ 
and so $ee=e(\src{e})=e$; more generally,
\[
 (\forall e\tin \one_{\cat{A}})\;\;(\forall f\tin \one_{\cat{A}}) \;\;\;
    ef = 
    \begin{cases}
      e  & \text{ if } f = e, \\ 
      \bot & \text{ otherwise}. 
    \end{cases}
\]
In algebraic terms, the subtype $\one_{\cat{A}}$ is a type of pairwise 
orthogonal idempotents. For subtypes $X$ and $Y$ of $\acat{A}$, define
\[ 
 XY\defeq\{xy\mid x\tin X, y\tin Y,\ \src{x}=\tgt{y}\}. 
\]

\begin{fact}\label{fact:capsule}
  If $a\tin\acat{A}$, then $\one_{\cat{A}} \{a\}=\{a\}=\{a\} \one_{\cat{A}}$;
  we write simply $\one_{\cat{A}} a=a=a\one_{\cat{A}}$.
\end{fact}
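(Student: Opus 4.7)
The plan is to reduce everything to the defining laws of abstract categories and the observation that every identity $e \in \one_{\acat{A}}$ satisfies $e = \src{e} = \tgt{e}$. The claim is a straightforward consequence, but it requires being careful about the condition $\src{x} = \tgt{y}$ that governs the set-wise product.

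First I would assume $a \neq \bot$; the case $a = \bot$ either reduces to the empty product (under the definition of $XY$) or, depending on convention, is excluded. Granting $a \neq \bot$, I would begin by proving the auxiliary fact that every $e \tin \one_{\acat{A}}$ satisfies $\src{e} = e = \tgt{e}$. By definition $e = \src{a'}$ for some $a' \tin \acat{A}$ with $a' \neq \bot$, so $\src{e} = \src{(\src{a'})} = \src{a'} = e$ by Lemma~\ref{lem:idempotent-guards}\ref{lempart:idem}, while $\tgt{e} = \tgt{(\src{a'})} = \src{a'} = e$ by the source--target laws in Definition~\ref{def:abs-cat}. In particular, $\src{a}$ and $\tgt{a}$ both lie in $\one_{\acat{A}}$ and behave as their own source and target.

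Next I would verify $\{a\} \one_{\acat{A}} = \{a\}$. For any $e \tin \one_{\acat{A}}$ with $\src{a} = \tgt{e}$, the previous step gives $\tgt{e} = e$, so $e = \src{a}$; hence the only product arising is $a \cdot \src{a}$, which equals $a$ by the law $f(\src{f}) = f$. This shows $\{a\} \one_{\acat{A}} \subseteq \{a\}$. For the reverse inclusion, take $e = \src{a}$, which is in $\one_{\acat{A}}$ since $a \neq \bot$; then $\tgt{e} = \src{a}$, so $a \cdot e$ is defined and equals $a$, witnessing $a \tin \{a\} \one_{\acat{A}}$. The identity $\one_{\acat{A}} \{a\} = \{a\}$ follows by the symmetric argument, using $\tgt{a} \in \one_{\acat{A}}$, $\src{(\tgt{a})} = \tgt{a}$, and the law $(\tgt{f}) f = f$.

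The main obstacle is purely bookkeeping: confirming that the side condition $\src{x} = \tgt{y}$ in the definition of $XY$ forces a unique choice of identity, and that the two dual computations use the correct halves of the source--target laws. Beyond that, the argument is mechanical given Lemma~\ref{lem:idempotent-guards} and Definition~\ref{def:abs-cat}.
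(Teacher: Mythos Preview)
Your proof is correct and follows exactly the natural route: the paper states this as a Fact without proof, and the surrounding text (the observation just before that $e=\src{e}$ for $e\tin\one_{\acat{A}}$, and the computation just after that $\src{e}=\tgt{a}$ implies $ea=(\tgt{a})a=a$) contains precisely the ingredients you assemble. Your explicit handling of the $a=\bot$ edge case and of both inclusions in each equality is more careful than the paper itself.
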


\noindent Given $e,f\tin\one_{\cat{A}}$, we define three subtypes:
\begin{align*}
  & (\text{left slice}) & e\acat{A} & \defeq \{a\tin \acat{A} \mid e=\tgt{a}\}; \\
  & (\text{right slice}) & \acat{A}f & \defeq \{ a\tin \acat{A} \mid \src{a}=f\};
  \\
  & (\text{hom-set}) & e\acat{A}f & \defeq \{ a\tin \acat{A} \mid  e=\tgt{a},\ \src{a}=f\}.
\end{align*}
These subtypes appear in Figure~\ref{fig:peirce} in the left, middle, and right
images, respectively. If $\src{e}=\tgt{a}$ for $a:\acat{A}$, then
$ea=(\src{e})a=(\tgt{a})a=a$.

\begin{figure}[!htbp]
  \centering
  \begin{tikzpicture}
    \pgfmathsetmacro{\mylen}{3}
    \node (leftcoset) at (-4,0) {\begin{tikzpicture}
      \foreach \t in {0,...,6} {
        \draw[dotted] (0,0.5*\t) -- ++(\mylen,0);
        \draw[dotted] (0.5*\t,0) -- ++(0,\mylen);
      };
      \draw[thick] (0,0)  rectangle (\mylen,\mylen);      
      \draw[fill=black!20] (0,0.5) rectangle (\mylen,1.0);
      \node[scale=0.75] (A) at (1.25,0.75) {$e\acat{A}$};
      \node at (-0.25, 0.75) {$e$};
      \node at (1.25, 3.25) {$\phantom{f}$};
    \end{tikzpicture}};

    \node (rightcoset) at (0,0) {\begin{tikzpicture}
      \foreach \t in {0,...,6} {
        \draw[dotted] (0,0.5*\t) -- ++(\mylen,0);
        \draw[dotted] (0.5*\t,0) -- ++(0,\mylen);
      };
      \draw[thick] (0,0)  rectangle (\mylen,\mylen);      
      \draw[fill=black!20] (1.0,0) rectangle (1.5,\mylen);
      \node[scale=0.75] (A) at (1.25,0.75) {$\acat{A}f$};
      \node at (1.25, 3.25) {$f$};
    \end{tikzpicture}};

    \node (innercoset) at (3.75,0) {\begin{tikzpicture}
      \foreach \t in {0,...,6} {
        \draw[dotted] (0,0.5*\t) -- ++(\mylen,0);
        \draw[dotted] (0.5*\t,0) -- ++(0,\mylen);
      };
      \draw[thick] (0,0)  rectangle (\mylen,\mylen);      
      \draw[fill=black!20] (1.0,0.5) rectangle (1.5,1.0);
      \node[scale=0.75] (A) at (1.25,0.75) {$e\acat{A}f$};
      \node at (-0.25, 0.75) {$e$};
      \node at (1.25, 3.25) {$f$};
    \end{tikzpicture}};
  \end{tikzpicture}
  \caption{Visualizing the Peirce decomposition of $\acat{A}$}
  \label{fig:peirce}
\end{figure}
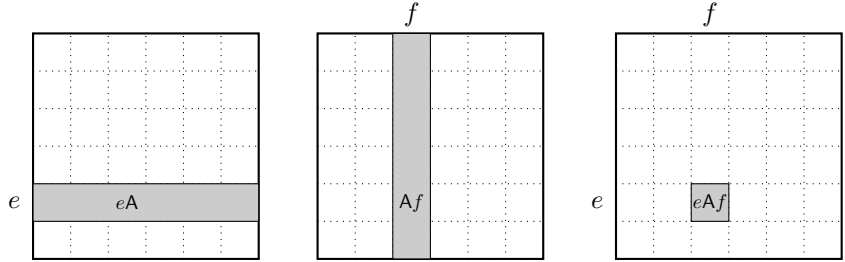

If $a\tin \acat{A}$, then $a\tin \acat{A} (\src{a})$, from which we
deduce the following. 

\begin{prop}
  \label{prop:Peirce-decomposition}
  If $\acat{A}$ is an abstract category, then $a\mapsto (\tgt{a})a$, 
  $a\mapsto a(\src{a})$, and $a\mapsto (\tgt{a})a(\src{a})$ induce  
  invertible functions $($denoted by ``$\leftrightarrow$"$)$ of the following types:
    \begin{align*}
      \acat{A} &\longleftrightarrow \bigsqcup_{e\tin \one_{\cat{A}}} e\acat{A}, 
      &
      \acat{A} &\longleftrightarrow \bigsqcup_{f\tin \one_{\cat{A}}}\acat{A}f 
      &
      \acat{A} &\longleftrightarrow \bigsqcup_{e\tin \one_{\cat{A}}} \bigsqcup_{f\tin \one_{\cat{A}}} e\acat{A}f.
    \end{align*}
\end{prop}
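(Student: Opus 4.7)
The plan is to exhibit explicit inverses for each of the three claimed bijections, using only the source--target laws from Definition~\ref{def:abs-cat} together with Fact~\ref{fact:capsule} and Lemma~\ref{lem:idempotent-guards}.

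For the first bijection, consider the forward map sending $a\tin\acat{A}$ to the pair $\langle \tgt{a},\,(\tgt{a})a\rangle$. To see this lands in $\bigsqcup_{e\tin\one_{\acat{A}}} e\acat{A}$, I would first note that $(\tgt{a})a = a$ by the source--target law $(\tgt{f})f=f$, and then compute $\tgt{((\tgt{a})a)} = \tgt{(\tgt{a})} = \tgt{a}$ using Lemma~\ref{lem:idempotent-guards}\ref{lempart:guard-reduc} followed by \ref{lempart:idem}; this confirms $(\tgt{a})a \tin (\tgt{a})\acat{A}$. For the backward map, I would send $\langle e, a\rangle \mapsto a$, noting this is well-defined as $e\acat{A}$ is a subtype of $\acat{A}$. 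Composing forward-then-backward obviously returns $a$. Composing backward-then-forward on $\langle e,a\rangle$ with $\tgt{a}=e$ gives $\langle \tgt{a}, (\tgt{a})a\rangle = \langle e, a\rangle$, where the first coordinate matches by hypothesis and the second by $(\tgt{a})a = a$.

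For the second bijection, the argument is strictly dual: I would use $a(\src{a}) = a$, and apply Lemma~\ref{lem:idempotent-guards}\ref{lempart:guard-reduc} (the second formula) together with idempotence of $\src{-}$ to see $\src{(a(\src{a}))} = \src{a}$, so that $a(\src{a}) \tin \acat{A}(\src{a})$. The backward map and the verification of mutual inverses are entirely symmetric. For the third bijection, the forward map $a \mapsto \langle \tgt{a}, \src{a},\,(\tgt{a})a(\src{a})\rangle$ simplifies, by two applications of the source--target laws, to $\langle \tgt{a}, \src{a}, a\rangle$, and clearly $a$ lies in $(\tgt{a})\acat{A}(\src{a})$. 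The backward map again forgets the index pair. Mutual inverseness follows as before.

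The only genuinely delicate point---and it is more a matter of type-theoretic bookkeeping than of mathematical content---is the treatment of the sink element $\bot$, together with the verification that the index recovered from the image matches the declared first coordinate of the disjoint-union term. The former is handled by noting that $\one_{\acat{A}}$ excludes $\bot$ by definition, while $\src{\bot}=\tgt{\bot}=\bot$ and $\bot \cdot \bot = \bot$, so $\bot$ is sent to a $\bot$-indexed slice under the conventions of~\eqref{eq:A-lift}; the latter is exactly what Lemma~\ref{lem:idempotent-guards} is designed to supply, and is the reason that proposition was proved first.
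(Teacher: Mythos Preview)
Your proof is correct and follows the same approach as the paper, which in fact offers only the one-line observation ``If $a\tin \acat{A}$, then $a\tin \acat{A}(\src{a})$'' immediately preceding the proposition and leaves the rest to the reader. Your argument is simply a fuller spelling-out of that observation, invoking exactly the source--target laws $(\tgt{a})a=a$ and $a(\src{a})=a$ together with Lemma~\ref{lem:idempotent-guards}; the only caveat is that your treatment of $\bot$ via a ``$\bot$-indexed slice'' is slightly informal (since $\one_{\acat{A}}$ excludes $\bot$), but the paper is equally silent on this point.
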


Proposition~\ref{prop:Peirce-decomposition}, which we use to prove Theorem~\ref{thm:extension},
allows us to draw upon intuition from matrix algebras.  
The morphisms of a category appear in its multiplication table, as in 
Table~\ref{tab:comp-grammer-table}. Products of morphisms and slices are defined, as with
matrix products, only when the inner indices agree. In this model, $\one_{\cat{A}}$ can be
visualized as the identity matrix, where the entries on the diagonal are the
individual identities $e\tin \one_{\cat{A}}$. In Figure~\ref{fig:Peirce}, 
that product is
represented in a matrix-like form respecting the conditions of the Peirce
decomposition.

\begin{rem}
  \label{rem:bi-inter} 
  While
types for categories and 
abstract categories differ, every theorem stated in one
  setting translates to a corresponding theorem in the other.  More
  precisely, the translation is a model-theoretic \emph{definable
  interpretation}~\cite{Marker:models}*{\S 1.4}: there is a prescribed
  formula that translates every theorem and its proof between the two theories.
  Example~\ref{ex:abscat} shows how the model of categories with both objects and
  morphisms may be interpreted as definable types in the
  theory of categories with only morphisms (abstract categories). Conversely, if
  $\acat{A}$ is an abstract category, then we obtain a category
  $\cat{C}$ with object type $\cat{C}_0\defeq \one_{\acat{A}}$ as follows. For objects 
  $e,f\tin \cat{C}_0$, we define 
  \[ 
    \cat{C}_1(e,f)  \defeq  f \cat{A} e,
  \]
  where the identity morphisms of $\acat{C}$ are $e:\cat{C}_1(e,e)$.
  To compose morphisms $fae : \cat{C}_1(e,f)$ with $gbf:\cat{C}_1(f,g)$ for objects $e,f,g:\cat{C}_0$, we define 
  \[
    (gbf)(fae) \defeq gbae : \cat{C}_1(e, g).
  \]  
  Hence, we no longer distinguish between categories and
  abstract categories.
\end{rem}

\subsection{Varieties as categories}

Proposition~\ref{prop:morph_laws} shows that a category is an 
algebraic structure with the composition signature. Conversely, 
for every signature $\Omega$, the type  
$\mathrm{Alge}_{\Omega,\mathcal{L}}$ of $\Omega$-algebras in the 
variety for the laws $\mathcal{L}$ forms a category with morphism 
type $\mathrm{Hom}_{\Omega,\mathcal{L}}$ (Section \ref{sec_laws}). 
Indeed, as in Proposition~\ref{prop:morph_laws}, 
$\mathrm{Alge}_{\Omega,\mathcal{L}}$ is an abstract category on  
$(\mathrm{Hom}_{\Omega,\mathcal{L}})^?$, and is therefore 
an algebraic structure with the composition signature. 

 Freyd originally explored the concept of 
\emph{essentially algebraic structures} 
using partial functions, but did not include 
$\bot$ as an operator (see ~\cite{FS}*{\S 1.2}). 
This required dealing with implications such as
``if $\src{f}=\tgt{g}$ then $\src{(fg)}=\src{g}$",  
which in turn entails working in a \emph{quasi-variety}.
But a quasi-variety is not closed under homomorphic images, 
so no analogue of Noether's Isomorphism Theorem (see Theorem \ref{thm:Noether}) exists.
An earlier version of this paper used this approach, but 
implementing our methods revealed the simpler approach of
transforming categories into varieties (see Section \ref{sec:imp}).

We reserve $\cat{E}$ to denote a variety treated as category.

\begin{rem}
  \label{rem:[paradox]} Regarding categories as algebras could lead to a paradox
  of Russell type. The paradox is avoided either by limiting $\Pi$-types to
  forbid some quantifications~\cite{Tucker} or by creating an increasing tower
  of universe types and pushing the larger categories into the next
  universe~\cite{HoTT}*{\S 9.9}.  Both resolutions allow us to define categories
  and algebras computationally.
\end{rem}

Under the correspondence of Remark~\ref{rem:bi-inter}, morphisms between
abstract categories yield functors between categories, but the converse need not follow. A functor $\func{F} : \acat{C}\to\acat{D}$ between categories retains the homomorphism properties of most of the operators: $\func{F}(\src{c}) = \src{\func{F}(c)}$, $\func{F}(\tgt{c}) = \tgt{\func{F}(c)}$, and $\func{F}(\bot) = \bot$. But it relaxes composition to a directional equality:
\begin{align*}
  \func{F}(cc') &\venturi \func{F}(c)\func{F}(c').
\end{align*}
This translation
serves two of our goals.  The first is an elementary representation theory for
categories: by regarding categories as ``monoids with partial operators", we
mimic monoid actions.  The second is to treat a category as a single data type
with operations defined on it.  This is considerably easier to implement as a
computer program. Both {\sf GAP} and {\sc Magma}
are designed for such algebras. 
While there are advantages to the usual
description of categories, the translation to abstract categories is
essential for our approach to computing with and within categories.

We conclude this section with Noether's Isomorphism 
Theorem, see \cite{Cohn}*{Theorem~II.3.7}; it guarantees that images and coimages
exist in a variety.

\begin{thm}[Noether's Isomorphism Theorem]
\label{thm:Noether}
  Let $\varphi: E_1 \to E_2$ be a morphism of $\Omega$-algebras. There exists
  an $\Omega$-algebra $\mathrm{Coim}(\varphi)$ and epimorphism
  $\mathrm{coim}(\varphi) : E_1 \twoheadrightarrow \mathrm{Coim}(\varphi)$, an
  $\Omega$-algebra $\mathrm{Im}(\varphi)$ and monomorphism $\mathrm{im}(\varphi)
  : \mathrm{Im}(\varphi) \hookrightarrow E_2$, and an isomorphism $\psi
  :\mathrm{Coim}(\varphi) \to \mathrm{Im}(\varphi)$ such that the  
  following diagram commutes:
  \begin{center}
    \begin{tikzcd}
      E_1 \arrow[r, "\varphi"] \arrow[d, twoheadrightarrow, "\mathrm{coim}(\varphi)", swap] & E_2 \\
      \mathrm{Coim}(\varphi) \arrow[r, "\psi"] & \mathrm{Im}(\varphi) \arrow[u, hook, "\mathrm{im}(\varphi)", swap]
    \end{tikzcd}
  \end{center}
\end{thm}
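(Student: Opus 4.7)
The plan is to follow the standard universal-algebra argument (as in Cohn, cited in the statement) but stated carefully in the type-theoretic framework of Section~\ref{sec:Eastern}. First I would construct $\mathrm{Im}(\varphi)$ as the subtype $\{\varphi(e) \mid e\tin E_1\}$ of $E_2$ in the sense of Section~\ref{sec:subtypes}, with $\mathrm{im}(\varphi)$ the associated inclusion. Because $\varphi$ is a homomorphism, for each $\omega:\Omega$ and inputs $\varphi(a_1),\ldots,\varphi(a_{|\omega|})$ we have $\omega_{E_2}(\varphi(a_1),\ldots,\varphi(a_{|\omega|})) = \varphi(\omega_{E_1}(a_1,\ldots,a_{|\omega|}))$, so $\mathrm{Im}(\varphi)$ is closed under every $\omega_{E_2}$ and inherits an $\Omega$-algebra structure for which $\mathrm{im}(\varphi)$ is a homomorphism; the laws $\mathcal{L}$ descend to the subalgebra because they hold pointwise in $E_2$, so $\mathrm{Im}(\varphi): \mathrm{Alge}_{\Omega,\mathcal{L}}$.

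Next I would construct $\mathrm{Coim}(\varphi)$ as a quotient. Define the relation $a\sim b \defeq (\varphi(a) =_{E_2} \varphi(b))$ on $E_1$; this is an equivalence relation, and it is a congruence for $\Omega$ because if $a_i\sim b_i$ for each $i$, then applying $\varphi$ and using that it is a homomorphism gives $\varphi(\omega_{E_1}(a_1,\ldots)) = \omega_{E_2}(\varphi(a_1),\ldots) = \omega_{E_2}(\varphi(b_1),\ldots) = \varphi(\omega_{E_1}(b_1,\ldots))$. Set $\mathrm{Coim}(\varphi) \defeq E_1/{\sim}$, with operators $\omega_{\mathrm{Coim}(\varphi)}([a_1],\ldots,[a_{|\omega|}]) \defeq [\omega_{E_1}(a_1,\ldots,a_{|\omega|})]$, and let $\mathrm{coim}(\varphi)$ be the canonical projection $e\mapsto [e]$, which is an epimorphism by construction. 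The laws in $\mathcal{L}$ are preserved: any law $\Lambda_{1,\ell}=\Lambda_{2,\ell}$ evaluated at classes $[a]:\mathrm{Coim}(\varphi)^X$ equals $[\Lambda_{1,\ell}(a)]=[\Lambda_{2,\ell}(a)]$, which holds because the corresponding equality holds in $E_1$.

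Then I would define $\psi:\mathrm{Coim}(\varphi)\to \mathrm{Im}(\varphi)$ by $\psi([e]) \defeq \langle \varphi(e), \pf\rangle$, where $\pf$ is the canonical proof that $\varphi(e)$ lies in $\mathrm{Im}(\varphi)$. Well-definedness is immediate from the definition of $\sim$; injectivity is the converse implication, also built into $\sim$; surjectivity is the definition of $\mathrm{Im}(\varphi)$; and $\psi$ is a homomorphism because both $\mathrm{coim}(\varphi)$ and $\mathrm{im}(\varphi)\circ\psi\circ\mathrm{coim}(\varphi)=\varphi$ are, and $\mathrm{coim}(\varphi)$ is epic. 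Commutativity of the triangle is then the definitional chain $\mathrm{im}(\varphi)(\psi(\mathrm{coim}(\varphi)(e))) = \mathrm{im}(\varphi)(\psi([e])) = \varphi(e)$.

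The main obstacle is not the algebra but the bookkeeping: one must verify that each construction stays inside the variety $\mathrm{Alge}_{\Omega,\mathcal{L}}$ (closure under operators \emph{and} preservation of laws) and that the quotient set $E_1/{\sim}$ really carries a well-defined $\Omega$-algebra structure in the type-theoretic sense, which requires the congruence property together with a choice of representatives compatible with the path-based equality of Section~\ref{sec:equality}. In more expressive MLTT set-ups the quotient is a higher inductive type, and the induction principle replaces the ad hoc argument; in a plainer setting one invokes the universal property of the quotient together with \eqref{eqn:path-proof} to transport equalities. Once that hurdle is cleared, everything else reduces to unwinding definitions.
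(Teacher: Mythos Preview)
The paper does not actually prove this theorem: it is stated as a background result with a citation to \cite{Cohn}*{Theorem~II.3.7}, and no argument is given in the paper itself. Your proposal is precisely the standard universal-algebra proof found in that reference (image as the subalgebra $\{\varphi(e)\mid e:E_1\}$, coimage as the quotient by the kernel congruence $a\sim b\defeq(\varphi(a)=\varphi(b))$, and $\psi$ the induced bijection), so there is nothing to compare and your argument is correct. Your closing remarks about the type-theoretic cost of forming quotients are apt and go beyond what the paper records; the paper is content to invoke the classical result and use only its consequences (Lemmas~\ref{lem:im} and~\ref{lem:im-monic}).
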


The morphism $\mathrm{im}(\varphi)$ from Theorem~\ref{thm:Noether} is the
\emph{image} of $\varphi$, and the morphism $\mathrm{coim}(\varphi)$ is the
\emph{coimage} of $\varphi$. These maps possess universal
properties~\cite{Riehl}*{\S E.5}.

\subsection{Subobjects and images}
\label{sec:subobjects-images}

We close with a list of facts about varieties, which we use heavily in
Section~\ref{sec:induced}. We first define a pre-order that enables abbreviation
of compositions of multiple homomorphisms. To motivate this, assume $\varphi :
E_1\to E_2$ is a homomorphism of algebras. Theorem~\ref{thm:Noether} states
there exists $\theta : E_1 \to \mathrm{Im}(\varphi)$ such that $\varphi =
\mathrm{im}(\varphi) \theta$. We denote this by $\varphi\ll
\mathrm{im}(\varphi)$ and make the following more general definition. For
morphisms $a,b\tin \acat{E}$, 
\begin{align}
  \label{eq:pre-order}
  a &\ll b \iff  \left[(\exists c:\acat{E})\;\; 
  a=bc\right], \\
  \label{eq:dual-ll}
  a &\gg b \iff  \left[(\exists d:\acat{E})\;\; 
  a=db\right].
\end{align}
Two monomorphisms $a,b:\acat{E}$ are \emph{equivalent} if $a\ll b$ and
$b\ll a$. Similarly, epimorphisms $c,d:\acat{E}$ are \emph{equivalent} if $c\gg
d$ and $d\gg c$.

\begin{lem}\label{lem:im}
  Let $\acat{E}$ be a variety. 
For morphisms $a,b\tin \acat{E}$, if 
  $\src{a}=\tgt{b}$, then $a\,\mathrm{im}(b) \ll \mathrm{im}(ab)$.
\end{lem}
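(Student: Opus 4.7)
The plan is to use the epi-mono factorization from Noether's Isomorphism Theorem (Theorem~\ref{thm:Noether}) and the fact that composing with an epimorphism on the right does not change the image (up to the equivalence of monomorphisms defined in \eqref{eq:pre-order}--\eqref{eq:dual-ll}). The key observation is that $ab$ and $a\cdot\mathrm{im}(b)$ have the ``same'' image, because they differ by an epimorphism applied on the right.

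Concretely, first I would factor $b$ using Theorem~\ref{thm:Noether} as $b = \mathrm{im}(b)\,\psi_b\,\mathrm{coim}(b)$, where $\psi_b$ is an isomorphism and $\mathrm{coim}(b)$ is an epimorphism. Setting $e \defeq \psi_b\,\mathrm{coim}(b)$, this is an epimorphism (being a composite of an isomorphism with an epimorphism), and the hypothesis $\src{a}=\tgt{b}$ ensures every composition below is defined. Then
\[
 ab = a\,\mathrm{im}(b)\,e.
\]
Next, I would apply Theorem~\ref{thm:Noether} to the morphism $g \defeq a\,\mathrm{im}(b)$, writing $g = \mathrm{im}(g)\,\psi_g\,\mathrm{coim}(g)$. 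Substituting into the previous display gives
\[
 ab = \mathrm{im}(g)\,\bigl(\psi_g\,\mathrm{coim}(g)\,e\bigr),
\]
which exhibits $ab$ as the composition of the monomorphism $\mathrm{im}(g)$ with the epimorphism $\psi_g\,\mathrm{coim}(g)\,e$. By the universal property of the image (the uniqueness clause following Theorem~\ref{thm:Noether}), this epi-mono factorization is equivalent to $ab = \mathrm{im}(ab)\,\psi_{ab}\,\mathrm{coim}(ab)$. In particular, $\mathrm{im}(g)$ and $\mathrm{im}(ab)$ are equivalent monomorphisms, so there exists a morphism $u$ with $\mathrm{im}(g) = \mathrm{im}(ab)\,u$.

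Finally, combining these yields
\[
 a\,\mathrm{im}(b) = g = \mathrm{im}(g)\,\psi_g\,\mathrm{coim}(g) = \mathrm{im}(ab)\,\bigl(u\,\psi_g\,\mathrm{coim}(g)\bigr),
\]
so setting $c \defeq u\,\psi_g\,\mathrm{coim}(g)$ gives $a\,\mathrm{im}(b) = \mathrm{im}(ab)\,c$, which by \eqref{eq:pre-order} is exactly the statement $a\,\mathrm{im}(b) \ll \mathrm{im}(ab)$.

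The main obstacle I anticipate is the justification that the two epi-mono factorizations of $ab$ produce equivalent monomorphisms, i.e., invoking the uniqueness aspect of Theorem~\ref{thm:Noether} cleanly in the variety/abstract-category formalism; this is really a universal-property argument rather than a calculation, but care is needed because $\mathrm{im}(-)$ in this paper is only defined up to equivalence of monomorphisms. Once that equivalence is in hand, producing the witness $c$ is routine substitution.
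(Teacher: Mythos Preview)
Your proof is correct. Both your argument and the paper's hinge on Noether's factorization (Theorem~\ref{thm:Noether}), but they finish differently. You take a third epi-mono factorization, of $g=a\,\mathrm{im}(b)$, observe that $ab=\mathrm{im}(g)\cdot(\text{epi})$, and then invoke essential uniqueness of epi-mono factorizations to conclude $\mathrm{im}(g)$ is equivalent to $\mathrm{im}(ab)$. The paper instead factors only $b$ and $ab$, uses the universal property of \emph{coimages} to produce $\pi$ with $\mathrm{coim}(ab)=\pi\,\mathrm{coim}(b)$, and then cancels the epimorphism $\mathrm{coim}(b)$ on the right to read off $a\,\mathrm{im}(b)=\mathrm{im}(ab)\,\psi_{ab}\,\pi\,\psi_b^{-1}$ directly. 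The paper's route is a line shorter and produces an explicit witness for $c$ without the auxiliary factorization of $g$; your route has the conceptual advantage of making transparent that precomposing with an epimorphism does not change the image. The obstacle you flagged---uniqueness of epi-mono factorizations up to equivalence---is genuine but standard in a variety and follows from the universal properties referenced after Theorem~\ref{thm:Noether}.
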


\begin{proof}
  By Theorem~\ref{thm:Noether}, there exist isomorphisms $\psi_b, \psi_{ab} : \acat{E}$
  such that 
  \begin{align*}
    b &= \mathrm{im}(b)\psi_b\mathrm{coim}(b), & ab &= \mathrm{im}(ab)\psi_{ab}\mathrm{coim}(ab) .
  \end{align*}
  By the universal property of coimages, there exists a unique morphism
  $\pi:\acat{E}$ such that $\mathrm{coim}(ab) = \pi\,\mathrm{coim}(b)$.
  Therefore
  \begin{align*}
    a\,\mathrm{im}(b)\psi_b\mathrm{coim}(b) = ab = \mathrm{im}(ab) \psi_{ab} \mathrm{coim}(ab) = \mathrm{im}(ab) \psi_{ab} \pi\, \mathrm{coim}(b).
  \end{align*}
  Since $\mathrm{coim}(b)$ is an epimorphism, $a\,\mathrm{im}(b) =
  \mathrm{im}(ab) \psi_{ab} \pi \psi_b^{-1} \ll \mathrm{im}(ab)$.
\end{proof}

\begin{lem}\label{lem:im-monic}
  Let $\acat{E}$ be a variety. For morphisms $a,b\tin \acat{E}$, if
  $\src{a}=\tgt{b}$, then 
  $\mathrm{im}(ab)\ll \mathrm{im}(a)$. If $a$ is also monic, then 
  $\mathrm{im}(ab) \ll a\,\mathrm{im}(b)$.
\end{lem}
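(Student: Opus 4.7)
The plan is to handle both claims in parallel using Noether's Isomorphism Theorem to exhibit an explicit factorization of $ab$ through the relevant monomorphism, and then invoke the universal property of images (mentioned just after Theorem~\ref{thm:Noether}) to factor $\mathrm{im}(ab)$ through that same monomorphism.

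For the first claim, I would apply Theorem~\ref{thm:Noether} to $a$ to obtain an epimorphism $\mathrm{coim}(a)$ and isomorphism $\psi_a$ with $a = \mathrm{im}(a)\,\psi_a\,\mathrm{coim}(a)$. Setting $c \defeq \psi_a\,\mathrm{coim}(a)\, b$ gives $ab = \mathrm{im}(a)\, c$. Thus $ab$ factors through the monomorphism $\mathrm{im}(a)$. Since the image of $ab$ is the smallest subobject of $\tgt{a}$ through which $ab$ factors, there exists a (unique) morphism $\rho$ satisfying $\mathrm{im}(ab) = \mathrm{im}(a)\,\rho$, which by \eqref{eq:pre-order} is exactly $\mathrm{im}(ab) \ll \mathrm{im}(a)$.

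For the second claim, I would proceed symmetrically by applying Theorem~\ref{thm:Noether} to $b$ to write $b = \mathrm{im}(b)\,\psi_b\,\mathrm{coim}(b)$, so that $ab = a\,\mathrm{im}(b)\,(\psi_b\,\mathrm{coim}(b))$. The extra hypothesis that $a$ is monic is used here to observe that $a\,\mathrm{im}(b)$ is a composition of two monomorphisms, hence itself a monomorphism. Applying the universal property of images once more yields a morphism $\sigma$ with $\mathrm{im}(ab) = (a\,\mathrm{im}(b))\,\sigma$, that is, $\mathrm{im}(ab) \ll a\,\mathrm{im}(b)$.

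The main obstacle I anticipate is justifying the universal property of images in a way that fits the framework of this paper; unlike the existence statement in Theorem~\ref{thm:Noether}, it is only gestured at through the reference to \cite{Riehl}. If one wants to be fully self-contained, one can instead repeat the argument of Lemma~\ref{lem:im}: factor the witness $c$ (respectively $\psi_b\,\mathrm{coim}(b)$) via Noether to express $ab$ as a product of $\mathrm{im}(a)\,\mathrm{im}(c)$ (respectively $a\,\mathrm{im}(b)$) with an epimorphism, and then compare with the Noether factorization $ab = \mathrm{im}(ab)\,\psi_{ab}\,\mathrm{coim}(ab)$ using uniqueness of epi-mono factorization in a variety to extract the required factoring morphism. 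Aside from this bookkeeping, the argument should be short and the fact that $a$ being monic is needed exactly at the step where one wants $a\,\mathrm{im}(b)$ to remain a monomorphism gives a clean explanation for the asymmetry between the two statements.
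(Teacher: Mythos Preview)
Your proposal is correct and matches the paper's proof essentially step for step: the paper dispatches the first claim with a one-line appeal to the universal property of images (you spell out the factorization $ab=\mathrm{im}(a)\,\psi_a\,\mathrm{coim}(a)\,b$ explicitly, which is fine), and for the second claim it applies Noether to $b$, observes that $a\,\mathrm{im}(b)$ is monic because $a$ is, and invokes the universal property of images to produce the factoring morphism---exactly as you do. Your closing remark about how to make the universal property self-contained is reasonable but unnecessary here, since the paper is content to cite it.
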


\begin{proof}
  The first claim follows from the universal property of images, so we assume $a$ is monic.
  By Theorem~\ref{thm:Noether}, there exists an isomorphism $\psi_b : \acat{E}$ such
  that 
  \begin{align*}
    b &= \mathrm{im}(b)\psi_b\mathrm{coim}(b) .
  \end{align*}
  Since $a\,\mathrm{im}(b)$ is monic and
  $ab=(a\,\mathrm{im}(b))(\psi_b\mathrm{coim}(b))$, by the universal property of
  images, there exists a morphism $\iota : \acat{E}$ such that $\mathrm{im}(ab)
  = a\,\mathrm{im}(b)\iota\ll a\,\mathrm{im}(b)$.
\end{proof}

Varieties have a \emph{coproduct}~\cite{Riehl}*{p.~81} given by
the \emph{free product}~\cite{Riehl}*{p.~183}.  
An example concerning groups is given in \cite{Riehl}*{Corollary 4.5.7}. We list
some facts concerning coproducts in varieties.

\begin{fact}
  \label{fact:coprod}
  Let $I$ be a type. In a variety $\acat{E}$,
  the following hold for all $e:\one_{\acat{E}}$ and $a:I\to e\acat{E}$.
  \begin{ithm}
    \item\label{factpart:coprods} There exists a coproduct morphism
    $\coprod_{i:I}a_i$ and morphisms $\iota:I\to
    \big(\coprod_{i:I}a_i\big)\acat{E}$ satisfying
    $\big(\coprod_{i:I}a_i\big)\iota_j=a_j$ for each $j:I$.

    \item\label{factpart:empty-coprod} If $I$ is uninhabited, then
    $f\defeq\src{\left(\coprod_{i:I}a_i\right)}$ is the identity on the free algebra
    on the empty set. In particular, $\coprod_{i:I}a_i$ is the unique morphism
    inhabiting $e\acat{E}f$.

    \item\label{factpart:factor-out} If $b\tin \acat{E}$ such that
    $\src{b}=\tgt{a_i}$ for all $i:I$, then $\coprod_{i:I}(b a_i)=b
    \coprod_{i:I}a_i$. 

    \item\label{factpart:ignore-inside} If $b:I\to \acat{E}$ with
    $\src{a_i}=\tgt{b_i}$ for all $i:I$, then $\coprod_{i:I} (a_i b_i)\ll
    \coprod_{i: I}a_i$.

    \item\label{factpart:smaller-coprod} If $J\subset I$, then $\coprod_{j:J}a_j
    \ll \coprod_{i:I}a_i$.
  \end{ithm}
\end{fact}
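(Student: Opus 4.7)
The plan is to reduce each item to the universal property of coproducts in a variety $\acat{E}$, which exist as free products: one takes the free $\Omega$-algebra on the disjoint union of generating sets for the $\src{a_i}$ and quotients by the relations defining each $\src{a_i}$. This standard construction (see e.g.\ the companion discussion around Noether's Isomorphism Theorem) gives part~\ref{factpart:coprods} directly: since each $a_i\tin e\acat{E}$, the targets all coincide at $e$, so the coproduct of the sources $\src{a_i}$ is an object equipped with canonical insertions $\iota_j\tin \acat{E}$ satisfying $\src{\iota_j}=\src{a_j}$, and the universal property yields the unique morphism $\coprod_{i\tin I}a_i\tin e\acat{E}$ with $\big(\coprod_{i\tin I}a_i\big)\iota_j=a_j$ for each $j$.

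For part~\ref{factpart:empty-coprod}, when $I$ is uninhabited, the free product is the coproduct of the empty family, which is the initial object of $\acat{E}$---the free $\Omega$-algebra on the void type. Its identity morphism is $f$, and any other morphism out of it in $e\acat{E}f$ is forced by the universal property to equal $\coprod_{i\tin I}a_i$. For parts~\ref{factpart:factor-out}, \ref{factpart:ignore-inside}, and~\ref{factpart:smaller-coprod} the strategy is uniform: produce the candidate morphism, check it composes correctly with the canonical insertions $\iota_j$, and invoke uniqueness. Concretely, for~\ref{factpart:factor-out} both $\coprod_{i\tin I}(ba_i)$ and $b\coprod_{i\tin I}a_i$ compose with $\iota_j$ to produce $ba_j$, forcing equality. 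For~\ref{factpart:ignore-inside}, apply part~\ref{factpart:coprods} to the family $\iota_j b_j\tin\acat{E}$ to produce a morphism $\varphi$ from $\src{\coprod_{i\tin I}(a_ib_i)}$ to $\src{\coprod_{i\tin I}a_i}$, then verify $\big(\coprod_{i\tin I}a_i\big)\varphi\iota'_j=a_jb_j$ on each inclusion $\iota'_j$ of the coproduct on the left, so by uniqueness $\coprod_{i\tin I}(a_ib_i)=\big(\coprod_{i\tin I}a_i\big)\varphi\ll\coprod_{i\tin I}a_i$. Part~\ref{factpart:smaller-coprod} is the special case where each $b_i$ is an identity and $I\setminus J$ is discarded: the universal property applied to the subfamily $\{\iota_j\}_{j\tin J}$ produces $\kappa\tin\acat{E}$ with $\big(\coprod_{i\tin I}a_i\big)\kappa=\coprod_{j\tin J}a_j$.

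The main point requiring care is not mathematical depth but bookkeeping: because we work in the abstract-category formulation where composition is a partial function, every application of the universal property must be accompanied by a check that all source/target conditions align so no composite collapses to $\bot$. The assumption $a\tin I\to e\acat{E}$ ensures the targets coincide at $e$, and the explicit hypotheses $\src{b}=\tgt{a_i}$ in~\ref{factpart:factor-out} and $\src{a_i}=\tgt{b_i}$ in~\ref{factpart:ignore-inside} guarantee the relevant compositions are defined. The one genuine subtlety is in~\ref{factpart:empty-coprod}: one must know the empty coproduct exists in $\acat{E}$, which is exactly where the variety hypothesis enters---the free $\Omega$-algebra on the empty set always exists in a variety, unlike in, say, the category of fields.
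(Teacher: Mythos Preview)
Your argument is correct. The paper itself offers no proof of this statement: it is stated as a \emph{Fact}, prefaced only by the remark that varieties have coproducts given by free products (with a citation to Riehl), and the five parts are simply listed without justification. Your proposal therefore supplies exactly the standard verification the paper elides, deriving each part from the universal property of the coproduct together with the existence of free algebras in a variety. There is nothing to compare against; your write-up is a sound expansion of what the authors evidently regard as routine.
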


Finally, if $a$ is monomorphism satisfying 
$\tgt{a}=e$, for some identity $e$, then $\src{a}$ can be regarded as 
a subobject of the object associated to $e$. 
Given a collection $\{a_i\mid i:I\}$ of such monomorphisms, consider the smallest subobject 
containing all set-wise images of the $\src{a_i}$.  
The coproduct allows us to effectively ``glue" together 
all of the monomorphisms, but the result is not a monomorphism.  
To obtain a monomorphism, we take the image of the coproduct, 
namely
\begin{align}\label{eq:coprod_im_comment}
  \text{im}\left(
    \coprod_{i:I} a_i
  \right).
\end{align}

\section{Category actions, capsules, and counits}
\label{sec:actions}

Theorem~\ref{thm:char-repn} asserts that characteristic subgroups arise from
categories acting on other categories.  In this section we define category
actions and introduce the notion of a \textit{capsule}. We also elucidate the
connection between capsules and the more familiar category notions of units,
counits, and adjoint functor pairs. Recall from our discussion following
Definition \ref{def:abs-cat} that an (abstract) category $\acat{A}$ is on an
underlying type 
$A^?$, hence $\bot:\acat{A}$.

\subsection{Category actions}
\label{sec:cat-acts-biacts}
Our formulation of category actions generalizes the familiar notion for groups
and also actions of monoids and groupoids ~\cite{MonoidsAC}*{\S I.4}. The
technical aspects of the definition concern the additional guards, denoted
$\lhd$, needed to express where products are defined. Their use is similar to
the guards $\blacktriangleleft$ introduced for abstract categories in
Definition~\ref{def:abs-cat}.

\newcommand{\actX}{\aX} 
\newcommand{\actY}{\aY}
\newcommand{\biactionpair}{biaction pair}
\newcommand{\myopp}{\lhd X}

\begin{defn}
\label{def:cat-act}
  Let $\cA$ be an abstract category with guards $\src{(-)}$ and
  $\tgt{(-)}$. Let $X$ be a type. A \emph{$($left$)$ category action} of
  $\cA$ on $X$ consists of a type $\myopp$, functions 
  $(-)\lhd:\cA\to (\myopp)^?$ and $\lhd(-):X^?\to (\myopp)^?$ that output $\bot$ if, and only if, the input is $\bot$, and a 
  function $\cdot: \cA\times X^?\to X^?$ that satisfies the following
  rules: \\[1ex]
  \hspace*{0.5cm}\begin{tabular}{llll}
    (1) & ($\forall a:\cA$) & ($\forall x:X^?$) &  $\left[(a\lhd=\lhd x)\iff ((\exists y:X)\; a\cdot x=y)\right]$;\\[0.5ex]
    (2) & ($\forall a:\cA)$  & ($\forall x:X^?$) & $\left[(\src{a})\lhd=a\lhd\text{ and } ((\src{a})\cdot x) \venturi x\right]$; and\\[0.5ex]
    (3) & ($\forall a,b:\cA$) & ($\forall x:X^?$) &  $((ab)\cdot x) \venturi (a\cdot (b\cdot x))$.
  \end{tabular}

  Given a left action of $\cA$ on a type $Y$, a function $\fM:X^?\to Y^?$
  is an \emph{$\cA$-morphism} if $\fM(a\cdot x) = a \cdot \fM(x)$ whenever
  $a:\cA$ and $x:X$ with $a\lhd=\lhd x$; that is, $\fM(a\cdot x) \venturi a \cdot \fM(x)$.
\end{defn}

Right category actions are similarly defined. 
We unpack the symbolic expressions in Definition~\ref{def:cat-act}. Condition
(1) states that the functions $(-)\lhd$ and $\lhd (-)$ serve as guards for the
function $\cdot : \cA\times X^?\to X^?$: namely, (1)
characterizes precisely when $\cdot$ is defined. The first part of condition (2)
asserts that $(-)\lhd$ respects the $\src{(-)}$ identity of $\acat{A}$; the
second part states that identity morphisms of $\acat{A}$ act as identities.
Condition (3) is the familiar group action axiom in the setting of
partial functions.

For subtypes $S\subset \acat{A}$ and $Y\subset X$, we write
\begin{align*}
  S\cdot Y & \defeq \{ s\cdot y\mid s\tin S,\; y\tin Y,\; s\lhd =\lhd y\}.
\end{align*}
From Definition~\ref{def:cat-act}, an $\acat{A}$-morphism $\fM:X^?\to Y^?$ maps a term
$b:(\acat{A}\cdot X)$ to a term of $Y$;
we say that $\fM$ is \emph{defined} on $\acat{A}\cdot X$.

\begin{defn}
  The category action of $\acat{A}$ on $X$ is \emph{full} if $e\cdot x \mapsto
  \lhd(e\cdot x)$ defines a bijection from $\one_{\acat{A}}\cdot X$ to
  $\cA\lhd = \{a \lhd \mid a :\acat{A}\}$.
  Thus, the action is full if, and only if, for every 
$a:\cA$ there exists $x:X$ such that $a\lhd =\lhd x$. 
\end{defn}
\noindent

Recall from Remark~\ref{rem:bi-inter}
that we identify categories and abstract 
categories. We say that a category $\cC$ 
acts on a type $X$ if its morphism type 
$\cC_1^?$ acts on $X$ (cf.\ Proposition~\ref{prop:morph_laws}).

\begin{ex}
  Let $\cat{C}$ be a category with object type $\cat{C}_0$ and morphism type   
  $\cat{C}_1^?$. Set $X = \myopp = \cat{C}_0$. Define $(-)\lhd : \cat{C}_1^? \to 
  \cat{C}_0^?$ via $f\lhd \defeq \Dom f$ and define $\lhd (-) : \cat{C}_0^? \to
  \cat{C}_0^?$ via $\lhd \,U \defeq U$. Let $\cdot : \cat{C}_1^? \times \cat{C}_0^?\to
  \cat{C}_0^?$ be defined by 
  \begin{align*}
    f\cdot U \defeq \begin{cases}
      \Codom f & \text{if } f\lhd = \lhd \,U, \\
      \bot & \text{otherwise}.
    \end{cases}
  \end{align*}
  This defines a full left action of $\cat{C}$ on $\acat{C}_0$. A full right
  action is defined similarly.~\exqed
\end{ex}

\begin{rem}
  Let $\acat{C}$ be a category and let $X=\acat{C}_1$. The
  definition of category action 
  in~\cite{FS}*{1.271--1.274} is similar to ours, but it requires
  $\myopp=\one_{\acat{C}}=\{\src{f} \mid f\tin\acat{C}_1\}$ and $\lhd
  x=\tgt{x}$ and $f\lhd =\src{f}$ for every $x\tin X$ and $f\tin
  \acat{C}_1$. Thus, for $f,g\tin \acat{C}_1$ and $x\tin X$, both $f\cdot
  x$ and $g\cdot x$ are defined (neither is $\bot$) only when $\src{f}=\lhd x=\src{g}$; this is too
  restrictive for our purposes.
\end{rem}

\subsection{Capsules}
As identified in Section~\ref{sec:local-to-global}, we focus on the 
action of one category $\cat{A}$ on another category $\cat{X}$; 
we call these ``category modules'' \emph{capsules}. 
Note the subtle change in notation from $X$ to $\cat{X}$ 
to emphasize this setting.
In this case, $\cat{X}$ already has a candidate type
for $\lhd \cat{X}$, namely $\tgt{\cat{X}}=\one_{\cat{X}}$. 
Since 
a category has its own operation of composition, the action by $\cat{A}$
respects composition.  For example, given a group homomorphism
$\varphi:G\to H$, we get an action $g\cdot h\defeq \varphi(g)h$ that satisfies
$g\cdot (hh')=(g\cdot h)h'$. 

\begin{defn}\label{def:vanilla} 
  A category $\cat{X}$ is a \emph{left $\acat{A}$-capsule} if there is a full left
  $\acat{A}$-action on $\acat{X}$ with $\lhd \acat{X} = \one_{\acat{X}}$ such
  that the following hold:\\[1ex]
  \hspace*{0.5cm}\begin{tabular}{llll}
    (a) & $(\forall x:\acat{X})$& $(\lhd x = \tgt{x})$;\\
    (b) & $(\forall a:\acat{A})$ & $(\forall x,y: \acat{X})$ & $(a\cdot (xy)=
    (a\cdot x)y)$.
  \end{tabular}
\end{defn}
A \emph{right $\acat{A}$-capsule} is similarly defined.  
We present our results below 
for left $\acat{A}$-capsules, but they can be formulated for both.

Much of our intuition on actions draws on familiar themes in representation
theory. A reader may be assisted by translating ``$\cA$-capsule'' to
``$A$-module'' and considering the matching statement for modules.  We write
${_{\cA} {\acat{X}}}$ to indicate the presence of a left $\acat{A}$-capsule action
on $\aX$.

From now on, if a category $\acat{A}$ acts on
itself, then we assume it is by the (left) \emph{regular action}, 
where $\cdot : \acat{A}\times \acat{A} \to \acat{A}$ is given 
by composition in $\acat{A}$. Moreover, a
category action on another category is implicitly understood to be on the
morphisms. We now show that capsules arise from morphisms between
categories.

\begin{prop}
  \label{prop:functors-are}
    A category $\cX$ is a left $\cA$-capsule of a category $\cA$ if, and only
    if, there is a morphism $\fF:\cA\to \cX$ such that $a\cdot x=\fF(a)x$
    for all $a:\cA$ and $x:\cX$. Furthermore, the morphism $\fF$ is
    unique.
\end{prop}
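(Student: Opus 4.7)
The plan is to prove both directions of the equivalence and then uniqueness. The reverse direction is direct: given a morphism $\fF : \acat{A} \to \acat{X}$, I would set $\lhd x \defeq \tgt{x}$, $a\lhd \defeq \fF(\src{a})$, and $a \cdot x \defeq \fF(a)x$; the axioms of Definitions~\ref{def:cat-act} and~\ref{def:vanilla} then follow immediately from $\fF$ being a morphism of abstract categories and from associativity of composition in $\acat{X}$.

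The forward direction is the heart of the argument. Assume $\acat{X}$ is a left $\acat{A}$-capsule. Since the action is full and $\lhd \acat{X} = \one_{\acat{X}}$, for every $a : \acat{A}$ with $a \neq \bot$ the term $a\lhd$ is an identity of $\acat{X}$, and so $a\cdot(a\lhd)$ is defined. I set $\fF(a) \defeq a\cdot(a\lhd)$ (and $\fF(\bot) \defeq \bot$). The main obstacle is verifying that $\fF$ is a morphism of abstract categories, in particular that $\src{\fF(a)} = \fF(\src{a})$, which is what ensures that $\fF(a)x$ is defined exactly when $a\cdot x$ is. The key trick exploits the idempotency of the identity $a\lhd$: from Definition~\ref{def:vanilla}(b),
\[
 \fF(a)(a\lhd) \;=\; (a\cdot(a\lhd))(a\lhd) \;=\; a\cdot((a\lhd)(a\lhd)) \;=\; a\cdot(a\lhd) \;=\; \fF(a),
\]
which forces $\src{\fF(a)} = a\lhd$. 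By condition~(2) of Definition~\ref{def:cat-act}, $\fF(\src{a}) = (\src{a})\cdot((\src a)\lhd) = a\lhd$, so $\src{\fF(a)} = \fF(\src{a})$. The dual identity $\tgt{\fF(a)} = \fF(\tgt{a})$ comes from combining $a = (\tgt a) a$ with condition~(3) of Definition~\ref{def:cat-act}: whenever $a\cdot x$ is defined, $(\tgt a)\lhd = \lhd(a\cdot x) = \tgt{(a\cdot x)}$ by Definition~\ref{def:vanilla}(a); specializing to $x = a\lhd$ gives the claim. Composition is handled by $\fF(ab) = (ab)\cdot(b\lhd) = a\cdot(b\cdot(b\lhd)) = a\cdot\fF(b) = \fF(a)\fF(b)$, where the last equality is the general identity $a\cdot x = \fF(a)x$ for defined products, which follows from Definition~\ref{def:vanilla}(b) via $a\cdot x = a\cdot((a\lhd)x) = (a\cdot(a\lhd))x = \fF(a)x$.

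Uniqueness is immediate from the same computation: any $\fF'$ satisfying $a\cdot x = \fF'(a)x$ must in particular satisfy $\fF'(a)(a\lhd) = a\cdot(a\lhd) = \fF(a)$; since $\fF'$ is a morphism, $\src{\fF'(a)} = \fF'(\src a) = a\lhd$ (arguing as above from $(\src a)\cdot x = x$), hence $\fF'(a)(a\lhd) = \fF'(a)$, and so $\fF'(a) = \fF(a)$.
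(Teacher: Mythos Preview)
Your proposal is correct and follows essentially the same approach as the paper. The paper separates the reverse direction into a standalone lemma (Lemma~\ref{lem:induced-act}) with the guard $a\lhd\defeq\src{\fF(a)}$ (equal to your $\fF(\src a)$), and for the forward direction first isolates the fact that there is a unique identity $e:\one_{\acat{X}}$ with $a\cdot e$ defined (Lemma~\ref{lem:unique-identity}); but this $e$ is precisely your $a\lhd$, and the paper's $\fF(a)\defeq a\cdot\fF(\src a)$ coincides with your $\fF(a)\defeq a\cdot(a\lhd)$, so the constructions and verifications are the same up to notation.
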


The following lemma proves one direction of Proposition~\ref{prop:functors-are}.
  
\begin{lem}
  \label{lem:induced-act}
    Every morphism $\fF:\cA\to\cX$ of categories makes $\cX$ a left 
$\acat{A}$-capsule,
    where for each $a:\cA$ and $x:\cX$, the guard is defined by
    $\text{$a\lhd\defeq\src{\fF(a)}$}$ and the action is defined by $a\cdot
    x\defeq \fF(a)x$. 
\end{lem}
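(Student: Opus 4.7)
The plan is to verify the axioms of Definitions~\ref{def:cat-act} and~\ref{def:vanilla} in turn, taking $\lhd\cX \defeq \one_\cX$ and $\lhd x \defeq \tgt{x}$ for $x:\cX$. With these choices, the guard condition $a\lhd = \lhd x$ unfolds to $\src{\fF(a)} = \tgt{x}$, which is exactly when the composition $\fF(a)x$ in the abstract category $\cX$ is defined; thus $a\cdot x = \fF(a)x$ automatically returns $\bot$ precisely when the guards disagree.

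First I would verify the three conditions of Definition~\ref{def:cat-act}. Condition (1) is immediate from the unfolding above. For the first half of condition (2), I would use that $\fF$ strictly preserves the source operator (see Section~\ref{sec:cats-algestruct}) together with the idempotence of $\src{(-)}$ from Lemma~\ref{lem:idempotent-guards}\ref{lempart:idem} to conclude
\[
  (\src{a})\lhd \;=\; \src{\fF(\src{a})} \;=\; \src{(\src{\fF(a)})} \;=\; \src{\fF(a)} \;=\; a\lhd;
\]
the second half, $(\src{a})\cdot x \venturi x$, reduces via the source--target law $(\tgt{x})x=x$ of Definition~\ref{def:abs-cat} to a one-line computation in $\cX$. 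Condition (3) relies on the directional homomorphism law $\fF(ab)\venturi \fF(a)\fF(b)$: whenever $(ab)\cdot x \neq \bot$, the composite $\fF(ab)$ equals $\fF(a)\fF(b)$, and associativity in $\cX$ then yields $(ab)\cdot x = \fF(a)(\fF(b)x) = a\cdot(b\cdot x)$.

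Next I would verify the capsule requirements of Definition~\ref{def:vanilla}. Fullness is immediate: for any non-$\bot$ morphism $a:\cA$, the identity $\src{\fF(a)}$ lies in $\one_\cX\subset\cX$ and satisfies $\tgt{(\src{\fF(a)})}=\src{\fF(a)} = a\lhd$; conversely every element of $\one_\cX$ arises as $\tgt{(-)}$ of some morphism of $\cX$, giving $\lhd\cX = \one_\cX$. Condition (a) holds by construction, and condition (b) follows from associativity in $\cX$, since whenever $a\cdot(xy)$ is defined both sides equal $\fF(a)xy$.

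The main obstacle is the bookkeeping of partial-function equalities: each step must carefully distinguish strict equality from the directional $\venturi$ of~\eqref{eq:venturi} and invoke the appropriate preservation property of $\fF$, which is strict for the guards $\src{(-)}$ and $\tgt{(-)}$ but only directional for composition. Once the correct unfolding of $a\lhd = \lhd x$ is identified, the remaining verifications are routine applications of the laws in Definition~\ref{def:abs-cat}.
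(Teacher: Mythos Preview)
Your proposal is correct and follows essentially the same route as the paper's proof: both choose $\lhd x = \tgt{x}$, verify condition~(2) via strict preservation of $\src{(-)}$ by $\fF$ and Lemma~\ref{lem:idempotent-guards}\ref{lempart:idem}, establish fullness by exhibiting $x=\src{\fF(a)}$, and obtain the capsule law~(b) from associativity in $\cX$. The only notable difference is in condition~(3): the paper explicitly tracks definedness, first showing $b\lhd=\lhd x$ and then $a\lhd = \lhd(b\cdot x)$ before composing, whereas you invoke $\fF(ab)\venturi \fF(a)\fF(b)$ and the universal associativity law of Definition~\ref{def:abs-cat} directly; both arguments are valid and arrive at the same equality.
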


\begin{proof}
Condition (1) of Definition~\ref{def:cat-act} is satisfied by the defined action.

For the first part of condition (2), let $a\tin\acat{A}$. Since $\func{F}$ is
a morphism and $\src{(-)}$ is everywhere defined,
$\func{F}(\src{a})=\src{\func{F}(a)}$. Hence, by
  Lemma~\ref{lem:idempotent-guards}\ref{lempart:idem}, 
  \begin{align*}
    (\src{a})\lhd & = \src{\func{F}(\src{a})} = \src{(\src{\func{F}(a)})} = \src{\func{F}(a)} = a\!\lhd.
  \end{align*}
For the second part of condition (2), let $a\tin\acat{A}$ and $x\tin\acat{X}$
  with $a\lhd=\lhd x$, so $\src{\func{F}(a)} =\tgt{x}$ by definition. Thus,
  $$(\src{a}) \cdot x = \func{F}(\src{a})x = (\src{\func{F}(a)})x = (\tgt{x}) x =
  x,$$ so $(\src{a})\cdot x\venturi x$ for every $a:\acat{A}$ and $x:\acat{X}$. 

  For condition (3), let $a,b\tin\acat{A}$ and $x\tin\acat{X}$ with $\src{a} =
  \tgt{b}$ and $(ab)\lhd=\lhd x$, so $(ab)\cdot x$ is defined and
  $\src{(ab)}=\src{b}$. We need to show that $(ab)\cdot x=a\cdot (b\cdot x)$.
  Since $\func{F}$ is a morphism, 
  $$
    (ab)\lhd=\src{(\func{F}(ab))}=\func{F}(\src{(ab)})=\func{F}(\src{b})=\src{\func{F}(b)}=b\lhd.
  $$ 
Hence, $(ab)\lhd=\lhd x$ implies $b\lhd =\lhd x$. Thus, $\func{F}(b)x$ is
  defined. Also,  $\src{a} = \tgt{b}$ implies $\src{\func{F}(a)} =
  \tgt{(\func{F}(b))}$, so
  \[
    a \lhd = \src{\func{F}(a)} = \tgt{(\func{F}(b))} =\tgt{(\func{F}(b)x)}=\tgt{(b\cdot x)}=\lhd (b\cdot x).
  \]
It follows that $a\cdot (b\cdot x)$ is defined. Since $\func{F}$ is a
morphism, 
  $$
    a\cdot (b\cdot x) = \func{F}(a)(\func{F}(b)x) = \func{F}(ab)x = (ab)\cdot x,
  $$ 
and therefore $(ab)\cdot x \venturi a\cdot (b\cdot x)$ for every
$a,b\tin\acat{A}$ and $x\tin\acat{X}$.

  To see that the action is full, consider  $a\tin \acat{A}$ and define
  $x=\src{\func{F}(a)}$. By the laws of an abstract category,  
  $a \lhd = \src{\func{F}(a)} = \tgt{({\src{\func{F}(a)}})} = \tgt{x} = \lhd x$.
  Finally, $(a\cdot x)y=\fF(a)xy = a\cdot (xy)$, so $\cX$ is a left
  $\cat{A}$-capsule.
\end{proof}

Our proof of the reverse direction of  Proposition~\ref{prop:functors-are} uses
the following result.

\begin{lem}
  \label{lem:unique-identity}
  Let $\acat{X}$ be a left $\acat{A}$-capsule. For every $a\tin \cA$, there is a unique
  $e:\one_{\aX}$ such that $a\cdot e$ is the unique term of type
  $a\cdot\one_{\aX}$.
\end{lem}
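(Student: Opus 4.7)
The plan is to extract $e$ directly from the fullness hypothesis and the defining property (a) of a capsule, and then rule out other candidates by the biconditional in condition~(1) of Definition~\ref{def:cat-act}.

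First, I would establish existence. Since the action of $\cA$ on $\aX$ is full, for the given $a:\cA$ there is some $x:\aX$ with $a\lhd \;=\;\lhd x$. By property (a) of a left capsule, $\lhd x = \tgt{x}$, which lies in $\one_{\aX}$ because $\one_{\aX}=\tgt{\aX}\setminus\{\bot\}$. Set $e\defeq \tgt{x}$. Since $e$ is an identity in $\aX$, the source--target laws of Definition~\ref{def:abs-cat} give $\tgt{e}=e$, and hence by (a) again $\lhd e = \tgt{e}=e = \lhd x = a\lhd$. By condition~(1) of Definition~\ref{def:cat-act}, this forces $a\cdot e$ to be a well-defined term of $\aX$, so $a\cdot e$ is a term of $a\cdot \one_{\aX}$.

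Next I would show uniqueness of $e$. Suppose $e':\one_{\aX}$ also satisfies that $a\cdot e'$ is defined. Condition~(1) gives $a\lhd = \lhd e'$, and the same computation as above yields $\lhd e' = \tgt{e'}= e'$. Therefore $e' = a\lhd = e$. This both gives the uniqueness of $e$ and shows that
\[
  a\cdot \one_{\aX} \;=\; \{\,a\cdot y \mid y:\one_{\aX},\; a\lhd\,=\,\lhd y\,\} \;=\; \{\,a\cdot e\,\},
\]
which is the required singleton.

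There is no real obstacle here; the lemma is essentially an unpacking of the definitions. The only mildly delicate point is to remember that fullness is what supplies \emph{existence} of a compatible identity (without it, one could not transport $a\lhd$ back to an element of $\one_{\aX}$), while the rigidity $\lhd e = \tgt{e}=e$ on identities of $\aX$ is what forces \emph{uniqueness}. I would make sure to invoke both ingredients explicitly so the proof reads cleanly.
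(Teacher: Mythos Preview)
Your proof is correct and follows essentially the same route as the paper: use fullness to produce a witness $x$ with $a\lhd=\lhd x$, pass to $e\defeq\tgt{x}\in\one_{\aX}$, and then observe that $\lhd e=\tgt{e}=e$ forces any compatible identity to equal $a\lhd$. The only difference is that the paper first disposes of the degenerate case $a=\bot$ in one line (where $a\lhd=\bot$ and the only compatible term is $\bot$ itself); you should add that sentence so the quantification ``for every $a:\cA$'' is literally covered.
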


\begin{proof}
Let $a\tin\acat{A}$. 
If $a=\bot$, then $\bot$ is the unique $x\tin \aX$ with  
 $a\lhd = \bot = \lhd x$.
Now suppose $a$ is not $\bot$.  
Since the action is full, there exists
  $x\tin\aX$ such that $a\lhd = \lhd x$, so $a\cdot x$ is defined.
  Since $\acat{X}$ is a left $\acat{A}$-capsule, $a\lhd = \lhd x = \tgt{x}$, so
  \[
    \lhd(\tgt{x})=\tgt{(\tgt{x})}=\tgt{x}.
  \]
  Hence, $a\cdot (\tgt{x})$ is defined and has type $a\cdot \one_{\aX}$.
  Suppose $e\tin\one_{\acat{X}}$ and $f\tin\one_{\acat{X}}$ with 
  $a\lhd = \lhd e = \lhd f$, so that
  $a\cdot e \tin a\cdot \one_{\aX}$ and $a\cdot f \tin a\cdot \one_{\aX}$.  
  Then
  \[
    e = \tgt{e} = \lhd e = \lhd f = \tgt{f} = f,
  \] 
  so $a\cdot e = a\cdot f$, and there is exactly one term with 
  type $a\cdot \one_{\aX}$.
\end{proof}

Under the assumptions of Lemma~\ref{lem:unique-identity}, we simplify notation and
identify $a\cdot \one_{\acat{X}}$ with its unique term. 

\begin{proof}[Proof of Proposition~$\ref{prop:functors-are}$]
  By Lemma~\ref{lem:induced-act}, it remains to prove the forward direction and uniqueness. 
  Suppose that $\cX$ is a left $\cA$-capsule.
  By Lemma~\ref{lem:unique-identity}, for each $a:\cA$ there is a unique
  $\fF(\src{a}):\one_{\cX}$ such that $(\src{a})\cdot \fF(\src{a})$ is defined.
  Since $\cX$ is a left $\cA$-capsule and $\fF(\src{a})$ is an identity,
  \[
    (\src{a})\lhd=a\lhd=\lhd\fF(\src{a})=\tgt\fF(\src{a})=\fF(\src{a}).
  \]
  Thus, $a\cdot \fF(\src{a})$ is also defined. Put $\fF(a)\defeq a\cdot
  \fF(\src{a})$. If $x:\cX$, then $a\cdot x$ is defined whenever 
  \[
    \tgt{x}=\lhd x=a\lhd=\fF(\src{a})=\src{\fF(\src{a})}.
  \]
  Hence, $\fF(\src{a})x$ is also defined in $\cX$. Because $\fF(\src{a})$ is an
  identity, $\fF(\src{a})x=x$.  Since $\acat{X}$ is a left $\acat{A}$-capsule, $a\cdot x=a\cdot
  (\fF(\src{a}) x)=(a\cdot \fF(\src{a}))x=\fF(a)x$. Hence, it remains to prove
  that $\fF:\cA\to \cX$ is a morphism of categories.

  For $a,b\tin \acat{A}$, by the action laws
  \begin{align*}
    \func{F}(ab) 
     & = (ab)\cdot \fF(\src{(ab)})
      = (ab)\cdot \fF(\src{b})
      ~\venturi~ a\cdot (b\cdot \fF(\src{b}))
     = a\cdot \func{F}(b).
  \end{align*}
  Thus, $\func{F}(ab) \venturi a\cdot \func{F}(b)$. 
But $\cX$ is a left $\cA$-capsule, so Fact~\ref{fact:capsule} implies that
  \begin{align}\label{eqn:act-comp}
    a\cdot \func{F}(b) & = a\cdot (\one_{\cX}\func{F}(b)) 
    = (a\cdot \one_{\cX}) \func{F}(b) = \func{F}(a)\func{F}(b).
  \end{align}
  Hence, $\func{F}(ab) \venturi \func{F}(a)\func{F}(b)$. By Lemma~\ref{lem:idempotent-guards}\ref{lempart:guard-reduc} and \eqref{eqn:act-comp} for all $a:\cA$, 
  \[
    \src{\fF(a)} ~=~ \src{(a\cdot \fF(\src{a}))} ~=~ \src{(\fF(a) \fF(\src{a}))} ~=~ \src{\fF(\src{a})} ~=~ \fF(\src{a}).
  \]
  Similarly, $\func{F}(\tgt{a}) = \tgt{\func{F}(a)}$. Hence, $\fF$ is a
  morphism.

  Lastly, we prove uniqueness of  $\fF$. Suppose there exists $\func{G}:
  \cA\to \cX$ such that $a\cdot x = \func{G}(a)x$ for every $a:\cA$ and $x:\cX$
  whenever $a\lhd = \lhd x$. Since $a\lhd=\fF(\src{a})$, it follows that
  $\src{\fG(a)}=\fF(\src{a})$, so $\func{G}(a) = \func{G}(a)\fF(\src{a}) =
  a\cdot \fF(\src{a})  = \fF(a)$.
\end{proof}

If $\acat{B}$ is a subcategory of $\acat{A}$ with inclusion $\func{I} :
\acat{B}\to\acat{A}$, then the (left) \emph{regular action} of $\acat{B}$ on
$\acat{A}$ is defined to be the action given by $\func{I}$. In other words, the regular action
of $\acat{B}$ on $\acat{A}$ is given by $b\cdot a = \func{I}(b)a$ for
$a:\acat{A}$ and $b:\acat{B}$. By Lemma~\ref{lem:induced-act}, each regular
action defines a capsule.  With regular actions we sometimes omit the ``$\cdot$''.

\subsection{Category biactions and cyclic bicapsules}\label{sec:cat-biacts}

We now define the concepts appearing in Theorem~\ref{thm:char-repn}(3).

\begin{defn}\label{defn:bi-action}
  Let $\cA$ and $\cB$ be categories and let $X$ and $Y$ be types.
  \begin{ithm}
  \item An $(\cA,\cB)$-\emph{biaction} on $X$ is a left $\cA$-action on $X$
  and a right $\acat{B}$-action on $X$ such that $a\cdot (x\cdot b) =
  (a\cdot x)\cdot b$ for every  $a\tin\cA$, $b\tin\cB$, and $x\tin X$. Hence,
  writing $a\cdot x\cdot b$ is unambiguous. If, in addition, $\aX$ is a left
  $\cA$-capsule and right $\cB$-capsule, then $\aX$ is an
  \emph{$(\cA,\cB)$-bicapsule}.

  \item Suppose there are $(\cA,\cB)$-biactions on $X$ and $Y$. An
  \emph{$(\cA,\cB)$-morphism} is a function $\fM:X^?\to Y^?$ such that
  $\fM(a\cdot x\cdot b) = a\cdot \fM(x) \cdot b$, whenever
  $a:\cA$, $x:X$, $b:\cB$ with $a\lhd=\lhd x$ and $x\lhd=\lhd b$; that is, $\fM(a\cdot x\cdot b) \venturi a\cdot \fM(x) \cdot b$.
  \end{ithm}
\end{defn}

We sometimes write ${_{\cA}X_{\cB}}$ for an $(\cA,\cB)$-biaction on $X$ for
clarity.  Notice that an $(\cA,\cB)$-morphism $\fM:X^?\to Y^?$ must be defined on
$\cA\cdot X\cdot \cB$. As with capsule morphisms, we do not need to establish
guards. We abbreviate $(\cA,\cA)$-bicapsule to \emph{$\cA$-bicapsule},
$(\cA,\cA)$-morphism to \emph{$\cA$-bimorphism}, and $(\cA,\cA)$-biaction to
\emph{$\cA$-biaction}. 
Just as ring homomorphisms are not always linear maps, morphisms of capsules
need not be morphisms of categories since they need not 
send identities to identities. 

Motivated by Proposition~\ref{prop:functors-are}, we show that bicapsules
provide a computationally useful perspective to record natural transformations
of functors. If $\func{F},\func{G}:\acat{A}\to\acat{B}$ are functors and $\mu:
\func{G}\Rightarrow \func{F}$ is a natural transformation, then, using
Remark~\ref{rem:bi-inter}, the natural transformation property written with guards is 
\[
  \func{F}(a)\mu_{\src{a}} = \mu_{\tgt{a}}\func{G}(a)
\] 
for every morphism $a$ in $\acat{A}$.

\begin{prop}
  \label{prop:nat-trans-biact}
  In the following statements, the category $\cA$ is 
  also regarded as an $\cA$-bicapsule via its 
  regular action.
  \begin{ithm}
  \item\label{proppart:get-biactions}
  For every natural transformation 
  $\mu:\fG\Rightarrow \fF$ between 
  functors $\fF,\fG:\cA\to \cX$, the assignment 
  \begin{align*}
  a\cdot x\cdot a'  \defeq \fF(a)x\fG(a') &&
  (a,a':\cA,\;x:\cX)
  \end{align*}
  makes $\acat{X}$ into an $\cA$-bicapsule, and 
  the assignment
  $\fM(a) \defeq a\cdot \mu_{\src{a}}$ defines an 
  $\cA$-bimorphism $\fM:\aA\to \aX$.

  \item\label{proppart:get-nat-trans}
  Conversely, for every category $\aX$ and $\cA$-bimorphism $\fM:\aA\to \aX$,
  the assignments 
  \begin{align*}
    \fF(a) & \defeq a\cdot \one_{\cX}, 
    & 
    \fG(a) & \defeq \one_{\cX}\cdot a
    &
    (a:\cA)
  \end{align*}
   define functors $\func{F},\func{G} : \cA\to \cX$, and
   the assignment
   \begin{align*} 
    \mu_{\src{a}} & \defeq \fM(\src{a}) & (a:\cA)
   \end{align*}
defines a natural transformation $\mu:\fG\Rightarrow \fF$. 
  \end{ithm}
\end{prop}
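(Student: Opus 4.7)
The plan is to prove both parts of Proposition~\ref{prop:nat-trans-biact} using Proposition~\ref{prop:functors-are}, which identifies category capsules with functor-induced actions, together with a careful manipulation of the bimorphism identity on the regular bicapsule $\cA$.

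For part (a), the left action $a\cdot x=\fF(a)x$ makes $\cX$ into a left $\cA$-capsule by Lemma~\ref{lem:induced-act} applied to $\fF$, and the right action $x\cdot a'=x\fG(a')$ makes $\cX$ into a right $\cA$-capsule by the analogous statement for $\fG$. The compatibility $(a\cdot x)\cdot a'=a\cdot (x\cdot a')$ is immediate from associativity in $\cX$. To verify that $\fM(a)\defeq \fF(a)\mu_{\src{a}}$ is an $\cA$-bimorphism, I expand
\[
  \fM(bab')=\fF(b)\fF(a)\fF(b')\mu_{\src{b'}}
\]
and invoke naturality of $\mu$ at $b'$, namely $\fF(b')\mu_{\src{b'}}=\mu_{\tgt{b'}}\fG(b')=\mu_{\src{a}}\fG(b')$, using $\tgt{b'}=\src{a}$ whenever $ab'$ is defined. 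This rearranges to $\fF(b)\fM(a)\fG(b')=b\cdot \fM(a)\cdot b'$, as required.

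For part (b), the left and right capsule structures on $\cX$ yield functors $\fF,\fG\colon \cA\to\cX$ via Proposition~\ref{prop:functors-are} applied on each side, with $\fF(a)=a\cdot\one_{\cX}$ and $\fG(a)=\one_{\cX}\cdot a$. Setting $\mu_e\defeq \fM(e)$ for identities $e:\one_{\cA}$, the bimorphism identity applied to the decomposition $e=e\cdot e\cdot e$ in the regular action gives $\fM(e)=\fF(e)\fM(e)\fG(e)$, which forces $\tgt{\fM(e)}=\fF(e)$ and $\src{\fM(e)}=\fG(e)$, so indeed $\mu_e\colon \fG(e)\to\fF(e)$. For naturality at a general morphism $a:\cA$, I apply the bimorphism identity to the two decompositions $a=a\cdot\src{a}\cdot\src{a}$ and $a=\tgt{a}\cdot\tgt{a}\cdot a$ in the regular action of $\cA$ on itself, obtaining
\[
  \fF(a)\mu_{\src{a}}=\fF(a)\mu_{\src{a}}\fG(\src{a})=\fM(a)=\fF(\tgt{a})\mu_{\tgt{a}}\fG(a)=\mu_{\tgt{a}}\fG(a),
\]
where the outer equalities collapse because $\fG(\src{a})=\src{\mu_{\src{a}}}$ and $\fF(\tgt{a})=\tgt{\mu_{\tgt{a}}}$ act as identities on the adjacent factors. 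This equality $\fF(a)\mu_{\src{a}}=\mu_{\tgt{a}}\fG(a)$ is exactly the naturality square for $\mu\colon\fG\Rightarrow\fF$.

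The main obstacle is careful bookkeeping of guards through partial compositions, ensuring that each invocation of $\fM(a\cdot x\cdot b)=a\cdot \fM(x)\cdot b$ is on inputs where both sides are genuinely defined, and that the directional equalities $\venturi$ in Definition~\ref{def:cat-act}(3) can be upgraded to equalities when needed. Establishing at the outset the source and target identities $\src{\mu_e}=\fG(e)$ and $\tgt{\mu_e}=\fF(e)$ is what makes the telescoping compositions above collapse in the way claimed.
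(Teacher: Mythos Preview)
Your proposal is correct and follows essentially the same approach as the paper: both parts reduce to the naturality square $\fF(a)\mu_{\src{a}}=\mu_{\tgt{a}}\fG(a)$, obtained in (a) directly from the hypothesis and in (b) by applying the bimorphism identity to the decompositions $a=a(\src{a})=(\tgt{a})a$ in the regular action. Your extra step establishing $\src{\mu_e}=\fG(e)$ and $\tgt{\mu_e}=\fF(e)$ is a helpful clarification the paper leaves implicit, but otherwise the arguments coincide.
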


\begin{proof}
  \begin{iprf}
\item  By Lemma~\ref{lem:idempotent-guards}\ref{lempart:guard-reduc} for all $a,b,c\tin \acat{A}$,
  \begin{align*}
    \fM(ab) 
    & = (ab) \cdot \mu_{\src{(ab)}} = \func{F}(ab)\mu_{\src{(ab)}}
    ~\venturi~ \func{F}(a)\func{F}(b)\mu_{\src{b}}
    ~=~ a\cdot \fM(b) .
  \end{align*}
Since $\mu$ is a
  natural transformation,
  \begin{align*}
    \fM(bc)
    & ~=~ \func{F}(bc)\mu_{\src{(bc)}} 
      ~=~ \mu_{\tgt{(bc)}}\func{G}(bc)\\ 
    &  ~\venturi~ \mu_{\tgt{b}}\func{G}(b)\func{G}(c) 
      ~=~ \func{F}(b)\mu_{\src{b}}\func{G}(c) 
      ~=~ \fM(b)\cdot c.
  \end{align*}
  Thus, $\fM(abc)\venturi a\cdot \fM(b)\cdot c$, so $\fM$ is an
  $\cA$-bimorphism.

  \item By Proposition~\ref{prop:functors-are}, 
  the left and right actions determine functors
  $\func{F},\func{G}:\acat{A}\to \acat{X}$. For $e:\one_{\acat{A}}$, 
  define $\mu_e\defeq\func{M}(e)$. 
For $a\tin\acat{A}$ 
  \begin{align*}
    \func{F}(a) \mu_{\src{a}} &= \func{F}(a) \fM(\src{a}) = a\cdot \fM(\src{a}) = \fM(a(\src{a})) = \fM(a)\\
    &= \fM((\tgt{a})a) = \fM(\tgt{a})\cdot a = \fM(\tgt{a})\func{G}(a) = \mu_{\tgt{a}}\func{G}(a).
  \end{align*}
  It follows that $\mu$ is a natural transformation.\qedhere
  \end{iprf}
\end{proof}

We summarize the conclusion in
Proposition~\ref{prop:nat-trans-biact}\ref{proppart:get-nat-trans}, namely $\mu_e =
\fM(e)$ for every $e:\one_{\acat{A}}$, by writing $\mu\defeq\mathcal{M}(\one_A)$.
While $\one_{\cA}$ consists of many terms,
each of the types $x\cdot \one_{\cA}$
and $\one_{\cA}\cdot x$ is inhabited by a unique term, so 
$\one_{\cA}$ plays a role similar
to multiplying by $1$. Since $\mathcal{M}:\acat{A}\to\acat{X}$ is an $\acat{A}$-bimorphism,  
\begin{align*}
(\forall a:\acat{A}) &&
\mathcal{M}(a)~=~a\cdot \mathcal{M}(\tgt{a})~=~\mathcal{M}(\src{a})\cdot a, 
\end{align*} 
which shows that $\mathcal{M}$ is determined by $\mathcal{M}(\one_{\acat{A}})$. We 
write
\begin{align}\label{eqn:cyclic-bicap}
  \cA\cdot \mu \cdot \cA \defeq \left\{ a \cdot \mu_e \cdot \acute{a} ~\middle|~ a,\acute{a} 
  :\acat{A}, e:\one_{\acat{A}},\ a\lhd = \lhd \mu_{e},\ \mu_e \lhd = \lhd \acute{a} \right\}.
\end{align}
The bicapsule in \eqref{eqn:cyclic-bicap} is the \emph{cyclic}
$\acat{A}$-bicapsule determined by $\mu = \fM(\one_{\acat{A}})$.

\subsection{Units and counits}
A \emph{unit} in a category $\cA$ is a 
natural transformation $\mu:\id_{\cA}\Rightarrow \fH$, where
$\fH:\cA\to \cA$ is a functor. Similarly, 
a \emph{counit} is a natural transformation 
$\nu:\fH\Rightarrow \id_{\cA}$.  We 
will prove that units and counits 
are responsible for all characteristic structure.  
It therefore makes sense to translate these into capsule actions.
We show that a unit $\mu$ is characterized as 
an $(\cA,\cB)$-bimorphism $\fM:\cA\to \cB$
and a counit $\nu$ by an $(\cA,\cB)$-morphism $\fN:\cB\to \cA$.  
As the relationship is dual, 
and we emphasize substructures instead of quotients, we 
consider this relationship only for counits.

\begin{thm}\label{thm:counit-capsules}
  Let $\cA$ and $\cB$ be categories.
  \begin{ithm}

    \item\label{thmpart:bicap-to-counit} If both $\cA$ and $\cB$ are $(\cA,\cB)$-bicapsules and $\fN:\cB\to \cA$ is an
    $(\cA,\cB)$-morphism, then $\fF(b)\defeq \one_{\cA}\cdot b$ and $\fG(a)\defeq a\cdot \one_{\cB}$ 
    define functors $\func{F} :\cB \to \cA$ and $\fG : \cA \to \cB$, and $\nu\defeq\fN\fG(\one_{\cA})$ 
    is a counit $\nu:\fF\fG\Rightarrow \id_{\cA}$.

  \item\label{thmpart:counit-to-bicap} 
  If $\fF:\cB\to \cA$ and $\fG:\cA\to \cB$ are functors and $\nu:\fF\fG\Rightarrow \id_{\cA}$
  is a counit, then $\cA$ and $\cB$ are $(\cA,\cB)$-bicapsules, 
  where
  $a\cdot y\cdot b\defeq \fG(a)yb$ and $a\cdot x\cdot b=\fF\fG(a)x\fF\fG\fF(b)$ for $a,x:\cA$ and $b,y:\cB$.
  Also,
  $\fN'(b)\defeq \fF(b)\nu_{\src{\fF(b)}}$ is an $(\cA,\cB)$-morphism $\cB\to \cA$ such that
  $\fN'\fG(e)=\nu_{\fF\fG(e)}$ for all $e:\one_{\cA}$.
  \end{ithm}
\end{thm}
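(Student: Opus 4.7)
For part (a), the plan is to first apply Proposition~\ref{prop:functors-are} in both its left and right forms to the given biactions to extract the claimed functors: the right $\cB$-action on $\cA$ gives a unique functor $\fF:\cB\to\cA$ with $x\cdot b = x\fF(b)$, hence $\fF(b) = \one_\cA \cdot b$; the left $\cA$-action on $\cB$ gives $\fG:\cA\to\cB$ with $\fG(a) = a\cdot \one_\cB$. For $e:\one_\cA$, set $\nu_e \defeq \fN(\fG(e))$. Since functors send identities to identities, $\fG(e)\in\one_\cB$ and $\fF\fG(e)\in\one_\cA$. Applying the $(\cA,\cB)$-morphism property of $\fN$ to the defined products $e\cdot \fG(e) = \fG(e)\fG(e) = \fG(e)$ and $\fG(e)\cdot \fG(e) = \fG(e)$ yields
\[ \nu_e \;=\; \fN(e\cdot \fG(e)) \;=\; e\,\nu_e \quad\text{and}\quad \nu_e \;=\; \fN(\fG(e)\cdot \fG(e)) \;=\; \nu_e\,\fF\fG(e), \]
which force $\tgt{\nu_e} = e$ and $\src{\nu_e} = \fF\fG(e)$. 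Naturality at any $a:\cA$ then follows from two further applications of the bimorphism property:
\[ a\,\nu_{\src a} \;=\; \fN(a\cdot \fG(\src a)) \;=\; \fN(\fG(a)\fG(\src a)) \;=\; \fN(\fG(a)) \;=\; \fN(\fG(\tgt a)\fG(a)) \;=\; \nu_{\tgt a}\,\fF\fG(a). \]

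For part (b), I would first verify that the stated formulas define $(\cA,\cB)$-biactions: Lemma~\ref{lem:induced-act} applied to the functors $\fG:\cA\to\cB$, $\fF\fG:\cA\to\cA$, and $\fF\fG\fF:\cB\to\cA$ yields the left and right capsule axioms, and compatibility reduces to associativity in $\cA$ and $\cB$. Since $\nu_{\fF(\src b)}:\fF\fG\fF(\src b) \to \fF(\src b)$, the formula $\fN'(b) \defeq \fF(b)\nu_{\src{\fF(b)}}$ composes to a morphism $\fF\fG\fF(\src b) \to \fF(\tgt b)$ in $\cA$. The bimorphism property then hinges on naturality of $\nu$ at the morphism $\fF(b'):\cA$, namely $\fF(b')\nu_{\fF(\src{b'})} = \nu_{\fF(\tgt{b'})}\fF\fG\fF(b')$: taking $a:\cA$ and $b, b':\cB$ with $bb'$ defined (so $\tgt{b'} = \src b$), functoriality of $\fF$ and $\fG$ gives
\[ \fN'(\fG(a)bb') \;=\; \fF\fG(a)\fF(b)\fF(b')\nu_{\fF(\src{b'})} \;=\; \fF\fG(a)\fF(b)\nu_{\fF(\src b)}\fF\fG\fF(b') \;=\; a\cdot \fN'(b)\cdot b'. \]
Finally, for $e:\one_\cA$, the element $\fF\fG(e)$ is the identity on itself in $\cA$, so $\fN'(\fG(e)) = \fF\fG(e)\,\nu_{\fF\fG(e)} = \nu_{\fF\fG(e)}$, as required.

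The hardest aspect is bookkeeping of partial operations: at each step one must track which composition is in play (regular in $\cA$ or $\cB$, left $\cA$-action, right $\cB$-action, or bimorphism pairing) and verify that guards match so no product is $\bot$. In particular, the ``unnatural'' bicapsule on $\cA$ in part (b), factoring through $\fF\fG$ on the left and $\fF\fG\fF$ on the right, is precisely what is needed for $\fN'$ to be a bimorphism; the na\"ive choice $a\cdot x\cdot b = ax\fF(b)$ would fail the bimorphism equation by exactly the factor inserted when one applies naturality of $\nu$ to move $\fF(b')$ past $\nu$.
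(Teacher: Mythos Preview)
Your proposal is correct and follows essentially the same approach as the paper: extract $\fF$ and $\fG$ via Proposition~\ref{prop:functors-are}, then verify naturality of $\nu$ and the bimorphism property of $\fN'$ by unwinding the $(\cA,\cB)$-morphism axioms together with naturality of $\nu$ at $\fF(b')$. The only cosmetic differences are that you explicitly pin down $\tgt{\nu_e}=e$ and $\src{\nu_e}=\fF\fG(e)$ before naturality (the paper leaves this implicit), and in part~(b) you verify left and right compatibility of $\fN'$ in a single combined computation, whereas the paper checks $\fN'(a\cdot y)=a\cdot\fN'(y)$ and $\fN'(yb)=\fN'(y)\cdot b$ separately.
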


\begin{proof}
 \begin{iprf} 
\item  By Proposition~\ref{prop:functors-are}, the maps $\fF$ and $\fG$  define functors 
  where $x\cdot b= x\fF(b)$ and $a\cdot y= \fG(a)y$, for $a,x:\cA$ and $b,y:\cB$.
  Put $\nu=\fN(\fG(\one_A))$. For $a:\acat{A}$, 
  \begin{align*}
    a\nu_{\src{a}}
    & = a\fN\fG(\src{a})
      = a\fN(\src{\fG(a)})
      = \fN(a\cdot \src{(\fG(a))}) 
      = \fN(\fG(a)\src{(\fG(a))}) 
      = \fN\fG(a), 
  \end{align*}
  and 
  \begin{align*}
    \nu_{\tgt{a}} \fF\fG(a)
    & = \fN\fG(\tgt{a})\cdot\fG(a)
      = \fN(\tgt{\fG(a)})\cdot \fG(a) 
      = \fN((\tgt{\fG(a)})\fG(a)) 
      = \fN\fG(a).
  \end{align*}
  Hence, $a\nu_{\src{a}} = \nu_{\tgt{a}} \fF\fG(a)$ for all $a:\acat{A}$, so $\nu:\fF\fG\Rightarrow \id_{\cA}$ is a natural transformation.

  We show that $\mathcal{N}'(b) \defeq \fF(b)\nu_{\src{\fF(b)}}$ yields an
  $(\cA,\cB)$-morphism $\mathcal{N}': \cB \to \cA$. 
  First, if $a:\cA$ and $y:\cB$
  with $a\lhd = \lhd y$, then 
  \begin{align*}
    \mathcal{N}'(a\cdot y) &= \mathcal{N}'(\fG(a)y) \\
    &= \fF(\fG(a)y) \nu_{\src{\fF(\fG(a)y)}} \\
    &= \fF\fG(a)\fF(y) \nu_{\src{(\fF\fG(a)\fF(y))}} \\
    &= \fF\fG(a) \fF(y)\nu_{\src{\fF(y)}} \\
    &= \fF\fG(a) \mathcal{N}'(y) \\
    &= a\cdot \mathcal{N}'(y).
  \end{align*}
  Next, if $b:\cB$ such that $y\lhd = \lhd b$, then 
  \begin{align*}
    \mathcal{N}'(yb) &= \fF(yb)\nu_{\src{\fF(yb)}} \\
    &= \nu_{\tgt{\fF(yb)}}\fF\fG\fF(yb) \\ 
    &= \nu_{\tgt{(\fF(y)\fF(b))}}\fF\fG\fF(y)\fF\fG\fF(b) \\
    &= \nu_{\tgt{\fF(y)}}\fF\fG\fF(y)\fF\fG\fF(b) \\
    &= \fF(y)\nu_{\src{\fF(y)}}\fF\fG\fF(b) \\
    &= \mathcal{N}'(y)\fF\fG\fF(b) \\
    &= \mathcal{N}'(y) \cdot b.
  \end{align*}
  Finally, consider $e:\one_{\cA}$. Since 
  functors map identities to identities, we deduce that 
  \begin{align*}
    \mathcal{N}'(\fG(e)) &= \fF\fG(e) \nu_{\src{\fF\fG(e)}} = \nu_{\fF\fG(e)} . \qedhere
  \end{align*} 
 \end{iprf}
\end{proof}

\subsection{Adjoint functor pairs}
\label{sec:biacts-adjoints}
Adjoint functor pairs are an important 
special case of natural transformations. 
We give one of many equivalent definitions
\cite{Riehl}*{\S 4.1}.
\begin{defn}\label{def:adjoint}
  Let $\cA$ and $\cB$ be categories. An \emph{adjoint functor pair} is
  a pair of functors $\fF:\cB\to\cA$ and $\fG : \cA  \to\cB$ with the 
  following property. For every object $U$ in $\cB$ and
  $V$ in $\cA$, there is an invertible function
  \[
    \Psi_{UV} : \cA_1(\fF(U),V) \to \cB_1(U, \fG(V))
  \]
  that is \emph{natural} in the following sense: if $b\tin\cB_1(X,U)$ and
  $a\tin \cA_1(V,Y)$ for objects $X$ in $\cB$ and $Y$ in
  $\cB$ then, for $x : \cA_1(\fF(U),V)$,
  \begin{equation}\label{def:adjoint-classic}
    \Psi_{XY}(a x \fF(b)) 
    = \fG(a)\Psi_{UV}(x)b . 
  \end{equation}
  We say $\fF$ is \emph{left-adjoint} to $\fG$ and $\fG$ is
  \emph{right-adjoint} to $\func{F}$ and write $\fF : \cB
  \adjoint_{\Psi} \cA : \fG$.
\end{defn}

We now characterize adjoint functor pairs in terms of bicapsules.  
A reader may find it useful to review the translation between 
categories and abstract categories in Remark~\ref{rem:bi-inter}. The invertibility of $\Psi_{UV}$ in
Definition~\ref{def:adjoint} is equivalent to a pseudo-inverse property of
morphisms of bicapsules. 

For types $X$ and $Y$, functions $\fM : X^?\to
Y^?$ and $\fN : Y^?\to X^?$ are \emph{pseudo-inverses} if,
for $x:X^?$ and $y:Y^?$,
$\fM\fN\fM(x)=
\fM(x)$ and $\fN\fM\fN(y)= \fN(y)$.

\pagebreak

\begin{thm}\label{thm:iso-biacts-adjoints} 
  Let $\cA$ and $\cB$ be categories. 
  \begin{ithm}
    \item\label{thmpart:biaction-to-adjoint}
    If $\cA$ and $\cB$ are $(\cA,\cB)$-bicapsules and $\fM : \cA\to \cB$ and
    $\fN:\cB\to \cA$ are $(\cA,\cB)$-morphisms that are pseudo-inverses, then
    $\fF:\cB\dashv_{\Psi} \cA:\fG$ where
    \begin{align*}
      \func{F}&: \acat{B}\to\acat{A},\quad \func{F}(b)\defeq \one_{\cA}\cdot b,\\
      \func{G}&: \acat{A}\to\acat{B},\quad \fG(a) \defeq a\cdot \one_{\cB},
    \end{align*}
    and for $x:\acat{A}_1(\func{F}(U),V)$ and $y:\acat{B}_1(U,\func{G}(V))$ the bijections $\Psi_{UV}$ and $\Psi_{UV}^{-1}$ are given by
    \begin{align*}
      \Psi_{UV}(x) &\defeq \mathcal{M}(x) & 
      \Psi_{UV}^{-1}(x) &\defeq \mathcal{N}(y) . 
    \end{align*}
    
    \item 
      If $\fF:\cB \dashv_{\Psi} \cA:\cG$ is an adjoint functor pair, then $\cA$
      and $\cB$ are $(\cA,\cB)$-bicapsules with actions defined by
      \[a\cdot y \defeq \fG(a)y\quad\text{and}\quad x\cdot b  \defeq x\fF(b)\]
      for $a,x: \cA$ and $b,y:\cB$. Furthermore, $\Psi$ yields a pair
      of $(\cA,\cB)$-morphisms $\fM:\cA\to \cB$ and $\fN:\cB\to \cA$ that are
      pseudo-inverses, where 
    \begin{align*}
      \fM(x) & \defeq \Psi_{UV}(x), & x:\cA\cdot \one_{\cB}\defeq\bigsqcup_{\id_V:\one_{\cA}}\bigsqcup_{\id_U:\one_{\cB}}\cA_1(\fF(U),V),\\
      \fN(y) & \defeq \Psi^{-1}_{UV}(y), & y:\one_{\cA}\cdot \cB\defeq\bigsqcup_{\id_V:\one_{\cA}}\bigsqcup_{\id_U:\one_{\cB}}\cB_1(U,\fG(V)).
    \end{align*}
  \end{ithm}
\end{thm}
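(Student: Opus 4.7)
The plan is to translate between the classical definition of adjoint functor pairs and the bicapsule/bimorphism framework, relying on Proposition~\ref{prop:functors-are} to convert biactions into functors and on the Peirce decomposition (Proposition~\ref{prop:Peirce-decomposition}) to identify hom-sets as slices of the form $e\acat{A}f$. The basic dictionary is that a morphism $x:\acat{A}_1(\func{F}(U),V)$ is precisely a term of the slice $\id_V\,\acat{A}\,\func{F}(U)$, equivalently a term with $\tgt{x}=\id_V$ and $\src{x}=\func{F}(U)=\one_{\acat{A}}\cdot U$; and similarly for $\acat{B}_1(U,\func{G}(V))$. Naturality of $\Psi$ then corresponds exactly to the bimorphism property of $\mathcal{M}$.

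For part (a), I would first invoke Proposition~\ref{prop:functors-are} applied to the left $\acat{A}$- and right $\acat{B}$-actions on $\acat{A}$ (and on $\acat{B}$) to recognise that $\func{F}(b)\defeq\one_{\acat{A}}\cdot b$ and $\func{G}(a)\defeq a\cdot\one_{\acat{B}}$ are indeed functors. Next, I would check that for identities $\id_U:\one_{\acat{B}}$ and $\id_V:\one_{\acat{A}}$, the bimorphism property forces $\mathcal{M}$ to send the slice $\id_V\,\acat{A}\,\func{F}(U)$ into the slice $\id_V\cdot\acat{B}\cdot\one_{\acat{B}}\id_U$, which is $\acat{B}_1(U,\func{G}(V))$ (and symmetrically for $\mathcal{N}$). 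The main obstacle is verifying the naturality identity $\Psi_{XY}(ax\func{F}(b))=\func{G}(a)\Psi_{UV}(x)b$ of Definition~\ref{def:adjoint}; but in the bicapsule language $ax\func{F}(b)=a\cdot x\cdot b$ and $\func{G}(a)\mathcal{M}(x)b=a\cdot\mathcal{M}(x)\cdot b$, so the naturality equation reduces directly to $\mathcal{M}(a\cdot x\cdot b)\venturi a\cdot\mathcal{M}(x)\cdot b$, which is exactly Definition~\ref{defn:bi-action}(b). Invertibility of $\Psi_{UV}$ follows because on the pair of slices the compositions $\mathcal{M}\mathcal{N}$ and $\mathcal{N}\mathcal{M}$ agree with the identity: by the pseudo-inverse identities $\mathcal{M}\mathcal{N}\mathcal{M}=\mathcal{M}$ and $\mathcal{N}\mathcal{M}\mathcal{N}=\mathcal{N}$, restricted to the image of $\mathcal{M}$ (respectively $\mathcal{N}$), these compositions act as the identity, and the image of $\mathcal{M}$ restricted to the slice is $\acat{B}_1(U,\func{G}(V))$.

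For part (b), the direction is essentially dual. I would define $a\cdot y\defeq\func{G}(a)y$ and $x\cdot b\defeq x\func{F}(b)$ and verify the bicapsule axioms of Definitions~\ref{def:cat-act} and \ref{def:vanilla} using functoriality of $\func{F}$ and $\func{G}$ together with associativity in $\acat{A}$ and $\acat{B}$; the compatibility $a\cdot(x\cdot b)=(a\cdot x)\cdot b$ comes from associativity, and the capsule condition $a\cdot(xy)=(a\cdot x)y$ from $\func{G}(a)(xy)=(\func{G}(a)x)y$. Then I would define $\mathcal{M}$ piecewise: on each slice $\acat{A}_1(\func{F}(U),V)$ set $\mathcal{M}\defeq\Psi_{UV}$, and extend by $\bot$ on morphisms that do not lie in any such slice; similarly set $\mathcal{N}\defeq\Psi^{-1}_{UV}$ on the slices $\acat{B}_1(U,\func{G}(V))$.

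The bimorphism property of $\mathcal{M}$ and $\mathcal{N}$ now reads as the naturality equation~\eqref{def:adjoint-classic} evaluated on the appropriate slice, and pseudo-invertibility is automatic because $\Psi$ and $\Psi^{-1}$ are strict two-sided inverses on each hom-set. The subtle point here is that genuine (two-sided) inverses on each slice are weaker than global inverses on all of $\acat{A}$ and $\acat{B}$—$\mathcal{M}$ and $\mathcal{N}$ must be defined everywhere (by $\bot$ outside the slices), but may not be mutual inverses there—so the pseudo-inverse formulation is exactly the right notion to capture what the naturality bijections $\Psi_{UV}$ give; this is the conceptual obstacle one must absorb before the computations become routine.
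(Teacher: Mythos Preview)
Your proposal is correct and follows essentially the same approach as the paper's proof: both directions hinge on Proposition~\ref{prop:functors-are} to interchange functors with capsule actions, identify the hom-sets $\acat{A}_1(\func{F}(U),V)$ and $\acat{B}_1(U,\func{G}(V))$ with Peirce slices $f\acat{A}\cdot e$ and $f\cdot\acat{B}e$, and then read the naturality equation~\eqref{def:adjoint-classic} as the bimorphism identity. The paper is terser---in part~(a) it simply asserts that fullness of the capsule actions plus the pseudo-inverse identities force $\mathcal{M}$ and $\mathcal{N}$ to be mutual inverses on $\acat{A}\cdot\one_{\acat{B}}$ and $\one_{\acat{A}}\cdot\acat{B}$, whereas you argue via the images of $\mathcal{M}$ and $\mathcal{N}$---but the content is the same.
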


\begin{proof}
  \begin{iprf}
  \item Since $\acat{A}$ and $\acat{B}$ are
  $(\acat{A},\acat{B})$-bicapsules, by Proposition~\ref{prop:functors-are} there
  are functors $\fF:\cB\to \cA$ and $\fG:\cA\to \cB$ defining the right
  $\cB$-capsule $\cA_{\cB}$ and the left $\cA$-capsule ${_{\cA} \cB}$
  respectively. Since $\fM$ and $\fN$ are pseudo-inverses and capsule
  actions are full, $\fM$ inverts $\fN$ on $\cA\cdot \one_{\cB}$ and $\fN$
  inverts $\fM$ on $\one_{\cA}\cdot \cB$. For objects $U$ of $\cB$ and $V$ of
  $\cA$, let $e=\id_U$ and $f=\id_V$,  
  so that $\cA_1(\fF(U),V)=f\cA\cdot e$ and $\cB_1(U,\fG(V))=f\cdot \cB e$.
  Define $\Psi_{UV}(x) \defeq \fM(x)$ for $x:\cA_1(\fF(U),V)$.
  For $y:\cB_1(U,\fG(V))$, the map $y\mapsto \fN(y)$ inverts
  $\Psi_{UV}$, so the result follows. 
\item By Proposition~\ref{prop:functors-are}, we can exchange
functors for capsules, so $\fF:\cB\to \cA$ affords a right $\cB$-capsule
$\cA_{\cB}$. We enrich this action by adding the left regular action by $\cA$ to
produce an $(\cA,\cB)$-bicapsule $_{\cA}\cA_{\cB}$.  We do likewise with $\fG:\cA\to \cB$
producing a second $(\cA,\cB)$-capsule $_{\cA}\cB_{\cB}$.

To encode $\Psi$, we define an $(\cA,\cB)$-bimorphism $\fM:\cA\to \cB$ by
$\fM(x) \defeq \Psi_{UV}(x)$ for $x:A_1(\fF(U),V)$. This defines $\fM$ on 
\[
  \bigsqcup_{U:\cB_0}\bigsqcup_{V:\cA_0}\cA_1(\fF(U),V)
  =\bigsqcup_{e:\one_{\cB}}\bigsqcup_{f:\one_{\cA}}f\cA\cdot e
  = \cA\cdot \one_{\cB}.
\]
For all other values, $\fM$ is undefined.  Now \eqref{def:adjoint-classic} shows that  on  $\cA\cdot \one_{\cB}$ with $a:\cA_1(V,Y)$, $b:\cB_1(X,U)$, and $x : \cA_1(\fF(U),V)$, 
\begin{align*}
  \fM(ax\cdot b) & = \Psi_{UV}(ax\fF(b)) = \fG(a)\Psi_{XY}(x)b=a\cdot \fM(x)b,
\end{align*}
so $\fM$ is an $(\cA,\cB)$-bimorphism.  We define $\fN:\cB\to \cA$ analogously: if $y:\one_{\cA}\cdot \cB$, then  $\fN(y)\defeq \Psi^{-1}(y)$ (for suitable subscripts of $\Psi$), and otherwise $\fN(y)$ is undefined. Therefore, for $x: \acat{A}\cdot\one_{\acat{B}}$ and $y: \one_{\cA}\cdot \cB$, 
\begin{align*}
  (\fM\fN\fM)(x) & = \Psi(\Psi^{-1}(\Psi(x)))=\Psi(x)=\fM(x)\\
  (\fN\fM\fN)(y) & = \Psi^{-1}(\Psi(\Psi^{-1}(y)))=\Psi^{-1}(y)=\fN(y).  \qedhere
\end{align*}
  \end{iprf}
\end{proof}

\subsection{A computational model for natural transformations}
\label{sec:comp-nat-trans}

We use the algebraic perspective of Section~\ref{sec:cats-algestruct} to 
discuss briefly a model for computing with natural transformations. The next definition
formalizes how to treat morphisms of a category as functors between two other
categories.

\begin{defn}\label{def:natural-action}
  Let $\acat{N}$, $\acat{A}$ and $\acat{B}$ be abstract categories. A
  \emph{natural map} of $\acat{N}$ from $\acat{A}$ to $\acat{B}$ consists of
  functions $\cdot : \one_{\acat{N}} \times \acat{A} \to \acat{B}$ and
  $\bullet : \acat{N} \times \one_{\acat{A}} \to \acat{B}$ that satisfy the
  following properties:\\[1ex]
  \hspace*{0.5cm}\begin{tabular}{llll}
   (1) & $(\forall x,y\tin\acat{A})$& $(\forall e:\one_{\acat{N}})$ &  $e\cdot (xy)\venturi (e\cdot x)(e\cdot y)$;\\[0.5ex]
    (2) & $(\forall x\tin\acat{A})$& $(\forall e:\one_{\acat{N}})$ &  $e\cdot (\src{x}) = \src{(e \cdot x)}$\; and\; $e\cdot (\tgt{x}) = \tgt{(e\cdot x)}$;\\[0.5ex]
    (3) & $(\forall x\tin\acat{A})$& $(\forall s:\acat{N})$  & $(s \bullet (\tgt{x}))((\src{s})\cdot x)     = ((\tgt{s})\cdot x)(s\bullet (\src{x}))$;\\[0.5ex]
   (4) &  $(\forall f:\one_{\acat{A}})$& $(\forall s,t \tin \acat{N})$ & $(st)\bullet f \venturi (s \bullet f)(t\bullet f)$.
  \end{tabular}
\end{defn}

The use of $\venturi$ in (1) and (4) depends only on $xy$ and $st$,
respectively, being defined. For the composition signature $\Omega$
from~\eqref{eqn:comp-sig},
conditions (1) and (2) imply that there is a function
$\one_{\acat{N}} \to \mathrm{Hom}_{\Omega}(\acat{A}, \acat{B})$ given by
$e\mapsto (x\mapsto e\cdot x)$ where $\mathrm{Hom}_{\Omega}(\acat{A}, \acat{B})$
is the type of morphisms between abstract categories; see also
Definition~\ref{def:abs-cat}. This function $\one_{\acat{N}} \to
\mathrm{Hom}_{\Omega}(\acat{A}, \acat{B})$ enables us to treat the objects of
$\cat{N}$ as functors from $\cat{A}$ to $\cat{B}$. As illustrated in Example
\ref{ex:natural-maps}, conditions (3) and (4) are equivalent to the commutative
diagrams in Figure~\ref{fig:functor-cat-act} in the shaded $(2,2)$ and $(3,1)$
entries, respectively.

\begin{figure}[!htbp]
  \centering
  \includegraphics[scale=0.98]{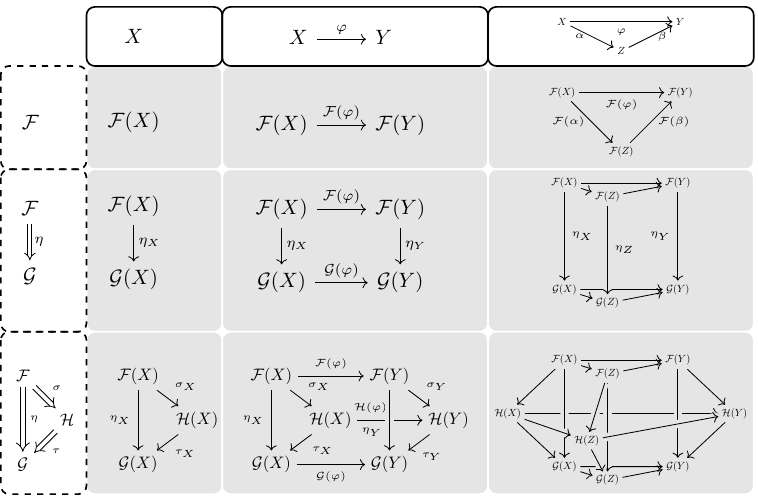} 
  \caption{A natural map of $\cat{N}$ (displayed in the left dotted column) from
  $\cat{A}$ (displayed in top row) to $\cat{B}$ (shaded gray)}
  \label{fig:functor-cat-act}
\end{figure}

\begin{ex}\label{ex:natural-maps} We illustrate how the four conditions of
Definition~\ref{def:natural-action} translate to categories with objects and
morphisms. Let $\acat{A}$ and $\acat{B}$ be two such categories, and let
$\Omega$ be the composition signature from~\eqref{eqn:comp-sig}. Then
$\mathrm{Hom}_{\Omega}(\acat{A},\acat{B})$ is the type of {functors} from $\acat{A}$ to
$\acat{B}$. Let $\acat{N}$ be the category whose objects are the {functors} in
$\mathrm{Hom}_{\Omega}(\acat{A},\acat{B})$ and whose morphisms are natural
transformations. Let $\eta : \func{F} \Rightarrow \func{G}$ be a natural
transformation between $\func{F},\func{G}\tin
\mathrm{Hom}_{\Omega}(\acat{A},\acat{B})$. Treating $\acat{N}$ as an abstract category,
the guards are defined as follows: $\src{\eta} \defeq (\id_{\func{F}}: \func{F}
\Rightarrow \func{F})$ and $\tgt{\eta} \defeq (\id_{\func{G}} :
\func{G}\Rightarrow \func{G})$. Define $\cdot : \one_{\acat{N}}\times \acat{A}
\to \acat{B}$ by  $(\id_{\func{F}}, \varphi) \longmapsto \func{F}(\varphi)$,
  and  $\bullet : \acat{N} \times \one_{\acat{A}} \to \acat{B}$ by
    $(\eta, \id_X) \longmapsto \eta_X$. Now the  conditions of
    Definition~\ref{def:natural-action} become: \\[1.5ex]
  \hspace*{0.5cm}\begin{tabular}{llll}
   (1) & $(\forall a,b \tin \acat{A})$ & $(\forall \id_{\func{F}}\tin \one_{\acat{N}})$ & $\func{F}(ab) \venturi \func{F}(a)\func{F}(b)$;\\[0.7ex]
  (2) & $(\forall a \tin \acat{A})$ & $(\forall \id_{\func{F}}\tin \one_{\acat{N}})$ & $\func{F}(\src{a}) = \src{(\func{F}(a))}$ and  $\func{F}(\tgt{a}) = \tgt{(\func{F}(a))}$;\\[0.7ex]
   (3) & \multicolumn{3}{l}{$(\forall (a:X\to Y) \tin \acat{A})\quad (\forall (\eta : \func{F}\Rightarrow \func{G})\tin\acat{N})\quad \eta_Y\func{F}(a) = \func{G}(a)\eta_X$;}\\[0.7ex]
   (4) & $(\forall \id_X\tin\one_{\acat{A}})$ & $(\forall \eta,\epsilon
    \tin\acat{N})$ & $(\eta\epsilon)_X \venturi \eta_X\epsilon_X$.~\exqed
  \end{tabular}
\end{ex} 

The theory of functors and natural transformations is equivalent to that of natural maps on
abstract categories, but the latter allows us to
use multiple encodings of functors and natural transformations such as those
available in computer algebra systems. If, for example, we compute the 
derived subgroup
$\gamma_2(G)$ of a group $G$ in \textsc{Magma}, then the system may use an
encoding for $\gamma_2(G)$ that differs from 
that supplied for $G$. In such cases, \textsc{Magma} also returns 
an inclusion homomorphism
$\lambda_G : \gamma_2(G)\hookrightarrow G$.

\section{The Extension Theorem}
\label{sec:induced}   

One of our goals is a categorification of characteristic subgroups and their analogues
 in varieties of algebraic structures. We start by translating the characteristic condition into
 the language of natural transformations. 

\subsection{Natural transformations express characteristic subgroups}
\label{sec:nat-trans-express}

Suppose that $H$ is a characteristic subgroup of a group $G$.   
Hence, every automorphism $\varphi:G\to G$ restricts to an 
automorphism $\varphi|_H:H\to H$ of $H$.  In categorical terms, 
we now treat $\acat{A}\defeq \Autcat(G)$ as the subcategory of 
$\cat{Grp}$ consisting of a single
object $G$ and all isomorphisms $G\to G$.  Likewise, we treat $\acat{B}\defeq
\Autcat(H)$ as a subcategory of $\cat{Grp}$.  The restriction defines a
functor $\fC:\acat{A}\to \acat{B}$.  
Of course, $\Autcat(G)$ and $\Autcat(H)$ are also groups
and $\fC$ is a group homomorphism, but the discussion below 
justifies the functor language.  

Now we use the fact that $H$ is a subgroup of $G$ (by using the inclusion map
$\rho_G:H\hookrightarrow G$).
That $\varphi(H)$ is a subgroup of $H$ can be expressed as 
\(
  \varphi\rho_G=\rho_G\varphi|_H=\rho_G\fC(\varphi).
\)  
Recognizing the different categories, we use 
the inclusion functors $\fI:\acat{A}\to \acat{Grp}$ 
and $\fJ:\acat{B}\to \acat{Grp}$ to deduce the following:
\begin{equation*} 
  \fI(\varphi)\rho_G=\rho_G\fJ\fC(\varphi).
\end{equation*}
Thus, a characteristic subgroup determines a natural transformation
\[
  \rho:\fJ\fC\Rightarrow \fI.
\]
The next definition generalizes Definition~\ref{def:firstcounital}.

\begin{defn}\label{def:counital}
  Fix a variety $\cat{E}$ 
with subcategories $\cat{A}$ and $\cat{B}$ and
  inclusion functors $\func{I} : \cat{A} \to \cat{E}$ and $\func{J} : \cat{B}
  \to \cat{E}$. 
A \emph{counital} is
  a natural transformation $\rho : \func{JC}\Rightarrow \func{I}$ for some functor $\func{C}:\cat{A}\to \cat{B}$. 
  The counital $\rho : \func{JC}\Rightarrow \func{I}$ is \emph{monic} if
  $\rho_X$ is a monomorphism for all objects $X$ in $\cat{A}$. 
\end{defn}

A common way to illustrate categories, functors, and natural transformations 
uses a 2-dimensional diagram 
where categories are vertices, functors are directed edges, and 
natural transformations are oriented 2-cells. 
The next diagram illustrates the counital discussed above.

\begin{center}
\pgfmathsetmacro{\rad}{1.25}
\begin{tikzpicture}[scale=0.8]
  \coordinate (t) at (0,0);
  \coordinate (b) at (-1.25,-1.8);
  \coordinate (d) at (1.75,-2.5);

  \fill[color=red!30] (t) -- (b) -- (d) -- cycle;

  \node[scale=0.8, rounded corners, fill=black!10] (Grp) at (t) {$\cat{Grp}$};
  \node[scale=0.8, rounded corners, fill=black!10] (G) at (b) {$\cat{Aut}(G)$};
  \node[scale=0.8, rounded corners, fill=black!10] (K) at (d) {$\cat{Aut}(H)$};

  \draw (G) edge[->, "{$\fI$}"{name=I, above left}, thick] (Grp);
  \draw (K) edge[->, "{$\fJ$}"{above right}, thick] (Grp);
  \draw (G) edge[->, "{$\func{C}$}"{below}, thick] (K);

  \draw[Rightarrow] (1.2,-2.1) -- (-0.5,-0.95) node[midway, above, yshift=2pt] {$\rho$};
\end{tikzpicture}
\end{center}

We now generalize the notion of a characteristic subgroup to an 
arbitrary algebra in a variety.

  \begin{defn}\label{def:A-invariance}
Let $\acat{E}$ be a variety and $G,H:\acat{E}$. Let $\acat{A}$ be a subcategory of $\acat{E}$ whose objects are those of $\acat{E}$.  We denote by  $\acat{A}(G)$ the full subcategory of $\acat{A}$ with a single object $G$. Let $\fI: \acat{A}(G) \to \acat{A}$ and $\fJ :
  \acat{A}(H) \to \acat{A}$ be inclusions. A monomorphism $\iota :
  H\hookrightarrow G$ is \emph{$\acat{A}$-invariant} if there is a functor $\fC:
  \acat{A}(G) \to \acat{A}(H)$ and a monic counital $\eta : \func{JC}
  \Rightarrow \fI$ such that $\eta_G$ is equivalent to $\iota$ (see Section~\ref{sec:subobjects-images} for the definition of equivalence).
\end{defn}

Using the language of Definition~\ref{def:A-invariance}, a characteristic
subgroup $H$ of a group $G$ determines and is determined by a
$\lcore{Grp}$-invariant monomorphism $H\hookrightarrow G$. For fully invariant
subgroups, the corresponding monomorphism is $\acat{Grp}$-invariant.

\subsection{The extension problem and representation theory}
\label{sec:ext-prob}

In Section~\ref{sec:nat-trans-express}, we observed that a characteristic
subgroup $H$ of $G$ determines a functor $\fC : \acat{A} \to\acat{B}$ and a
natural transformation $\rho : \func{JC}\Rightarrow \func{I}$, where $\acat{A}$
and $\acat{B}$ are categories with one object, namely $G$ and $H$ respectively.
If a group $\hat{G}$ is isomorphic to
$G$, then, by Fact~\ref{fact:aut-iso}, $\hat{G}$ has a characteristic subgroup corresponding to $H$. It
seems plausible that we may be able to extend the functor $\func{C}$ to more
groups and, hence, to larger categories. We now make this notion of extension
precise and generalize it to the setting of varieties of algebras.

Fix a variety $\acat{E}$.
Let $\acat{A}$, $\acat{B}$, and $\acat{C}$ be subcategories of 
$\acat{E}$ where $\acat{A} \leq \acat{C}$.
We have inclusion functors $\fI:\acat{A}\to \acat{E}$,
$\fJ:\cB\to \cE$, $\fK:\acat{C}\to \acat{E}$ and $\fL:\acat{A}\to \acat{C}$
such that $\fI=\fK\fL$.  Suppose that  $\rho:\fJ\fC\Rightarrow \fI$ is a monic
counital as depicted in Figure~\ref{fig:char-ext-2-cells}(a). The extension problem
asks whether there is a functor $\fD:\acat{C}\to \acat{E}$ and a natural
transformation $\sigma:\fD\Rightarrow \fK$ such that 
\begin{align}\label{eqn:extension-nat-trans}
  \rho_X = \sigma_{\fL(X)}\tau_X
\end{align}
for some invertible morphism $\tau_X : \func{JC}(X) \to
\func{DL}(X)$ for all objects $X$ of $\acat{A}$. This is depicted in Figure~\ref{fig:char-ext-2-cells}(b).

\begin{figure}[!htbp]
  \pgfmathsetmacro{\rad}{1.5}
  \begin{subfigure}[t]{0.49\textwidth}
    \centering
      \begin{tikzpicture}
        \fill[color=blue!30] (0,0) circle (1.02*\rad cm);
      
        \node[scale=0.8, rounded corners, fill=black!10] (G) at (-\rad, 0) {$\cat{A}$};
      
        \node[scale=0.8, rounded corners, fill=black!10] (B) at (0, -\rad) {$\cat{B}$};

        \node[scale=0.8, rounded corners, fill=black!10] (Grp) at (\rad, 0) {$\cat{E}$};
      
        \node (BB) at (0, -\rad + 0.125*\rad) {};
      
        \draw[very thick, color=black,->] (G) edge[
          out=84, in=97, looseness=1.5, 
          "$\fI$"{below,name=I}] (Grp);
        \draw[very thick,->] (G) edge[
          out=-84, in=173, looseness=1.05, 
          "$\fC$"{left}] (B);
          \draw[very thick,->] (B) edge[
          out=7, in=-97, looseness=1.05, 
          "$\fJ$"{right}] (Grp);
      
        \draw (BB) edge[
            arrows=-Implies,
            double distance=3pt,
            scaling nfold=2, 
            "$\rho$"{right, name=Iota}, pos=0.5, yshift=1em] (I);
      \end{tikzpicture}
    \caption{A diagram of a counital}
    \label{fig:counital-plane}
    \end{subfigure}
    \hfill
    \begin{subfigure}[t]{0.49\textwidth}
      \centering
        \begin{tikzpicture}
          \fill[color=blue!30] (0,0) circle (1.02*\rad cm);
          \fill[color=red!30] (0.5*\rad,0) circle (0.52*\rad cm);
        
          \node[scale=0.8, rounded corners, fill=black!10] (A) at (-\rad, 0) {$\cat{A}$};

          \node[scale=0.8, rounded corners, fill=black!10] (B) at (0, -\rad) {$\cat{B}$};
          \node (BB) at (0, -\rad + 0.125*\rad) {};
        
          \node[scale=0.8, rounded corners, fill=black!10] (C) at (0, 0) {$\cat{C}$};
        
          \node[scale=0.8, rounded corners, fill=black!10] (E) at (\rad, 0) {$\cat{E}$};
        
          \draw[very thick, color=black,->] (A) edge["$\fL$", pos=0.2] (C);
        
          \draw[very thick, color=black,->] (A) edge[
            out=84, in=97, looseness=1.5, 
            "$\fI$"{below,name=I}] (E);
          \draw[very thick,->] (A) edge[
            out=-84, in=173, looseness=1.05, 
            "$\func{C}$"{below}] (B);
          \draw[very thick,->] (B) edge[
            out=7, in=-97, looseness=1.05, 
            "$\fJ$"{right}] (E);
        
          \draw[very thick, color=black,->] (C) edge[
            out=81, in=99, looseness=1.3, 
            "$\fK$"{below,name=K}] (E);
          \draw[very thick,->] (C) edge[
            out=-81, in=-99, looseness=1.3, 
            "$\fD$"{below}, 
            ""{name=D, outer sep=2pt}] (E);
        
          \draw (D) edge[
              arrows=-Implies,
              double distance=3pt,
              scaling nfold=2, 
              "$\sigma$"{right, name=Kappa}, pos=0.4] (K);
          \draw (BB) edge[
            out=135, in=-120,
            arrows=-Implies,
            double distance=3pt,
            scaling nfold=2, 
            "$\tau$"{right, name=Tau}, pos=0.4] (C);
          \draw (C) edge[
            out=135, in=-120,
            arrows=-Implies,
            double distance=3pt,
            scaling nfold=2, 
            pos=0.4] (I);
            \end{tikzpicture}
      \caption{A diagram of an extension}
      \label{fig:ext-rho}
  \end{subfigure}
  \caption{Extending a counital}
  \label{fig:char-ext-2-cells}
\end{figure}
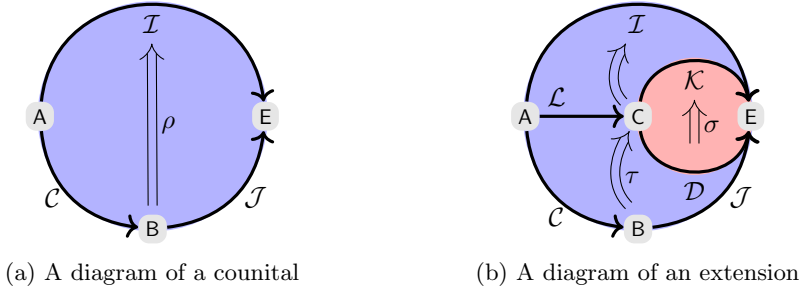

For now, we are concerned only with the existence and construction of such extensions. For use
within an isomorphism test, it will be necessary to develop tools to compute
efficiently with categories; the data types of Section~\ref{sec:comp-nat-trans}
are designed for that purpose.

In light of Proposition~\ref{prop:nat-trans-biact}, we can explore the natural
transformations from Figure~\ref{fig:char-ext-2-cells} through the lens of actions.
Recall that concatenation always denotes regular actions. The natural transformation
$\rho$ defined above is encoded as an $\cA$-bimorphism $\fR:\cA\to \cE$, where
$\rho=\fR(\one_{\cA})$, and this bimorphism defines a cyclic $\cA$-bicapsule
$\Delta\defeq \cA\rho \cdot \cA$ via \eqref{eqn:cyclic-bicap}, which we fix
throughout.

Our goal in part is to extend the cyclic $\cA$-bicapsule $\Delta$ to a cyclic
$\cC$-bicapsule $\Sigma$. Specifically, we will define $\Sigma\defeq\cC \sigma
\cdot \cC$, where $\sigma : \func{D} \Rightarrow \func{K}$ is depicted in
Figure~\ref{fig:char-ext-2-cells}(b).  This is the content of
Theorem~\ref{thm:extension}, but given in  the general setting of 
algebras in a variety.
By construction (Proposition~\ref{prop:nat-trans-biact}), the left actions on
$\Delta$ and $\Sigma$ are regular; hence, we focus on right actions.

\begin{ex}\label{ex_ringstuff}
For the purposes of illustration, we consider 
a familiar construction that is similar to our context, 
namely Frobenius reciprocity and Morita
condensation \cite[Theorem 25A.19]{Rowen}. 
Here $E$ is a ring, and $A$ and $C$ are subrings. Considering 
a Peirce decomposition of $E$, let $\one_A$ and $\one_C$ be
idempotents in $E$ such that
$\one_A\one_C=\one_A=\one_C\one_A$. Then $C = \one_C E\one_C$ is a
(non-unital) subring of $E$, and $A = \one_A E\one_A$ is a subring of both
$C$ and $E$. Furthermore, $\one_A E\one_C=\one_A C$ is an $(A,C)$-bimodule and
$C\one_A=\one_C E\one_A$ is a $(C,A)$-bimodule. Suppose $\Delta$ is a right
$A$-module and $\Sigma$ a right $C$-module. 
The theory of induction and restriction provides us, respectively, 
with a right $C$-module and a right $A$-module: namely, 
\begin{equation*}
  \mathrm{Ind}^C_A(\Delta) \defeq \Delta\otimes_A (\one_A C)\quad\text{and}\quad \mathrm{Res}^C_A(\Sigma) \defeq\Sigma\otimes_C (C\one_A).
\end{equation*}
Thus, $A\to (\one_A C)\otimes_C (C\one_A)$ yields a map
$\Delta\cong\Delta\otimes_A A\to \mathrm{Res}_A^C(\mathrm{Ind}_A^C(\Delta))$.
If, for example, $C\one_A C=C$, then $\Delta \cong
\mathrm{Res}_A^C(\mathrm{Ind}_A^C(\Delta))$.~\exqed
\end{ex}
Guided by the Peirce decomposition from Example~\ref{ex_ringstuff}, we seek similar
constructions for categories and capsules.  Recall that $\cE$ contains a
subcategory $\cC$ that contains a subcategory~$\cA$.  This containment implies
that $\one_{\cA}$ is contained in (or rather embeds under the inclusion functors into)
$\one_{\cC}$. The bicapsule action of $\cA$ on $\Delta$ induces a
$\cC$-bicapsule, denoted $\mathrm{Ind}_{\cA}^{\cC}(\Delta)$.  By mimicking
modules, we can consider a formal extension process. We form the type
$\Delta\otimes_{\cA}\cC$ whose terms are 
pairs, denoted $\delta \otimes c$ for $\delta:\Delta$ and $c:\cC$, subject to the equivalence
relation $(\delta\cdot a)\otimes c=\delta \otimes (ac)$. Then we equip this type
with the right $\cC$-action $(\delta\otimes c)\cdot c'=\delta \otimes (cc')$. Defining $ \cA\backslash \cC \defeq \{\cA c \mid c:\cC\}$, we write
\begin{align*}
  \Delta\otimes_{\cA} \cC &\defeq \coprod_{\cA c:\cA\backslash \cC}\Delta\otimes_{\cA} \cA c . 
\end{align*}
We return to this construction in Section~\ref{sec:compose}.
\noindent
Finally, since $\Delta = \cA\rho\cdot \cA$ and $\cC$ are both subtypes of $\cE$,
the product in $\cE$ defines a map $\Delta\times \cC\to \cE$ that factors
through $\Delta\otimes_{\cA}\cC$.  The image of the map is a cyclic
$\cC$-bicapsule $\Sigma$ embedded in $\cE$, with corresponding
$\acat{C}$-bimorphism $\func{S}:\acat{C}\to\acat{E}$.   
The following theorem
states that this is always possible if $\acat{A}$ is full in $\acat{C}$. 
Table~\ref{tab_setup_exthm}  summarizes some of the notation fixed 
throughout this section.

\begin{table}[!htb]
  \begin{center}
  \hspace*{1.22cm}\begin{tabular}{l|p{2in}}
    $\cE$ & variety of algebraic structures\\
    $\cB,\cC$ &  subcategories of $\cE$\\
    $\cA$ & full subcategory of $\cC$
  \end{tabular}
  \end{center}
  \smallskip
  
  \renewcommand\arraystretch{1.3}
 \begin{center}
\begin{tabular}{c|c|c}
  bimorphism & monic counital & cyclic bicapsule\\\hline
  $\mathcal{R}: \cat{A}\to\cat{E}$ &  $\rho\defeq\func{R}(\one_{\acat{A}})$ & $\Delta\defeq A\rho\cdot A$\\
  $\mathcal{S}: \cat{C}\to\cat{E}$ &  $\sigma\defeq\func{S}(\one_{\acat{C}})$ & $\Sigma\defeq C\sigma\cdot C$
\end{tabular}
\end{center}
\caption{Data for the proof of Theorem~\ref{thm:extension}}
\label{tab_setup_exthm}
\end{table}

\begin{thm}[Extension]\label{thm:extension}
  Let $\acat{E},\acat{C},\acat{A},\Delta$ be as in Table~$\ref{tab_setup_exthm}$. 
  If $\acat{A}$ is full in $\acat{C}$, then 
there is a cyclic $\cC$-bicapsule $\cat{\Sigma}$ on $\cE$ and unique
  cyclic $\cA$-bicapsules $\cat{\Upsilon}$, $\cat{\Lambda}$ on $\cE$ such that 
\[
    \Delta = \mathrm{Res}_{\cA}^{\cC}(\Sigma)\otimes_{\cA}\Upsilon\quad\text{and}\quad
    \mathrm{Res}_{\cA}^{\cC}(\Sigma) = \Delta \otimes_{\acat{A}} \Lambda.
\]
\end{thm}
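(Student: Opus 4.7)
My plan follows the ring-theoretic analogy in Example~\ref{ex_ringstuff}: construct $\Sigma$ as the ``induction'' of $\Delta$ from $\cA$ up to $\cC$ via the image-of-coproduct machinery of~\eqref{eq:coprod_im_comment}, then obtain $\Upsilon$ and $\Lambda$ as the cyclic $\cA$-bicapsules built from the $\cC$-hom-types that link $\cA$-identities to $\cC$-identities.

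For each $e:\one_\cC$, define
\[
\sigma_e \defeq \mathrm{im}\!\left(\coprod_{f:\one_\cA}\;\coprod_{c:e\cC f} c\,\rho_f\right)\hookrightarrow e,
\]
which exists by Fact~\ref{fact:coprod}\ref{factpart:coprods}. Setting $\fS(c)\defeq c\,\sigma_{\src{c}}$ and $\Sigma\defeq \cC\sigma\cdot\cC$, by Proposition~\ref{prop:nat-trans-biact}\ref{proppart:get-biactions} it remains to verify that $\sigma$ is natural. For any morphism $g:e'\to e$ in $\cC$ and any summand $c\rho_f$ of the coproduct defining $\sigma_{e'}$, composition yields $g(c\rho_f)=(gc)\rho_f$; fullness of $\cA$ in $\cC$ guarantees that $gc$ lies in $e\cC f$ whenever $c:e'\cC f$, so $(gc,f)$ indexes a summand of $\sigma_e$. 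Fact~\ref{fact:coprod}\ref{factpart:factor-out} together with Lemmas~\ref{lem:im} and~\ref{lem:im-monic} then produce the factorisation $g\,\sigma_{e'}\ll\sigma_e$ in the pre-order~\eqref{eq:pre-order}, which is exactly the naturality square for $\sigma$. Hence $\Sigma$ is a cyclic $\cC$-bicapsule on $\cE$.

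For $\Upsilon$ and $\Lambda$, I take the cyclic $\cA$-bicapsules on $\cE$ generated respectively by
\[
\upsilon \defeq \mathrm{im}\!\left(\coprod_{e:\one_\cC}\;\coprod_{f,f':\one_\cA}(f'\cC e)(e\cC f)\right),\;\; \lambda \defeq \mathrm{im}\!\left(\coprod_{e:\one_\cC}\;\coprod_{f,f':\one_\cA}(f\cC e)(e\cC f')\right),
\]
these being the categorical analogues of $\one_A C\otimes_C C\one_A$ and $C\one_A\otimes_A\one_A C$ from Example~\ref{ex_ringstuff}. The claimed equalities $\Delta = \mathrm{Res}_{\cA}^{\cC}(\Sigma)\otimes_\cA\Upsilon$ and $\mathrm{Res}_{\cA}^{\cC}(\Sigma)=\Delta\otimes_\cA\Lambda$ reduce to showing that the composition-in-$\cE$ pairings $\mathrm{Res}_{\cA}^{\cC}(\Sigma)\times\Upsilon\to\cE$ and $\Delta\times\Lambda\to\cE$ factor through $\otimes_\cA$ (immediate from the balancing condition $(x\cdot a,y)\sim(x,a\cdot y)$) with images exactly $\Delta$ and $\mathrm{Res}_{\cA}^{\cC}(\Sigma)$, respectively. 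Surjectivity uses the explicit coproduct description of $\sigma_e$; injectivity follows by rewriting via Fact~\ref{fact:coprod}\ref{factpart:ignore-inside} and Noether's Isomorphism Theorem~\ref{thm:Noether}. Uniqueness of $\Upsilon$ and $\Lambda$ is then forced by the cyclicity of $\Delta$ and $\mathrm{Res}_{\cA}^{\cC}(\Sigma)$: any candidate must share the same cyclic generator up to the equivalence of monomorphisms from Section~\ref{sec:subobjects-images}.

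The main obstacle is the naturality step for $\sigma$: I must promote the pre-order inequality $g\,\sigma_{e'}\ll\sigma_e$ into a canonical factoring morphism $\func{D}(e')\to\func{D}(e)$ and verify that this assignment is functorial in $g$ and compatible with $\fS$. This rests on the fullness hypothesis, which is precisely what makes the coproduct indexing stable under the $\cC$-action, and on the uniqueness of monomorphic factorisations in the variety $\cE$ (Theorem~\ref{thm:Noether}). Once naturality is secured, the remaining tensor identities reduce to bookkeeping through the Peirce decomposition (Section~\ref{sec:Peirce}) and the defining relations of $\otimes_\cA$.
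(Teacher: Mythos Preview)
Your construction of $\sigma$ is essentially the same as the paper's (Lemma~\ref{lem:def-sigma}), and the naturality argument $g\,\sigma_{e'}\ll\sigma_e$ is correct in outline. However, you misattribute the role of fullness: the fact that $gc\in e\cC f$ whenever $g:e'\to e$ and $c\in e'\cC f$ is just composition in $\cC$ and uses nothing about $\cA$. Indeed, the paper proves naturality of $\sigma$ (Lemma~\ref{lem:def-sigma}) without invoking fullness at all. Fullness enters only later, in Proposition~\ref{prop:T}, to control what happens when you restrict $\sigma$ back to $\one_{\cA}$.

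The serious gap is your construction of $\Upsilon$ and $\Lambda$. Your proposed generators $\upsilon$ and $\lambda$ are built purely from hom-types $(f'\cC e)(e\cC f)$ in $\cC$ and do not mention $\rho$ at all. They therefore cannot record the relationship between $\Delta$ and $\mathrm{Res}_{\cA}^{\cC}(\Sigma)$, which depends on $\rho$. In the paper (Proposition~\ref{prop:T} and Section~\ref{sec:extension-proof}), $\Lambda$ and $\Upsilon$ arise from a family of \emph{isomorphisms} $\lambda_e:\src{\fS(e\cdot\one_{\cC})}\to\src{\fR(e)}$ for $e:\one_{\cA}$, and it is precisely fullness that makes these maps invertible: when $e:\one_{\cA}$, fullness collapses the index set $\mathbb{U}_{\cC}(e)$ to $(e\cA)\cdot\one_{\cC}$, so one can factor $\fR(e)$ out of the coproduct defining $\sigma_e$ and obtain both $\fS(e\cdot\one_{\cC})\ll\fR(e)$ and $\fR(e)\ll\fS(e\cdot\one_{\cC})$. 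The resulting $\lambda$ is then shown to be a natural transformation, giving the $\cA$-bimorphism $\fT$ with $\fT(\src{a})=\lambda_{\src{a}}$; the cyclic $\cA$-bicapsules $\Upsilon$ and $\Lambda$ are those generated by $\lambda$ and $\lambda^{-1}$. Uniqueness follows because each $\sigma_e$ is monic. Your ring-theoretic analogy led you to the ``universal'' bimodules $\one_A C\otimes_C C\one_A$, but what is needed here is the specific correction factor relating $\fR$ to $\fS\fL$, and that requires the argument of Proposition~\ref{prop:T}.
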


We briefly describe the idea of the proof. We start with a cyclic
$\cA$-bicapsule $\Delta$ with associated $\acat{A}$-bimorphism $\fR:\cA\to \cE$.
We seek an extension of $\func{R}$ to a $\acat{C}$-bimorphism $\func{S}\colon
\cat{C}\to \cat{E}$ that
satisfies $\func{S}\func{L}=\func{R}$, where
$\func{L}\colon \acat{A}\to\acat{C}$ is the inclusion functor. If this holds,
then, for every $e:\one_{\acat{A}}$ and every $c:\acat{C}$ with
$\src{c}=\tgt{\func{R}(e)}$,
\[c\cdot\func{R}(e)= c\cdot \func{S}\func{L}(e)=\func{S}(c\cdot
e)=\func{S}(c)=\func{S}(\tgt{c})\cdot c.\]
We now derive some necessary conditions for a putative $\func{S}$ of this type. Recall from \eqref{eq:pre-order} that
the notation $\alpha\ll \beta$ for morphisms $\alpha$ and $\beta$ implies that
there is a morphism $\gamma$ such that $\alpha=\beta\gamma$. Applying
Lemma~\ref{lem:im-monic} to $c\cdot \func{R}(e)=\func{S}(\src{c})\cdot c$ yields
$\mathrm{im}(c\cdot \func{R}(e))\ll \mathrm{im}(\func{S}(\tgt{c}))$. 
For $f:\one_{\acat{C}}$ define
\begin{align*}
\mathbb{U}_{\acat{C}}(f)
\defeq \bigsqcup_{e:\one_{\acat{A}}}f\acat{C}\cdot e.
\end{align*} 
The
  left $\acat{A}$-actions on $\acat{C}$ and $\acat{E}$ are regular, as is
  the left $\acat{C}$-action on $\acat{E}$. 
Hence, for $\langle e,c\rangle:\mathbb{U}_{\acat{C}}(f)$, 
  \[ 
    c\lhd = \src{(c\cdot \one_{\acat{E}})} = (\src{c})\cdot \one_{\acat{E}} = e\cdot \one_{\acat{E}}.
  \]
  Therefore $c\lhd = e\cdot \one_{\acat{E}} = \tgt{(e\cdot \fR(e))} =
  \tgt{\fR(e)} = \lhd \fR(e)$, so $c\cdot \fR(e)$ is defined. Since
  $\mathrm{im}(c\cdot \func{R}(e))\ll \mathrm{im}(\func{S}(f))$ holds for every
  $\langle e,c\rangle:\mathbb{U}_{\acat{C}}(f)$, we can make a single inclusion (see \eqref{eq:coprod_im_comment}):
\begin{equation}\label{eq_Sdef}
  \mathrm{im}\left(\coprod_{{\langle e,c\rangle \in \mathbb{U}_{\acat{C}}(f)}} \mathrm{im}(c\cdot \func{R}(e))\right) \ll \mathrm{im}(\func{S}(f)).
\end{equation} 
Observe that \eqref{eq_Sdef} also holds if, instead of $\func{R}=\func{SL}$,
we assume that there exists $\func{T}:\acat{A}\to\acat{E}$ such that
$\func{R}(a)=\mathrm{Res}_{\acat{A}}^{\acat{C}}(\func{S})(a)\func{T}(\src{a})$
for all $a:\acat{A}$; here $\mathrm{Res}_{\acat{A}}^{\acat{C}}(\func{S})(a)=\func{S}(\one_{\acat{C}}\cdot a)$ denotes the restriction of $\func{S}$ to $\acat{A}$. This motivates us to choose $\func{S}$ such that
$\func{S}(c)$ is defined as the left hand side of \eqref{eq_Sdef}, and then solve
for a suitable~$\func{T}$.

In the language of bimorphisms,  Theorem~\ref{thm:extension} asserts that there
exists an $\acat{A}$-bimorphism $\func{T}\colon \acat{A}\to\acat{E}$ such that,
for $a:\acat{A}$,
\begin{align}\label{eqn:RST-bimorphisms}
  \fR(a) &= \fS(\one_{\acat{C}} \cdot (\tgt{a})) \func{T}(a)
  =\fS(\one_{\acat{C}} \cdot a) \func{T}(\src{a})
\end{align}
where $\func{S}$ is the $\acat{C}$-bimorphism corresponding to $\Sigma$; we use
this language in its proof. The second equality in~\eqref{eqn:RST-bimorphisms}
reflects the tensor product over $\cat{A}$ shown in Theorem~\ref{thm:extension}.

\subsection{Building blocks}\label{sec:blocks}

We prove Theorem~\ref{thm:extension} in Section~\ref{sec:extension-proof} using the three
intermediate results presented in this section.

\begin{lem}\label{lem:S-sufficient}
Let $\cC$ and $\cE$ be as in Table~$\ref{tab_setup_exthm}$. 
  For  $\sigma:\prod_{f:\one_{\cC}}(f~\cdot\mono{E})$, the
  following are equivalent. 
  \begin{ithm}
    \item[\rm (1)] There is a $\cC$-bicapsule $\Sigma$ on $\cE$ such that the
    function $\fS : \cC \to \Sigma$ given by 
    \(
      \fS(c) \defeq c\cdot \sigma_{\src{c}}
    \)
    is a $\cC$-bimorphism. 

    \item[\rm (2)] For all $c:\acat{C}$, $c\cdot \sigma_{\src{c}} \ll
    \sigma_{\tgt{c}}$.
  \end{ithm}
\end{lem}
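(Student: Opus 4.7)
The plan is to translate between the two conditions via Proposition~\ref{prop:nat-trans-biact}, which packages natural transformations as bicapsule data, combined with Proposition~\ref{prop:functors-are} and the universal property of monomorphisms.

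For (1)$\Rightarrow$(2), I would first compute $\fS(\tgt c) = \tgt c \cdot \sigma_{\tgt c} = \sigma_{\tgt c}$ since $\tgt c$ acts as an identity in $\cE$, and then use right equivariance of the bimorphism $\fS$ applied to the decomposition $c = \tgt c \cdot c$ in $\cC$ to obtain $\fS(c) = \fS(\tgt c)\cdot c = \sigma_{\tgt c}\cdot c$. Proposition~\ref{prop:functors-are} furnishes a functor $\fG:\cC\to\Sigma$ encoding the right $\cC$-action on $\Sigma$, namely $s\cdot c = s\,\fG(c)$ as composition in $\cE$. Equating the two expressions for $\fS(c)$ yields $c\,\sigma_{\src c} = \sigma_{\tgt c}\,\fG(c)$, which is condition (2).

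For (2)$\Rightarrow$(1), monicness of $\sigma_{\tgt c}$ makes the witness $\tau$ in $c\,\sigma_{\src c} = \sigma_{\tgt c}\tau$ unique, so I would define $\fG:\cC\to\cE$ by $\fG(c)\defeq\tau$. The non-trivial step is verifying that $\fG$ is a functor; for composition,
\[\sigma_{\tgt{c_1}}\,\fG(c_1 c_2) = (c_1 c_2)\,\sigma_{\src{c_2}} = c_1\,\sigma_{\tgt{c_2}}\,\fG(c_2) = \sigma_{\tgt{c_1}}\,\fG(c_1)\,\fG(c_2),\]
and monicness of $\sigma_{\tgt{c_1}}$ gives $\fG(c_1 c_2) = \fG(c_1)\fG(c_2)$; the source/target compatibilities and the $\bot$-sink laws follow similarly. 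The defining equation of $\fG$ then expresses that $\sigma$ is a natural transformation from $\fG$ to the inclusion $\fI:\cC\hookrightarrow\cE$. Applying Proposition~\ref{prop:nat-trans-biact}\ref{proppart:get-biactions} with $\fF = \fI$ and $\mu = \sigma$ endows $\cE$ with a $\cC$-bicapsule structure whose cyclic sub-bicapsule $\Sigma \defeq \cC\cdot\sigma\cdot\cC$ receives the bimorphism $\fS(c) = c\cdot\sigma_{\src c}$ supplied by that proposition.

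The main obstacle is the functoriality verification for $\fG$ in the (2)$\Rightarrow$(1) direction. Multiplicativity is clean thanks to uniqueness from monicness, but one must also track the $\bot$-sink and the source/target guards so that $\fG(c_1 c_2)$ is defined exactly when $\fG(c_1)\fG(c_2)$ is, ensuring in particular that the composable chains of $\sigma$'s and $\fG(c)$'s used in the displayed calculation are themselves defined. These checks are routine given the typing $\sigma_f \tin f\,\mono{E}$, but the careful bookkeeping of guards is where the real work lies.
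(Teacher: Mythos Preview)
Your proposal is correct and follows essentially the same approach as the paper: for (2)$\Rightarrow$(1) both you and the paper define the unique factor $u_c$ (your $\fG(c)$) through $\sigma_{\tgt c}$ via monicness and verify multiplicativity $u_{c_1c_2}=u_{c_1}u_{c_2}$ by the same cancellation argument. The only cosmetic difference is that you package the result via Proposition~\ref{prop:nat-trans-biact}\ref{proppart:get-biactions}, whereas the paper builds the right $\cC$-capsule action on $\cE$ from $u_c$ directly and then checks by hand that $\fS$ is a $\cC$-bimorphism.
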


\begin{proof}
  We assume (1) holds and prove (2). By
  Proposition~\ref{prop:nat-trans-biact}\ref{proppart:get-nat-trans}, there exists a unique
  functor $\func{G} : \acat{C}\to \acat{E}$ that induces the action of
  $\acat{C}$ on the right of $\acat{E}$. Since $\fS$ is a function and a 
  $\cC$-bimorphism by assumption, $c\lhd= \lhd \sigma_{\src{c}}$ for all
  $c:\acat{C}$, and 
  \begin{align*}
    c\cdot \sigma_{\src{c}} 
    & = \fS(c) =\fS((\tgt c)\cdot c)= \fS(\tgt{c})\cdot c
    = ((\tgt{c}) \cdot \sigma_{\src{(\tgt{c})}}) \cdot c  
    =\sigma_{\tgt{c}}\fG(c) . 
  \end{align*}
  Thus, (2) holds.

  We now assume (2) holds and prove (1). First, we show that $x:\acat{E}$
  satisfying $c\cdot \sigma_{\src{c}} = \sigma_{\tgt{c}}x$ is unique. Suppose
  $y:\acat{E}$ satisfies $c\cdot \sigma_{\src{c}} = \sigma_{\tgt{c}}y$, so
  $\sigma_{\tgt{c}}x = \sigma_{\tgt{c}}y$. Since $\sigma_{\tgt{c}}$ is a
  monomorphism, $x=y$. We denote this unique morphism by $u_c:\acat{E}$. Since
  $\sigma_{\tgt{c}}u_c$ is defined for all $c:\cC$, 
  \begin{align*}
    \tgt{u_{\tgt{c}}} = \src{(\sigma_{\tgt{(\tgt{c})}})} 
    = \src{(\sigma_{\tgt{c}})} = \tgt{u_c}.
  \end{align*}
  Next, we define a right $\cC$-capsule structure on $\acat{E}$ as
  follows. Let $\lhd (-) : \cC \to \one_{\cE}$ be given by $\lhd c \defeq
  \tgt{u_c}$, and let $(-)\lhd : \cE \to \one_{\cE}$ be given by
  $x\lhd \defeq \src{x}$. For all $c:\cC$ and $x:\cE$, let $x\cdot c \defeq
  xu_c$, which is defined if, and only if, $\src{x} = \tgt{u_c}$. Condition (2)
  of Definition~\ref{def:cat-act} follows from $\lhd (\tgt{c}) = \tgt{u_{\tgt{c}}} =
  \tgt{u_c} = \lhd c$ and $u_e:\one_{\cE}$ for all $e:\one_{\cC}$
  since $\sigma_e$ is monic. Lastly, let $c,d:\cC$ with $\src{c}=\tgt{d}$.
  Now $(cd) \cdot \sigma_{\src{(cd)}} = \sigma_{\tgt{(cd)}}u_{cd} =
  \sigma_{\tgt{c}}u_{cd}$ and, since we have a regular left action,  
  \begin{align*}
    (cd) \cdot \sigma_{\src{(cd)}}  
    = c \cdot (d \cdot \sigma_{\src{(cd)}})
    = c\cdot (d\cdot \sigma_{\src{d}}) 
    = c\cdot \sigma_{\tgt{d}} u_d 
    = \sigma_{\tgt{c}}u_cu_d.
  \end{align*}
  Since $\sigma_{\tgt{c}}$ is a monomorphism, $u_{cd}=u_cu_d$. Hence, this
  defines a right $\cC$-capsule on $\cE$ since $\lhd c = \tgt{u_c}:\one_{\cE}$
  for all $c:\cC$. Since $\acat{C}$ acts regularly on $\acat{E}$ on the left,
  there exists a $\acat{C}$-bicapsule $\Sigma$ on $\acat{E}$ by
  Proposition~\ref{prop:functors-are}, with the regular left and right actions
  just defined. Finally, we prove that $\fS$ is a $\cC$-bimorphism. For all
  $c,x,y:\cC$, 
  \begin{align*}
    \func{S}(cx) &= (cx)\cdot \sigma_{\src{(cx)}} 
    = (cx)\cdot \sigma_{\src{x}} 
    = c\cdot (x\cdot \sigma_{\src{x}}) 
    = c\cdot \fS(x),
  \end{align*}
  provided $\src{c}=\tgt{x}$. If $\src{y} = \tgt{c}$, then 
  \begin{align*}
    \fS(yc) &= (yc)\cdot \sigma_{\src{(yc)}} 
    = (yc)\cdot \sigma_{\src{c}} 
    = y\cdot (c\cdot \sigma_{\src{c}}) 
    = y\cdot (\sigma_{\tgt{c}}u_c)
    = (y\cdot \sigma_{\src{y}})u_c \\
    &= \fS(y)\cdot c.\qedhere
  \end{align*}
\end{proof}

\begin{lem}\label{lem:def-sigma}
For $\acat{E},\acat{C},\func{R}$ as in Table~$\ref{tab_setup_exthm}$,  
define $\sigma :
  \coprod_{f:\one_{\acat{C}}}(f~\cdot \mono{E})$ via
  \begin{equation*} 
    \sigma_{f}  \defeq \mathrm{im}\left(
      \coprod_{\langle e,x\rangle:\mathbb{U}_{\acat{C}}(f)} 
        \mathrm{im}(x\cdot \fR(e))
        \right). 
  \end{equation*}
  For each $c:\acat{C}$, there is a unique $y:\acat{E}$ satisfying $c\cdot
  \sigma_{\src{c}}=\sigma_{\tgt{c}}y$.
\end{lem}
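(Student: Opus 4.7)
My plan is to handle uniqueness immediately from the monicness of $\sigma_{\tgt{c}}$ and then reduce existence to the single inequality $c \cdot \sigma_{\src{c}} \ll \sigma_{\tgt{c}}$. Because $\sigma_{\tgt{c}}$ is by construction an image, it is a monomorphism by Theorem~\ref{thm:Noether}, so any two solutions $y$ of $\sigma_{\tgt{c}} y = c\cdot \sigma_{\src{c}}$ must coincide. It remains to construct the factoring $y$. To this end, abbreviate
\[
  \Phi_g \defeq \coprod_{\langle e,x\rangle :\mathbb{U}_{\acat{C}}(g)}\mathrm{im}(x\cdot \func{R}(e)),
\]
so that $\sigma_g = \mathrm{im}(\Phi_g)$ for every $g:\one_{\acat{C}}$. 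The argument then becomes a short chain of $\ll$-inequalities built from Lemmas~\ref{lem:im}, \ref{lem:im-monic} and the coproduct calculus of Fact~\ref{fact:coprod}.

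The key observation is that left multiplication by $c$ carries summands of $\Phi_{\src{c}}$ into summands of $\Phi_{\tgt{c}}$. Given $\langle e,x\rangle : \mathbb{U}_{\acat{C}}(\src{c})$, the composite $cx$ is defined in $\acat{C}$ with $\tgt{cx}=\tgt{c}$ and $\src{cx}=e$, so $\langle e,cx\rangle : \mathbb{U}_{\acat{C}}(\tgt{c})$. Hence Fact~\ref{fact:coprod}\ref{factpart:smaller-coprod} gives $\mathrm{im}(cx\cdot\func{R}(e))\ll \Phi_{\tgt{c}}\ll \sigma_{\tgt{c}}$, while Lemma~\ref{lem:im} bounds the image after left multiplication:
\[
    c\cdot\mathrm{im}(x\cdot\func{R}(e)) \;\ll\; \mathrm{im}(cx\cdot\func{R}(e)) \;\ll\; \sigma_{\tgt{c}}.
\]

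Pulling $c$ through the coproduct via Fact~\ref{fact:coprod}\ref{factpart:factor-out} yields $c\Phi_{\src{c}} = \coprod_{\langle e,x\rangle}\, c\cdot \mathrm{im}(x\cdot\func{R}(e))$; summing the summand-wise estimates using the universal property of the coproduct (again Fact~\ref{fact:coprod}\ref{factpart:factor-out}) produces $c\Phi_{\src{c}} \ll \sigma_{\tgt{c}}$. Two further applications of the image lemmas finish the proof: Lemma~\ref{lem:im} gives $c\cdot \sigma_{\src{c}} = c\cdot\mathrm{im}(\Phi_{\src{c}}) \ll \mathrm{im}(c\Phi_{\src{c}})$, and then $c\Phi_{\src{c}}\ll \sigma_{\tgt{c}}$ combined with the monicness of $\sigma_{\tgt{c}}$ and Lemma~\ref{lem:im-monic} collapses $\mathrm{im}(c\Phi_{\src{c}})\ll \sigma_{\tgt{c}}$, producing the desired inequality.

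The main obstacle is bookkeeping rather than depth: every $\ll$-inequality hides a specific factoring morphism, and one must verify that at each composition the source and target guards match so that no product silently becomes $\bot$, and that the universal properties of images, coimages, and coproducts are applied to morphisms of the correct type. No ingredient beyond Theorem~\ref{thm:Noether}, Lemmas~\ref{lem:im} and \ref{lem:im-monic}, and Fact~\ref{fact:coprod} is required; the argument is a disciplined threading of these results through the definition of~$\sigma$.
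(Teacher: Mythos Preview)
Your argument is correct and follows essentially the same route as the paper's proof: both establish $c\cdot\sigma_{\src{c}}\ll\sigma_{\tgt{c}}$ by pushing $c$ through the image and the coproduct via Lemma~\ref{lem:im} and Fact~\ref{fact:coprod}\ref{factpart:factor-out}, bound each transported summand $\mathrm{im}((cx)\cdot\func{R}(e))$ inside $\sigma_{\tgt{c}}$, and invoke monicness of $\sigma_{\tgt{c}}$ for uniqueness. The only cosmetic difference is that you bound each summand directly by $\sigma_{\tgt{c}}$ and then assemble (using Fact~\ref{fact:coprod}\ref{factpart:factor-out} with $b=\sigma_{\tgt{c}}$), whereas the paper first assembles into the intermediate coproduct $\coprod\mathrm{im}((cx)\cdot\func{R}(e))$ via Fact~\ref{fact:coprod}\ref{factpart:ignore-inside} and then applies Fact~\ref{fact:coprod}\ref{factpart:smaller-coprod} to reindex over $\mathbb{U}_{\acat{C}}(\tgt{c})$.
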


\begin{proof}
Let $c:\acat{C}$, so $c \lhd = \src{c}$, and $\src{c} = \lhd \sigma_{\src{c}}$
  by definition of $\sigma$. We show that $c\cdot \sigma_{\src{c}}
  \ll\sigma_{\tgt{c}}$. By Lemma~\ref{lem:im}, 
  \begin{align*} 
    &(\forall \langle e,x\rangle:\mathbb{U}_{\acat{C}}(f))&  c\cdot \mathrm{im}(x\cdot \fR(e))& \ll\mathrm{im}(c\cdot (x\cdot \fR(e))) = \mathrm{im}((cx)\cdot \fR(e)).
  \end{align*}
  Thus, by Fact~\ref{fact:coprod}\ref{factpart:ignore-inside},
  \begin{align}
    \label{eq:coprod-ineq}
    \coprod_{\langle e,x\rangle:\mathbb{U}_{\acat{C}}(f)} \left(c\cdot \mathrm{im}(x\cdot \fR(e))\right)
      &\ll \coprod_{\langle e,x\rangle:\mathbb{U}_{\acat{C}}(f)} \mathrm{im}((cx)\cdot \fR(e)).
  \end{align}
  Therefore, using \eqref{eq:coprod_im_comment},
\begin{align*}
  c\cdot \mathrm{im}\left(
    \coprod_{\langle e,x\rangle}
        \mathrm{im}(x\cdot \fR(e))
  \right) 
  & \ll  \mathrm{im}\left(
    c\cdot 
    \coprod_{\langle e,x\rangle}
        \mathrm{im}(x\cdot \fR(e))
  \right) & & (\text{Lemma~\ref{lem:im}}) \\
  & =  \mathrm{im}\left(
    \coprod_{\langle e,x\rangle}
      (c\cdot 
        \mathrm{im}(x\cdot \fR(e)))
  \right) & & (\text{Fact~\ref{fact:coprod}\ref{factpart:factor-out}}) \\
  & \ll  \mathrm{im}\left(
    \coprod_{\langle e,x\rangle}
        \mathrm{im}((cx)\cdot \fR(e))
  \right) & & (\text{Equation (\ref{eq:coprod-ineq})}) 
\end{align*} 
where all of the coproducts are over
$\langle e,x\rangle:\mathbb{U}_{\acat{C}}(\src{c})$. By 
Fact~\ref{fact:coprod}\ref{factpart:smaller-coprod}, 
\[ 
  \mathrm{im}\left(
    \coprod_{\langle e,x\rangle:\mathbb{U}_{\acat{C}}(\src{c})}
        \mathrm{im}((cx)\cdot \fR(e))
  \right) \ll \mathrm{im}\left(
        \coprod_{\langle e,z\rangle:\mathbb{U}_{\acat{C}}(\tgt{c})} 
          \mathrm{im}(z\cdot \fR(e))
    \right).
\]
Putting this together, we deduce that 
\begin{align*}
  c\cdot \sigma_{\src{c}} &= c\cdot \mathrm{im}\left(
    \coprod_{\langle e,x\rangle:\mathbb{U}_{\acat{C}}(\src{c})}
        \mathrm{im}(x\cdot \fR(e))
  \right) 
  \ll \mathrm{im}\left(
    \coprod_{\langle e,z\rangle:\mathbb{U}_{\acat{C}}(\tgt{c})} 
      \mathrm{im}(z\cdot \fR(e))
\right) = \sigma_{\tgt{c}}, 
\end{align*}
so $c\cdot \sigma_{\src{c}}=\sigma_{\src{c}}y$ for some $y:\acat{E}$. Since
$\sigma_{\tgt{c}}$ is monic, $y$ is unique. 
\end{proof}

\begin{prop}\label{prop:T}
 Let $\acat{E},\acat{C},\acat{A},\func{R}$ be as in Table~$\ref{tab_setup_exthm}$. 
If $\acat{A}$ is full in $\acat{C}$, 
 then $\func{S}: \acat{C}\to\acat{E}$ defined by 
  \[\func{S}(c)\defeq c\cdot \mathrm{im}\left(\coprod_{\langle e,x\rangle:\mathbb{U}_{\acat{C}}(\src{c})} \mathrm{im}(x\cdot\func{R}(e))  \right)\]
   is a  $\acat{C}$-bimorphism, and there exists a unique
  $\lambda:\prod_{f:\one_{\acat{A}}}f\!\!\core{E}\!\!f$ such that for all $a:\acat{A}$,  
  \begin{equation*}
    \fR(a) = \fS(a \cdot\one_{\acat{C}}) \lambda_{\src{a}}\quad\text{and}\quad \fS(a \cdot\one_{\acat{C}}) = \fR(a) \lambda_{\src{a}}^{-1}. 
  \end{equation*}
\end{prop}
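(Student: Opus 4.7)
The plan has two parts: first, establish the $\cC$-bimorphism property of $\fS$ using the preparatory lemmas; second, construct the comparison isomorphism $\lambda$ by showing that $\fR(e)$ and $\sigma_e\defeq\fS(e\cdot\one_\cC)$ are equivalent monomorphisms for every $e:\one_\cA$.

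For the first part, set $\sigma_f\defeq\mathrm{im}\!\left(\coprod_{\langle e,x\rangle:\mathbb{U}_\cC(f)}\mathrm{im}(x\cdot\fR(e))\right)$ for each $f:\one_\cC$, so that $\sigma_f$ is monic by construction and $\fS(c)=c\cdot\sigma_{\src c}$. Lemma~\ref{lem:def-sigma} yields exactly $c\cdot\sigma_{\src c}\ll\sigma_{\tgt c}$ for every $c:\cC$, which is hypothesis (2) of Lemma~\ref{lem:S-sufficient}; its conclusion (1) produces the $\cC$-bicapsule $\Sigma$ on $\cE$ and certifies that $\fS$ is a $\cC$-bimorphism.

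For the second part, fix $e:\one_\cA\subseteq\one_\cC$. The inclusion $\fR(e)\ll\sigma_e$ is immediate: the pair $\langle e,e\rangle$ lies in $\mathbb{U}_\cC(e)$, so $\mathrm{im}(e\cdot\fR(e))=\mathrm{im}(\fR(e))$ appears as a summand of the coproduct defining $\sigma_e$, and monicity of $\fR(e)=\rho_e$ together with Fact~\ref{fact:coprod}\ref{factpart:smaller-coprod} and the universal property of images delivers $\fR(e)\ll\sigma_e$. The reverse $\sigma_e\ll\fR(e)$ is where fullness of $\cA$ in $\cC$ is essential: every $\langle e',x\rangle:\mathbb{U}_\cC(e)$ satisfies $x\in e\cC e'=e\cA e'$, so naturality (the bimorphism property of $\fR$) gives $x\cdot\fR(e')=\fI(x)\rho_{e'}=\rho_e\fJ\fC(x)$, and Lemma~\ref{lem:im-monic} together with monicity of $\rho_e$ yields $\mathrm{im}(x\cdot\fR(e'))\ll\rho_e\,\mathrm{im}(\fJ\fC(x))\ll\fR(e)$. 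Facts~\ref{fact:coprod}\ref{factpart:factor-out} and \ref{factpart:ignore-inside}, followed by one further application of Lemma~\ref{lem:im-monic}, propagate this inequality through the coproduct and its outer image to produce $\sigma_e\ll\fR(e)$.

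The two comparisons produce, for each $e:\one_\cA$, a unique isomorphism $\lambda_e$ with $\fR(e)=\sigma_e\lambda_e$, uniqueness coming from monicity of $\sigma_e$. For general $a:\cA$, since the left $\cA$-action on $\cE$ is regular and $\fS$ is a $\cC$-bimorphism,
\[
\fR(a)=\fI(a)\fR(\src a)=\fI(a)\sigma_{\src a}\lambda_{\src a}=\fS(a\cdot\one_\cC)\lambda_{\src a},
\]
with the dual equation following by right-multiplication by $\lambda_{\src a}^{-1}$. The main obstacle is the reverse inclusion $\sigma_e\ll\fR(e)$: without fullness, the morphisms $x$ indexing the coproduct need not lie in $\cA$, so naturality cannot be invoked to factor $x\cdot\fR(e')$ through $\rho_e$. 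A secondary subtlety is tracking the direction of $\ll$ through coproducts of images while orchestrating Lemmas~\ref{lem:im} and~\ref{lem:im-monic} with Fact~\ref{fact:coprod}.
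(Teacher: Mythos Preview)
Your proposal is correct and follows essentially the same route as the paper: the bimorphism property of $\fS$ comes from combining Lemmas~\ref{lem:def-sigma} and~\ref{lem:S-sufficient}, and the comparison isomorphism $\lambda_e$ comes from showing $\fR(e)\ll\sigma_e$ (via the summand $\langle e,e\rangle$) and $\sigma_e\ll\fR(e)$ (via fullness and the bimorphism identity $x\cdot\fR(e')=\fR(e)\cdot x$), with uniqueness from monicity. The only cosmetic differences are that the paper phrases the key step using the bimorphism $\fR$ directly rather than the counital $\rho$ and functor $\fJ\fC$, and carries a general $a:\cA$ throughout instead of first settling the case $e:\one_{\cA}$ and then lifting.
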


\begin{proof}
  Take $e,f:\one_{\acat{A}}$, and recall that $\acat{A}$ acts regularly on both
  the left and right of $\acat{C}$. Since $\acat{A}$ is full in $\acat{C}$, 
these actions are full, so 
  \[
    f\cdot\acat{C}\cdot e = \one_{\acat{C}} \cdot (f\acat{A}e) = (f\acat{A}e) \cdot \one_{\acat{C}} . 
  \]
  Since the left actions of $\acat{A}$ on $\acat{C}$ and on $\acat{E}$ 
and the left action of $\acat{C}$ on $\acat{E}$ are regular, for each
  $a:\acat{A}$ and $x:\acat{E}$ with $a\lhd = \lhd x$,
  \begin{align}\label{eqn:C-to-A}
    (a\cdot \one_{\acat{C}}) \cdot x &= a\cdot x. 
  \end{align}
  Fix $a:\acat{A}$. Set $c = a\cdot\one_{\acat{C}}$, so $(\src{c})\acat{C} = (\src{a})\cdot
  \acat{C}$. Thus, since the $\acat{A}$-action on $\acat{C}$ is full,
  \begin{align}\label{eqn:subscripts}
    \mathbb{U}_{\acat{C}}(\src{c}) \defeq \bigsqcup_{e:\one_{\acat{A}}} \left((\src{c})\acat{C}\cdot e\right) &= \bigsqcup_{e:\one_{\acat{A}}} \left((\src{a})\cdot \acat{C}\cdot e\right) = ((\src{a})\acat{A})\cdot \one_{\acat{C}},
  \end{align}
  where $(\src{a})\acat{A}$ acts on $\one_{\acat{C}}$ in the final expression.
  Therefore
  \begin{align*}
    \fS(a\cdot\one_{\acat{C}}) 
    & = a\cdot \mathrm{im}\left(\coprod_{\langle e,x\rangle:\mathbb{U}_{\acat{C}}((\src{a})\cdot \one_{\acat{C}})} \mathrm{im}(x\cdot \fR(e))\right) 
        & & (\text{Equation \eqref{eqn:C-to-A}}) \\ 
    & = a\cdot \mathrm{im}\left(\coprod_{a':(\src{a})\acat{A}} \mathrm{im}(a'\cdot \fR(\src{a'}))\right) 
        & & (\text{Equation \eqref{eqn:subscripts}})\\
    & = a\cdot \mathrm{im}\left(\coprod_{a':(\src{a})\acat{A}} \mathrm{im}(\fR(\src{a})\cdot a')\right) 
        & & (\text{$\acat{A}$-bimorphism, $\tgt{a'} = \src{a}$})\\
    & \ll a\cdot \fR(\src{a}) = \fR(a).
    & & (\text{Lemma~\ref{lem:im-monic},  Fact~\ref{fact:coprod}\ref{factpart:factor-out}})
  \end{align*}
\noindent 
For the application of Lemma~\ref{lem:im-monic} in the last step, recall that 
$\func{R}(e)$ is monic for $e:\acat{A}$ by our assumption 
(Table~\ref{tab_setup_exthm}). 

We establish the other direction as follows:
  \begin{align*}
    \fR(\src{a}) &\ll \mathrm{im}(\fR(\src{a})) = \mathrm{im}((\src{a})\cdot \fR(\src{a})) & & (\text{Theorem~\ref{thm:Noether}}) \\
    &\ll \coprod_{a':(\src{a})\acat{A}} \mathrm{im}(a' \cdot \fR(\src{a'})) & & (\text{Fact~\ref{fact:coprod}\ref{factpart:smaller-coprod}}) \\
    &\ll \mathrm{im}\left(\coprod_{a':(\src{a})\acat{A}} \mathrm{im}(a' \cdot \fR(\src{a'}))\right). & & (\text{Theorem~\ref{thm:Noether}}) 
  \end{align*}
  Acting with $a:\acat{A}$ from the left, we obtain $\fR(a) \ll \fS(a\cdot \one_{\acat{C}})$.

  From both computations, there exist $\lambda,\mu : \prod_{f:\one_{\acat{A}}}
  f\acat{E}f$ such that 
  \begin{align*}
    \fR(a) &= \fS(a \cdot\one_{\acat{C}}) \lambda_{\src{a}}, 
    & \fS(a \cdot\one_{\acat{C}}) &= \fR(a) \mu_{\src{a}}. 
  \end{align*}
  It remains to show that $\mu_{\src{a}} = \lambda_{\src{a}}^{-1}$ for all
  $a:\acat{A}$ and $\lambda$ is unique. For all $e:\one_{\acat{A}}$, 
  $\fR(e)$ is monic by the assumptions in Theorem~\ref{thm:extension}, and 
  $\fS(e\cdot \one_{\acat{C}})$ is also monic by the definition of $\fS$.
  Since $\fR(e)$ is monic, 
  \begin{align*}
    \fR(e) &= \fS(e \cdot\one_{\acat{C}}) \lambda_{e} = \fR(e)\mu_{e}\lambda_{e}
  \end{align*}
  implies $\mu_{e}\lambda_{e} : \one_{\acat{E}}$. Similarly, $\lambda_{e} \mu_{e}
  : \one_{\acat{E}}$ because $\fS(e\cdot\one_{\acat{C}})$ is monic and
  \begin{align*}
    \fS(e \cdot\one_{\acat{C}}) &= \fR(e) \mu_{e} = \fS(e \cdot\one_{\acat{C}}) \lambda_{e} \mu_{e} .
  \end{align*}
  The uniqueness of $\lambda$ follows since $\fR(e)$ is monic.
\end{proof}

\subsection{Proof of Theorem~\ref{thm:extension}}
\label{sec:extension-proof}
Let $\fS : \acat{C} \to \acat{E}$ be the $\acat{C}$-bimorphism in Proposition~\ref{prop:T}. 
This proposition shows that there exists a unique
$\lambda:\prod_{f:\one_{\acat{A}}}f\!\!\core{E}\!\!f$ such that for all
$a:\acat{A}$,
\begin{align}\label{eqn:R-lambda}
  \fR(a) &= \fS(a\cdot \one_{\acat{C}}) \lambda_{\src{a}}. 
\end{align}
Since $\fR$ is an $\acat{A}$-bimorphism, 
\begin{align}\label{eqn:S-CC}
  (a\cdot \one_{\acat{E}}) \cdot \fR(\src{a}) &= \fR(a) = \fR(\tgt{a}) \cdot (\one_{\acat{E}}\cdot a).
\end{align}
Let $\Sigma$ be the $\acat{C}$-bicapsule on $\cE$ defined by the
$\cC$-bimorphism $\func{S}$. Both  $\Delta$ and $\Sigma$ are bicapsules, so
applying \eqref{eqn:R-lambda} to \eqref{eqn:S-CC} yields
\begin{align}\label{eqn:baseline}
  (a\cdot \one_{\acat{E}})\fS((\src{a})\cdot \one_{\acat{C}}) \lambda_{\src{a}} &=\fS((\tgt{a})\cdot \one_{\acat{C}}) \lambda_{\tgt{a}}(\one_{\acat{E}}\cdot a).
\end{align}
Since the left $\acat{C}$-action on $\acat{E}$ and the left $\acat{A}$-action on
$\acat{C}$ are regular, $a\cdot \one_{\acat{E}}=(a \cdot \one_{\acat{C}}) \cdot
\one_{\acat{E}}$. Thus, $(a\cdot \one_{\acat{E}})\fS((\src{a})\cdot
\one_{\acat{C}}) = \fS(a\cdot \one_{\acat{C}}) = \fS((\tgt{a})\cdot
\one_{\acat{C}})(\one_{\acat{E}}\cdot (\one_{\acat{C}} \cdot a))$. Applying this
to \eqref{eqn:baseline} and using the monic property of $\fS((\tgt{a})\cdot
\one_{\acat{C}})$, we deduce that
\begin{align*}
  (\one_{\acat{E}}\cdot (\one_{\acat{C}} \cdot a))\lambda_{\src{a}} &= \lambda_{\tgt{a}}(\one_{\acat{E}}\cdot a) . 
\end{align*}
Since the actions are capsules, $\lambda$ defines a natural transformation. 
By Proposition~\ref{prop:nat-trans-biact}\ref{proppart:get-biactions}, the
function $a\mapsto (\one_{\acat{E}}\cdot (\one_{\acat{C}} \cdot
a))\lambda_{\src{a}}$ defines an $\acat{A}$-bimorphism $\func{T} :
\acat{A}\to \acat{E}$. Thus, $\func{T}(\src{a}) = \lambda_{\src{a}} :\,\core{E}$,
and therefore
\begin{align*}
  \fR(a) &= \fS(\one_{\acat{C}} \cdot a) \func{T}(\src{a}) = \fS(\one_{\acat{C}} \cdot (\tgt{a})) (\one_{\acat{E}}\cdot (\one_{\acat{C}} \cdot a)) \func{T}(\src{a}) = \fS(\one_{\acat{C}} \cdot (\tgt{a})) \func{T}(a).
\end{align*}
The uniqueness of $\func{T}$ follows from
Proposition~\ref{prop:nat-trans-biact}\ref{proppart:get-nat-trans} and the uniqueness of
$\lambda$. \qed

\subsection{Proof of Theorem~\ref{thm:char-counital} for varieties}
\label{sec:proofThm1} 

Recall that
$\text{Counital}(\cB,\cE)$ denotes the type of all counitals
$\iota:\func{K}\fC\Rightarrow \fI$, where $\cB\leq \cE$ and $\cat{C}$ are 
categories, $\fC: \cB\to \cC$ is a functor, and $\fI:\cB\to \cE$ and
$\func{K}:\cC\to \cE$ are inclusion functors.   Let $\text{Unital}(\cB,\cE)$ be
the type  of all unitals $\pi:\fI\Rightarrow \fK\fC$; these are the duals of
counitals. Recall from Theorem~\ref{thm:Noether}
that $\mathrm{im}$ and $\mathrm{coim}$ produce categorical morphisms, and from
Section~\ref{sec:subobjects-images} the equivalence relations on monomorphisms
and epimorphisms. Invariance of monomorphisms is defined in  Definition \ref{def:A-invariance}.

Our use of set theory notation in the 
following generalization of Theorem~\ref{thm:char-counital} 
is justified because we compare subsets of a fixed algebra. 

\begingroup   
\renewcommand{\themainthm}{1-cat} 
\begin{mainthm}\label{thm:char-counital-eastern}
  Let $\cE$ be a variety. For every $G:\cE$, the following equalities of sets hold up to equivalence:
    \begin{align}
      \tag{1}
      \{\iota:H\hookrightarrow G \mid \iota\text{ is $\core{E}$-invariant} \} 
      & = 
      \left\{
        \text{\rm im}(\eta_G) \mid \eta: \text{\rm Counital}\left(\core{\cE},\cE\right)
      \right\};\\[5pt]
      \tag{2}
      \{\pi:G\twoheadrightarrow Q \mid \pi\text{ is $\core{E}$-invariant} \} 
      & =  
      \left\{
        \text{\rm coim}(\tau_G) \mid \tau: \text{\rm Unital}\left(\core{E},\cE\right)
      \right\};\\[5pt]
      \tag{3}
      \{\iota:H\hookrightarrow G \mid \iota\text{ is $\acat{E}$-invariant} \} 
      & =
      \left\{
        \text{\rm im}(\eta_G) \mid \eta: \text{\rm Counital}(\cat{E},\cE)
      \right\};\\[5pt]
      \tag{4}
      \{\pi:G\twoheadrightarrow Q \mid \pi\text{ is $\acat{E}$-invariant} \} 
      & = 
      \left\{
        \text{\rm coim}(\tau_G) \mid \tau: \text{\rm Unital}(\cat{E},\cE)
      \right\}.
    \end{align}
\end{mainthm}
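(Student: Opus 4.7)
I split the four equalities into two dual pairs: (1) and (3) concern invariant monomorphisms and counitals, while (2) and (4) are their duals for invariant epimorphisms and unitals. I will sketch (1) in detail; (3) follows identically with $\core{\cE}$ replaced by $\cE$ throughout and with the fullness of $\cE(G)$ in $\cE$ used in the Extension Theorem; (2) and (4) follow by a systematic dualization that replaces $\mathrm{im}$ by $\mathrm{coim}$ and exchanges the roles of the left and right capsule actions.

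For the inclusion ``$\supseteq$'' in (1), I start with a counital $\eta:\fC\Rightarrow\fI$ from $\text{Counital}(\core{\cE},\cE)$ and pass to its pointwise image factorization. By Theorem~\ref{thm:Noether}, each component factors as $\eta_X = \mathrm{im}(\eta_X)\,\psi_X\,\mathrm{coim}(\eta_X)$ for an isomorphism $\psi_X$. For any isomorphism $\varphi:X\to Y$ in $\core{\cE}$, the naturality identity $\eta_Y\fC(\varphi) = \varphi\eta_X$ together with the universal property of image factorizations produces a unique isomorphism $\widetilde{\fC}(\varphi):\mathrm{Im}(\eta_X)\to\mathrm{Im}(\eta_Y)$. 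This makes $\widetilde{\fC}:\core{\cE}\to\core{\cE}$ into a functor, and the assignment $\widetilde{\eta}_X\defeq\mathrm{im}(\eta_X)$ assembles into a monic counital $\widetilde{\eta}:\fJ\widetilde{\fC}\Rightarrow\fI$, where $\fJ:\core{\cE}\to\cE$ is the inclusion. Restricting $\widetilde{\eta}$ to the one-object full subcategory $\core{\cE}(G)$ certifies that $\mathrm{im}(\eta_G)$ is $\core{\cE}$-invariant in the sense of Definition~\ref{def:A-invariance}.

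For the inclusion ``$\subseteq$'' in (1), I begin with a $\core{\cE}$-invariant monomorphism $\iota:H\hookrightarrow G$, so Definition~\ref{def:A-invariance} supplies a monic counital $\eta$ on $\core{\cE}(G)$ whose $G$-component is equivalent to $\iota$. Proposition~\ref{prop:nat-trans-biact} converts $\eta$ into a $\core{\cE}(G)$-bimorphism $\fR:\core{\cE}(G)\to\cE$. Since $\core{\cE}(G)$ is, by definition, the full subcategory of $\core{\cE}$ on the single object $G$, it is full in $\core{\cE}$, and the Extension Theorem (Theorem~\ref{thm:extension}) applies. It produces a $\core{\cE}$-bimorphism $\fS:\core{\cE}\to\cE$ together with an $\core{\cE}(G)$-bimorphism $\func{T}$ satisfying $\fR(a) = \fS(a\cdot\one_{\core{\cE}})\,\func{T}(\src{a})$. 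Inverting the conversion of Proposition~\ref{prop:nat-trans-biact} presents $\fS$ as a natural transformation $\sigma$ which, by the construction of $\fS$ in Proposition~\ref{prop:T}, is pointwise monic and thus a counital in $\text{Counital}(\core{\cE},\cE)$. Evaluating the extension identity at $e=\id_G$ yields that $\iota$ is equivalent to $\fR(e) = \sigma_G\,\func{T}(e)$, and Proposition~\ref{prop:T} identifies $\func{T}(e)$ with the isomorphism $\lambda_e$. Hence $\sigma_G$ and $\iota$ are equivalent monomorphisms and $\mathrm{im}(\sigma_G)$ is equivalent to $\iota$.

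The main obstacle is the careful identification of the $G$-component of the extended counital with the original monomorphism $\iota$ up to the equivalence of monomorphisms defined in Section~\ref{sec:subobjects-images}; this rests on tracing through Proposition~\ref{prop:T} to recognize $\func{T}(\src{e})$ as the isomorphism $\lambda_{\src{e}}$, together with verifying that $\sigma$ obtained via Proposition~\ref{prop:nat-trans-biact} really is a counital with inclusion target. For the dual statements (2) and (4), one establishes a left-extension variant of Theorem~\ref{thm:extension} by the symmetric argument---interchanging $\mathrm{im}$ with $\mathrm{coim}$ and left with right actions in Lemmas~\ref{lem:S-sufficient},~\ref{lem:def-sigma} and Proposition~\ref{prop:T}---and applies it to a unital on the single-object subcategory exactly as above.
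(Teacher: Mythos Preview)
Your proposal is correct and follows essentially the same route as the paper: both argue (1) in detail, invoke the Extension Theorem~\ref{thm:extension} on the full single-object subcategory $\core{\cE}(G)=\Autcat(G)$ for the inclusion ``$\subseteq$'', and read off $\core{\cE}$-invariance of $\mathrm{im}(\eta_G)$ from naturality for ``$\supseteq$''; (3) is the same with $\core{\cE}$ replaced by $\cE$, and (2), (4) are dual. Your treatment of ``$\supseteq$'' is slightly more explicit than the paper's---you globally factor $\eta$ through its pointwise image to produce a monic counital $\widetilde{\eta}$ on all of $\core{\cE}$ before restricting---whereas the paper simply observes that the naturality square at $G$ already exhibits $\eta_G$ (hence $\mathrm{im}(\eta_G)$) as $\core{\cE}$-invariant; both are fine, and your extra step is harmless since Definition~\ref{def:A-invariance} only requires the data on $\core{\cE}(G)$.
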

\endgroup

\enlargethispage{0.5cm}    
\begin{proof}
  We prove (1) in detail; the proof of (3) is analogous but requires replacing $\core{E}$ with $\cat{E}$. The proofs of (2) and (4) are dual to the proofs of (1) and (3), respectively. Recall that
the single-object category $\Autcat(G)$ consists of $G$ and all its automorphisms. It is a subcategory of $\cat{E}$ and a full subcategory of $\core{E}$. Let $\fI:\Autcat(G)\to \cE$, $\fL:\Autcat(G)\to \;\core{E}$, and $\fK:\;\core{E}\;\to \cE$ be the inclusion functors with   $\fK\fL=\fI$.

Let $\iota : H\hookrightarrow G$ be an $\core{E}$-invariant morphism in $\cat{E}$. Consider the single-object category $\Autcat(H)$ with inclusion functor $\fJ:\Autcat(H)\to \cE$. As in
Section~\ref{sec:nat-trans-express}, we obtain a natural transformation
$\rho:\fJ\fC\Rightarrow \fI$ with (restriction) functor
$\fC:\Autcat(G)\to\Autcat(H)$, so $\rho: \text{Counital}(\Autcat(G),\cE)$ is a
monic counital.

We now use Proposition~\ref{prop:nat-trans-biact} to pass to the associated
cyclic $\Autcat(G)$-bicapsule  $\Delta\defeq\Autcat(G)\cdot \rho\cdot \Autcat(G)$.
Recall that the left action is defined by $\func{I}$, hence it is regular, and the
right action is defined by $\func{JC}$.  By construction, $\Delta$ satisfies the
conditions of Theorem~\ref{thm:extension} since $\Autcat(G)$ is full in
$\core{E}$. We extend $\Delta$ to a cyclic $\core{E}$-bicapsule
$\Sigma=\core{E}\cdot \sigma \cdot \core{E}$ where $\sigma:\fK\fD\Rightarrow
\fK$ is a monic counital extending $\rho$. Thus, there exists an isomorphism
$\tau_G : \func{JC}(G) \to \func{DL}(G)$ such that $\iota =
\rho_G=\sigma_{\func{L}(G)}\tau_G$; see \eqref{eqn:extension-nat-trans}. Since
$\fL$ is the inclusion functor, there exists an isomorphism $\tau':\acat{E}$
such that $\iota = \sigma_G\tau'$, so $\iota$ and $\sigma_G$ are equivalent.
Hence, $\iota$ and $\mathrm{im}(\sigma_G)$ are equivalent. Since
$\sigma:\text{Counital}(\core{E},\cE)$, this proves the ``$\subseteq$'' part of
(1). 

For the converse, consider $\eta: \text{Counital}(\core{E},\cE)$, say $\eta:
\func{HD}\Rightarrow\func{K}$ for some functor $\fD:\; \core{E}\;\to \cat{C}$,
subcategory $\cat{C}\leq\acat{E}$, and inclusion $\func{H}:\cat{C}\to \cE$.  If
$\varphi:\Autcat(G)$, then $\fL(\varphi):\; \core{E}$, and so
$\func{K}\fL(\varphi) \eta_G =\eta_G \func{H}\fD\func{L}(\varphi)$. Since
$G=\func{L}(G)=\func{K}(G)$, the morphism  $\eta_G:
\func{HD}(G)\to G$ is characteristic, and therefore so is its monic image
$\mathrm{im}(\eta_G)$. This proves the ``$\supseteq$'' part of (1).
\end{proof}

\section{Categorification of characteristic substructure}
\label{sec:inv-cat}

The final step in our work is to 
describe the source of all characteristic subgroups, and more generally 
of characteristic substructures in algebras in fixed varieties.  
In Section~\ref{sec:induced}, we showed that characteristic structure 
arises naturally from counitals.  Now we demonstrate that all counitals 
are derived from counits. In particular, 
in Section~\ref{sec:proofs}, we prove the following generalization of
Theorem~\ref{thm:char-repn} to varieties of algebras.  
\begingroup
\renewcommand{\themainthm}{2-cat} 
\begin{mainthm}\label{thm:char-repn-eastern}
  Fix a variety $\cE$. Let $G$ be an object in $\cE$ with subobject 
$H$ and inclusion $\iota:H\hookrightarrow G$.
   There exist categories $\cat{A}$ and $\cat{B}$, where
   $\core{E}\;\leq\acat{A}\leq\acat{E}$, such that the following are
   equivalent. 
  \begin{ithm}
    \item[\rm (1)] $H$ is characteristic in $G$.
    \item[\rm (2)] There is a functor $\func{C} : \cat{A} \to \cat{A}$ and
     a counit $\eta:\func{C}\Rightarrow \id_{\cat{A}}$ such that 
     $H = \im(\eta_G)$.
    \item[\rm (3)] There is an $(\acat{A},\acat{B})$-morphism $\mathcal{M}:\acat{B}\to
    \acat{A}$ such that $\iota=\mathcal{M}(\id_G \cdot \one_{\acat{B}})$.
  \end{ithm} 
\end{mainthm}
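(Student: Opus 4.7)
The plan is to chain Theorem~\ref{thm:char-counital-eastern} with the Extension Theorem and the counit/bimorphism correspondence of Theorem~\ref{thm:counit-capsules}. Let $\fI : \Autcat(G) \to \cE$ and $\fJ : \Autcat(H) \to \cE$ be the inclusions of the one-object subcategories, and let $\fK : \core{E} \to \cE$ be the core inclusion.

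For (1) $\Rightarrow$ (2), suppose $H$ is characteristic in $G$. The proof of Theorem~\ref{thm:char-counital-eastern} yields a monic counital $\rho : \fJ \fC_0 \Rightarrow \fI$ for the restriction functor $\fC_0 : \Autcat(G) \to \Autcat(H)$. Proposition~\ref{prop:nat-trans-biact} repackages $\rho$ as a cyclic $\Autcat(G)$-bicapsule $\Delta \defeq \Autcat(G) \cdot \rho \cdot \Autcat(G)$ inside $\cE$. Because $\Autcat(G)$ is full in $\core{E}$, Theorem~\ref{thm:extension} extends $\Delta$ to a cyclic $\core{E}$-bicapsule $\Sigma$ and produces a monic natural transformation $\sigma : \fK \fD \Rightarrow \fK$ for an endofunctor $\fD : \core{E} \to \core{E}$. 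I then define $\cA$ to be the subcategory of $\cE$ generated by $\core{E}$ together with the morphisms $\{\sigma_X : \fD(X) \to X\}$ and lift $\fD$ to an endofunctor $\fC$ of $\cA$ by declaring $\fC(\sigma_X) \defeq \sigma_{\fD(X)}$ and extending along composition. Then $\sigma$ reinterpreted in $\cA$ becomes a counit $\eta : \fC \Rightarrow \id_{\cA}$ with $\im(\eta_G) = H$. Conversely, given a counit $\eta : \fC \Rightarrow \id_{\cA}$ with $\im(\eta_G) = H$, naturality at each $\varphi \in \Aut(G) \leq \core{E} \leq \cA$ gives $\varphi \eta_G = \eta_G \fC(\varphi)$, so $\varphi$ stabilizes $\im(\eta_G)$, and hence $H$ is characteristic.

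For (2) $\Leftrightarrow$ (3), I apply Theorem~\ref{thm:counit-capsules}. In the direction (2) $\Rightarrow$ (3), factor $\fC = \id_{\cA} \circ \fC$ and set $\cB \defeq \cA$; Theorem~\ref{thm:counit-capsules}(b) combined with Proposition~\ref{prop:nat-trans-biact} repackages $\eta$ as an $(\cA,\cB)$-bimorphism, and after composing with the monic factor from Theorem~\ref{thm:Noether}, the resulting $\cM$ satisfies $\cM(\id_G \cdot \one_{\cB}) = \iota$. For (3) $\Rightarrow$ (2), Theorem~\ref{thm:counit-capsules}(a) converts $\cM$ back into functors $\fF, \fG$ and a counit $\nu : \fF \fG \Rightarrow \id_{\cA}$ with $\nu_G = \cM(\id_G \cdot \one_{\cB}) = \iota$; setting $\fC \defeq \fF \fG$ and $\eta \defeq \nu$ realizes (2) with $\im(\eta_G) = \im(\iota) = H$.

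The principal obstacle is the internalization step in (1) $\Rightarrow$ (2). The Extension Theorem delivers $\sigma$ as a counital with codomain $\fK$, not as a counit, so promoting it to a genuine counit $\eta : \fC \Rightarrow \id_{\cA}$ requires building $\cA$ by adjoining the morphisms $\sigma_X$ to $\core{E}$ while preserving the composition laws of Definition~\ref{def:abs-cat}, and verifying that $\fD$ extends coherently to an endofunctor of this enlarged category; both points rely essentially on $H$ being characteristic rather than merely invariant under some automorphisms of $G$. A secondary technicality in (2) $\Leftrightarrow$ (3) is securing the literal equality $\iota = \cM(\id_G \cdot \one_{\cB})$ rather than equivalence up to an isomorphism factor, which requires choosing $\cB$ so that $\cM$ outputs the monic inclusion itself, separated from any isomorphism appearing in the Noether decomposition of $\eta_G$.
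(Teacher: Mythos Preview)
Your approach is essentially the paper's, organized slightly differently. The construction you sketch for $(1)\Rightarrow(2)$---adjoining the morphisms $\sigma_X$ to $\core{E}$ to form $\cA$ and extending $\fD$ by $\fC(\sigma_X)\defeq\sigma_{\fD(X)}$---is exactly Proposition~\ref{prop:isosceles-to-internal}; the paper bundles this with Proposition~\ref{prop:nat-trans-biact}\ref{proppart:get-biactions} into Theorem~\ref{thm:char-from-biacts} and uses it for $(1)\Rightarrow(3)$ rather than $(1)\Rightarrow(2)$. The well-definedness check you anticipate (when some $\sigma_X$ already lies in the smaller category) is carried out there using monicity of the counital, so your ``principal obstacle'' is not a gap but precisely the lemma the paper supplies.

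The paper then closes the cycle as $(1)\Rightarrow(3)\Rightarrow(2)\Rightarrow(1)$, invoking Theorem~\ref{thm:counit-capsules}\ref{thmpart:bicap-to-counit} for $(3)\Rightarrow(2)$ and Theorem~\ref{thm:char-counital-eastern} for $(2)\Rightarrow(1)$. This ordering sidesteps your ``secondary technicality'': in your direct $(2)\Rightarrow(3)$, the bimorphism built from an arbitrary $\eta$ with $\im(\eta_G)=H$ satisfies $\fM(\id_G\cdot\one_{\cB})=\eta_G$ rather than $\iota$, and post-composing with a Noether factor does not obviously remain an $(\cA,\cB)$-morphism. By instead routing $(2)\Rightarrow(1)\Rightarrow(3)$, the paper's construction in Theorem~\ref{thm:char-from-biacts} begins from $\iota$ itself, so the resulting bimorphism tracks $\iota$ by construction and the issue does not arise.
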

  \endgroup
  
Our proof relies on  the Extension Theorem~\ref{thm:extension} and additional
consideration of counitals.

\begin{defn}\label{def:counital-type}
  Fix a category $\cE$ with subcategories $\cat{A}$ and $\cat{B}$ and
  inclusion functors $\func{I} : \cA \to \cE$ and $\func{J} : \cB
  \to \cE$. 
  A counital  $\eta : \func{JC}\Rightarrow \func{I}$ is \emph{isosceles} 
if $\cat{A}=\cat{B}$ and $\fI=\fJ$, and 
    \emph{flat} if, in addition, $\cA=\cB=\cE$ and $\fI=\fJ=\id_{\cE}$. 
Otherwise, it is \emph{scalene}.
\end{defn}

\begin{ex}\label{ex:three-chars}
  We mention three examples in $\cat{Grp}$ and 
illustrate their natural transformations in 
Figure~\ref{fig:counital}. 
The first two are the derived subgroup and the center of a group $G$,
as considered in Example~\ref{ex:three-chars_first}. For the third example, 
we consider an arbitrary characteristic subgroup $H$ of $G$. 
  As discussed in Section~\ref{sec:ext-prob}, define
  $\cat{Aut}(G)$ to be the category with one object $G$ and 
its morphisms are the
  automorphisms of $G$. Hence, $\cat{Aut}(G)$ and $\cat{Aut}(H)$ are
  subcategories of $\cat{Grp}$ with inclusion functors $\func{J}$ and
  $\func{K}$, respectively. We define a functor $\func{C} : \cat{Aut}(G) \to
  \cat{Aut}(H)$ by mapping $G$ to $H$ and automorphisms of $G$ to their
  restriction to $H$, and so obtain a natural transformation $\iota :
  \func{K}\func{C} \Rightarrow \func{J}$.~\exqed
\end{ex}

\begin{figure}[!htbp]
  \begin{subfigure}[t]{0.32\textwidth}
    \centering
    \pgfmathsetmacro{\rad}{1.25}
    \begin{tikzpicture}
      \fill[color=ForestGreen!30] (0,0) circle (1.02*\rad cm);
    
      \node[scale=0.8, rounded corners, fill=black!10] (G) at (-\rad, 0) {$\cat{Grp}$};
    
      \node[scale=0.8, rounded corners, fill=black!10] (Grp) at (\rad, 0) {$\cat{Grp}$};
    
      \draw[very thick, color=black,->] (G) edge[
      out=84, in=97, looseness=1.5, 
      "$\id$"{below,name=I}] (Grp);
      \draw[very thick,->] (G) edge[
      out=-84, in=-97, looseness=1.5, 
      "$\func{D}$"{above, name=C}, 
      ] (Grp);
    
      \draw (C) edge[
      arrows=-Implies,
      double distance=3pt, 
      scaling nfold=2, 
      "$\lambda$"{right, name=Iota}, 
      pos=0.4] (I);
    \end{tikzpicture} 
    \caption{The derived subgroup}
    \label{fig:comm-counital}
    \end{subfigure}
    \hfill
  \begin{subfigure}[t]{0.32\textwidth}
	\centering
	\pgfmathsetmacro{\rad}{1.25}
  \begin{tikzpicture}
    \coordinate (t) at (0,0);
    \coordinate (b) at (-1.5,-2.5);
    \coordinate (d) at (1.5,-2.5);

    \fill[color=blue!30] (t) -- (b) -- (d) -- cycle;

    \node[scale=0.8, rounded corners, fill=black!10] (Grp) at (t) {$\cat{Grp}$};
    \node[scale=0.8, rounded corners, fill=black!10] (G) at (b) {$\lcore{Grp}$};
    \node[scale=0.8, rounded corners, fill=black!10] (K) at (d) {$\lcore{Grp}$};

    \draw (G) edge[->,"{$\func{I}$}"{name=I, above left}, very thick] (Grp);
    \draw (K) edge[->,"{$\func{I}$}"{above right}, very thick] (Grp);
    \draw (G) edge[->, "{$\func{Z}$}", very thick] (K);

    \draw[Rightarrow] (1,-2.15) -- (-0.6,-1.175) node[midway, above, yshift=2pt] {$\rho$};
  \end{tikzpicture}
	\caption{The center}
	\label{fig:center-counital}
  \end{subfigure}
  \hfill
  \begin{subfigure}[t]{0.33\textwidth}
    \centering
    \pgfmathsetmacro{\rad}{1.25}
    \begin{tikzpicture}
      \coordinate (t) at (0,0);
      \coordinate (b) at (-1.25,-1.8);
      \coordinate (d) at (1.75,-2.5);
  
      \fill[color=red!30] (t) -- (b) -- (d) -- cycle;
  
      \node[scale=0.8, rounded corners, fill=black!10] (Grp) at (t) {$\cat{Grp}$};
      \node[scale=0.8, rounded corners, fill=black!10] (G) at (b) {$\cat{Aut}(G)$};
      \node[scale=0.8, rounded corners, fill=black!10] (K) at (d) {$\cat{Aut}(H)$};
  
      \draw (G) edge[->, "{$\func{J}$}"{name=I, above left}, very thick] (Grp);
      \draw (K) edge[->, "{$\func{K}$}"{above right}, very thick] (Grp);
      \draw (G) edge[->, "{$\func{C}$}"{below}, very thick] (K);
  
      \draw[Rightarrow] (1.2,-2.1) -- (-0.5,-0.95) node[midway, above, yshift=2pt] {$\iota$};
    \end{tikzpicture}
    \caption{A characteristic subgroup}
    \label{fig:single-counital}
    \end{subfigure}  
	\caption{Natural transformations from Example~\ref{ex:three-chars}}
	\label{fig:counital}
\end{figure}
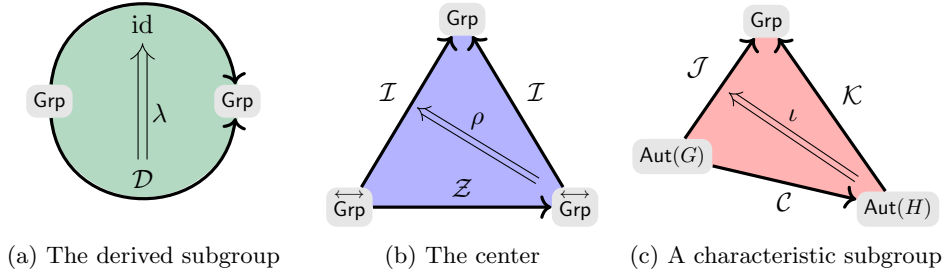

In our study of characteristic structure we 
use  induced actions from Theorem~\ref{thm:char-from-biacts} to
pass from a scalene counital to one that is isosceles. 
We then work with isosceles counitals to determine an intermediate class of isosceles 
counitals known as \emph{internal} counitals. Finally, we show that an internal counital
is completely determined by a morphism of bicapsules.

Counits are common in many categorical contexts; for example, they occur for
every adjoint functor pair.  The case of flat counitals coincides precisely with
the stricter class of fully-invariant substructures.

\subsection{Composing counitals}
In this section, we describe two ways to construct new counitals from given
counitals by composing natural transformations and functors in different ways. These are two instances of a much larger theory; see \cite{Baez, Power}. Figure~\ref{fig:char-ext-2-cells} illustrates the usual composition
of natural transformations. We now  describe how to compose a functor with a
natural transformation. Consider functors $\func{F},\func{G}:\cat{B}\to
\cat{A}$, $\func{H}:\cat{C}\to\cat{B}$, and $\func{K}:\cat{A}\to \cat{D}$ for
categories $\cat{A}$, $\cat{B}$, $\cat{C}$, and $\acat{D}$, and a natural
transformation $\eta : \func{F} \Rightarrow \func{G}$. Define $\eta
\func{H}:\func{F}\func{H}\Rightarrow \func{G}\func{H}$ by setting $(\eta
\func{H})_X\defeq \eta_{\func{H}(X)}$ for each object $X$ in $\cat{C}$.
Similarly, define $\func{K}\eta:\func{K}\func{F}\Rightarrow \func{K}\func{G}$ by
setting $(\func{K}\eta)_Y\defeq\func{K}(\eta_Y)$ for each $Y$ in $\cat{B}$. The
effects of $\eta \func{H}$ and $\func{K}\eta$ are displayed
in~Figure~\ref{fig:nat-trans-functor}.

\begin{figure}[!htbp]
  \pgfmathsetmacro{\rad}{1.5}
  \begin{subfigure}[t]{0.49\textwidth}
    \centering
      \begin{tikzpicture}
        \fill[color=blue!30] (0,0) circle (1.02*\rad cm);
        \fill[color=red!30] (0.5*\rad,0) circle (0.52*\rad cm);
      
        \node[scale=0.8, rounded corners, fill=black!10] (G) at (-\rad, 0) {$\cat{C}$};
      
        \node[scale=0.8, rounded corners, fill=black!10] (H) at (0, 0) {$\cat{B}$};
      
        \node[scale=0.8, rounded corners, fill=black!10] (Grp) at (\rad, 0) {$\cat{A}$};
      
        \draw[very thick, color=black,->] (G) edge["$\func{H}$", pos=0.2] (H);
      
        \draw[very thick, color=black,->] (G) edge[
          out=84, in=97, looseness=1.5, 
          "$\func{G}\func{H}$"{below,name=I}] (Grp);
        \draw[very thick,->] (G) edge[
          out=-84, in=-97, looseness=1.5, 
          "$\func{F}\func{H}$"{below}, 
          ""{name=C, outer sep=2pt}] (Grp);
      
        \draw[very thick, color=black,->] (H) edge[
          out=81, in=99, looseness=1.33, 
          "$\func{G}$"{below,name=J}] (Grp);
        \draw[very thick,->] (H) edge[
          out=-81, in=-99, looseness=1.33, 
          "$\func{F}$"{below}, 
          ""{name=D, outer sep=2pt}] (Grp);
      
        \draw (C) edge[
            out=135, in=-135, 
            arrows=-Implies,
            scaling nfold=2, 
            double distance=3pt,
            "$\eta \func{H}$"{right, name=Iota}, pos=0.2] (I);
        \draw (D) edge[
            arrows=-Implies,
            double distance=3pt,
            scaling nfold=2, 
            "$\eta$"{right, name=Kappa}, pos=0.4] (J);
      \end{tikzpicture}
    \caption{A diagram for $\eta \func{H}$}
    \label{fig:eta-H}
    \end{subfigure}
    \hfill
  \begin{subfigure}[t]{0.49\textwidth}    
    \centering
    \begin{tikzpicture}[xscale=-1] 
      \fill[color=blue!30] (0,0) circle (1.02*\rad cm);
      \fill[color=red!30] (0.5*\rad,0) circle (0.54*\rad cm);
    
      \node[scale=0.8, rounded corners, fill=black!10] (G) at (-\rad, 0) {$\cat{D}$};
    
      \node[scale=0.8, rounded corners, fill=black!10] (H) at (0, 0) {$\cat{A}$};
    
      \node[scale=0.8, rounded corners, fill=black!10] (Grp) at (\rad, 0) {$\cat{B}$};
    
      \draw[very thick, color=black,->] (H) edge["$\func{K}$", pos=0.8] (G);
    
      \draw[very thick, color=black,->] (Grp) edge[
        out=97, in=84, looseness=1.47, 
        "$\func{K}\func{G}$"{below,name=I}] (G);
      \draw[very thick,->] (Grp) edge[
        out=-97, in=-84, looseness=1.47, 
        "$\func{K}\func{F}$"{below}, 
        ""{name=C, outer sep=2pt}] (G);
    
      \draw[very thick, color=black,->] (Grp) edge[
        out=99, in=81, looseness=1.3, 
        "$\func{G}$"{below,name=J}] (H);
      \draw[very thick,->] (Grp) edge[
        out=-99, in=-81, looseness=1.3, 
        "$\func{F}$"{below}, 
        ""{name=D, outer sep=2pt}] (H);
    
      \draw (C) edge[
          out=155, in=-150, 
          arrows=-Implies,
          scaling nfold=2, 
          double distance=3pt,
          "$\func{K}\eta$"{right, name=Iota}, pos=0.2] (I);
      \draw (D) edge[
          arrows=-Implies,
          double distance=3pt,
          scaling nfold=2, 
          "$\eta$"{right, name=Kappa}, pos=0.4] (J);
    \end{tikzpicture}
    \caption{A diagram for $\func{K}\eta$}
    \label{fig:K-eta}
  \end{subfigure}
  \caption{Composing natural transformations with functors}
  \label{fig:nat-trans-functor}
\end{figure}
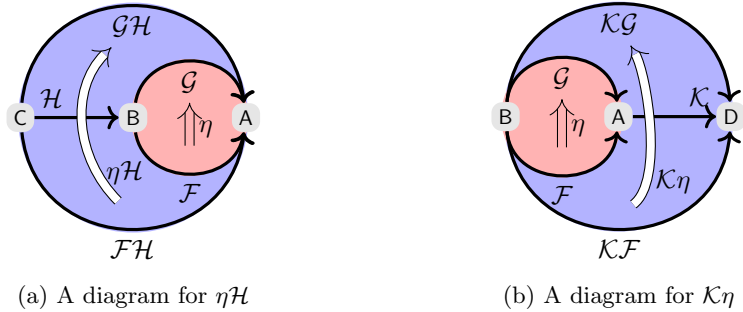

The composition we describe next is specific to natural transformations of a
particular form, which include counitals. It composes two natural
transformations that share a functor and reflects our expectation that the
characteristic relation is transitive. In $\cat{Grp}$, for example,  given a
counital describing a characteristic subgroup $H$ of $G$, and a counital
describing a characteristic subgroup $K$ of $H$, we expect to have a counital
that prescribes how $K$ is characteristic in $G$.

To that end, suppose $\acat{E}$ is a variety with subcategories $\acat{A}$, $\acat{B}$, and $\acat{C}$,
and respective inclusions $\func{I}$, $\func{J}$, and $\func{K}$. Suppose
$\eta:\fJ\fC\Rightarrow \fI$ and $\mu:\func{K}\fD \Rightarrow \fJ$ are
natural transformations. Define $\mu\triangledown \eta :
\func{KDC} \Rightarrow \func{I}$ by
\begin{align*} 
  (\mu\triangledown\eta)_X \defeq \eta_{X}\mu_{\fC(X)}
\end{align*} 
for all objects $X$ in $\acat{A}$, see  Figure~\ref{fig:compose-counitals}. This
construction reflects the fact that being a characteristic substructure is a
transitive property.

\begin{figure}[!htbp]
  \centering
  \begin{tikzpicture}
    \node at (-3,0) {
      \begin{tikzpicture}
        \coordinate (t) at (0,0);
        \coordinate (b) at (-1,-2);
        \coordinate (d) at (1.75,-2.5);
        \coordinate (c) at (4.5,-3);
    
        \fill[color=blue!30] (t) -- (b) -- (d) -- cycle;
        \fill[color=red!30] (t) -- (c) -- (d) -- cycle;
    
        \node[scale=0.8, rounded corners, fill=black!10] (Grp) at (t) {$\cat{E}$};
        \node[scale=0.8, rounded corners, fill=black!10] (G) at (b) {$\cat{A}$};
        \node[scale=0.8, rounded corners, fill=black!10] (K) at (d) {$\cat{B}$};
        \node[scale=0.8, rounded corners, fill=black!10] (H) at (c) {$\cat{C}$};
    
        \draw (G) edge[->, "{$\func{I}$}"{name=I, above left}, thick] (Grp);
        \draw (K) edge[->, thick] (Grp);
        \draw (G) edge[->, "{$\func{C}$}"{below}, thick] (K);
        \draw (K) edge[->, "{$\func{D}$}"{below}, thick] (H);
        \draw (H) edge[->, "{$\func{K}$}"{name=K, above right}, thick] (Grp);
    
        \draw[Rightarrow] (1.2,-2.1) -- (-0.4,-1) node[midway, above, yshift=2pt] {$\eta$};
        \draw[Rightarrow] (3.8,-2.7) -- (1.2,-1.4) node[midway, below, yshift=-2pt] {$\mu$};
      \end{tikzpicture}
    };
    \node at (3,0) {
      \begin{tikzpicture}
        \coordinate (t) at (0,0);
        \coordinate (b) at (-1,-2);
        \coordinate (d) at (1.75,-2.5); 
        \coordinate (c) at (4.5,-3); 
    
        \fill[color=ForestGreen!30] (t) -- (c) -- (b) -- cycle;
    
        \node[scale=0.8, rounded corners, fill=black!10] (Grp) at (t) {$\cat{E}$};
        \node[scale=0.8, rounded corners, fill=black!10] (G) at (b) {$\cat{A}$};
        \node[scale=0.8, rounded corners, fill=black!10] (H) at (c) {$\cat{C}$};
    
        \draw (G) edge[->, "{$\func{I}$}"{name=I, above left}, thick] (Grp);
        \draw (G) edge[->, "{$\func{DC}$}"{below}, thick] (H);
        \draw (H) edge[->, "{$\func{K}$}"{name=K, above right}, thick] (Grp);
    
        \draw[Rightarrow] (3.8,-2.7) -- (-0.4,-1) node[midway, below, yshift=2pt, xshift=-10pt] {$\mu\triangledown\eta$};
      \end{tikzpicture}
    };
  \end{tikzpicture}   

  \caption{The $\triangledown$-composition of counitals explains transitivity}
  \label{fig:compose-counitals}
\end{figure}

\subsection{Categorifying isosceles counitals}
All extensions used in 
our proof of Theorem~\ref{thm:char-counital-eastern} 
lead to isosceles counitals. 
Counits---namely, counitals
$\fJ\fC\Rightarrow \fI$ in which $\fJ=\fI$ is 
the identity functor---are one
source of isosceles counitals. This hints at a way to characterize
characteristic subgroups.

We now prove that all counitals arising from characteristic subgroups extend to
isosceles counitals, thereby proving Theorem~\ref{thm:char-repn-eastern}. The most direct proof
might utilize \emph{Kan lifts}, the dual of the better known \emph{Kan
extensions}~\cite{Riehl}*{Chapter 6}, but we give a self-contained proof.

\begin{defn}\label{def:internal-counital} 
  Let $\fI:\cat{A}\to\cat{C}$ be an
  inclusion functor of categories, let $\fC:\cat{A}\to \cat{C}$ be a functor,
  and let $\counital : \fC \Rightarrow \func{I}$ be a natural transformation.
  If, for every object $X$ in $\acat{A}$, the morphism $\counital_X :
  \func{C}(X) \to \func{I}(X)$ in $\acat{C}$ is a morphism in $\acat{A}$ (more
  precisely, the image of a morphism in $\acat{A}$ under $\fI$), then  
  $\counital$ is \emph{internal}.
\end{defn}

The property of being internal is strong. Take, for example, $\cat{A} =
\;\core{C}$, so the morphisms are exclusively isomorphisms. If $\counital$ is
internal, then $\iota_X : \func{C}(X) \to X$ is an isomorphism. Such an $\iota$
does not identify a new substructure.  In other words, $\cat{A}$ has too few
morphisms for our purposes. By extending the types of morphisms, we prove in
Proposition~\ref{prop:isosceles-to-internal} that every monic isosceles counital lifts to
an internal one; see~Figure~\ref{fig:extend-isosceles} for an illustration.

\begin{figure}[h]
  \begin{tikzpicture}[thick] 
    \coordinate (t) at (0,0);
    \coordinate (a) at (-1.5,-1.5);
    \coordinate (b) at (-2.5,-2.5);
    \coordinate (c) at ( 1.5,-1.5);
    \coordinate (d) at (2.5,-2.5);

    \fill[color=blue!30] (t) -- (b) -- (d) -- cycle;
    \fill[color=blue!30] (0,-2.5) ellipse (2.4cm and 0.8cm);
    \fill[color=red!30] (t) -- (a) -- (c) -- cycle;
    \fill[color=red!50] (0,-1.5) ellipse (1.4cm and 0.5cm);

    \node[scale=0.8, rounded corners, fill=black!10] (Grp) at (t) {$\cat{E}$};
    \node[scale=0.8, rounded corners, fill=black!10] (H) at (a) {$\cat{A}$};
    \node[scale=0.8, rounded corners, fill=black!10] (G) at (b) {$\cat{B}$};
    \node[scale=0.8, rounded corners, fill=black!10] (L) at (c) {$\cat{A}$};
    \node[scale=0.8, rounded corners, fill=black!10] (K) at (d) {$\cat{B}$};

    \draw (H) edge[->, "{\tiny $\func{K}$}"] (Grp);
    \draw (G) edge[->, "{\tiny $\func{J}$}"] (H);
    \draw (L) edge[->, "{\tiny $\func{K}$}"{above right}] (Grp);
    \draw (K) edge[->, "{\tiny $\func{J}$}"{above right}] (L);
    \draw (H) edge[->, out=30, in=150, dotted, "{\tiny $\id$}"{name=id}] (L);
    \draw (H) edge[->, out=-30, in=-150, "{\tiny $\func{D}$}"{name=D}] (L);
    \draw (G) edge[->, out=-30, in=-150, "{\tiny $\func{E}$}"] (K);

    \draw (D) edge[Rightarrow, "$\hat{\eta}$"{right}] (id);
  \end{tikzpicture}
  \caption{Extending an isosceles counital to an internal one}
  \label{fig:extend-isosceles}
\end{figure}
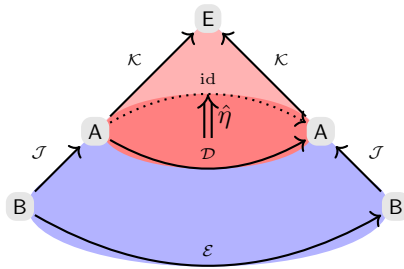

\begin{prop}\label{prop:isosceles-to-internal}
  Let $\acat{E}$ be a category with subcategory $\acat{B}$ and inclusion
  $\func{I}$. Suppose  every object in $\acat{E}$ is also an object in
  $\acat{B}$. Let  $\eta : \func{I}\func{E} \Rightarrow \func{I}$ be a monic
  isosceles counital with $\func{E} : \cat{B} \to \cat{B}$. There exists a
  category $\acat{A}$ with inclusions $\func{J} : \acat{B}\to\acat{A}$ and
  $\func{K}:\acat{A}\to\acat{E}$, a functor $\func{D} : \acat{A}\to\acat{A}$,
  and an internal monic isosceles counital $\hat{\eta} : \func{K}\func{D}
  \Rightarrow \func{K}$ such that $\func{JE}=\func{DJ}$, $\func{I}=\func{KJ}$,
  and $\hat{\eta}\func{J} = \eta$.  
\end{prop}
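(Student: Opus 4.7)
The plan is to enlarge $\cat{B}$ to a subcategory $\cat{A}$ of $\cat{E}$ that contains the components $\eta_X$, and then to transport $\func{E}$ to a functor $\func{D}$ on $\cat{A}$ using the fact that each $\eta_X$ is a monomorphism. The construction is forced by wanting $\hat{\eta}_X=\eta_X$ for every object $X$: naturality of $\hat{\eta}$ in $\cat{A}$ will pin down $\func{D}(\alpha)$ uniquely, and the monic hypothesis will make this prescription consistent.

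First I would define $\cat{A}$ as the subcategory of $\cat{E}$ whose objects are those of $\cat{B}$ (which, by hypothesis, are all of the objects of $\cat{E}$) and whose morphisms are all compositions in $\cat{E}$ of morphisms from $\func{I}(\cat{B})$ and of the components $\{\eta_X\mid X\in\text{ob}\,\cat{B}\}$. Let $\func{K}\colon \cat{A}\hookrightarrow\cat{E}$ and $\func{J}\colon \cat{B}\hookrightarrow\cat{A}$ be the inclusions; by construction $\func{I}=\func{K}\func{J}$.

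Next I would define $\func{D}\colon\cat{A}\to\cat{A}$. On objects set $\func{D}(X)\defeq\func{E}(X)$. On a morphism $\alpha\colon X\to Y$ in $\cat{A}$, define $\func{D}(\alpha)\colon\func{E}(X)\to\func{E}(Y)$ to be the unique morphism in $\cat{E}$ satisfying
\[
\eta_Y\circ\func{D}(\alpha) \;=\; \alpha\circ\eta_X,
\]
which exists and is unique because $\eta_Y$ is monic \emph{if} the right-hand side factors through $\eta_Y$. The main verification is that this really produces morphisms \emph{in $\cat{A}$} and does so functorially. I would check this on the generators and extend multiplicatively:
\begin{itemize}
\item For $\alpha\in\cat{B}$, naturality of $\eta$ gives $\eta_Y\circ\func{E}(\alpha)=\alpha\circ\eta_X$, so by uniqueness $\func{D}(\alpha)=\func{E}(\alpha)$ lies in $\cat{B}\subseteq\cat{A}$. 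In particular $\func{D}\func{J}=\func{J}\func{E}$.
\item For $\alpha=\eta_X$, the equation $\eta_X\circ\func{D}(\eta_X)=\eta_X\circ\eta_{\func{E}(X)}$ together with monicity of $\eta_X$ forces $\func{D}(\eta_X)=\eta_{\func{E}(X)}$, which is again a generator of $\cat{A}$.
\item For a composite $\beta\alpha$, the computation $\eta_Z\circ\func{D}(\beta)\func{D}(\alpha)=\beta\circ\eta_Y\circ\func{D}(\alpha)=\beta\alpha\circ\eta_X$ and uniqueness give $\func{D}(\beta\alpha)=\func{D}(\beta)\func{D}(\alpha)$; similarly $\func{D}(\id_X)=\id_{\func{E}(X)}$.
\end{itemize}
Thus $\func{D}$ is a well-defined endofunctor of $\cat{A}$ that restricts to $\func{E}$ along $\func{J}$.

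Finally I would define $\hat{\eta}\colon\func{K}\func{D}\Rightarrow\func{K}$ componentwise by $\hat{\eta}_X\defeq\eta_X$, viewed as a morphism of $\cat{A}$ (it is one of the chosen generators). Naturality for $\alpha\colon X\to Y$ in $\cat{A}$ is precisely the defining equation $\eta_Y\circ\func{D}(\alpha)=\alpha\circ\eta_X$; monicity of each component is inherited from $\eta$; and $\hat{\eta}\func{J}=\eta$ holds on the nose. By definition of $\cat{A}$ every $\hat{\eta}_X$ is a morphism in $\cat{A}$, so $\hat{\eta}$ is internal in the sense of Definition~\ref{def:internal-counital}.

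The delicate point, and the one I would worry about most, is ensuring that $\func{D}$ is well-defined on all of $\cat{A}$: a morphism in $\cat{A}$ can have many representations as a composite of generators, and the candidate definition $\func{D}$ must agree on all of them. The trick is to sidestep this by defining $\func{D}(\alpha)$ abstractly via the universal property afforded by the monic $\eta_Y$, and then only use the generator-wise description to verify that $\func{D}$ takes values in $\cat{A}$ (rather than merely in $\cat{E}$). This shifts the burden from combinatorics of word representations to a clean use of the monic hypothesis, which is exactly what the statement was designed to invoke.
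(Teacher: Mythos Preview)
Your proof is correct and follows the same outline as the paper's: both build $\cat{A}$ as the subcategory of $\cat{E}$ generated by $\func{I}(\cat{B})$ together with the components $\eta_X$, set $\hat\eta_X=\eta_X$, and define $\func{D}$ so that $\func{D}\func{J}=\func{J}\func{E}$ and $\func{D}(\eta_X)=\eta_{\func{E}(X)}$.

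The one genuine difference is in how well-definedness of $\func{D}$ is secured. The paper defines $\func{D}$ on generators and extends multiplicatively, then argues that the only coherence to check is the overlap where some $\eta_X$ happens already to lie in $\func{I}(\cat{B})$. You instead define $\func{D}(\alpha)$ intrinsically as the unique $d$ with $\eta_Y\circ d=\alpha\circ\eta_X$, using monicity of $\eta_Y$ for uniqueness, and only invoke the generator description to show (by induction on word length) that such a $d$ exists and lies in $\cat{A}$. Your route is cleaner: since $\cat{A}$ is generated inside $\cat{E}$ rather than freely, there may be relations among composites of generators beyond the single overlap the paper isolates, and your abstract definition absorbs all such relations automatically. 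The paper's approach buys a more explicit formula for $\func{D}$ on generators up front; yours buys a watertight well-definedness argument with no combinatorics of word representations.
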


\begin{proof}
  We define a subcategory $\cat{A}$ of $\cat{E}$ as follows: its objects are the
  objects of~$\acat{E}$; its morphisms are given as finite compositions of
  morphisms $\func{I}(\varphi):\acat{E}$, where $\varphi$ is a morphism in
  $\acat{B}$, and morphisms $\eta_X:\acat{E}$, where $X$ an object in
  $\acat{B}$. Hence, we have inclusions $\func{J} : \cat{B} \to \cat{A}$ and
  $\func{K} : \cat{A} \to \cat{E}$ such that $\func{I} = \func{KJ}$. Since both
  $\acat{A}$ and $\acat{B}$ have the same objects as $\acat{E}$, it follows that
  $\func{I}$, $\func{J}$, and $\func{K}$ are the identities on objects. Moreover, $\func{K}$ is the identity on morphisms.

  We now construct a functor $\fD:\cat{A}\to \cat{A}$ such that $\func{J}
  \func{E}= \fD \func{J}$. It suffices to define $\fD$ on morphisms and then
  verify that $\fD$ is a functor. Set
  \begin{align*} 
    \fD (\varphi) & \defeq \begin{cases}
      \func{JE}(\varphi') & \varphi = \fJ(\varphi')\text{ for a morphism $\varphi'$ in } \cat{B}, \\
      \eta_{\func{E}(X)} & \varphi=\eta_X\text{ for some object $X$ in $\acat{B}$}, \\
      \func{D}(\sigma)\func{D}(\tau) & \varphi=\sigma\tau.
    \end{cases}
  \end{align*}
  If $\fD$ is well defined, then $\func{D}(\varphi)$ is a morphism in
  $\acat{A}$, and $\func{J} \func{E}=\fD \func{J}$ by construction. To verify
  that $\fD$ is well defined, it suffices to consider the case where  $\eta_X$
  (with $X$ an object in $\acat{B}$) is also a morphism in $\acat{B}$:
  specifically, there is a morphism $\beta :\acat{B}$ such that $\eta_X =
  \func{I}(\beta)$. Since $\fI$ is the identity on objects, $\beta : \fE(X) \to
  X$. We will show that $\eta_{\func{E}(X)}=\fK\fD(\eta_X)=\func{IE}(\beta)$. To
  see this, we apply $\eta$ to the morphism $\beta:\func{E}(X)\to X$ and obtain
  the following diagram (see shaded entry $(2,2)$ of
  Figure~\ref{fig:functor-cat-act}).
  \[ 
    \begin{tikzcd}
      \func{I}\func{E}\func{E}(X) \arrow[r, "\func{IE}(\beta)"] \arrow[d, "\eta_{\fE(X)}"] 
        & \func{I}\func{E}(X) \arrow[d, "\eta_{X}"] \\
      \func{I}\func{E}(X) \arrow[r, "\func{I}(\beta)"] & \func{I}(X)
    \end{tikzcd}
  \]
  Since $\eta_X = \fI(\beta)$, the diagram implies that $\eta_X
  \eta_{\func{E}(X)}=\eta_X \func{IE}(\beta)$. Since $\eta_X$ is monic by
  assumption,  $\func{IE}(\beta)=\eta_{\func{E}(X)}$. This proves that
  $\func{D}$ is well defined. 

  We claim that there exists a natural transformation
  $\hat{\eta}:\func{K}\func{D}\Rightarrow \func{K}$ such that
  $\hat{\eta}\func{J} = \eta$. 
  Since the
  objects of $\cat{A}$ are those of $\cat{B}$, we define $\hat{\eta}_X$ to be
  $\eta_X$ and show that this yields the required counital. First, we consider
  the case that $\varphi : X\to Y$ is a morphism in $\cat{B}$. Then $\func{K} \func{D}
  \func{J}(\varphi)=\func{K} \func{J} \func{E}(\varphi)=\func{I}
  \func{E}(\varphi)$, so
  \begin{align*}
    \hat{\eta}_Y \func{K} \func{D}(\func{J}(\varphi)) 
    & = \eta_Y \func{I} \func{E}(\varphi) = \func{I}(\varphi)\eta_X 
     = \func{K}(\func{J}(\varphi))\hat{\eta}_X.
  \end{align*} 
  Now we assume $\varphi = \eta_X: \func{IE}(X)\to \func{I}(X)$ for some
  object $X$ in $\cat{B}$. Since $\fI$ is the identity on objects and $\fK$ is the identity on morphisms, 
  \begin{align*}
    \hat{\eta}_{\func{I}(X)} \func{K} \func{D}(\eta_X) 
    & = \hat{\eta}_X\func{KD}(\eta_X) = \eta_{X} \func{K}(\eta_{ \func{E}(X)})  
     = \eta_{X} \eta_{ \func{E}(X)}  = \func{K}(\eta_X)\hat{\eta}_{ \func{E}(X)}.
  \end{align*}
  Lastly, we consider the case of an arbitrary finite composition
  $\varphi=\varphi_1\cdots \varphi_n$ where each  $\varphi_k$ is either $\fJ(\varphi_k')$ for some morphism $\varphi_k'$ in $\acat{B}$ or a morphism $\eta_X$ for some object $X$ in $\acat{B}$. It suffices to consider
  only the case where $n=2$, say $\varphi = \varphi_1\varphi_2$ with $\varphi_2
  : X\to Z$ and $\varphi_1: Z\to Y$. Now 
  \begin{align*}
    \hat{\eta}_Y \func{K} \func{D}(\varphi) 
    & = \hat{\eta}_Y \func{K} \func{D}(\varphi_1) \func{K} \func{D}(\varphi_2) \\
    & = \func{K}(\varphi_1)\hat{\eta}_{Z} \func{K} \fD(\varphi_2) \\
    & = \func{K}(\varphi_1) \func{K}(\varphi_2)\hat{\eta}_{X}\\
    & = \func{K}(\varphi)\hat{\eta}_X.
  \end{align*} 
 Thus, 
$\hat{\eta}:\func{K} \func{D}\Rightarrow \func{K}$. Since $\eta$ is monic, so is $\hat{\eta}$.  Also, $\hat{\eta}_X$ is a morphism in $\cat{A}$ for every object $X$, so it is  internal, as claimed.
\end{proof}

We now prove that every characteristic substructure of an algebra in a variety 
is induced by a morphism of category biactions.

\begin{thm}\label{thm:char-from-biacts} 
  Let $X$ be an object in a variety
  $\cat{E}$. Let $Y$ be characteristic in $X$ with inclusion $\iota: Y\to X$.
  There exist subcategories $\cat{A}$ and $\cat{B}$ with $\core{E}\leq
  \cat{A},\cat{B}\leq \cat{E}$, and an $(\acat{A},\acat{B})$-morphism $\func{M}
  : \acat{B} \to \acat{A}$ such that $\func{M}(\id_X\cdot \one_{\acat{B}}) =
  \iota$.
\end{thm}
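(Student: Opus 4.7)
The plan is to combine three earlier results: Theorem~\ref{thm:char-counital-eastern} realizes $\iota$ as (equivalent to) the value at $X$ of a monic counital on $\core{E}$; Proposition~\ref{prop:isosceles-to-internal} lifts this counital to an internal one on a larger subcategory of $\cE$; and Proposition~\ref{prop:nat-trans-biact}\ref{proppart:get-biactions} converts the resulting counit into a bimorphism of bicapsules.

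First I apply Theorem~\ref{thm:char-counital-eastern}(1) to the characteristic monomorphism $\iota\colon Y \hookrightarrow X$, producing a monic counital $\sigma\colon \fK\fD \Rightarrow \fK$ in $\text{Counital}(\core{E},\cE)$ with $\sigma_X$ equivalent to $\iota$. Here $\fK\colon \core{E} \to \cE$ is the inclusion and $\fD\colon \core{E} \to \core{E}$ is the functor delivered by the Extension Theorem~\ref{thm:extension}, applied to $\Autcat(X) \leq \core{E}$. Because both legs of $\sigma$ factor through the single inclusion $\fK$, the counital $\sigma$ is isosceles in the sense of Definition~\ref{def:counital-type}.

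Next I apply Proposition~\ref{prop:isosceles-to-internal} with input $\cB = \core{E}$ and $\eta = \sigma$; the hypothesis is satisfied since $\core{E}$ and $\cE$ share the same objects. This yields a subcategory $\cat{A}$ of $\cE$ with $\core{E} \leq \cat{A} \leq \cE$, a functor $\fD'\colon \cat{A} \to \cat{A}$, and an internal monic counital $\hat{\sigma}\colon \fK'\fD' \Rightarrow \fK'$ extending $\sigma$, where $\fK'\colon \cat{A} \to \cE$ is the inclusion. Because $\hat{\sigma}$ is internal, each component $\hat{\sigma}_Z$ already belongs to $\cat{A}$, so $\hat{\sigma}$ descends to a genuine counit $\tilde{\sigma}\colon \fD' \Rightarrow \id_{\cat{A}}$ living entirely inside $\cat{A}$.

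Finally, setting $\cB \defeq \cat{A}$ and applying Proposition~\ref{prop:nat-trans-biact}\ref{proppart:get-biactions} with $\cA = \cX = \cat{A}$, $\mu = \tilde{\sigma}$, $\fF = \id_{\cat{A}}$, and $\fG = \fD'$, I obtain an $(\cat{A},\cB)$-morphism $\fM\colon \cB \to \cat{A}$ satisfying $\fM(a) = a\,\tilde{\sigma}_{\src{a}}$. Under the regular action on $\cB$, the expression $\id_X \cdot \one_{\cB}$ reduces to $\id_X$, and therefore $\fM(\id_X \cdot \one_{\cB}) = \tilde{\sigma}_X = \hat{\sigma}_X = \sigma_X$, which is equivalent to $\iota$ as required. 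The main obstacle is bookkeeping the equivalence of monomorphisms through the three constructions: Theorem~\ref{thm:char-counital-eastern} only identifies $\iota$ with $\sigma_X$ up to post-composition with an isomorphism, and one must verify that this equivalence is preserved both by the internal lift produced by Proposition~\ref{prop:isosceles-to-internal} and by the translation to a bimorphism via Proposition~\ref{prop:nat-trans-biact}.
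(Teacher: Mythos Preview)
Your proposal is correct and follows essentially the same three-step approach as the paper: invoke (the proof of) Theorem~\ref{thm:char-counital-eastern} to obtain a monic isosceles counital on $\core{E}$, lift it via Proposition~\ref{prop:isosceles-to-internal} to an internal counital on a larger $\cat{A}$, and then convert to a bimorphism using Proposition~\ref{prop:nat-trans-biact}\ref{proppart:get-biactions}. The only cosmetic difference is that you first descend the internal counital $\hat{\sigma}$ to a counit $\tilde{\sigma}$ inside $\cat{A}$ and then apply Proposition~\ref{prop:nat-trans-biact} with $\cX=\cat{A}$, whereas the paper applies Proposition~\ref{prop:nat-trans-biact} with $\cX=\cE$ to obtain $\mathcal{N}:\cat{A}\to\cE$ and then uses internality to factor $\mathcal{N}=\fK\mathcal{M}$; these are equivalent maneuvers. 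Your explicit attention to the equivalence-versus-equality issue for $\sigma_X$ and $\iota$ is well placed: the paper asserts $\eta_X=\iota$ by absorbing the natural isomorphism $\tau$ produced in the proof of Theorem~\ref{thm:char-counital-eastern} into the functor, a step you would likewise need to make explicit to obtain equality rather than mere equivalence.
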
 

\begin{proof}
Let $\func{I}: \core{E}\to \acat{E}$ be  the inclusion functor. The proof of
  Theorem~\ref{thm:char-counital-eastern} shows that there exists  a functor
  $\func{E}:\;\core{E}\;\to \;\core{E}$ and a monic counital $\eta:\func{I}
  \func{E}\Rightarrow \func{I}$ such that  $\eta_X=\iota$. We use
  Proposition~\ref{prop:isosceles-to-internal}  (with $\cB=\;\core{E}$) to create
  a category $\cat{A}$ generated from $\core{E}$ and $\eta$, an inclusion
  functor $\func{K} : \cat{A} \to \cat{E}$, a functor $\func{D} : \cat{A} \to
  \cat{A}$, and an internal monic counital $\hat{\eta} : \func{K}\func{D}
  \Rightarrow \func{K}$ with  $\hat{\eta}_{Z}=\eta_Z$ for all objects in
  $\core{E}$. Lastly, we  apply Proposition~\ref{prop:nat-trans-biact}(a) to
  $\hat{\eta}$ to obtain an $\acat{A}$-bimorphism $\mathcal{N} : \acat{A} \to
  \acat{E}$ such that $\hat{\eta} = \mathcal{N}(\one_{\acat{A}})$. Since
  $\hat{\eta}$ is internal, there exists an $\acat{A}$-bimorphism $\mathcal{M} :
  \acat{A} \to \acat{A}$ such that $\mathcal{N}=\mathcal{KM}$. Hence,
  $\hat{\eta}=\func{K}\func{M}(\one_{\acat{A}})$. With $\acat{B} \defeq
  \acat{A}$, it follows that  $\mathcal{M}(\id_X\cdot \one_{\acat{B}})
  =\hat{\eta}_X=\eta_X=\iota$, as claimed.
\end{proof}

\subsection{Proofs of main theorems}\label{sec:proofs} Having developed the
required theory, we can now complete the proofs of our main results.
Theorem~\ref{thm:char-counital} is a special case of
Theorem~\ref{thm:char-counital-eastern}, which we proved 
in the previous section.

\medskip 

\noindent\textit{Proof of Theorem} \ref{thm:char-repn-eastern}.
If (1) holds, then Theorem~\ref{thm:char-from-biacts} yields (3). 
If (3) holds, then (2) follows from Theorem~\ref{thm:counit-capsules}\ref{thmpart:bicap-to-counit}  and the fact that 
$\iota=\mathcal{M}(\id_G\cdot\one_{\cat{B}})$. If (2) holds, then (1) follows from 
Theorem~\ref{thm:char-counital-eastern}. \qed  

\medskip
\noindent
Theorem~\ref{thm:char-repn} follows from 
Theorem~\ref{thm:char-repn-eastern}.

\subsection{Duality} 
\label{sec_dual} 
 Recall from  Section~\ref{sec:proofThm1} that a natural
transformation $\eta : \func{I} \Rightarrow \func{D}$ is a unital if  $\func{I}$ is an
inclusion functor. If $\func{I} = \id$, then $\eta : \id
\Rightarrow \func{D}$ is a \emph{unit}. A unital $\eta : \func{I} \Rightarrow
\func{D}$ is \emph{epic} if $\eta_X : \func{I}(X) \to \func{D}(X)$ is
epic for all objects $X$. Units and unitals are the duals 
of counits and counitals.

We state a dual analogue of Theorem~\ref{thm:char-repn-eastern} for
characteristic quotients of algebras in varieties; its proof follows
\emph{mutatis mutandis} from that of Theorem~\ref{thm:char-repn-eastern}.

\begingroup     
\renewcommand{\themainthm}{2-dual} 
\begin{mainthm}\label{thm:general-dual}
  Let $\acat{E}$ be a variety, and let $G$ be an object of
  $\acat{E}$ with quotient $Q$ and projection $\pi$. There exist categories
  $\acat{A}$ and $\acat{B}$, where $\core{E}\; \leq \acat{A} \leq \acat{E}$,
  such that the following are equivalent.
  \begin{ithm}
    \item[\rm (1)] $Q$ is a characteristic quotient of $G$.
    \item[\rm (2)] There is a functor $\func{U} : \acat{A} \to \acat{A}$ and a unit
    $\epsilon : \id_{\acat{A}} \Rightarrow \func{U}$ such that $Q =
    \mathrm{Coim}(\epsilon_{G})$.
    \item[\rm (3)] There is an $(\acat{A},\acat{B})$-morphism $\func{M} : \acat{A} \to
    \acat{B}$ such that $\pi = \func{M}(\one_{\acat{A}}\cdot \id_G)$.
  \end{ithm}
\end{mainthm}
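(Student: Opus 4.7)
The plan is to dualize the proof of Theorem~\ref{thm:char-repn-eastern} throughout, replacing substructures by quotients, monomorphisms by epimorphisms, $\mathrm{im}$ by $\mathrm{coim}$, counitals by unitals, and counits by units. Each supporting result we invoke admits a dual whose proof mirrors the original: Proposition~\ref{prop:nat-trans-biact} covers natural transformations in either direction, so it directly handles units; the Extension Theorem~\ref{thm:extension} has a dual obtained by reversing the arrows in Figure~\ref{fig:char-ext-2-cells}, in which the role of $\mathrm{im}$ as a monic ``least upper bound'' is replaced by $\mathrm{coim}$ as an epic ``greatest lower bound''; and Theorem~\ref{thm:counit-capsules} has a dual formulation for units and bimorphisms of the opposite handedness.

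For (1)$\Rightarrow$(3), the characteristic projection $\pi: G \twoheadrightarrow Q$ produces a functor $\fC:\Autcat(G)\to \Autcat(Q)$ (every automorphism of $G$ descends along $\pi$) together with an epic unital $\pi:\fI\Rightarrow \fJ\fC$, where $\fI,\fJ$ are the inclusions of $\Autcat(G),\Autcat(Q)$ into $\acat{E}$. The dual of Theorem~\ref{thm:char-counital-eastern}(2), whose proof invokes the dual of the Extension Theorem~\ref{thm:extension} on the cyclic $\Autcat(G)$-bicapsule determined by $\pi$, extends this to an epic isosceles unital $\epsilon:\fK\Rightarrow \fK\fE$ on $\core{E}$ with $\coim(\epsilon_G)$ equivalent to $\pi$. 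The dual of Proposition~\ref{prop:isosceles-to-internal} then produces a category $\acat{A}$ with $\core{E}\leq \acat{A}\leq \acat{E}$, a functor $\fD:\acat{A}\to \acat{A}$, and an internal epic unital $\hat{\epsilon}:\fK\Rightarrow \fK\fD$ agreeing with $\epsilon$ on $\core{E}$. Applying the dual of Proposition~\ref{prop:nat-trans-biact}\ref{proppart:get-biactions} and then invoking internality, exactly as in the proof of Theorem~\ref{thm:char-from-biacts}, factors $\hat{\epsilon}$ through an $\acat{A}$-bimorphism $\fM:\acat{A}\to \acat{A}$ satisfying $\fM(\one_{\acat{A}}\cdot \id_G)=\pi$. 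Setting $\acat{B}\defeq \acat{A}$ completes (3).

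For (3)$\Rightarrow$(2), apply the dual of Theorem~\ref{thm:counit-capsules}\ref{thmpart:bicap-to-counit}: the $(\acat{A},\acat{B})$-morphism $\fM$ together with the regular bicapsule actions yields functors and a unit $\epsilon:\id_{\acat{A}}\Rightarrow \fU$ whose $G$-component equals $\fM(\one_{\acat{A}}\cdot \id_G)=\pi$, so $Q=\coim(\epsilon_G)$. For (2)$\Rightarrow$(1), since $\core{E}\leq \acat{A}$, the unit $\epsilon$ restricts to a natural transformation on $\Autcat(G)$ showing that every automorphism of $G$ preserves $\ker \epsilon_G$; equivalently, the dual of Theorem~\ref{thm:char-counital-eastern}(2) identifies $\coim(\epsilon_G)=Q$ as a characteristic quotient of $G$.

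The principal obstacle is dualising Proposition~\ref{prop:isosceles-to-internal}. One must verify that the functor $\fD$ on the enlarged category is well defined when an adjoined component $\epsilon_X$ coincides with some $\fI(\beta)$ already in $\acat{B}$, where necessarily $\beta:X\to \fE(X)$. Naturality of $\epsilon:\fI\Rightarrow \fI\fE$ applied to $\beta$ yields $\epsilon_{\fE(X)}\fI(\beta)=\fI\fE(\beta)\epsilon_X$, hence $\epsilon_{\fE(X)}\epsilon_X=\fI\fE(\beta)\epsilon_X$; cancelling the epimorphism $\epsilon_X$ on the right gives $\epsilon_{\fE(X)}=\fI\fE(\beta)$, the consistency needed for $\fD$ to be well defined. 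Once this dual cancellation is secured, the remainder of the argument transfers \emph{mutatis mutandis}.
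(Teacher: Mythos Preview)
Your proposal is correct and follows the same approach as the paper, which simply states that the proof follows \emph{mutatis mutandis} from that of Theorem~\ref{thm:char-repn-eastern}. You have in fact supplied more detail than the paper does, correctly identifying the chain (1)$\Rightarrow$(3) via the dual of Theorem~\ref{thm:char-from-biacts}, (3)$\Rightarrow$(2) via the dual of Theorem~\ref{thm:counit-capsules}\ref{thmpart:bicap-to-counit}, and (2)$\Rightarrow$(1) via the dual of Theorem~\ref{thm:char-counital-eastern}, and your verification of the epic cancellation needed to dualize Proposition~\ref{prop:isosceles-to-internal} is precisely the subtle point one must check.
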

\endgroup

Although a characteristic subgroup of a group $G$ is associated with a
characteristic quotient of $G$, and vice versa, there are subtle differences in
other categories of algebraic structures. 

\begin{ex}\label{ex:unital-rings}
  Let $\mathbb{Q}$ be the ring of rational numbers and $\mathbb{Z}$ its subring
  of integers. If $\varphi : \mathbb{Q}\to\mathbb{Q}$ is a homomorphism of
  unital rings, then $\varphi(1)=1$. This forces $\varphi=\id_\mathbb{Q}$, so
  $\mathbb{Z}$ is fully invariant in $\mathbb{Q}$. Since $\mathbb{Q}$ is a
  field, its only quotients are itself and the trivial ring. Hence, $\mathbb{Q}$
  has many fully-invariant substructures, but only two fully-invariant
  quotients. More generally, if $R$ is a unital ring, then the kernel of a ring homomorphism with domain $R$ is not necessarily a unital subring of $R$.
  \exqed 
\end{ex} 

  By contrast, the kernel of every group homomorphism is a normal subgroup. Up to equivalence of natural transformations in $\cat{Cat}$, invariant structures of groups are \emph{self-dual}. The next proposition provides a categorical description of this observation for $\cat{Grp}$; we use it in 
Section \ref{sec:examples}.

\begin{prop}\label{prop:kernel}
  The following hold for categories 
  $\lcore{Grp}\;\leq \cat{A}\leq \cat{Grp}$ and $\cat{B}\leq \cat{Grp}$
  with inclusion functors $\func{I}:\acat{A} \to \acat{Grp}$ and
  $\func{J}:\acat{B}\to \acat{Grp}$.
  \begin{ithm} 
    \item Given a unital $\unital:\func{I}\Rightarrow \func{J}\func{U}$, there
    is a sub category $\cat{C}\leq \cat{Grp}$ with inclusion $\func{K}$, and a
    functor $\func{C} : \cat{A} \to \cat{C}$ such that $\ker(\unital) :
    \func{K}\func{C}\Rightarrow \func{I}$ is a counital where $\func{C}(G) = \ker(\unital_G)$ and $(\ker (\unital))_G
    : \ker(\unital_G)\hookrightarrow G$ is the inclusion for every group~$G$.

    \item Given a counital $\counital:\func{J}\func{C}\Rightarrow \func{I}$,
    there is a subcategory $\cat{C}\leq \cat{Grp}$ with inclusion 
    $\func{K}$, and a functor $\func{U} :
    \cat{A}\to \cat{C}$ such that $\coker (\counital): \func{I}\Rightarrow
    \func{K} \func{U}$ is a unital where $\func{U}(G)=G/\im (\counital_G)$ and $(\coker (\counital))_G : G \twoheadrightarrow G/\im (\counital_G)$ for every group $G$.

    \item With the notation of {\rm (a)} and {\rm (b)},  there are unique invertible 
    $\mu,\tau:\cA$ such that   $\coker (\ker (\unital))=\mu (\mathrm{im}(\pi))$ and 
    $\ker
    (\coker (\counital))= \counital\tau$.
  \end{ithm}
\end{prop}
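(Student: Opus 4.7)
The plan is to construct, in each of parts (a) and (b), the ingredients of the asserted counital/unital in two stages: first define the functor $\mathcal{C}$ or $\mathcal{U}$ object-wise using kernel or image (and quotient), then use the naturality square of $\pi$ (resp.\ $\iota$) to induce the action on morphisms, and finally verify the resulting natural transformation has the prescribed components. Part (c) is then a component-wise application of Noether's Isomorphism Theorem (Theorem~\ref{thm:Noether}).

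For part~(a): given the unital $\pi : \mathcal{I} \Rightarrow \mathcal{JU}$, set $\mathcal{C}(G) \defeq \ker(\pi_G)$, which is a (normal) subgroup of $G$. For a morphism $\alpha : G \to H$ in $\mathcal{A}$, naturality gives $\mathcal{JU}(\alpha)\,\pi_G = \pi_H\,\mathcal{I}(\alpha)$, so for $g \in \ker(\pi_G)$ we have $\pi_H(\alpha(g))=1$, and $\alpha$ restricts to $\mathcal{C}(\alpha) : \ker(\pi_G) \to \ker(\pi_H)$. Functoriality of $\mathcal{C}$ is immediate since restriction preserves composition and identities. Let $\mathcal{C} \leq \cat{Grp}$ denote the subcategory generated by these objects and restriction morphisms, with inclusion $\mathcal{K}$. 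The inclusions $(\ker\pi)_G : \ker(\pi_G) \hookrightarrow G$ assemble into the desired counital $\ker(\pi) : \mathcal{KC} \Rightarrow \mathcal{I}$; naturality reduces to the tautology that the restriction of $\alpha$ to $\ker(\pi_G)$ composed with the inclusion into $H$ equals $\alpha$ composed with the inclusion of $\ker(\pi_G)$ into $G$.

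For part~(b): the key step is to verify that $\mathrm{im}(\iota_G)$ is normal in $G$, so that the quotient $\mathcal{U}(G) \defeq G/\mathrm{im}(\iota_G)$ makes sense. This uses the assumption $\lcore{Grp} \leq \mathcal{A}$: for every $g \in G$, conjugation by $g$ is an inner automorphism of $G$, hence a morphism of $\mathcal{A}$, and naturality of $\iota$ against this morphism shows conjugation preserves $\mathrm{im}(\iota_G)$. For a morphism $\alpha : G \to H$ in $\mathcal{A}$, naturality $\mathcal{I}(\alpha)\iota_G = \iota_H\,\mathcal{JC}(\alpha)$ forces $\alpha(\mathrm{im}(\iota_G)) \subseteq \mathrm{im}(\iota_H)$, so $\alpha$ descends to $\mathcal{U}(\alpha)$ on the quotients; again functoriality is routine. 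Taking $\mathcal{C}$ to be the subcategory generated by the quotients and induced maps (with inclusion $\mathcal{K}$), the projections $(\coker\iota)_G : G \twoheadrightarrow G/\mathrm{im}(\iota_G)$ form the desired unital.

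For part~(c): apply Theorem~\ref{thm:Noether} componentwise. At each $G$, Noether's theorem yields a unique isomorphism $\tau_G$ fitting the factorization of $\iota_G$ through $\mathrm{im}(\iota_G) = \ker(\coker(\iota_G))$; dually there is a unique isomorphism $\mu_G$ relating $\coker(\ker(\pi_G))$ to the coimage component of $\pi_G$, which in $\cat{Grp}$ is identified via $\mathrm{im}(\pi_G)$ by the first isomorphism theorem. These pointwise isomorphisms are natural: the isos $\psi_G$ in Noether's factorization are unique and intertwined by any morphism in $\mathcal{A}$, so $\mu$ and $\tau$ are natural transformations with invertible components, which one may record as morphisms in (the image under the inclusion of) $\mathcal{A}$. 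The main technical obstacle is bookkeeping rather than conceptual: ensuring the subcategory $\mathcal{C}$ in each of (a) and (b) is large enough to contain all induced morphisms, and checking that $\tau$ and $\mu$ are the \emph{unique} such natural isomorphisms, which follows because both the image and coimage factorizations in Theorem~\ref{thm:Noether} are determined up to unique isomorphism.
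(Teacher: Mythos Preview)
Your proof is correct and follows essentially the same route as the paper: in (a) you use naturality of $\pi$ to restrict morphisms to kernels and assemble the counital from inclusions, and in (c) you invoke Noether's Isomorphism Theorem componentwise, exactly as the paper does. Your treatment of (b) is in fact more careful than the paper's, which simply says ``dual to (a)'': you explicitly verify that $\mathrm{im}(\iota_G)$ is normal in $G$ by using the hypothesis $\lcore{Grp}\leq\cat{A}$ to apply naturality of $\iota$ against inner automorphisms, a step genuinely needed for the quotient $G/\mathrm{im}(\iota_G)$ to make sense but which the paper's dualization gloss hides.
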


\begin{proof}
  \begin{iprf}
  \item For every morphism $\varphi:G\to H$ in $\cat{A}$, there is an induced
  morphism $\varphi':\im(\unital_G)\to \im (\unital_{H})$ such that
  $\varphi'\unital_{G}=\unital_H\varphi$, so
  \[
    \unital_{H}\varphi(\ker(\unital_G))
    =\varphi' \unital_G (\ker (\unital_G))=1.
  \]
  Therefore $\varphi(\ker(\unital_G))\leq \ker (\unital_{H})$.
  In particular, the restriction 
  \begin{equation*}
    \varphi|_{\ker (\unital_G)}:\ker(\unital_G)\to \ker(\unital_{H})
  \end{equation*}
  is well defined. Let $\cat{C}$ be the category whose objects
  are $\ker(\unital_G)$ for all groups $G$ and whose morphisms are $\varphi|_{\ker (\unital_G)}$ for all morphisms $\varphi : G\to  H$ in $\cat{A}$. Let $\func{K}: \acat{C}\to \acat{Grp}$ be the inclusion functor. Moreover,
  there is a functor $\func{C}: \cat{A} \to \cat{C}$ given
  by $\func{C}(G) = \ker(\unital_G)$ and $\func{C}(\varphi) = \varphi|_{\ker
  (\unital_G)}$. If we define $\counital_G: \ker(\unital_G)\hookrightarrow G$ to be the
  associated inclusion map for the kernel, then  $\counital:
  \func{K}\func{C}\Rightarrow \func{I}$ is the required counital.

  \item The proof is dual to that of (a).

  \item Consider the unital $\unital : \func{I} \Rightarrow
  \func{J} \func{U}$. By Theorem~\ref{thm:Noether},
  for each group $G$ there is an isomorphism
  \[ 
    \mu: \fU(G)=\mathrm{Im}\pi_G \to  G/\ker\unital_G=\coker (\ker\unital_G).
  \] 
  Thus, 
$\coker (\ker (\unital))=\mu (\text{im}(\pi))$; 
   likewise, for $\ker (\coker (\counital))$ and  $\counital$. \qedhere
  \end{iprf}
\end{proof}

\section{Categorification of standard characteristic subgroups}
\label{sec:examples}

Theorem~\ref{thm:char-repn} states that every characteristic subgroup can be studied in
three ways: as a group, as a natural transformation, and as a morphism of category
biactions. In this section, we describe common characteristic subgroups using  
all three forms. In so doing, we reveal insights gained from the categorical perspective. 

Throughout, we use the following notation for restriction and induction. Let $\varphi : G \to H$ be a homomorphism of groups, and let
$\func{C}(G)$ and $\func{C}(H)$ be subgroups of $H$ and $G$, respectively. 
If the restriction of $\varphi$ to $\func{C}(G)$ maps into $\func{C}(H)$, 
then we denote it by 
\begin{align}\label{eqn:restriction} 
  \varphi|_{\func{C}} : \func{C}(G) \to \func{C}(H),\quad c\mapsto \varphi(c).
\end{align} 
Similarly, if $\varphi$ maps a normal subgroup  $\func{Q}(G)$ of $G$ into a normal subgroup $\func{Q}(H)$ of $H$, then the \emph{induction} of $\varphi$ via $\func{Q}$ is
\begin{align}\label{eqn:induction}
  \varphi|^{\func{Q}} : G/\func{Q}(G) \to H/\func{Q}(H),\quad g\func{Q}(G) \mapsto \varphi(g)\func{Q}(H).
\end{align}

\subsection{Abelianization and derived subgroups}\label{sec:abelianization}

Figure~\ref{fig:comm} gives the three perspectives on the derived subgroup. We
develop this example so that we may also treat the lower central series 
and all verbal subgroups in Section~\ref{sec:lower-central}.

The counital $\lambda : \func{D}\Rightarrow \id_{\cat{Grp}}$ 
of Example~\ref{ex:three-chars}
associated with the derived subgroup $\gamma_2(G)$ of a group $G$ 
can be constructed also as the kernel of the unital 
associated with abelianization.
We explore the category biaction interpretation.
Let $\acat{Abel}$ be the category of abelian groups, a subcategory of
$\cat{Grp}$ with inclusion $\func{I} : \acat{Abel} \to \acat{Grp}$. We define a
morphism $\func{A} : \cat{Grp} \to \cat{Abel}$ given by $\varphi\mapsto
\varphi|^{\gamma_2}$. The functors $\func{A}$ and
$\func{I}$ turn the categories $\acat{Grp}$ and $\acat{Abel}$ into $(\acat{Grp},
\acat{Abel})$-bicapsules. 

We show that $\func{A}:\acat{Grp}\to\acat{Abel}$ is a $(\acat{Grp},
\acat{Abel})$-morphism. Let $\varphi$ and $\tau$ be group homomorphisms, and
let $\alpha$ be a homomorphism of abelian groups. Now 
\begin{align*}
  \func{A}(\alpha\cdot \varphi\tau) &= (\func{I}(\alpha)\varphi\tau)|^{\gamma_2} = \alpha\;\varphi|^{\gamma_2}\;  \tau|^{\gamma_2} =\alpha\func{A}(\varphi)\cdot \tau.
\end{align*}
To obtain the counital associated with the derived subgroup, we apply
Proposition~\ref{prop:kernel} and take the kernel of $\func{A}(\one_{\acat{Grp}})$. Since
the unital-counital pair obtained through this process is a unit-counit pair, 
 we
obtain the well-known observation that the derived subgroup is fully invariant.

\begin{figure}[h]
  \centering
  \includegraphics[width=\textwidth]{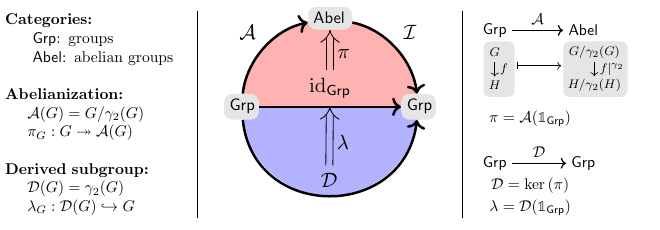}
  \caption{Three perspectives on the derived subgroup}
  \label{fig:comm}
\end{figure}

\subsection{Verbal subgroups}
\label{sec:lower-central}

We generalize the approach taken in Section~\ref{sec:abelianization}. Let $\Omega$ be
the group signature from~Example~\ref{ex:group-grammar}. To each set $W$ of words from
the free group $\Omega\langle X\rangle$ we associate a category $\acat{Var}(W)$ as follows (see Section~\ref{sec:free}). 
For each word $w :W$, group
$G$, and $X$-tuple $g\tin G^X$, define $w_G : G^X \to G$ by $g\mapsto \mathrm{eval}_g(w)$. 
Define $\acat{Var}(W)$ to be the full subcategory of $\cat{Grp}$
with objects 
\[
  \{ G:\cat{Grp} \mid (\forall g\tin G^X) (\forall w\tin W)\; w_G(g)=1\}
\]
with inclusion functor $\func{I}:\acat{Var}(W)\to \cat{Grp}$. The category
$\acat{Var}(W)$ is the \textit{group variety} with laws $W$. Let $\mathrm{Rad}_W(G)$ be the minimal normal subgroup of a group $G$ such that $G/\mathrm{Rad}_W(G)$ is in $\mathsf{Var}(W)$. Let
$\func{R}:\cat{Grp}\to \acat{Var}(W)$ be the functor such that $\func{R}(G)$ is the
largest quotient of $G$ contained in $\acat{Var}(W)$, where the functor carries $G$ 
to $G/\mathrm{Rad}_W(G)$, and morphisms $\varphi$ are sent to $\varphi|^{\mathrm{Rad}_W}$. 

\begin{prop}\label{prop:verbal}
The functors $\func{R}$ and $\func{I}$ defined above form an adjoint functor pair
  $\func{R} : \acat{Grp}\dashv \acat{Var}(W) : \func{I}$.
\end{prop}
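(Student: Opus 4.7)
The plan is to establish the adjunction by exhibiting the canonical natural bijection explicitly, using the quotient maps $\pi_G : G \twoheadrightarrow G/\mathrm{Rad}_W(G) = \func{R}(G)$ as the unit. The essential ingredient is the universal property of $\mathrm{Rad}_W(G)$: every homomorphism from $G$ into an object of $\acat{Var}(W)$ factors uniquely through $\pi_G$. Once this is in place, the bijection and its naturality follow formally.

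First, for each $U : \cat{Grp}$ and $V : \cat{Var}(W)$, I would define
\[
  \Psi_{U,V} : \cat{Var}(W)_1(\func{R}(U),V) \longrightarrow \cat{Grp}_1(U,\func{I}(V)), \qquad x \longmapsto \func{I}(x)\pi_U,
\]
that is, pre-composition with the quotient morphism. The inverse would be constructed as follows: given $y : U \to \func{I}(V)$, I would show $\mathrm{Rad}_W(U) \leq \ker y$. This is where the variety laws enter: because $V$ lies in $\cat{Var}(W)$, for every word $w : W$ and every $u : U^X$ we have $y(w_U(u)) = w_V(y\circ u) = 1_V$, so every verbal value lies in $\ker y$; since $\ker y$ is normal, the entire normal closure---which by the standard ``two descriptions of verbal subgroups'' argument coincides with $\mathrm{Rad}_W(U)$---lies in $\ker y$. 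Applying Noether's Isomorphism Theorem~\ref{thm:Noether} to $y$ and the epimorphism $\pi_U$ (whose image already equals $\mathrm{Rad}_W(U) \leq \ker y$) produces a unique $x : \func{R}(U) \to V$ with $\func{I}(x)\pi_U = y$; this defines $\Psi_{U,V}^{-1}(y) \defeq x$. The uniqueness afforded by Theorem~\ref{thm:Noether} gives $\Psi_{U,V}^{-1}\Psi_{U,V} = \id$ and $\Psi_{U,V}\Psi_{U,V}^{-1} = \id$.

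Next, I would verify the naturality condition of Definition~\ref{def:adjoint}. For $b:\cat{Grp}_1(X,U)$ and $a:\cat{Var}(W)_1(V,Y)$ and $x:\cat{Var}(W)_1(\func{R}(U),V)$, the equation to check is
\[
  \Psi_{X,Y}\bigl(a\, x\, \func{R}(b)\bigr) \;=\; \func{I}(a)\,\Psi_{U,V}(x)\,b.
\]
Unwinding the left side yields $\func{I}(a)\func{I}(x)\func{I}(\func{R}(b))\pi_X$, while the right side is $\func{I}(a)\func{I}(x)\pi_U b$. Hence it suffices to prove $\func{I}(\func{R}(b))\pi_X = \pi_U b$, which is exactly the naturality square for $\pi : \id_{\cat{Grp}} \Rightarrow \func{I}\func{R}$. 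This identity is built into the definition of $\func{R}$ on morphisms via induction~\eqref{eqn:induction}: $\func{R}(b) = b|^{\mathrm{Rad}_W}$ is precisely the induced map on quotients, and commutativity with the projection is its defining property.

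The main obstacle is the characterization of $\mathrm{Rad}_W(U)$ that is invoked implicitly: namely, that the intersection of all normal subgroups $N \trianglelefteq U$ with $U/N \in \cat{Var}(W)$ exists, equals the normal closure of verbal values of words in $W$, and itself has quotient in $\cat{Var}(W)$. Existence and the last point follow because varieties are closed under products and subalgebras: if $\{N_i\}$ is the family of such normal subgroups, then $U/\bigcap_i N_i$ embeds into $\prod_i U/N_i$, which lies in $\cat{Var}(W)$, and $\cat{Var}(W)$ is closed under subobjects. The identification with the normal closure of verbal values is standard and follows because a quotient $U/N$ satisfies every $w\in W$ if and only if every $w_U(u)$ lies in $N$. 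With this lemma, every step above proceeds cleanly, so no further technical difficulty arises.
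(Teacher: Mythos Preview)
Your proof is correct, but it takes a different route from the paper's own argument. You verify Definition~\ref{def:adjoint} directly: you build the unit $\pi$, exhibit $\Psi_{U,V}$ as precomposition with $\pi_U$, invert it via the universal property of $\mathrm{Rad}_W(U)$, and check naturality by hand. The paper instead uses the bicapsule machinery it has developed: it invokes Proposition~\ref{prop:functors-are} to make $\acat{Grp}$ and $\acat{Var}(W)$ into $(\acat{Var}(W),\acat{Grp})$-bicapsules via $\func{I}$ and $\func{R}$, checks in one line that $\func{R}$ is a $(\acat{Var}(W),\acat{Grp})$-morphism (the computation $\func{R}(\alpha\varphi\cdot\tau)=\alpha\cdot\func{R}(\varphi)\tau$), notes that $\func{R}$ and $\func{I}$ are pseudo-inverses, and then applies Theorem~\ref{thm:iso-biacts-adjoints}\ref{thmpart:biaction-to-adjoint}. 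Your approach is the classical one and is entirely self-contained; the paper's approach is shorter once its framework is in place and serves the paper's broader point that adjunctions are instances of pseudo-inverse bimorphisms of bicapsules. Either proof is acceptable, but the paper's version is the one that illustrates why this example belongs in a paper about category actions.
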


\begin{proof}
  By Proposition~\ref{prop:functors-are}, the functors $\func{R}$ and $\func{I}$ turn both
  $\acat{Var}(W)$ and $\acat{Grp}$ into $(\acat{Var}(W),\acat{Grp})$-bicapsules. The
  functor $\func{R}$ is a $(\acat{Var}(W),\acat{Grp})$-morphism: for morphisms
  $\alpha$ in $\acat{Var}(W)$ and $\varphi,\tau$ in $\acat{Grp}$, 
  \begin{align*}
    \func{R}(\alpha\varphi\cdot\tau) &= (\alpha\varphi\func{I}(\tau))|^{\mathrm{Rad}_W} = \alpha|^{\mathrm{Rad}_W}\; \varphi|^{\mathrm{Rad}_W}\; \tau = \alpha\cdot \func{R}(\varphi)\tau.
  \end{align*}
  Since $\func{R}$ and $\func{I}$ are pseudo-inverses,
  the result follows from Theorem~\ref{thm:iso-biacts-adjoints}\ref{thmpart:biaction-to-adjoint}. 
\end{proof}

The adjoint functor pair in Proposition~\ref{prop:verbal} categorifies verbal subgroups.
The dual version of Theorem~\ref{thm:counit-capsules} describes how to obtain the
unit $\pi : \id_{\acat{Grp}}\Rightarrow \func{IR}$ from~$\func{R}$. Applying
Proposition~\ref{prop:kernel}, the kernel of $\pi$ yields a counit $\iota : \func{V}
\Rightarrow \id_{\acat{Grp}}$ for some functor $\func{V}: \acat{Grp} \to
\acat{Grp}$. If $G$ is a group, then  $\func{V}(G)$ is the \emph{$W$-verbal} subgroup.
We conclude that all verbal subgroups are fully invariant.
Thus, from Proposition~\ref{prop:verbal}, we get an \emph{exact sequence} of natural
transformations
\begin{center}
\begin{tikzcd}
  \func{V} \arrow[r,Rightarrow,"\ker(\pi)"] & \id_{\cat{Grp}} 
  \arrow[r,Rightarrow,"\unital"] & \func{IR}.
\end{tikzcd}
\end{center} 
The corresponding diagram appears in Figure~\ref{fig:verb-W}.

\begin{figure}[h]
  \centering
  \includegraphics[width=\textwidth]{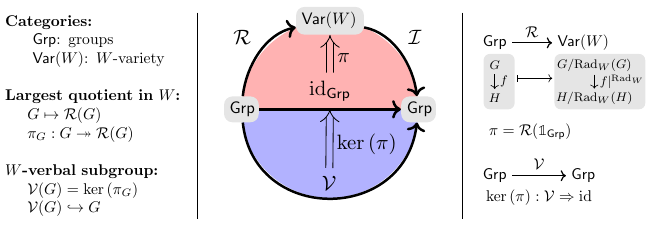}
  \caption{Three perspectives on verbal subgroups}
  \label{fig:verb-W} 
\end{figure}

\subsection{Marginal subgroups}
\label{sec:center}

Now we consider characteristic subgroups such as the center $\zeta(G)$ of a
group $G$.  As seen in Example~\ref{ex:three-chars_first},  there are group homomorphisms 
$\varphi:G\to H$ for which $\varphi(\zeta(G))\not\leq \zeta (H)$, so,
unlike verbal subgroups, the center is not fully invariant. This fact is
revealed by the categorification of the center---it does not yield a counit
between functors $\acat{Grp}\to \acat{Grp}$, but rather a proper counital between functors of
the form $\lepi{Grp}\; \to\acat{Grp}$, where $\lepi{Grp}$ is the category of
groups whose morphisms are epimorphisms. We establish this fact more generally
for the class of \textit{marginal subgroups} introduced 
by P.\ Hall \cite{Hall40}.

\begin{ex}[Hall's Isoclinism]\label{ex:isoclinism}
 For an integer $n>0$ we write $G^n$ for the $n$-fold direct product 
of a group $G$. 
  The commutator map $\kappa:G^2 \to G$ 
is given by $(g,h) \mapsto [g,h]\defeq g^{-1}h^{-1}gh$.
We define a congruence relation
  $\equiv$ on $G$ and write $x\equiv z$ if and only if $[x,z]=[y,z]$ for all $y: G$.
Factoring through
  this congruence relation and restricting the outputs to the verbal subgroups,
  we obtain a 
map $*:(G/\zeta(G))^2\to \gamma_2(G)$ such that the following diagram commutes.
  \begin{equation*} 
    \begin{tikzcd}
      G^2\arrow[r,"{\kappa}"]\arrow[d,two heads] & G\\
      (G/\zeta(G))^2\arrow[r,"*"] & {\gamma_2(G)}\arrow[u,hook]
    \end{tikzcd}
  \end{equation*}  
  Two groups are \emph{isoclinic} if their 
commutator maps are equivalent.~\exqed
\end{ex}

For each group $G$ and each word $w$, there is a unique minimal normal subgroup $w^*(G)$ such that the
  map $\overline{w}_G : (G/w^*(G))^n \to G$ given by 
  \begin{align*}
    (g_1w^*(G),\dots, g_nw^*(G)) &\longmapsto w_G(g_1,\dots, g_n)
  \end{align*}
  is non-degenerate: namely,
fixing any $n-1$ entries of the $n$-tuple argument of 
$\overline{w}_G$ yields an injective map $G/w^*(G) \to G$. 
Here $w_G$ is as defined in Section~\ref{sec:lower-central}.

For a set $W$ of words, the associated \emph{marginal subgroup} of a group $G$
is defined as  $W^*(G)\defeq\bigcap_{w:W} w^*(G)$. Clearly, $W^*(G)$ is
characteristic in $G$. The image of $\overline{w}_G$, and thus also $w_G$, is
the verbal subgroup $w(G)$ associated with $w$.
\begin{figure}[!tbp] 
  \centering
  \includegraphics{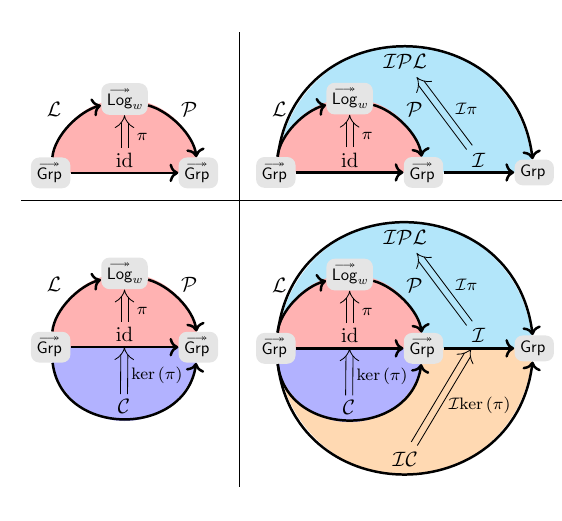}
  \caption{Marginal subgroups and quotients categorified}
  \label{fig:log-W-star} 
\end{figure}

Hall \cite{Hall40}
introduced the general notion of \emph{isologism} for word-map
equivalence. We extend this language to categories.
Each word $w$ determines a category $\lepi{Log}_{w}$ with maps
$\overline{w}_G : (G/w^*(G))^n\to w(G)$ as objects, where the 
morphisms are pairs
$(\varphi_1,\varphi_2)$ of group epimorphisms such that the following diagram
commutes.
\begin{center}
  \begin{tikzcd}
    (G/w^*(G))^n\arrow[r, "\overline{w}_G"]\arrow[d, "\varphi_1^n"] & w(G) \arrow[d, "\varphi_2"] \\ 
    (H/w^*(H))^n \arrow[r, "\overline{w}_H"] & w(H)
  \end{tikzcd}
\end{center}
We define two functors. The first is $\func{L} : \;\lepi{Grp}\; \to \;\lepi{Log}_w$
given by $G\mapsto \overline{w}_G$ and $\varphi\mapsto (\varphi|^{w^*},
\varphi|_w)$. The second is $\func{P}:\;\lepi{Log}_w\;\to \;\lepi{Grp}$ given by
$\overline{w}_G\mapsto G/w^*(G)$ and $(\varphi_1,\varphi_2)\mapsto \varphi_1$.
For a group $G$, let $\pi_G : G \twoheadrightarrow G/w^*(G)$ be the usual projection homomorphism. 
Now $\pi :
\id_{\lepi{Grp}}\Rightarrow \func{PL}$ is a unit. Let $\func{I} :
\;\lepi{Grp}\; \to \acat{Grp}$ be the inclusion functor.
Then the unital $\func{I}\pi : \func{I}\Rightarrow
\func{IPL}$ is a categorification of marginal quotients.

To categorify the marginal
subgroup, we take the kernel of $\pi$ via Proposition~\ref{prop:kernel} and compose with
$\func{I}$: namely, $\func{I}\ker(\pi) : \func{IC}\Rightarrow \func{I}$ for some
functor $\func{C} : \;\lepi{Grp}\; \to \;\lepi{Grp}$. Figure~\ref{fig:log-W-star}
displays the various morphisms and their relationships. This
construction demonstrates that  marginal subgroups are not just
characteristic, but invariant under all epimorphisms.

The construction applies to other algebraic structures by simply involving
formulas in the appropriate signature. However, the notion of congruence does
not always yield a substructure, so the structures are 
more naturally expressed as characteristic quotients.

\section{Composite characteristic structures}
\label{sec:compose}
We now address one remaining powerful feature of our categorical description of characteristic 
structure. It relates to a comment we made 
after Theorem~\ref{thm:char-repn}:  a characteristic subgroup  may 
arise from $(\acat{A},\acat{B})$-morphisms $\acat{B}\to\acat{A}$
where $\acat{B}$ is \textit{not} a category of groups. 
We give one illustration of how this ``transferability'' 
explains techniques currently used in isomorphism tests. 

In~\cite{Wilson:filters}*{\S 4}, it is shown that
a $p$-group $G$ of class at most $2$ with exponent $p$ has a 
characteristic subgroup induced by the Jacobson radical of an 
algebra associated to the bilinear commutator map of $G$. 
Here we construct that characteristic subgroup using a tensor 
product of capsules, as described in~Section~\ref{sec:ext-prob}.

\subsection{From groups to bimaps}\label{sec:delta}

Fix an odd prime $p$, and let  $\cat{G}\defeq \lcore{Grp}_{2,p}$ be the category
whose objects are $p$-groups of class at most $2$ with exponent $p$, and whose
morphisms are isomorphisms. The objects of $\cat{G}$ are groups $G$ with
exponent $p$ and central derived subgroup, so $\gamma_2(G)\leq \zeta(G)$. 

Let $\mathbb{F}_p$ be the field with $p$ elements, and let 
$\cat{B}\defeq\lcore{Bi}(\mathbb{F}_p)$ be the category of alternating $\mathbb{F}_p$-bilinear maps. 
The objects of $\cat{B}$ are bilinear maps $b: V\times V\to W$, 
where $V$ and $W$ are $\mathbb{F}_p$-spaces, such that $b(u,v)=-b(v,u)$ 
for all vectors $u,v$. For
objects $b : V\times V \to W$ and $b' : V'\times V' \to W'$ in $\acat{B}$, 
a morphism $\varphi : b \to b'$ is a pair of invertible linear maps 
$(\alpha : V\to V', \beta: W\to W')$ such that,
 for all $u,v\in V$,
\begin{equation*}
     b'(\alpha u, \alpha v)  = \beta b(u,v).
\end{equation*}

Define a functor  
$\func{B} : \cG\to \cB$ that takes a group $G$ to 
\[
    b_G: G/\gamma_2(G) \times G/\gamma_2(G) \to \gamma_2(G),
    \quad (x\gamma_2(G),y\gamma_2(G))\mapsto [x,y],
\]
and a homomorphism $\varphi : G\to H$ to the pair $(\varphi|^{\gamma_2},\
\varphi|_{\gamma_2})$, as defined in \eqref{eqn:restriction} and
\eqref{eqn:induction}. Since $G$ has exponent $p$ and $\gamma_2(G)\leq \zeta(G)$
by assumption,  
$b_G$ is an alternating $\mathbb{F}_p$-bilinear map.

Next, define a functor $\func{G} : \cB \to \cG$
that takes an $\mathbb{F}_p$-bilinear map
$b : V\times V\to W$ to the group $G_b$ on
$V\times W$ with binary operation
\[
    (v_1,w_1) \cdot (v_2,w_2) = \left(v_1+v_2, w_1+w_2 + \frac{1}{2}b(v_1,v_2)\right).
\] 
A morphism $(\alpha, \beta)$ from $b:V\times V\to W$ to $b':V'\times V'\to W'$ 
in $\cat{B}$ induces a group isomorphism, denoted $\alpha\boxtimes \beta$,
mapping $G_b=V\times W$ to $G_{b'}=V'\times W'$
by
\begin{equation*}
    (\alpha\boxtimes \beta)(v,w) \defeq (\alpha u, \beta w). 
\end{equation*}

\begin{lem}
    \label{lem:Baer}
    The functor $\func{B}:\cG\to \cB$ is  
    a $(\cG,\cB)$-morphism.
\end{lem}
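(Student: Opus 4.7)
The plan is to verify the $(\cG,\cB)$-morphism identity of Definition~\ref{defn:bi-action} for $\func{B}$. First, I would make explicit the relevant biactions. By the right-action analogue of Proposition~\ref{prop:functors-are}, the functor $\func{G}$ equips $\cG$ with a right $\cB$-action via $g\cdot b\defeq g\func{G}(b)$, and together with the regular left $\cG$-action this turns $\cG$ into a $(\cG,\cB)$-bicapsule. Dually, $\func{B}$ gives $\cB$ a left $\cG$-action $g\cdot b'\defeq \func{B}(g)b'$, which combined with the regular right $\cB$-action makes $\cB$ a $(\cG,\cB)$-bicapsule; the two actions on each side commute by associativity of composition.

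Second, I would unfold the intertwining identity $\func{B}(g\cdot x\cdot b)=g\cdot \func{B}(x)\cdot b$ for $g,x:\cG$ and $b:\cB$ with guards aligned. Using functoriality of $\func{B}$, the left-hand side rewrites as
\[
  \func{B}(g\cdot x\cdot b)\;=\;\func{B}(gx\func{G}(b))\;=\;\func{B}(g)\func{B}(x)\func{B}\func{G}(b),
\]
while the right-hand side unfolds to $\func{B}(g)\func{B}(x)b$. The lemma therefore reduces to the identity $\func{B}\func{G}(b)=b$ for every $b:\cB$, together with the analogous statement on morphisms.

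The main obstacle, and the content of the classical Baer correspondence for odd $p$, is this final identification. The plan is to verify it by a direct computation inside $G_b=\func{G}(b)$ on the underlying set $V\times W$: the alternating property $b(v,v)=0$ gives $(v,w)^{-1}=(-v,-w)$ and the key commutator formula
\[
  [(v_1,0),(v_2,0)]\;=\;(0,b(v_1,v_2)).
\]
This identifies $\gamma_2(G_b)$ with the copy of $W$ sitting in $\{0\}\times W$ (here one uses that the bilinear maps parametrizing $\cB$ are surjective onto their codomains, which is implicit from their role as commutator maps), $G_b/\gamma_2(G_b)$ with $V$, and the induced commutator map $b_{G_b}$ with $b$ on the nose. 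Naturality on morphisms is analogous: a morphism $(\alpha,\beta):b\to b'$ in $\cB$ is sent by $\func{G}$ to $\alpha\boxtimes\beta$, whose restriction to the derived subgroup is $\beta$ and whose induction modulo the derived subgroup is $\alpha$, so $\func{B}\func{G}(\alpha,\beta)=(\alpha,\beta)$. Substituting this equality into the reduction above yields the claim.
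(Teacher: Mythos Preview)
Your proof is correct and follows essentially the same route as the paper. The paper's argument is a one-line direct computation
\[
\func{B}(\lambda\mu\cdot(\alpha,\beta))
=\bigl((\lambda\mu(\alpha\boxtimes\beta))|^{\gamma_2},\,(\lambda\mu(\alpha\boxtimes\beta))|_{\gamma_2}\bigr)
=\bigl(\lambda|^{\gamma_2}\mu|^{\gamma_2}\alpha,\ \lambda|_{\gamma_2}\mu|_{\gamma_2}\beta\bigr)
=\lambda\cdot\func{B}(\mu)(\alpha,\beta),
\]
whose middle equality is precisely your identity $\func{B}\func{G}(\alpha,\beta)=(\alpha,\beta)$, i.e.\ $(\alpha\boxtimes\beta)|^{\gamma_2}=\alpha$ and $(\alpha\boxtimes\beta)|_{\gamma_2}=\beta$; you have simply isolated this step and supplied the commutator calculation in $G_b$ that justifies it. Your remark about needing surjectivity of $b$ onto $W$ so that $\gamma_2(G_b)=\{0\}\times W$ is a genuine subtlety that the paper leaves implicit.
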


\begin{proof}
    The functor $\func{B}$ induces a left $\acat{G}$-action on (the morphisms of) 
    $\cB$,
    and $\func{G}$ induces a right $\acat{B}$-action on $\cG$,
    so $\cB$ and
    $\cG$ are $(\cB,\cG)$-bicapsules.
    Let $\lambda, \mu$ be morphisms of $\cG$ and let $(\alpha,\beta)$ be a
    morphism of $\cB$ such that $\lambda \mu\cdot (\alpha,\beta) =
    \lambda\mu (\alpha\boxtimes \beta)$ is 
not $\bot$.
     Now
    \begin{align*}
        \func{B}(\lambda\mu\cdot (\alpha,\beta)) 
        &= \left((\lambda\mu(\alpha\boxtimes\beta))|^{\gamma_2},\ (\lambda\mu(\alpha\boxtimes\beta))|_{\gamma_2}\right) \\
        &= \left(\lambda|^{\gamma_2} \mu|^{\gamma_2}\alpha,\ \lambda|_{\gamma_2} \mu|_{\gamma_2} \beta\right) \\ &= \lambda \cdot\func{B}(\mu) (\alpha,\beta),
    \end{align*}
    so $\func{B}$ is a $(\cG,\cB)$-morphism. 
\end{proof}

By applying the dual version of Theorem~\ref{thm:counit-capsules}\ref{thmpart:bicap-to-counit},
    we obtain a unit $\id_{\cG}\Rightarrow \func{BG}$.  
    There is also a 
    counit 
    $\id_{\cG}\Leftarrow \func{BG}$. Together
    these give a categorical interpretation 
    of the {\it Baer correspondence}~\cite{Baer}.

\subsection{From bimaps to algebras}
\label{sec:gamma}
Let $\cA\defeq \lcore{Alge}(\mathbb{F}_p)$ be the category of $\mathbb{F}_p$-matrix algebras with 
algebra isomorphisms.  Using \cite{Wilson:filters}*{\S 4}, define a functor 
$\func{A}:\cB\to \cA$ by
\begin{align*} 
    \func{A}(b)&= \left\{f \in \End(V)
    \mid (\exists f^*\in\End(V)^{{\rm op}})(\forall u,v \in V)\; b(fu, v) = b(u, f^*v) \right\}.
\end{align*}
Invertible morphisms $(\alpha,\beta)$ in $\cB$ from 
$b:V\times V\to W$ to $b':V'\times V'\to W'$ are  sent to 
\begin{equation*}
    \func{A}(\alpha,\beta):
    f\in \func{A}(b) \mapsto f^{\alpha^{-1}}\in \func{A}(b').
\end{equation*}
  
\begin{fact}
\label{fact:adj-ten}
    The functor $\func{A}$ is a $(\cB,\cA)$-morphism.
\end{fact}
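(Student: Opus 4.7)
The plan is to follow the template set by Lemma~\ref{lem:Baer}. I would first verify directly that $\func{A}$ is a functor: on morphisms $(\alpha,\beta):b\to b'$ in $\cB$, the assignment $f\mapsto f^{\alpha^{-1}}$ lands in $\func{A}(b')$ because, if $f^*$ is a witness for $f\in\func{A}(b)$, then $(f^*)^{\alpha^{-1}}$ witnesses $f^{\alpha^{-1}}\in\func{A}(b')$ by the defining identity $b'(\alpha u,\alpha v)=\beta b(u,v)$; and functoriality on composites follows from $(\alpha_2\alpha_1)^{-1}=\alpha_1^{-1}\alpha_2^{-1}$.

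By Proposition~\ref{prop:functors-are}, the functor $\func{A}$ endows $\cA$ with a left $\cB$-capsule structure $\beta\cdot\gamma\defeq \func{A}(\beta)\gamma$, which combined with the regular right $\cA$-action makes $\cA$ a $(\cB,\cA)$-bicapsule. Analogously, $\cB$ carries its regular left $\cB$-action together with a right $\cA$-action defined in parallel to the way $\func{G}$ turned $\cG$ into a right $\cB$-capsule in the proof of Lemma~\ref{lem:Baer}, making $\cB$ a $(\cB,\cA)$-bicapsule.

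The bimorphism identity would then be verified by a short direct calculation in the style of Lemma~\ref{lem:Baer}. For morphisms $\beta_1,\beta_2$ in $\cB$ and $\gamma$ in $\cA$ for which $\beta_1\beta_2\cdot\gamma$ is defined,
\[
\func{A}(\beta_1\beta_2\cdot\gamma)=\func{A}(\beta_1)\func{A}(\beta_2)\gamma=\beta_1\cdot\func{A}(\beta_2)\cdot\gamma,
\]
where the first equality invokes functoriality of $\func{A}$ on the composition $\beta_1\beta_2$ together with the compatibility of $\func{A}$ with the right $\cA$-action on $\cB$, and the second merely unpacks the bicapsule actions.

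The main obstacle will be specifying the right $\cA$-action on $\cB$ so that $\func{A}$ intertwines it with composition in $\cA$, since not every algebra isomorphism between adjoint algebras need come from a morphism of bilinear maps. Once this action is set up correctly---following the adjoint construction used in Section~\ref{sec:delta}---the bimorphism property reduces to the conjugation calculation above, and the fact follows.
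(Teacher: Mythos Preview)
The paper offers no proof of this Fact; it is simply asserted in Section~\ref{sec:gamma} alongside the citation to \cite{Wilson:filters}*{\S 4}. Your approach---mirroring Lemma~\ref{lem:Baer}---is the natural one, and both the functoriality verification and the $(\cB,\cA)$-bicapsule structure on $\cA$ (left action via $\func{A}$, right action regular) go through as you describe.

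The obstacle you flag in your last paragraph, however, is a genuine gap, and your proposed resolution does not close it. In Lemma~\ref{lem:Baer} the right $\cB$-action on $\cG$ is furnished by the explicit functor $\func{G}:\cB\to\cG$, and the computation there works precisely because $\func{B}\func{G}$ acts as the identity on morphisms of $\cB$: one has $(\alpha\boxtimes\beta)|^{\gamma_2}=\alpha$ and $(\alpha\boxtimes\beta)|_{\gamma_2}=\beta$. Section~\ref{sec:delta} provides no analogous functor $\cA\to\cB$, and---as you yourself observe---none can exist in general, since $\func{A}$ is not full: not every algebra isomorphism $\func{A}(b)\to\func{A}(b')$ is of the form $\func{A}(\alpha,\beta)$ for some morphism $(\alpha,\beta)$ in $\cB$. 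Until a right $\cA$-action on $\cB$ is actually specified, the expression $\beta_2\cdot\gamma$ in your displayed identity has no meaning, and the bimorphism equation cannot even be stated. To complete a proof in this style you would need either to define a genuinely partial right $\cA$-action on $\cB$ that is not a capsule (so Proposition~\ref{prop:functors-are} does not supply it), or to replace $\cA$ by a subcategory over which a section of $\func{A}$ exists. The paper does not indicate which route is intended, so your proposal remains a sketch with the key construction missing.
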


\subsection{From matrix algebras to semisimple algebras}
\label{sec:upsilon}
Every matrix algebra $A$ over a field is Artinian, so 
the quotient 
of $A$ by its Jacobson radical $\mathrm{Jac}(A)$ 
is semisimple. The map 
$A\mapsto A/\mathrm{Jac}(A)$ is 
a functor from $\acat{A}$ to the 
category $\cS\defeq$ \mbox{$\lcore{SSAlge}(\mathbb{F}_p)$} of 
semisimple $\mathbb{F}_p$-algebras. It is also an 
$(\cA,\cS)$-morphism.

\subsection{Combining capsules}

Recall that 
\begin{align*}
\cat{G}= \lcore{Grp}_{2,p}, \qquad \cat{B}= \lcore{Bi}(\mathbb{F}_p), \qquad
\cat{A}= \lcore{Alge}(\mathbb{F}_p), \qquad \cat{S}= \lcore{SSAlge}(\mathbb{F}_p).
\end{align*}
Denote by $\Delta$ the bicapsule associated to the $(\cG,\cB)$-morphism in Lemma~\ref{lem:Baer}.
Denote by $\Gamma$ and $\Upsilon$, respectively, the bicapsules associated to the 
$(\cB,\cA)$- and $(\cA,\cS)$-morphisms in Fact~\ref{fact:adj-ten} and Section~\ref{sec:upsilon}. 
These three capsules can now be combined to 
produce the $(\cG,\cS)$-capsule
\begin{align*}
    \Delta \otimes_{\cat{B}} \Gamma \otimes_{\cat{A}} \Upsilon
    = \cat{G}\cdot \mu\cdot  \cat{S}.
\end{align*}
The resulting generator $\mu$ of this cyclic bicapsule is  
a unital. By Theorem~\ref{thm:general-dual} this provides the 
characteristic subgroup used in \cite{Maglione:adjoints} and \cite{Wilson:filters}*{\S 4}.

\section{Implementation}\label{sec:imp}
At the suggestion of the referee, we developed code in Agda~\cite{Agda} that
focuses on the central topic of this paper: modeling characteristic
structure as categories acting on categories. Our documented implementation
is available at \cite{glassbox}. We encountered challenges in achieving both computational utility and verification and believe it is useful to identify 
them.

{\it Refining the decision hierarchy.}
A main goal of the implementation was to build 
the category of all homomorphisms of an algebraic structure. 
This requires a decidable 
composition test: to compose $f:A\to B$ and $g:C\to D$, we must decide whether $B=C$.
The question is decidable if $B$ and $C$ are simple types, but 
is undecidable for dependent types. 
For the particular dependent types used in our implementation, equality
is decidable by case-splitting, but this has a high combinatorial 
cost.  A `decidable universe tower' could potentially address this issue.

{\it Refining types for carrier sets.}
In many computational settings, the carrier set is not fixed in advance, 
but instead is generated by operations. One such setting is free algebras, 
which are 
equivalent to inductive types. Another is finite presentations 
(with explicit relations), which can be handled using higher inductive types. 
However, a third setting crucial in computational algebra is when we form a 
quotient by recognition (with no explicit relations). 
One such instance appears in Section~\ref{sec:int-ext}. 
This presents significant difficulties from a type theory perspective.

{\it Extending tactics to loop invariants.}
Computer algebra systems typically rely on mutable data, while theorem provers 
emphasize functional, stateless constructs. Many stateful algorithms
can be reformulated as loops with invariant properties, where loop termination 
provides the desired proof. Developing tactics and types that express such 
invariants directly, without expanding them into recursion, would enhance both 
efficiency and usability.

Using systems such as Agda reduces the risk of misinterpreting code or relying
on unverified results. Yet complex type systems sometimes create the illusion
that stronger claims have been proved than actually are. Implementing explicit
inhabitants and test cases often revealed gaps in our formal proofs. It is of
course  tempting to use ``proof holes'' (such as {\tt postulate} in Agda or {\tt
sorry} in Lean) to bypass seemingly ``obvious'' proofs, but this can undermine
the computational benefits of formalization. The development of our
implementation made this clear: avoiding postulates forced repeated
reformulation and showed us that treating categories as varieties,
rather than as the essentially algebraic structures we initially studied, 
was essential.  This conceptual shift strengthened our main results
and simplified their exposition. The result was not only completely verified
proofs, but also a deeper understanding of the algebra underlying categories.

Note that we avoid postulates for proofs, but our Agda code uses the tag 
\verb|{-# OPTIONS --allow-unsolved-metas #-}|
which averts warnings about potential holes in our code:
these are confined to proofs of negation only, and satisfy the
type-checker's need to return something if a contradiction is raised.
Since contradictions cannot arise, such holes
are \emph{unreachable} and do not represent a gap.

\section*{Acknowledgements}
We thank the referee for careful reading and valuable comments; 
the suggestion to develop a proof-of-concept implementation especially 
informed our understanding of the theory and its applications.
We thank Chris Liu for fruitful discussions.
 We thank John Power and 
Mima Stanojkovski for comments on a draft.  
Brooksbank was supported by NSF grant DMS-2319371. 
Maglione was supported by DFG grant VO 1248/4-1 (project
number 373111162) and DFG-GRK 2297. 
O’Brien was supported by the Marsden Fund of New Zealand Grant 23-UOA-080
and by a Research Award of the Alexander von Humboldt Foundation.
Wilson was supported by a Simons Foundation Grant
identifier \#636189 and by NSF grant DMS-2319370.

\begin{bibdiv}
\begin{biblist}

\bib{AR1994:categories}{book}{
	author={Ad\'{a}mek, Ji\v{r}\'{\i}},
	author={Rosick\'{y}, Ji\v{r}\'{\i}},
	title={Locally presentable and accessible categories},
	series={London Mathematical Soc.\ Lecture Note Ser.},
	volume={189},
	publisher={Cambridge University Press, Cambridge},
	date={1994},
	pages={xiv+316},
	isbn={0-521-42261-2},
}
\bib{Agda}{manual}{
  title =        {The Agda user manual},
  author =       {The Agda development team},
  year =         {2005--2023},
  note =          {\url{http://agda.readthedocs.io}},
}

\bib{Baer}{article}{
   author={Baer, Reinhold},
   title={Groups with abelian central quotient group},
   journal={Trans. Amer. Math. Soc.},
   volume={44},
   date={1938},
   number={3},
   pages={357--386},
   issn={0002-9947},
}

\bib{Baez}{incollection}{
author={Baez, John C.},
title = {An introduction to {$n$}-categories},
book={
title={Category theory and computer science ({S}anta {M}argherita {L}igure, 1997)}
series={Lecture Notes in Comput.\ Sci.},
publisher={Springer-Verlag}, 
address={Berlin},
volume={1290},
}
year={1997},
pages={1--33},
}

\bib{Bergner-Hackney}{article}{
  author={Bergner, Julia E.},
  author={Hackney, Philip},
  title={Reedy categories which encode the notion of category actions},
  journal={Fund.\ Math.},
  volume={228},
  year={2015},
  pages={193--222},
}

\bib{glassbox}{article}{
	author={P. A. Brooksbank},
  author={H. Dietrich},
	author={J. Maglione},
	author={E. A. O'Brien},
	author={J. B. Wilson},
        title = {Characteristic structure in Agda},
        note = {\url{https://github.com/algeboy/Glassbox}}
	year={2025}, 
}

\bib{ELGO2002}{article}{
	author={Eick, Bettina},
	author={Leedham-Green, C. R.},
	author={O'Brien, E. A.},
	title={Constructing automorphism groups of $p$-groups},
	journal={Comm. Algebra},
	volume={30},
	date={2002},
	number={5},
	pages={2271--2295},
	issn={0092-7872},
}

\bib{magma}{article}{
	author={Bosma, Wieb},
	author={Cannon, John},
	author={Playoust, Catherine},
	title={The Magma algebra system. I. The user language},
	note={Computational algebra and number theory (London, 1993)},
	journal={J. Symbolic Comput.},
	volume={24},
	date={1997},
	number={3-4},
	pages={235--265},
	issn={0747-7171},
}

\bib{BOW}{article}{
	author={Brooksbank, Peter A.},
	author={O'Brien, E. A.},
	author={Wilson, James B.},
	title={Testing isomorphism of graded algebras},
	journal={Trans. Amer. Math. Soc.},
	volume={372},
	date={2019},
	number={11},
	pages={8067--8090},
	issn={0002-9947},
	
}

\bib{Cohn}{book}{
	author={Cohn, P. M.},
	title={Universal algebra},
	series={},
	edition={2},
	publisher={D. Reidel Publishing Co., Dordrecht-Boston, Mass.},
	date={1981},
	pages={xv+412},
	isbn={90-277-1213-1;90-277-1254-9},

}

\bib{Coq}{manual}{
  title =        {The Coq proof assistant reference manual},
  author =       {The Coq development team},
  organization = {LogiCal Project},
  note = {\url{https://coq.inria.fr/documentation}},
  year =         {2004},
  url =          {\url{http://coq.inria.fr}},
}

\bib{handbook}{book}{
   author={Holt, Derek F.},
   author={Eick, Bettina},
   author={O'Brien, Eamonn A.},
   title={Handbook of computational group theory},
   series={Discrete Math.\ Appl.\ (Boca Raton)},
   publisher={Chapman \& Hall/CRC, Boca Raton, FL},
   date={2005},
   pages={xvi+514},
   isbn={1-58488-372-3},
   review={\MR{2129747}},
   doi={10.1201/9781420035216},
}

\bib{Feldman}{article}{
	ISSN = {00318108, 15581470},
	URL = {http://www.jstor.org/stable/2184291},
	author = {Fred Feldman},
	journal = {The Philosophical Review},
	number = {4},
	pages = {510--522},
	publisher = {[Duke University Press, Philosophical Review]},
	title = {Leibniz and ``Leibniz' Law''},
	volume = {79},
	year = {1970}
}

\bib{FS}{book}{
  author={Freyd, Peter J.},
  author={Scedrov, Andre},
  title={Categories, allegories},
  series={North-Holland Math.\ Library},
  volume={39},
  publisher={North-Holland Publishing Co., Amsterdam},
  date={1990},
  pages={xviii+296},
  isbn={0-444-70368-3},
  isbn={0-444-70367-5},
}

\bib{GAP4}{manual}{
    author={The {\sf GAP}~Group},
    title={{\sf GAP}-- Groups, Algorithms, and Programming,
                    Version 4.14.0},
    year={2024},
    url={\url{https://www.gap-system.org}},
}

\bib{Macauley}{misc}{
   author = {Grayson, Daniel},
   author = {Stillman, Michael},
   author = {Eisenbud, David},
   title = {Macaulay$2$},
   note = {\url{http://www2.macaulay2.com}}
}

\bib{Hall40}{article}{
	author={Hall, P.}
	title={Verbal and marginal subgroups}, 
	journal={J.\ Reine Angew.\ Math.},
	volume={182},
	date={1940},
	pages={156--157},
}

\bib{Hindley-Seldin}{book}{
	author = {Hindley, J. Roger},
	author = {Seldin, Jonathan P.},
	title = {Lambda-calculus and Combinators, An introduction},
	year = {2008},
	publisher = {Cambridge University Press},
	address = {Cambridge},
}

\bib{Hungerford}{book}{
   author={Hungerford, Thomas W.},
   title={Algebra},
   series={Grad.\ Texts in Math.},
   volume={73},
   publisher={Springer-Verlag, New York-Berlin},
   date={1980},
   pages={xxiii+502},
   isbn={0-387-90518-9},
   review={\MR{0600654}},
}

\bib{MonoidsAC}{book}{
 author = {Mati Kilp},
author ={Ulrich Knauer},
author = {Alexander V. Mikhalev},
 title = {Monoids, acts and categories},
 year = {2000}
series= {De Gruyter Exp.\ Math.},
 publisher={Walter de Gruyter \& Co., Berlin},
}

\bib{Maglione2021}{article}{
   author={Maglione, Joshua},
   title={Filters compatible with isomorphism testing},
   journal={J. Pure Appl. Algebra},
   volume={225},
   date={2021},
   number={3},
   pages={Paper No. 106528, 28},
   issn={0022-4049},
  
}

\bib{Maglione:adjoints}{article}{
   author={Maglione, Joshua},
   title={Longer nilpotent series for classical unipotent subgroups},
   journal={J. Group Theory},
   volume={18},
   date={2015},
   number={4},
   pages={569--585},
   issn={1433-5883},
 
}

\bib{lean}{incollection}{
  author={{de Moura}, Leonardo},
  author={Ullrich, Sebastian}, 
  title={The Lean $4$ Theorem Prover and Programming Language} 
book={
     title={Automated Deduction---CADE $28$},
      series={Lecture Notes in Comput.\ Sci.},
      volume={12699},
      publisher={Springer, Cham}, 
}

   pages={625--635},
   date={2021},
  url={\url{https://leanprover.github.io/}},
 }

\bib{Marker:models}{book}{
  author={Marker, David},
  title={Model Theory: An Introduction},
  series={Grad.\ Texts in Math.},
  volume={217},
PUBLISHER = {Springer-Verlag},
PAGES = {viii+342},
  address={New York},
  year={2002},
}

\bib{nlab:action}{misc}{
  author = {{nLab authors}},
  title = {action},
 
  note = {\href{https://ncatlab.org/nlab/revision/action/74}{Revision 74}},
 
  year = {2023},
}

\bib{Pierce:types}{book}{
  title={Types and programming languages},
  author={Pierce, Benjamin C.},
  year={2002},
  publisher={MIT Press},
  address={Cambridge, Massachusetts},
}

\bib{Power}{incollection}{
  author={Power, A.~J.},
  editor={Carboni, Aurelio},
  editor={Pedicchio, Maria~Cristina},
  editor={Rosolini, Guiseppe},
  title={An $n$-categorical pasting theorem},
  book={
    title={Category Theory ({C}omo 1990)},
    series={Lecture Notes in Math.},
    publisher={Springer},
    address={Berlin, Heidelberg},
  },
  year={1991},
  pages={326--358},
}

\bib{Riehl}{book}{
	title={Category theory in context},
	author={Riehl, Emily},
	year={2016},

       SERIES = {Aurora Dover Mod.\ Math Orig.},
       PUBLISHER = {Dover Publications, Inc., Mineola, NY},
}

\bib{Rottlander28}{article}{
   author={Rottlaender, Ada},
   title={Nachweis der Existenz nicht-isomorpher Gruppen von gleicher
   Situation der Untergruppen},

   journal={Math. Z.},
   volume={28},
   date={1928},
   number={1},
   pages={641--653},
   issn={0025-5874},
 
}

\bib{Rowen}{book}{
   author={Rowen, Louis Halle},
   title={Graduate algebra: noncommutative view},
   series={Grad.\ Stud.\ Math.},
   volume={91},
   publisher={American Mathematical Society, Providence, RI},
   date={2008},
   pages={xxvi+648},
   isbn={978-0-8218-0570-1},
 
}

\bib{sagemath}{manual}{
      author={Developers, The~Sage},
       title={{S}agemath, the {S}age {M}athematics {S}oftware {S}ystem},

        note={\url{https://www.sagemath.org}},
}

\bib{Seress}{book}{
   author={Seress, \'Akos},
   title={Permutation group algorithms},
   series={Cambridge Tracts in Math.},
   volume={152},
   publisher={Cambridge University Press, Cambridge},
   date={2003},
   pages={x+264},
   isbn={0-521-66103-X},
   review={\MR{1970241}},
   doi={10.1017/CBO9780511546549},
}

\bib{Tucker}{article}{
author={Tucker, Dustin},
year={2018}, 
title={Paradoxes and Restricted Quantification: A Non-Hierarchical Approach},
journal={Thought: A Journal of Philosophy,},
number={7},
pages={190-199},
}
\bib{HoTT}{book}{
	author =    {{Univalent Foundations Program}, The},
	title =     {Homotopy Type Theory: Univalent Foundations of Mathematics},
	note = {\url{https://homotopytypetheory.org/book}},
	address =   {Institute for Advanced Study},
	year =      {2013},
}

\bib{Wilson:filters}{article}{
   author={Wilson, James B.},
   title={More characteristic subgroups, Lie rings, and isomorphism tests
   for $p$-groups},
   journal={J. Group Theory},
   volume={16},
   date={2013},
   number={6},
   pages={875--897},
   issn={1433-5883},
 
}

\end{biblist}
\end{bibdiv}

\enlargethispage{2\baselineskip}

\end{document}